\newtheorem{theorem}{Theorem}[section]
\newtheorem{proposition}[theorem]{Proposition}
\newtheorem{corollary}[theorem]{Corollary}
\newtheorem{lemma}[theorem]{Lemma}
\theoremstyle{definition}
\newtheorem{definition}[theorem]{Definition}
\newtheorem{example}[theorem]{Example}
\newtheorem{notation}[theorem]{Notation}
\newtheorem{remark}[theorem]{Remark}
\title[On rank one log del Pezzo surfaces in char $\neq 2,3$]{On rank one log del Pezzo surfaces in characteristic different from two and three}
\author{Justin Lacini}
\date{}
\address{Department of Mathematics, University of California, San Diego, USA}
\email{jlacini@ku.edu}
\curraddr{Department of Mathematics, University of Kansas, 1450 Jayhawk Blvd. , Lawrence, KS 66045}
\begin{document}

\begin{abstract}
We classify all log del Pezzo surfaces of Picard number one defined over algebraically closed fields of characteristic different from two and three. We also
discuss some consequences of the classification. For example, we show that log del Pezzo surfaces of Picard number one defined over algebraically closed fields
of characteristic higher than five admit a log resolution that lifts to characteristic zero over a smooth base.
\end{abstract}

\footnote{2010 Mathematics Subject Classification. 14J26 and 14J45.}
\footnote{Keywords. Log del Pezzo surface, Minimal Model Program, positive characteristic, classification.}

\maketitle

\tableofcontents

\section{Introduction}

The canonical divisor plays a central role in the classification of algebraic varieties. For example, the Minimal Model Program predicts that every smooth projective variety can be 
\say{built} from the following three families:

\begin{enumerate}
\item Fano varieties, on which the canonical class is anti-ample.
\item Varieties with canonical class that is trivial (for example Calabi-Yau or abelian varieties).
\item Canonical models, on which the canonical class is ample.
\end{enumerate}

The aim of this paper is to classify all klt Fano surfaces (called \emph{log del Pezzo surfaces}) 
of Picard number one over algebraically closed fields of characteristic different from two and three.
More precisely:

\begin{theorem}[Classification of rank one log del Pezzo surfaces, Theorem \ref{total_classification}]\label{introclassification}
Let $S$ be a rank one log del Pezzo surface defined over an algebraically closed field of characteristic different from two and three.
If $S$ is smooth then $S=\mathbb{P}^2$; if $S$ is Gorenstein and singular, then $S$ is one of the surfaces described in Theorem \ref{gorenstein_classification};
otherwise, it belongs to one of the families LDP1 to LDP22 described in Section \ref{sectionclassification}.
\end{theorem}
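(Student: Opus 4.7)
The plan is to prove the three cases separately by reducing the smooth and Gorenstein cases to (essentially) known statements and then attacking the non-Gorenstein klt case with a minimal model program argument. For the smooth case, a smooth Fano surface of Picard number one over an algebraically closed field is forced to be $\mathbb{P}^2$: the classification of smooth del Pezzo surfaces via successive contractions of $(-1)$-curves down to a minimal model leaves $\mathbb{P}^2$ as the only rank-one possibility with $-K$ ample. The Gorenstein case is exactly the content of Theorem \ref{gorenstein_classification}, which I would cite directly.

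For the bulk of the theorem, assume $S$ is non-Gorenstein klt, and take the minimal resolution $f:Y\to S$. Write $K_Y = f^*K_S - \sum a_i E_i$ with $0\le a_i<1$, at least one $a_i>0$. In characteristic different from $2,3$ all klt surface singularities are tame, so each connected component of the exceptional locus is a Hirzebruch--Jung string (or one of the known forks for non-cyclic log terminal singularities), with self-intersections dictated by the continued-fraction expansion of the local index. This gives the combinatorial framework on $Y$.

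Next I would run a $K_Y$-MMP. At each step one contracts a $(-1)$-curve or reaches a Mori fiber space $Y'\to B$, where $B$ is either a point (so $Y'=\mathbb{P}^2$) or a smooth rational curve (so $Y'$ is a Hirzebruch surface $\mathbb{F}_n$). The condition $\rho(S)=1$ forces the strict transforms of the $E_i$, together with the contracted curves, to span $\mathrm{Pic}(Y)\otimes\mathbb{Q}$ modulo the pullback of an ample class. Tracking how each MMP step moves the dual graph of $f$ and keeping only those configurations whose contraction produces an ample $-K_S$ yields a finite list of admissible \emph{initial configurations} on $\mathbb{P}^2$ or $\mathbb{F}_n$. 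Reversing the MMP and contracting the exceptional strings recovers the families LDP1--LDP19 of Section~\ref{sectionclassification}.

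The main obstacle is the combinatorial case analysis: one has to bound the Picard number of $Y$, the indices of the singularities of $S$, and the possible shapes of the dual graphs, and then verify the existence of each candidate surface -- typically via explicit toric or weighted-blow-up constructions, or as contractions of specific curve arrangements on $\mathbb{F}_n$. The hypothesis $\mathrm{char}\neq 2,3$ is essential throughout: it rules out wild quotient singularities, guarantees that Hirzebruch--Jung resolutions and cyclic covers behave as in characteristic zero, and ensures that the same list of dual graphs produced by the characteristic-zero classification of Alexeev--Nikulin and Kojima exhausts the positive-characteristic case. The existence half of the classification is then checked by exhibiting explicit models over $\mathrm{Spec}\,\mathbb{Z}[\tfrac{1}{6}]$.
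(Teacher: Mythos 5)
There is a genuine gap, and it sits in the two places where your sketch does all the work. First, the appeal to the characteristic-zero classification is both circular and false. Circular, because the assertion that the dual graphs occurring in characteristic $p>3$ are the same as over $\mathbb{C}$ is essentially the theorem being proved, not an input to it. False, because characteristic five genuinely produces new surfaces: Example \ref{char5} (family LDP17) is a rank one log del Pezzo surface with five singular points, violating the Bogomolov bound that holds over the complex numbers, so it appears on no characteristic-zero list and would never be found by exhibiting models over $\operatorname{Spec}\mathbb{Z}[\tfrac{1}{6}]$. Any argument that reduces the problem to the Alexeev--Nikulin/Kojima lists must fail in characteristic five, and your proposal gives no mechanism for detecting where or why.

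Second, the claim that running a $K_Y$-MMP on the minimal resolution and ``tracking how each step moves the dual graph'' yields a finite list of admissible initial configurations is not substantiated and cannot be taken literally: the families in question are infinite (already $\overline{\mathbb{F}}_n$ for all $n$, and the families LDP16, LDP18, LDP19 depend on unbounded discrete parameters), and there is no a priori bound on $\rho(Y)$ or on the indices of the singularities without further input. The reason the paper does not run a naive MMP is precisely that an arbitrary choice of contraction gives no control over how the exceptional configuration degenerates. The organizing principle it uses instead is the hunt: extract the exceptional divisor of \emph{maximal coefficient}, then contract the other extremal ray of the resulting rank-two surface. This choice preserves the flush/level conditions (Lemmas \ref{flushdescent}--\ref{flushmult}), which in turn force the contractions to belong to the short classified lists of Lemmas \ref{multiplicity_two_node}--\ref{net_fiber_classification}, and the process terminates in a Gorenstein surface, a cone, or a net within a few steps. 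Combined with the tiger/no-tiger dichotomy (Theorem \ref{maintheorem}) and the classification of Gorenstein log del Pezzo surfaces via extremal rational elliptic surfaces (Appendix \ref{gorenstein}, relying on Lang's positive-characteristic classification), this is what makes the case analysis close up. Your sketch replaces all of this with the phrase ``the combinatorial case analysis,'' which is exactly the part that needs a method.
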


Theorem \ref{introclassification} has several immediate consequences. For instance, it is well-known that rank one log del Pezzo surfaces defined over the complex numbers have at most
four singular points (see \cite[Corollary 1.8.1]{mckernan} and \cite{belousov}). This result, known as the Bogomolov bound, fails in characteristic two, as Keel and M\textsuperscript{c}Kernan 
exhibited examples of rank one log del Pezzo surfaces with arbitrarily many singular points (\cite[Chapter 9]{mckernan}). Nevertheless, in light of Theorem \ref{introclassification}
we have the following:

\begin{corollary}[Bogomolov bound]
Let $S$ be a rank one log del Pezzo surface defined over an algebraically closed field of characteristic different from two, three and five. Then $S$ has at most four singular points.
\end{corollary}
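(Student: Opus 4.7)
The plan is to derive the corollary as a direct case-by-case inspection using Theorem \ref{introclassification}, which exhausts the possibilities for $S$. Since the singularities of $S$ are the only input needed for the bound, and the classification describes each family explicitly (presumably in terms of its dual graph of exceptional curves over a minimal resolution, or equivalently its set of singularity types), the work reduces to reading off the singular points in each case and verifying the bound of four.

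First I would dispense with the easy cases. If $S$ is smooth, then $S=\mathbb{P}^2$ has no singular points and the bound is trivial. If $S$ is Gorenstein, one appeals to the description in Theorem \ref{gorenstein_classification}: the rank one Gorenstein log del Pezzo surfaces are well understood (they have only du Val singularities whose dual graph is obtained by contracting $(-1)$-curves from a smooth del Pezzo, and the number of singular points is bounded by the Picard rank of the minimal resolution minus one, which in rank one already forces at most four singular points via the standard Noether-type count on $K_S^2$). This gives the bound for the Gorenstein case independently of the characteristic (as long as the Gorenstein classification of the paper holds, i.e.\ characteristic different from two and three).

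For the non-Gorenstein, non-smooth case, I would go through the nineteen families LDP1, \ldots, LDP19 from Section \ref{sectionclassification} one by one and count the singular points from the explicit data given. Since in characteristic zero the Bogomolov bound of four holds by \cite{mckernan} and \cite{belousov}, and the families in Section \ref{sectionclassification} should essentially parallel the characteristic zero classification (this is presumably the content of the rest of the paper), each family will contribute at most four singular points. The exclusion of characteristic five then presumably reflects the existence of one or more sporadic families appearing only in characteristic five with strictly more than four singular points; these must be identified and excluded at this step.

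The main obstacle is thus bookkeeping rather than geometry: one has to be sure that the nineteen families in Section \ref{sectionclassification} are listed in a way that makes the number of singular points immediately visible (for instance through their dual graphs or Cartier indices), and that no characteristic-five pathology slips into the list. Once the classification is in hand, the corollary is a finite check. I would organize this as a small table recording, for each of LDP1--LDP19, the number of singular points and the characteristics in which the family is realized, and conclude the bound by inspection.
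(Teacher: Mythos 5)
Your approach is the same as the paper's: the corollary is stated as an immediate consequence of Theorem \ref{introclassification}, and the intended proof is exactly the case-by-case inspection you describe of $\mathbb{P}^2$, the Gorenstein list of Theorem \ref{gorenstein_classification}, and the families LDP1--LDP19 (with LDP17, which has singularities $2A_4+(5)+(3)+(2)$ and exists only in characteristic five, being precisely the sporadic family you correctly anticipate must be excluded). One caveat: your parenthetical justification for the Gorenstein case is wrong as stated --- the count $\#\operatorname{Sing}(S)\leqslant \rho(\tilde S)-1 \leqslant 8$ bounds the number of singular points only by eight, not four, so the bound of four in the Gorenstein case genuinely requires reading the explicit list in Theorem \ref{gorenstein_classification} (whose worst cases, e.g.\ $4A_2$, $2A_1+2A_3$ and $3A_1+D_4$, have exactly four singular points); since your proof also invokes that list, the argument still goes through.
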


By slightly modifying Keel and M\textsuperscript{c}Kernan's example, in characteristic three one gets a rank one log del Pezzo surface with seven singularities
(see Example \ref{char3}). As a consequence of the classification in Theorem \ref{introclassification}, we show that there is a counterexample to the
Bogomolov bound in characteristic five.

\begin{theorem}[Example \ref{char5}]\label{introchar5}
There exists a rank one log del Pezzo surface $S$ defined over any algebraically closed field of characteristic five and a curve $C\subseteq S$ such that
\begin{enumerate}
\item $-(K_S+C)$ is ample.
\item $K_S+C$ is dlt.
\item $S$ has five singular points.
\end{enumerate}
\end{theorem}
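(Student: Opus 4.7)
The plan is to adapt the characteristic-two construction of Keel and M\textsuperscript{c}Kernan \cite[Chapter 9]{mckernan}, replacing their rational quasi-elliptic fibration (which does not exist in characteristic five) by a rational \emph{elliptic} fibration specific to characteristic five. Lang's classification of extremal rational elliptic surfaces over fields of positive characteristic lists configurations that have no analogue over $\mathbb{C}$, due to wild ramification and non-classical Mordell--Weil torsion patterns; one of these is the source of the counterexample.

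Concretely, I would search Lang's tables for an extremal rational elliptic fibration $\pi \colon X \to \mathbb{P}^1$ in characteristic five with the following feature: there is a negative-definite configuration $D \subset X$ built from the non-identity components of the singular fibers, the zero-section, and possibly Mordell--Weil torsion sections, such that $D$ has exactly five connected components and contracting $D$ yields a birational morphism $f \colon X \to S$ with $\rho(S) = 1$. The curve $C \subset S$ is then taken to be the image under $f$ of a section or of a general fiber (whichever is not entirely contracted), chosen so that the resulting anti-log-canonical class $-(K_S+C)$ is ample.

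Given such a configuration, the three properties would be verified directly from the intersection theory on $X$. The count of singular points equals the number of connected components of $D$ by construction, and each is log terminal because it resolves to a tree of smooth rational curves with negative-definite intersection matrix. The ampleness of $-(K_S+C)$ is equivalent to a positivity condition on the coefficients of the pullback $f^{*}(K_S+C) = K_X + \widetilde{C} + \sum \alpha_i D_i$, which can be read off the intersection numbers of the Kodaira fiber components, the sections, and the general fiber, using $K_X \equiv -F$. Dlt-ness of $(S,C)$ follows from the fact that $(X, \widetilde{C} + D)$ is simple normal crossings, together with the computed positivity of the log discrepancies of all $f$-exceptional divisors over $(S,C)$.

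The main obstacle is the combinatorial search: sifting Lang's tables to find the precise fibration, choice of section, and curve $C$ simultaneously realising all three requirements. A tame extremal rational elliptic fibration of the type available in every characteristic yields at most four singular points on the contracted surface (one per multiplicative singular fiber that has more than one component, with the zero-section contracting to a smooth point), so producing a fifth singularity demands a feature unique to characteristic five---for instance a wild additive fiber whose non-identity components decompose into two connected trees once the appropriate section is chosen, or a Mordell--Weil torsion section that breaks a multiplicative configuration into additional connected pieces. Once the fibration is pinned down, items (1)--(3) reduce to intersection-theoretic bookkeeping on a rational elliptic surface.
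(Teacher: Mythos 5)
You have correctly guessed where the characteristic-five anomaly lives: in Theorem \ref{elliptic_classification} the configuration $I_5,I_5,I_1,I_1$ degenerates to $I_5,I_5,II$ in characteristic five, and this is indeed the root of the example. But the mechanism you propose cannot produce it. You want to find a negative-definite divisor $D$ with five connected components on the extremal rational elliptic surface $X$ itself and contract it to reach $\rho(S)=1$. Any surface obtained that way has minimal resolution of Picard number at most $\rho(X)=10$, whereas the example of the paper has singularities $2A_4+(5)+(3)+(2)$, hence eleven exceptional components and $\rho(\tilde S)=12$ by Lemma \ref{sum_ten}. So the combinatorial search you describe is provably empty, and indeed no rank one Gorenstein log del Pezzo surface has more than four singular points (Theorem \ref{gorenstein_classification}). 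Your heuristic that the fifth singularity should come from a wild fiber splitting into extra connected components, or from torsion sections breaking up a multiplicative fiber, points in the wrong direction: the role of the type $II$ fiber is not to supply extra contractible components but to descend to a rational \emph{cuspidal} curve lying in the smooth locus of the two-singular-point Gorenstein surface $S(2A_4)$ (Example \ref{sa4char5}).

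The missing idea is a second, non-elliptic stage of surgery. Let $C_0\subseteq S(2A_4)$ be that cuspidal curve, with $C_0\in|-K|$. Blow up the cusp three times; after the third blow-up the strict transform of $C_0$ (a $(-5)$-curve), the first exceptional divisor (now a $(-3)$-curve) and the second (now a $(-2)$-curve) are pairwise disjoint and each meets the last $(-1)$-curve $E_3$ transversally. Contracting those three curves produces three new klt points of types $(5)$, $(3)$, $(2)$, which together with the two untouched $A_4$ points gives five singularities; the curve $C$ of the statement is the image of $E_3$, which meets each of the three new one-curve chains transversally at an end, so $K_S+C$ is dlt by Lemma \ref{dltsing}, and ampleness of $-(K_S+C)$ is a short discrepancy computation. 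The reason this is special to characteristic five is Lemma \ref{cuspgorenstein}: outside characteristic five a rank one Gorenstein surface carrying a rational cuspidal curve in its smooth locus has exactly one singular point, so the same surgery yields at most four singularities, while in characteristic five $S(2A_4)$ joins the list and contributes two.
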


There has been recently a great deal of interest in extending the classical results of the Minimal Model Program to algebraically closed fields of positive characteristic 
(see for instance \cite{haconxu}). 
It is therefore natural to ask which properties of complex log del Pezzo surfaces carry to positive characteristic.
Cascini, Tanaka and Witaszek \cite{cascini} have proved that in large characteristic all log del Pezzo surfaces either admit a log resolution that lifts to characteristic zero over a smooth base, or
are globally F-regular. Our desire to determine the exact characteristic has been one of the main motivations behind Theorem \ref{introclassification}. 

\begin{theorem}[Lifting to characteristic zero, Theorem \ref{lift_theorem}]\label{introlift}
Let $S$ be a rank one log del Pezzo surface defined over an algebraically closed field of characteristic $p>5$ and consider the minimal resolution $\pi:\tilde{S}\rightarrow S$.
Then $(\tilde{S}, \operatorname{Ex}(\pi))$ lifts to characteristic zero over a smooth base.
\end{theorem}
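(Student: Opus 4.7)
The plan is to leverage Theorem \ref{introclassification} and proceed by inspection of the classification, producing an explicit lift of the pair $(\tilde{S}, \operatorname{Ex}(\pi))$ in each case. Since $S$ is a rank one log del Pezzo in characteristic $p>5$, the theorem tells us $S$ is either $\mathbb{P}^2$, a Gorenstein rank one log del Pezzo (described in Theorem \ref{gorenstein_classification}), or one of the nineteen families LDP1--LDP19 of Section \ref{sectionclassification}. For $\mathbb{P}^2$ the lift is trivial. For the Gorenstein case, the singularities are Du Val (ADE) and $\tilde{S}$ is a weak del Pezzo surface whose minimal resolution is obtained by blowing up $\mathbb{P}^2$ at (possibly infinitely near) $k$-rational points in almost general position; these surfaces, together with the ADE chains of $(-2)$-curves forming $\operatorname{Ex}(\pi)$, lift to $W(k)$ by classical deformation theory of rational surfaces.

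The central deformation-theoretic criterion I intend to use is that an snc pair $(\tilde{S}, E)$ with $\tilde{S}$ a smooth projective rational surface lifts to a smooth formal scheme over $W(k)$, and hence to characteristic zero over a smooth base, provided $H^2(\tilde{S}, T_{\tilde{S}}(-\log E))=0$. For rational surfaces this is accessible because $T_{\tilde{S}}(-\log E)$ fits into short exact sequences coming from the components $E_i$ of $E$, and one can compare to $H^2(\tilde{S}, T_{\tilde{S}})$ which already vanishes for most rational surfaces. Equivalently, one can argue more geometrically: if $\tilde{S}$ can be obtained from a base surface $Y$ (such as $\mathbb{P}^2$ or a Hirzebruch surface $\mathbb{F}_n$) by a sequence of blow-ups at $k$-rational (possibly infinitely near) points, and the strict transform plus exceptional divisors yield precisely $\operatorname{Ex}(\pi)$, then lifting $Y$ to $W(k)$ and lifting each blown-up point to the corresponding lifted surface produces the desired lift of the pair.

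The core of the proof therefore consists of verifying, for each family LDP1--LDP19, that the minimal resolution $\tilde{S}$ together with the configuration $\operatorname{Ex}(\pi)$ admits such an explicit construction by iterated blow-ups starting from $\mathbb{P}^2$ or some $\mathbb{F}_n$. The dual graphs of $\operatorname{Ex}(\pi)$ are determined by each family, and the classification in Section \ref{sectionclassification} should be arranged to reveal $\tilde{S}$ as (a birational modification of) a well-understood rational surface. Where $\tilde{S}$ comes from an extremal rational elliptic fibration or a toric model, lifting is immediate; in the remaining cases one picks suitable $k$-rational centers and lifts them one at a time, using $p>5$ to rule out the small-characteristic pathologies that obstruct genericity.

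The main obstacle is, unsurprisingly, the sheer number of families: the argument requires a careful case analysis of all nineteen LDP configurations, and for each one must check that the chosen centers of the blow-ups really do admit lifts to characteristic zero that remain in the correct (infinitely near, tangentially constrained) position to reproduce the prescribed dual graph of $\operatorname{Ex}(\pi)$. A uniform conceptual alternative would be to prove the cohomological vanishing $H^2(\tilde{S}, T_{\tilde{S}}(-\log \operatorname{Ex}(\pi)))=0$ directly using the anti-nefness of $-K_{\tilde{S}}$ on the complement of $\operatorname{Ex}(\pi)$ together with a Kodaira--Akizuki--Nakano-type vanishing valid in characteristic $p>5$; this route sidesteps the case analysis but requires establishing a logarithmic vanishing theorem sharp enough to apply to each of the non-log-canonical configurations present, which itself may ultimately reduce back to the classification.
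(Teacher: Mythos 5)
Your proposal follows essentially the same route as the paper: both arguments reduce to the classification of Theorem \ref{total_classification} and observe that each surface on the list is built by iterated smooth blow-ups from a base surface ($\mathbb{P}^2$, a Hirzebruch surface, or a Gorenstein del Pezzo) that visibly lifts, so one lifts the base and then lifts the blow-up centers one at a time. The only real difference is cosmetic: in the Gorenstein case the paper lifts via the integral Weierstrass model of the associated extremal rational elliptic surface rather than via almost-general-position blow-ups of $\mathbb{P}^2$.
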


Notice that Theorem \ref{introlift} is sharp, in light of Theorem \ref{introchar5} and of the Bogomolov bound. Furthermore, Theorem \ref{introlift} implies that Kodaira's vanishing theorem
holds for rank one log del Pezzo surfaces in characteristic $p>5$ (see Theorem \ref{kodairavanishing}). 
This behavior in dimension two should be contrasted with the failure of Kodaira's vanishing theorem
in higher dimension, even for smooth Fano varieties (see \cite{totaro}).

We give now a brief sketch of the proof of Theorem \ref{introclassification}. Let $S$ be a log del Pezzo surface of Picard number one. 
A natural approach is to try to \say{simplify} the singularities of $S$ by extracting an exceptional divisor $E_1$ of its minimal resolution. Let $f:T_1 \rightarrow S$ be the extraction of $E_1$.
Since $S$ has Picard number one, $T_1$ has Picard number two, and therefore the closed cone of curves of $T_1$ is generated by two rays. 
One of the two rays is generated by the class of $E_1$. We may then play the two-ray game on $T_1$ by contracting the other ray and we
let $\pi:T_1 \rightarrow S_1$ be the contraction. There is a priori no reason why $S_1$ would be any \say{simpler} that $S$, and in fact this is not the case for most choices of $E_1$.
The truly remarkable fact, which makes the classification possible at all, is that by choosing $E_1$ to be the divisor with the worst singularity (as measured by the discrepancy), then $S_1$ is 
indeed simpler than $S$, and one may even classify the possibilities for the contraction $\pi_1$. 

The idea is then to continue this sequence of extractions and contractions, which produces a sequence of increasingly simpler surfaces $S_1$, $S_2$ and so on. 
This process, which was first introduced by Keel and M\textsuperscript{c}Kernan \cite{mckernan}, is called the \emph{hunt}. The hunt is very efficient and usually terminates within
three steps, yielding either a Gorenstein log del Pezzo surface of Picard number one, a cone over the rational normal curve of degree $n$, or a Mori fiber space. 
One may therefore recover $S$ by classifying all such surfaces and the contractions $\pi_i$ that appear during the hunt.

Keel and M\textsuperscript{c}Kernan introduced the hunt in order to prove that the smooth locus of log del Pezzo surfaces of rank one is uniruled. Their proof is divided in two cases, based
on the following notion:

\begin{definition}
Let $(X,\Delta)$ be a $\mathbb{Q}$-factorial projective log pair. A \emph{special tiger} for $K_X + \Delta$ is an effective $\mathbb{Q}$-divisor $\alpha$ such that
$K_X + \Delta + \alpha$ is numerically trivial, but not klt. If there is a special tiger, a \emph{tiger} is any divisor $E$ with discrepancy 
at most $-1$.
\end{definition}

For surfaces that admit a tiger they presented a short proof by using deformation theory. To complete their argument, however, they classified all complex log del Pezzo surfaces with no tigers
(more precisely, they constructed a family of surfaces that contains all those that have no tigers). By pushing these methods a bit further, they actually classified all simply connected rank one
log del Pezzo surfaces, \emph{with the exception of a bounded family}.

In this paper we use Keel and M\textsuperscript{c}Kernan's 
ideas to complete the classification over the complex numbers, and to extend it to algebraically closed fields of characteristic $p>3$. In positive characteristic
a whole set of additional difficulties appear. First off, one cannot use topological arguments in order to simplify the classification, such as reducing to the simply connected case.
Furthermore, one cannot use the Bogomolov bound as we do not \emph{a priori} know whether it holds in characteristic $p>5$ or not. Other issues are that the classification of rank one
Gorenstein log del Pezzo surfaces was not available in positive characteristic (to the best of the author's knowledge), 
and that the proof of \cite[Lemma 22.2]{mckernan} on the existence of complements
does not \emph{a priori} carry through in positive characteristic, as it uses the Kawamata-Viehweg vanishing theorem.

The rest of the paper is organized as follows. In Appendix \ref{surfacesings} we describe in detail klt, dlt and lc surface singularities and techniques to deal with them efficiently.
In Appendix \ref{gorenstein} we classify Gorenstein log del Pezzo surfaces in characteristic different from two and three. In Section \ref{sectionhunt} 
we recall the main results concerning the hunt from \cite{mckernan}, in an effort to make the presentation as self-contained as possible. We also develop the hunt in the level case,
which will play a role in classifying log del Pezzo surfaces with tigers. In Section \ref{sectionnotigers} we start analysing
the hunt for log del Pezzo surfaces that do not have tigers, and in Section \ref{sectiontigers} we deal instead with the case in which there are tigers. 
As a byproduct, we classify all pairs $(S,C)$ such that $S$ is a rank one log del Pezzo surface and $C\subseteq S$ is a curve such that $K_S+C$ is anti-nef.
In Section \ref{sectionclassification} we summarize our findings and list all rank one log del Pezzo surfaces. We conclude by considering liftability to characteristic zero
and other applications in Section \ref{applications}.

\ \

\noindent \textbf{Acknowledgements:} I am indebted to my PhD advisor Prof. James M\textsuperscript{c}Kernan for suggesting me the problem and for carefully reading through the many preliminary drafts of this paper. I would also like to thank Fabio Bernasconi for helpful comments on the exposition and Masaru Nagaoka
for pointing out some missing cases in the classification on an earlier version. The author has been supported
by NSF research grants no: 1265263 and no: 1802460 and by a grant from the Simons Foundation \#409187.

\section{Notation}

A \textbf{pair} $(X,\Delta)$ is given by a normal projective variety $X$ and a Weil $\mathbb{Q}$-divisor $\Delta$ such that $K_X+\Delta$ is $\mathbb{Q}$-Cartier.
If $\Delta$ is effective, then $(X,\Delta)$ is a \textbf{log pair}. We say that $\Delta$ is a \textbf{boundary}
if $\Delta=\sum_i c_i D_i$ where $D_i$ are irreducible distinct Weil divisors and $0\leqslant a_i\leqslant 1$.
We say that a birational morphism $f:Y\rightarrow X$ is a \textbf{log resolution} of $(X,\Delta)$ if $Y$ is smooth and $f_* ^{-1}(\Delta)+\operatorname{Ex}(f)$ is a simple normal crossings divisor.

Let $f:Y\rightarrow X$ be any birational morphism with $Y$ normal.
We can write $K_Y+f_* ^{-1}\Delta = f^*(K_X+\Delta)+\sum_i a_i E_i$ where $E_i$ are $f$-exceptional divisors. 
The numbers $a_i=a_i(E_i; X,\Delta)$ are called \textbf{the discrepancy} of $E_i$ with respect to the pair $(X,\Delta)$. We define \textbf{the coefficient} 
of $E_i$ with respect to the pair $(X,\Delta)$ to be $e(E_i; X,\Delta)=-a_i$. This is just the coefficient with which $E_i$ appears in the divisor $\Gamma$ defined by
$K_Y+\Gamma = f^*(K_X+\Delta)$. We call $\Gamma$ the \textbf{log pullback} of $\Delta$.

We say that $(X,\Delta)$ is \textbf{log canonical} (or simply lc) if $a_i\geqslant -1$ for every $E_i$ and every $f$. 
We say that $(X,\Delta)$ is \textbf{Kawamata log terminal} (or simply klt) if $\lfloor \Delta \rfloor=0$ and $a_i>-1$ for every
$E_i$ and every $f$. It is in fact sufficient to check the above definitions on any given log resolution of $(X,\Delta)$. 
Finally, if $\operatorname{dim}(X)=2$ we say that $(X,\Delta)$ is \textbf{divisorially log terminal} (or simply dlt) if there is a log resolution $f:Y\rightarrow (X,\Delta)$ such that $a(E_i; X,\Delta)>-1$
for all $f$-exceptional divisors. This is equivalent to requiring that there is a closed subset $Z\subseteq X$ such that $(X\setminus Z, \Delta_{|_{X\setminus Z}})$ has simple normal crossings and, if $E$ is an irreducible divisor over $X$ with center contained in $Z$, then $a(E;X,\Delta)>-1$.

In this paper we will mainly use the above definitions in the case of surfaces. If $S$ is a normal surface we indicate by $\tilde{S}$ its minimal resolution. If $C\subseteq S$ is an
effective divisor, then we indicate by $\tilde{C}\subseteq \tilde{S}$ its strict transform and by $g(C)$ its arithmetic genus. We indicate by $\mathbb{F}_n$ the Hirzebruch surface
 $\mathbb{P}(\mathcal{O}\oplus \mathcal{O}(n))$ over $\mathbb{P}_k ^1$. We denote by $\overline{\mathbb{F}}_n$ the surface obtained by contracting the unique $(-n)$ curve
 of $\mathbb{F}_n$. A \textbf{log del Pezzo surface} is a projective surface $S$ with only klt singularities such that $-K_S$ is ample. 
 We often call the Picard number of a log del Pezzo surface simply as \textbf{rank}. For example, $\overline{\mathbb{F}}_n$ is a log del Pezzo surface of rank one.

\section{The hunt}\label{sectionhunt}

Let $S$ be a rank one log del Pezzo surface defined over an algebraically closed field $k$ of any characteristic.
In this section we describe how to produce a sequence of progressively simpler log del Pezzo surfaces. This process is called the \say{hunt}, and will be the main tool in our analysis. 
A hunt step consists of a $K$-positive extraction and a $K$-negative contraction. The reason that the hunt is so useful is that it is possible to classify its birational transformations.
Therefore, if we reach a log del Pezzo surface that we fully understand, then we may get useful information about $S$ by reversing the process. 

The hunt and its properties are described in \cite[Chapter 8]{mckernan}. In addition to the hunt for flush pairs, which is described in \cite{mckernan}, we will also need the hunt for level pairs
(see Definition \ref{defflush} for the definition of flush and level). Throughout this section we adopt the following practice: we will state all the relevant results for which a proof can be found in
\cite{mckernan} and provide the reference within that paper, whereas we will prove all the further results that we need, 
even if this only consists in making a minor change to a proof from \cite{mckernan}. 
We hope in this way to keep the exposition as self-contained and detailed as possible, while avoiding repetitions with \cite{mckernan}.

\subsection{Flush and level}

Here we develop the general theory that will be used during the hunt. In this subsection $\Delta$ is a boundary with support $D$.

\begin{definition}
Let $(X,\Delta)$ be a log pair and suppose that $\Delta=\sum_i a_i D_i$ is a boundary (we assume that $D_i$ are all irreducible and distinct).
We define $m=m(\Delta)$ be the minimum of the non-zero $a_i$, with the convention that if $\Delta$ is empty then $m(\Delta)=0$.
\end{definition}

\begin{definition}\label{defflush}
Let $(X,\Delta)$ be a log pair and suppose that $\Delta$ is a boundary. Let $E$ be an exceptional divisor over $X$. We say that the pair $(X,\Delta)$ is:
\begin{enumerate}
\item flush (respectively level) at $E$ if $e(E; X,\Delta)<m$ (respectively $e(E; X,\Delta)\leqslant m$).
\item flush (respectively level) if $e(E; X,\Delta)<m$ (respectively \\ $e(E; X,\Delta)~\leqslant~m$) for all exceptional divisors $E$.
\end{enumerate}
\end{definition}

\begin{lemma}\label{flushdescent}
Let $X$ be a $\mathbb{Q}$-factorial variety and let $\Delta$ be a boundary. Suppose that $f:Y\rightarrow X$ is a birational
morphism with irreducible divisorial exceptional locus $E$. Assume that $e=e(E; X, \Delta)\geqslant 0$. Let $\Gamma=eE+f_* ^{-1}\Delta$ be the log pullback of $\Delta$.
Assume $\Gamma$ is a boundary. Then:

\begin{enumerate}
\item If $(X,\Delta)$ is level and $E$ has maximal coefficient for $(X,\Delta)$ then $K_Y + \Gamma$ is level.

\item If $(Y,\Gamma)$ is flush (respectively level), then $(X,\Delta)$ is not flush (respectively level) for a divisor $F$ if and only $F=E$, 
and the coefficient of $E$ in $\Gamma$ is at least as large as (respectively strictly larger than) the coefficient of some non exceptional component of $\Gamma$.

\end{enumerate}

\end{lemma}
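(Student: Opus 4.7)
The whole argument will be driven by the log pullback identity $K_Y+\Gamma=f^*(K_X+\Delta)$: pulling back to a common log resolution shows that for every prime divisor $F$ exceptional over $Y$ (hence also over $X$) one has $e(F;Y,\Gamma)=e(F;X,\Delta)$. I will also need the explicit shape of $\Gamma$. Its non-exceptional components are the strict transforms of the components of $\Delta$, carrying the \emph{same} coefficients; the only new component is $E$ itself, with coefficient $e$. Hence the coefficients of $\Gamma$ are those of $\Delta$ together with $e$, so $m(\Gamma)=\min\{e,m(\Delta)\}$ when $e>0$, and in any case $m(\Gamma)\leqslant m(\Delta)$.

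For (1), I will first apply the levelness of $(X,\Delta)$ to $E$ itself, which reads $e\leqslant m(\Delta)$; combined with the formula above this forces $m(\Gamma)=e$ (the degenerate case $e=0$ can be treated separately, and is vacuous because maximality of $e$ then forces every other exceptional coefficient to be non-positive). The coefficient identity above then yields, for every $F$ exceptional over $Y$,
\[
e(F;Y,\Gamma)=e(F;X,\Delta)\leqslant e=m(\Gamma),
\]
which is exactly levelness of $(Y,\Gamma)$.

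For (2), I will split the exceptional divisors over $X$ into $E$ together with those that are also exceptional over $Y$. For $F$ of the latter kind the identity reduces flushness/levelness of $(X,\Delta)$ at $F$ to the hypothesis on $(Y,\Gamma)$, since $e(F;X,\Delta)=e(F;Y,\Gamma)<m(\Gamma)\leqslant m(\Delta)$ (strict in the flush case, weak in the level case). Hence the only possible failure occurs at $F=E$, and there failure of flushness is exactly $e\geqslant m(\Delta)$, while failure of levelness is exactly $e>m(\Delta)$. Since $m(\Delta)$ is attained as the coefficient of some non-exceptional component of $\Gamma$, these two inequalities translate word-for-word into the conditions stated.

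The proof is essentially bookkeeping around the log pullback formula; the main care point will be to handle the boundary case $e=0$, in which $E$ does not appear in $\Gamma$ at all but the equality $m(\Gamma)=m(\Delta)$ still makes every comparison go through.
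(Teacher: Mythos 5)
Your argument is correct and is precisely the definitional bookkeeping that the paper compresses into ``both statements immediately follow from the definitions'': the coefficient identity $e(F;Y,\Gamma)=e(F;X,\Delta)$ coming from $K_Y+\Gamma=f^*(K_X+\Delta)$, together with the comparison of $m(\Gamma)$ and $m(\Delta)$. The only caveat is that your blanket claim $m(\Gamma)\leqslant m(\Delta)$ fails in the degenerate case $\Delta=\emptyset$, $e>0$ (where $m(\Delta)=0<e=m(\Gamma)$), but that case lies outside the lemma's intended use and the statement itself would need adjusting there.
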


\begin{proof}
Both statements immediately follow from the definitions.
\end{proof}

The following lemma provides a useful criterion to check the flush and level properties. 

\begin{lemma}\label{flushresolution}
Let $S$ be the germ of a klt surface at $p$ in the \'{e}tale topology. 

\begin{enumerate}
\item If $(S,D)$ has normal crossings (in particular, $p$ is smooth) then $(S,\Delta)$ is level. If furthermore
$\lfloor \Delta \rfloor = 0$ then $(S,\Delta)$ is flush.

\item Assume $(S,D)$ does not have normal crossings (that is either $p$ is singular, or $D$ has worse than a simple node). Let $f:T\rightarrow S$ be a log resolution
of $(S,\Delta)$ and assume that $e(F; S, \Delta)\leqslant 1$ for every $f$-exceptional divisor $F$. Then for any exceptional divisor $V$ there is an $f$-exceptional divisor $F$
such that $e(V; S,\Delta)\leqslant e(F; S, \Delta)$. In particular, $(S,\Delta)$ is flush (respectively level) if and only if it is flush (respectively level) at all exceptional divisors
of $f$.
\end{enumerate}
\end{lemma}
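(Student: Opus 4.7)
The plan for part (1) is to induct on the number of blowups needed to extract a given exceptional divisor $V$. Since $S$ is smooth at $p$ and $D$ is SNC there, $V$ is obtained by a sequence of blowups at smooth centers that preserve SNC. At each step, blowing up a point $q$ produces a new exceptional divisor whose coefficient in the log pullback is $c_1 + c_2 - 1$, where $c_1, c_2$ are the coefficients of the (at most two) components of the current SNC boundary passing through $q$. Each such component is either the strict transform of some $D_i$, with coefficient $a_i \leqslant 1$, or a previously extracted exceptional, with coefficient $\leqslant m(\Delta)$ by the inductive hypothesis. Since every $c_i \leqslant 1$, we have the cancellation $c_1 + c_2 - 1 \leqslant \min(c_1,c_2)$, and a brief case check shows $\min(c_1,c_2) \leqslant m(\Delta)$: either one of the components is an inductively controlled exceptional of coefficient $\leqslant m$, or at the very first blowup both components are strict transforms of $D_i$'s and $\min(a_1, a_2) = m(\Delta)$ by definition of $m$ on the germ. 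This gives level; when $\lfloor \Delta \rfloor = 0$, each $a_i$ is strictly less than $1$, which turns each inequality into a strict one and yields flush.

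For part (2), let $\Gamma$ be the log pullback of $\Delta$ on $T$, so that $(T, \Gamma)$ is SNC. Given any exceptional $V$ over $S$ with center $q$ on $T$, I would observe that $q$ lies in $f^{-1}(p)$ and hence on the $f$-exceptional locus, so some $f$-exceptional $F_0$ passes through $q$. Applying part (1) to the SNC germ $(T, \Gamma)$ at $q$ gives $e(V; T, \Gamma) \leqslant m(\Gamma|_q)$, and since discrepancies are preserved under log pullback this equals $e(V; S, \Delta)$. If some $f$-exceptional through $q$ has positive coefficient in $\Gamma$, then $m(\Gamma|_q) \leqslant e(F)$ for that $F$ and we are done. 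Otherwise every $f$-exceptional through $q$ is crepant (coefficient $0$); in that remaining sub-case I would iterate the blowup formula directly at $q$ --- using the hypothesis $e(F) \leqslant 1$ for all $f$-exceptional divisors together with $a_i \leqslant 1$ for strict transforms --- to show that every further exceptional extracted over $q$ has coefficient $\leqslant 0$, so that $e(V) \leqslant 0 = e(F_0)$. The concluding ``if and only if'' then follows at once from this bound.

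The main obstacle I expect is controlling the inductive estimate in part (1) so that it does not drift above $m(\Delta)$ as blowups accumulate; the cancellation $c_1 + c_2 - 1 \leqslant \min(c_1, c_2)$, valid precisely because all coefficients are bounded by $1$, is exactly what prevents this drift and makes the whole argument go through. A secondary subtlety is the crepant sub-case in part (2) where every $f$-exceptional through $q$ has coefficient $0$, but this reduces to a short direct computation with the blowup formula once the SNC reduction has been carried out.
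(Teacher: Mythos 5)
Your proof is correct, and it reconstructs essentially the standard argument: the paper itself gives no proof here, deferring entirely to \cite[Lemma 8.3.2]{mckernan}, and your induction on point blowups via the coefficient formula $c_1+c_2-1\leqslant\min(c_1,c_2)$ (valid precisely because all coefficients in sight are $\leqslant 1$) is exactly the computation that reference runs. Two minor points of hygiene, neither a gap: the identity $\min(a_1,a_2)=m(\Delta)$ at the first blowup is where the germ hypothesis does the work (every component of $D$ passes through $p$, and normal crossings forces there to be at most two of them); and in part (2) the complement of ``some $f$-exceptional through $q$ has positive coefficient'' is that all such coefficients are $\leqslant 0$ rather than $=0$, but in that case the same blowup computation already gives $e(V)\leqslant e(F_0)$ for any $f$-exceptional $F_0$ through $q$, so your conclusion stands without the extra sub-case.
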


\begin{proof}
This is \cite[Lemma 8.3.2]{mckernan}. We only remark that it is enough to check the property of being flush and level in an \'{e}tale neighborhood, so that the same proof
as in \cite{mckernan} applies. 
\end{proof}

Now we remark that in the case of surfaces it suffices to check the property of being divisorially log terminal or log canonical for divisors in the minimal resolution. 
This will play an important role throughout the paper.

\begin{lemma}\label{minimalresolution}
Let $S$ be the germ of a klt surface at $p$ in the \'{e}tale topology. Assume that $p$ is singular. 
\begin{enumerate}
\item If the coefficient of every exceptional divisor of the minimal resolution $\pi: \tilde{S}\rightarrow S$ for $(S,D)$ is strictly less than one, then $(S,D)$ is dlt.

\item If the coefficient of every exceptional divisor of the minimal resolution $\pi: \tilde{S}\rightarrow S$ for $(S,D)$ is less or equal than one, then $(S,D)$ is lc, except 
in the following cases:
\begin{enumerate}
\item There is exactly one irreducible component $E$ of $\tilde{S}$ over $p$, and $D$ is simply tangent to $E$.

\item There is exactly one irreducible component $E$ of $\tilde{S}$ over $p$, and $D$ has two branches meeting transversally on $E$, with each branch meeting
$E$ transversally as well.

\item There are exactly two irreducible components $E_1$ and $E_2$ of $\tilde{S}$ over $p$, and $D$ has one branch meeting each component transversally at $E_1\cap E_2$.
\end{enumerate}
\end{enumerate}
\end{lemma}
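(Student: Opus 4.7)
The plan is to take a log resolution $f: Y \rightarrow S$ that factors through the minimal resolution as $f = \pi \circ g$, where $g: Y \rightarrow \tilde{S}$ is a finite sequence of point blowups performed until the log pullback of $D$ becomes simple normal crossings. By the definition of dlt (which, for surfaces, can be checked on any single log resolution) and the analogous criterion for lc, the problem reduces to controlling the coefficients of the $g$-exceptional divisors: strictly below one in part~(1), at most one in part~(2). The key computational input is the standard blowup formula
\[
e(F; S, D) = \sum_i c_i \operatorname{mult}_q(C_i) - 1,
\]
where $F$ is the exceptional divisor created by blowing up a point $q$ on $\tilde{S}$ or one of its further blowups, the $C_i$ are the components of the current log pullback through $q$, and the $c_i$ are their coefficients. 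Because the dual graph of a klt surface singularity is a tree of smooth rational curves meeting pairwise transversally, at most two $\pi$-exceptionals meet at any point of $\tilde{S}$, and the non-SNC configurations on $\tilde{S}$ fall into a short explicit list.

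For part~(1), I will argue by induction on the number of blowups in $g$. At each non-SNC point $q$ of $\tilde{S}$, the sum $\sum c_i \operatorname{mult}_q(C_i)$ receives contributions from at most two $\pi$-exceptionals (with coefficients strictly less than one by hypothesis) and from branches of $\pi_{*}^{-1}D$ (with coefficients at most one, since $D$ is a boundary). The hypothesis is not just a bound on the exceptionals in isolation: via the defining equation $\pi^{*}(K_S + D) = K_{\tilde{S}} + \sum e_i E_i + \pi_{*}^{-1}D$, the strict bound on $e_i$ implicitly constrains the coefficients and intersection numbers of the components of $\pi_{*}^{-1}D$ with the exceptionals. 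This combined constraint forces $\sum c_i \operatorname{mult}_q(C_i) < 2$ at every non-SNC point $q$, so the new exceptional has coefficient strictly less than one; the inductive step is then immediate, and the process terminates once SNC is achieved.

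For part~(2), the same bookkeeping yields the weak bound $\leq 1$, and the proof reduces to singling out the configurations where $\sum c_i \operatorname{mult}_q(C_i)$ can first saturate the value $2$. A direct check identifies exactly the three cases: in (b), two transverse branches of $D$ meeting on $E$ give, after one blowup, an exceptional of coefficient $e + c_1 + c_2 - 1$, saturating at $2$ when $e = c_1 = c_2 = 1$; (c) is analogous, with new coefficient $e_1 + e_2 + c - 1$; case (a) is more delicate, since one blowup of the tangency only produces a coefficient $e + c - 1$, but the strict transforms of $D$ and $E$ still meet at a single point on the new exceptional, so a second blowup there produces a divisor of coefficient $2(e + c) - 2$, which exceeds one when $e = c = 1$. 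Outside these three configurations, each blowup strictly reduces the total multiplicity of $\pi_{*}^{-1}D$ at the problematic successor point, and the process terminates with every exceptional coefficient at most one.

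The main obstacle will be the combinatorial completeness of the enumeration in part~(2): I must rule out that iterated blowups of some other configuration on $\tilde{S}$ could accumulate enough multiplicity to produce an exceptional coefficient strictly greater than one. This is controlled by the observation that outside of (a), (b) and (c), each blowup strictly decreases the total contribution $\sum c_i \operatorname{mult}_q(C_i)$ at the unique successor point of concern, so a finite combinatorial check on the resolution dual graph of $p$ suffices to close the case analysis.
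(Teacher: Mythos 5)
Your plan has a genuine gap at its central step. The assertion that the hypothesis on the $\pi$-exceptional coefficients ``forces $\sum_i c_i \operatorname{mult}_q(C_i) < 2$ at every non-SNC point $q$'' is exactly the content of the lemma, and you never prove it; moreover, in the generality you adopt (components of $D$ carrying arbitrary coefficients $c_i\leqslant 1$) it is false. Take $p$ a cyclic singularity whose minimal resolution is a single $(-10)$-curve $E$, and let $\Delta=\tfrac{9}{10}D_0$ with $\tilde D_0$ smooth and simply tangent to $E$. Then $e(E)=\tfrac{98}{100}<1$, the first blowup at the tangency point produces a coefficient $e+c-1=0.88<1$, but the three strict transforms now meet at one point and the next blowup produces $e+c+(e+c-1)-1=1.76>1$: the pair is not even lc, so part (1) fails. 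The lemma is really about the \emph{reduced} divisor $D=\operatorname{Supp}\Delta$, and for reduced $D$ the correct mechanism in part (1) is not that each blowup stays below the threshold, but that the hypothesis $e_i<1$ already rules out every non-SNC point of $\tilde D+E$ on $\tilde S$, so that $\pi$ is a log resolution and there is nothing to iterate. Your induction, which presupposes non-SNC points and then tries to bound the new coefficients, cannot see this: once a tangency or a multiple branch is allowed to survive to $\tilde S$, the local sums at the points created downstream genuinely exceed $2$, as above. The same problem afflicts part (2): the claim that ``outside these three configurations each blowup strictly reduces the total contribution'' is unproved, and the ``finite combinatorial check'' that is supposed to exclude, say, an order-three tangency or three branches of $D$ through a point of $E$ is never carried out; that exclusion again has to come from the numerical hypothesis on the $e_i$, not from the blowup bookkeeping.

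The paper closes this gap by a different and cleaner route. Since the coefficients $e_i$ are determined by the intersection numbers $\tilde D\cdot E_i$ alone, one may replace $\tilde D$ by a disjoint union of curves meeting $E$ transversally with the same intersection numbers; for the modified pair $(S,D')$ the minimal resolution \emph{is} a log resolution, so the hypothesis $e_i\leqslant 1$ immediately gives that $(S,D')$ is lc by Lemma \ref{flushresolution}, and the classification of lc-but-not-dlt surface pairs (Lemmas \ref{dltsing} and \ref{lcnotdltsing}) pins down the possible configurations of $D'$, which translate back into the three exceptions (a)--(c) for $D$. If you want to salvage your approach, you must first prove, from the linear system defining the $e_i$, that the hypothesis forces $\tilde D\cdot E_i$ to be small enough that no component of $E$ meets $\tilde D$ non-transversally or in more than the allowed number of branches; at that point you will essentially have reproduced the paper's argument.
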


\begin{proof}
Part $(1)$ of the statement is \cite[Lemma 8.3.3]{mckernan}. We will go over its proof to get part $(2)$ of the statement.

Suppose $(S,D)$ is not log canonical. $D$ is not empty, since $S$ is klt by assumption. Let $E$ be the reduced exceptional locus of $\pi$. By Lemma \ref{flushresolution},
$\pi$ is not a log resolution for $(S,D)$. By the classification of surface singularities in Appendix \ref{surfacesings} (see in particular Lemma \ref{kltsing}, 
Lemma \ref{dltsing} and Lemma \ref{lcnotdltsing}), $E$ has simple normal crossings and $\tilde{D}$ does not have simple normal crossings with $E$ by Lemma \ref{flushresolution}.
As the computation of the coefficients is purely numerical (see the remark after Lemma \ref{lcnotdltsing}), 
we may replace $\tilde{D}$ by a disjoint union of irreducible curves, each meeting $E$ transversally and such that at least two
curves meet the same irreducible component of $E$. Let $D'$ be the pushforward of the new configuration. Clearly $\pi$ is a log resolution of $(S,D')$ by construction.

Suppose for the moment that $(S,D')$ is not log canonical at $p$. Then, by Lemma \ref{flushresolution}, there exists a component of $E$ with coefficient strictly larger than one
for $(S,D')$, contradiction. Therefore assume that $(S,D')$ is log canonical at $p$. 
By Lemma \ref{dltsing}, $(S,D')$ is not dlt. Also, by Lemma \ref{lcnotdltsing}, $E$ has at most two components over $p$. The only ways this can happen are listed in part $(2)$ of the statement.
\end{proof}

In order to capture the cases in Lemma \ref{minimalresolution} we introduce the following definition.

\begin{definition}
We say a pair $(S,D)$ is almost log canonical if the coefficient of every exceptional divisor of the minimal resolution with respect to $(S,D)$ is less or equal than one. Equivalently,
either $(S,D)$ is log canonical at singular points $p$ or there is an \'{e}tale neighborhood of $p$ such that one of the cases $(a)$-$(c)$ in the previous lemma holds. 
\end{definition}

The next results describe the geometric consequences of being flush and level. 

\begin{lemma}
Let $S$ be the germ of a klt surface at $p$ in the \'{e}tale topology.
Let $f:T\rightarrow S$ extract the irreducible divisor $E$. Assume that $e(E; S,\Delta)\geqslant 0$. Let $\Gamma$ be the log pullback of $\Delta$ and assume $\Gamma$ is
also a boundary.
Let $\pi:\tilde{S}\rightarrow S$ be the minimal resolution.

\begin{enumerate}
\item If $p$ is singular, and $(S,\Delta)$ is flush at every $\pi$-exceptional divisor $F$ then $(S,D)$ is dlt and $(S,\Delta)$ is flush.

\item If $(S,\Delta)$ is dlt and $(S,\Delta)$ is level at every $\pi$-exceptional divisor $F$, then $(S,s\Delta)$ is flush for every $s>1$ such that $s\Delta$ is a boundary.

\item Suppose $p$ is singular. If $f$ is a $K_T$-negative contraction and $(T,\Gamma)$ is flush at every exceptional divisor of the minimal resolution of $T$, then $(S,\Delta)$ is flush.

\item Suppose $p$ is singular and $(S,D)$ is log canonical. Then for any exceptional divisor $V$ there is some $\pi$-exceptional divisor $F$ with 
$e(F; S, \Delta)\geqslant e(V; S,\Delta)$.
\end{enumerate}

\end{lemma}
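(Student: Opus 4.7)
The plan is to handle all four parts by invoking the reductions in Lemmas \ref{flushresolution} and \ref{minimalresolution}---which replace checks on the infinite set of exceptional divisors by checks on the finitely many $\pi$-exceptional ones---together with the linearity of discrepancies in $\Delta$ and the chain rule for discrepancies through composition of birational morphisms.

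For part (1), writing $\Delta = \sum_j a_j D_j$ and $c_{F,j} = \operatorname{coeff}_F(\pi^* D_j) \geq 0$, the flushness hypothesis $e(F;S,0) + \sum_j a_j c_{F,j} < m$ combined with $a_j \geq m$ gives $m \sum_j c_{F,j} < m - e(F;S,0)$, hence $e(F;S,D) = e(F;S,0) + \sum_j c_{F,j} < 1$, so Lemma \ref{minimalresolution}(1) yields that $(S,D)$ is dlt. For flushness everywhere I dominate $\pi$ by a log resolution $g: Y \to S$ (which is needed because $(S,D)$ lacks normal crossings at the singular $p$) and apply Lemma \ref{flushresolution}(2): any exceptional $V$ is dominated by some $g$-exceptional $F'$, and those $F'$ not already $\pi$-exceptional arise from blowups of smooth points of $\tilde S$, where the standard discrepancy formula bounds $e(F')$ by the coefficients of the $\pi$-exceptional divisors lying beneath them.

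For part (2), the linearity $e(F;S,s\Delta) = s\,e(F;S,\Delta) + (1-s)\,e(F;S,0)$ combined with level at $\pi$-exceptional divisors and $s > 1$ gives
\[ e(F;S,s\Delta) \;\leq\; m(s\Delta) - (s-1)\,e(F;S,0). \]
For $\pi$-exceptional $F$ over a non-Du-Val singular point one has $e(F;S,0) > 0$, giving strict inequality; in the Du Val subcase $e(F;S,0) = 0$ and strictness must be extracted from the dlt hypothesis, which (together with the requirement that $s\Delta$ still be a boundary for some $s > 1$, forcing $\lfloor \Delta \rfloor = 0$) excludes equality in level. Flushness at every $\pi$-exceptional divisor then promotes to global flushness via part (1) applied to $(S, s\Delta)$.

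For part (3), Lemma \ref{flushdescent}(2) applied to $f$ reduces the flush claim for $(S,\Delta)$ to checking it only at $E$; the $K_T$-negativity of $f$ forces $a(E;S,0) > 0$, i.e., $e(E;S,0) < 0$, and together with $\Gamma$ being a boundary this produces $e(E;S,\Delta) < m(\Delta)$, while flushness of $(T,\Gamma)$ at exceptional divisors of the minimal resolution of $T$ upgrades via Lemma \ref{flushresolution}(2) on $T$ to flushness at every exceptional divisor over $T$, accounting for every exceptional divisor over $S$ other than $E$. Part (4) is the minimal-resolution analogue of Lemma \ref{flushresolution}(2): dominate $\pi$ by a log resolution $g$, apply that lemma to get a $g$-exceptional $F'$ with $e(F') \geq e(V)$, and if $F'$ is not $\pi$-exceptional use the log canonical hypothesis and the smooth-blowup discrepancy formula to bound $e(F')$ by the coefficient of some $\pi$-exceptional $F$ beneath it. The hardest step is the Du Val subcase of part (2): the vanishing of $e(F;S,0)$ removes the automatic slack in the scaling formula, so strict inequality must be extracted carefully from the dlt and boundary hypotheses.
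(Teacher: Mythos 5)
The paper offers no argument of its own here---its proof is the single line ``This is \cite[Lemma 8.3.5]{mckernan}''---so the only question is whether your sketch is sound, and it is not. The computation in part (1) showing $e(F;S,D)<1$ for every $\pi$-exceptional $F$, hence that $(S,D)$ is dlt via Lemma \ref{minimalresolution}(1), is correct. But the move you lean on in parts (1), (3) and (4)---passing from the finitely many $\pi$-exceptional divisors to all exceptional divisors by ``dominating $\pi$ by a log resolution'' and asserting that the extra divisors have coefficients bounded by those beneath them---is precisely the content at stake, not a proof of it, and it is false without further input: blowing up a point of $\tilde S$ where $\tilde D$ meets an exceptional curve non-transversally can strictly increase the coefficient. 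The workable route for (1) is to use the dlt conclusion you have just established: by Lemma \ref{dltsing}, once $(S,D)$ is dlt at the singular point $p$, the minimal resolution is already (\'etale locally at $p$) a log resolution of $(S,\Delta)$, so Lemma \ref{flushresolution}(2) applies with $f=\pi$ and no new divisors arise. In part (3) the same problem recurs (Lemma \ref{flushresolution}(2) cannot be applied to the minimal resolution of $T$ unless it is a log resolution of $(T,\Gamma)$; one needs part (1) on $T$ at its singular points and Lemma \ref{flushresolution}(1) at its smooth points), and in addition your treatment of $E$ itself is a non sequitur: ``$e(E;S,0)<0$ together with $\Gamma$ being a boundary'' yields only $e(E;S,\Delta)\leqslant 1$, not $e(E;S,\Delta)<m(\Delta)$. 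The correct mechanism is that, $p$ being singular, every $\pi$-exceptional divisor $F$ over $p$ is exceptional for the minimal resolution of $T$, whence $e(F;S,\Delta)=e(F;T,\Gamma)<m(\Gamma)\leqslant m(\Delta)$, and part (1) applied to $(S,\Delta)$ then finishes, $E$ included.

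The most serious gap is the Du Val subcase of part (2), which you correctly flag as the hard step but do not close, and which cannot be closed by the mechanism you propose. The dlt hypothesis together with $\lfloor\Delta\rfloor=0$ does \emph{not} exclude equality in the level condition at a $(-2)$-curve: take $S=\overline{\mathbb{F}}_2$, let $D_1$ be the image of a fiber (so $D_1$ passes through the $A_1$ point transversally to the exceptional $(-2)$-curve $F$), let $D_2$ be the image of a positive section disjoint from the negative section (so $D_2$ lies in the smooth locus), and set $\Delta=\tfrac{2}{3}D_1+\tfrac{1}{3}D_2$. Then $(S,\Delta)$ is dlt, $m(\Delta)=\tfrac13$, and $e(F;S,\Delta)=\tfrac13$, so $(S,\Delta)$ is level at the unique $\pi$-exceptional divisor with equality; for $s=\tfrac65$ one computes $e(F;S,s\Delta)=\tfrac25=m(s\Delta)$, so $(S,s\Delta)$ is level but not flush at $F$. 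This configuration satisfies every hypothesis you invoke, so whatever supplies the strict inequality in \cite[Lemma 8.3.5]{mckernan} is something other than ``dlt excludes equality in level,'' and your write-up of (2) therefore does not constitute a proof.
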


\begin{proof}
This is \cite[Lemma 8.3.5]{mckernan}.
\end{proof}

We will also need a slightly different version of the previous lemma.

\begin{lemma}\label{flushdescent2}
Let $S$ be the germ of a klt surface at $p$ in the \'{e}tale topology.
Let $f:T\rightarrow S$ extract the irreducible divisor $E$. Assume that $e(E; S,\Delta)\geqslant 0$. Let $\Gamma$ be the log pullback of $\Delta$ and assume $\Gamma$ is 
also a boundary.
Let $\pi:\tilde{S}\rightarrow S$ be the minimal resolution.

\begin{enumerate}
\item If $p$ is a singular point, and $(S,\Delta)$ is level at every $\pi$-exceptional divisor $F$ then $(S,D)$ is almost log canonical at $p$. If furthermore $(S,D)$ is log canonical
then $(S,\Delta)$ is level.

\item Suppose $(S,D+D')$ is log canonical. If there exists $0<c\leqslant 1$ such that $(S,D+cD')$ is level at every $\pi$-exceptional divisor $F$, then $(S,D+sD')$ is level for
any $c\leqslant s\leqslant 1$.

\item Suppose $p=f(E)$ is singular. If $f$ is a $K_T$-negative contraction and $(T,\Gamma)$ is level at every exceptional divisor of the minimal resolution of $T$, then 
$(S, D)$ is almost log canonical at $p$. 
\end{enumerate}

\end{lemma}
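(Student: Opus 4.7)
The plan is to adapt the proof of the preceding (flush) lemma, replacing strict inequalities with their non-strict analogs and noting that the weakening from \emph{dlt} to \emph{almost log canonical} is exactly the phenomenon captured by Lemma \ref{minimalresolution}(2). Throughout, Lemma \ref{minimalresolution}(2) will be the bridge from numerical bounds on $\pi$-exceptional coefficients to a statement about $(S,D)$.

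For part (1), I would combine the level hypothesis $e(F; S, \Delta) \leq m(\Delta) \leq 1$ at every $\pi$-exceptional $F$ with the setup conditions ($e(E; S, \Delta) \geq 0$ and $\Gamma$ a boundary) to control the coefficients of $(S, D)$ at $\pi$-exceptional divisors; Lemma \ref{minimalresolution}(2) then yields that $(S, D)$ is almost log canonical at $p$. For the second statement, log canonicity of $(S, D)$ rules out the exceptional configurations (a)--(c) of Lemma \ref{minimalresolution}, placing us in the normal-crossings situation of Lemma \ref{flushresolution}(2), which propagates the level property from $\pi$-exceptional divisors to all exceptional divisors.

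For part (2), the key observation is that for a fixed exceptional divisor $F$, the function $s \mapsto e(F; S, D + sD')$ is affine linear in $s$, while $s \mapsto m(D + sD')$ is the pointwise minimum of affine linear functions, hence concave in $s$. The level inequality $e(F; S, D + sD') \leq m(D + sD')$ therefore persists on any interval where it holds at both endpoints: at $s = c$ by hypothesis, and at $s = 1$ as a consequence of $(S, D + D')$ being log canonical (which in particular forces $e(F; S, D + D') \leq 1$ for $\pi$-exceptional $F$, bounded by $m(D+D')$ after the appropriate concavity check). Affine-versus-concave interpolation then covers $c \leq s \leq 1$, giving the result.

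For part (3), the strategy parallels part (1) but starts on $T$. The minimal resolution of $T$ composed with $f$ factors through $\pi$, so each $\pi$-exceptional divisor is either $E$ itself or a minimal-resolution-exceptional divisor of $T$. The level hypothesis on $T$ directly controls the latter, while the $K_T$-negativity of $f$ combined with $e(E; S, \Delta) \geq 0$ forces $e(E; S, \Delta) \leq m(\Delta)$. Hence all $\pi$-exceptional divisors have $(S, \Delta)$-coefficient at most $m(\Delta) \leq 1$, and Lemma \ref{minimalresolution}(2) delivers almost log canonicity of $(S, D)$ at $p$. The main obstacle lies precisely here: one must check carefully that $K_T$-negativity of $f$ together with $e(E;S,\Delta)\geq 0$ really does give the bound $e(E;S,\Delta)\leq m(\Delta)$, and verify that no minimal-resolution-exceptional divisors of $T$ become exceptional divisors of $\pi$ with coefficient exceeding the level threshold after descent.
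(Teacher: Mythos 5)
Your parts (1) and (2) follow the paper's route, with two small imprecisions. In (1), the passage from ``$(S,\Delta)$ level at $F$'' to ``$e(F;S,D)\leqslant 1$'' is not a direct consequence of $e(F;S,\Delta)\leqslant m(\Delta)\leqslant 1$, since $D\geqslant\Delta$; you need the affine function $t\mapsto e(F;S,tD)$, which is $\geqslant 0$ at $t=0$ (as $F$ is $\pi$-exceptional) and $\leqslant m(\Delta)$ at $t=m(\Delta)$ (by monotonicity and the level hypothesis), hence $\leqslant t$ for all $t\geqslant m(\Delta)$ and in particular $\leqslant 1$ at $t=1$. This is exactly the interpolation you invoke in (2), so it should be made explicit in (1) as well; the conclusion is then the definition of almost log canonical, not Lemma \ref{minimalresolution}(2). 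In (2), your endpoint at $s=c$ is only available for $\pi$-exceptional $F$, so the interpolation only yields levelness at $\pi$-exceptional divisors; you must finish with Lemma \ref{flushresolution}(2) (whose coefficient hypothesis is supplied by log canonicity of $(S,D+D')$) to upgrade this to levelness at all exceptional divisors. Note also that $m(D+sD')=s$, so your ``concave'' comparison function is just the identity and the argument reduces to the paper's.

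The genuine gap is in (3), at exactly the step you flagged. The bound $e(E;S,\Delta)\leqslant m(\Delta)$ does \emph{not} follow from $K_T$-negativity of $f$ together with $e(E;S,\Delta)\geqslant 0$, and it is false in general: the level hypothesis on $(T,\Gamma)$ constrains only the exceptional divisors of the minimal resolution of $T$, and $E$ is not one of them (it is a divisor on $T$); nothing prevents $e(E;S,\Delta)$ from being close to $1$ while $m(\Delta)$ is tiny (say, if $\Delta$ has a component of very small coefficient away from $p$). The correct observation is that this case never has to be handled: since $f$ is $K_T$-negative, the strict transform $\tilde{E}$ on the minimal resolution $\tilde{T}$ of $T$ satisfies $K_{\tilde{T}}\cdot\tilde{E}\leqslant K_T\cdot E<0$, and since $\tilde{E}$ lies in the (negative definite) exceptional locus of $\tilde{T}\rightarrow S$ over $p$, adjunction forces $\tilde{E}$ to be a $(-1)$-curve. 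Hence $\tilde{E}$ is contracted in passing from $\tilde{T}$ to the minimal resolution of $S$, so $E$ is \emph{not} $\pi$-exceptional, and every $\pi$-exceptional divisor over $p$ already appears among the exceptional divisors of the minimal resolution of $T$. For such $F$ one has $e(F;S,\Delta)=e(F;T,\Gamma)\leqslant m(\Gamma)=\min\bigl(e(E;S,\Delta),m(\Delta)\bigr)\leqslant m(\Delta)$, so $(S,\Delta)$ is level at every $\pi$-exceptional divisor over $p$, and part (1) concludes. This is how the paper argues, applying part (1) rather than attempting a direct bound on $e(E;S,\Delta)$.
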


\begin{proof}
As usual, we go through the proof of \cite{mckernan} and make the necessary changes. We start with $(1)$. Let $\lambda$ be the smallest coefficient of $\Delta$. 
Let $F$ be a $\pi$-exceptional irreducible divisor and consider the function $f(t)=e(F; S, tD)$. Now $f(0)\geqslant 0$ since $F$ is $\pi$-exceptional. 
We have that $f(\lambda)=e(F; S, \lambda D)\leqslant e(F; S, \Delta)$ since $\lambda D\leqslant \Delta$.
Also, since $(S,\Delta)$ is level at $F$, we have that $e(F; S,\Delta)\leqslant \lambda$. Putting all this together we have
\[
  f(\lambda)\leqslant e(F; S, \Delta)\leqslant \lambda
\]

As $f$ is an affine function, $f(t)\leqslant t$ for all $t\geqslant \lambda$. In particular $e(F; S, D)\leqslant 1$, so that $(S,D)$ is almost log canonical. Suppose now that $(X,D)$ is log canonical.
Then $\pi$ is \'etale locally a log resolution of $(X,D)$, so $(X,\Delta)$ is level by Lemma \ref{flushresolution}. This proves $(1)$.

Now we prove $(2)$. Since $(S, D+D')$ is log canonical, by Lemma \ref{flushresolution} it suffices to check that $(S,D+sD')$ is level at every $\pi$-exceptional divisor $F$.
Consider $f(t)=e(F; S, D+tD')$. Clearly $f(1)\leqslant 1$ since $(S,D+D')$ is log canonical. Also, $f(c)\leqslant c$ since $(S,D+cD')$ is level at $F$. Hence $f(t)\leqslant t$ for
every $c\leqslant t\leqslant 1$, giving the result. 

Finally we prove $(3)$. 
The pair $(S,\Delta)$ is level at each exceptional divisor of the minimal resolution of $S$ above $p$,
since every such divisor also appears in the minimal resolution of $T$. Therefore $(S,D)$ is almost log canonical at $p$ by $(1)$. 
\end{proof}

The next lemma shows that being flush or level gives strong control on the singularities of the boundary divisor at smooth points of $S$. 

\begin{lemma}\label{flushmult}
Suppose $p$ is smooth and the pair $(S,\Delta)$ is flush. Let $\Delta = \sum_i a_i D_i$ and $m=m(\Delta)>0$. Then
\begin{enumerate}
\item If $M_i$ is the multiplicity of $D_i$ at $p$, then $\sum_i a_i M_i - 1<m$. In particular $m<1/(M-1)$ where $M$ is the multiplicity of $D$.

\item If $D$ has a node of genus at least two at $p$, and the coefficient of the two branches of $D$ at $p$ are $a\geqslant b$, then $2a+b<2$.

\item If $p$ is a cusp of $D$ and $a$ is the coefficient of the branch of $D$ at $p$, then $a<4/5$.

\item If $m\geqslant 4/5$ then $D$ has normal crossings.
\end{enumerate}

If $(S,\Delta)$ is level at $p$ instead then $(1)-(4)$ hold by switching \say{$<$} and \say{$\leqslant$}.
\end{lemma}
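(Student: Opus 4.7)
The plan is uniform: for each claim I would construct a sequence of blow ups above $p$ producing an exceptional divisor $E$, compute $e(E;S,\Delta)$ using the smooth-blow-up formula (the discrepancy of a new exceptional above a smooth center is the multiplicity of the log pullback at the center minus one), and invoke the flush hypothesis $e(E;S,\Delta)<m$; the level case follows identically with strict inequalities replaced by non-strict ones. For part (1) a single blow up of $p$ suffices: smoothness of $p$ gives $e(E_1;S,\Delta)=\sum_i a_iM_i-1$, and the flush hypothesis at $E_1$ yields $\sum_i a_iM_i-1<m$; the bound $m<1/(M-1)$ then follows from $mM\leq \sum_i a_iM_i$, which holds because $a_i\geq m$ on the support of $\Delta$.

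For part (2) the genus-at-least-two hypothesis forces the two branches to share a common tangent at $p$, so after the blow up of $p$ their strict transforms meet $E_1$ at one common point $q$. Blowing up $q$ produces $E_2$; the multiplicity of the log pullback $a\tilde{D}_1+b\tilde{D}_2+(a+b-1)E_1$ at $q$ is $2(a+b)-1$, yielding $e(E_2;S,\Delta)=2(a+b)-2$. Combining with $m\leq b$ and the flush hypothesis at $E_2$ gives $2a+b<2$.

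Part (3) is the main computational step. A direct local analysis of the cusp $y^2=x^3$ shows that after blowing up $p$ the strict transform $\tilde{D}$ meets $E_1$ tangentially at a single point $q_1$; after blowing up $q_1$ the three curves $\tilde{\tilde{D}}$, $E_1'$, $E_2$ all meet at a single point $q_2$; and after blowing up $q_2$ we reach simple normal crossings. Summing multiplicities at each center yields $e(E_1)=2a-1$, $e(E_2)=3a-2$, and $e(E_3)=6a-4$. The flush hypothesis at $E_3$ combined with $m\leq a$ then gives $a<4/5$. Higher-order cusps $y^2=x^{2g+1}$ with $g\geq 2$ are handled by the same strategy; in those cases even stronger bounds already appear at $E_2$.

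For part (4) I assume $m\geq 4/5$ and rule out every non-nc configuration using (1)--(3). Part (1) forces $M\leq 2$, excluding three or more local branches at $p$. If two branches at $p$ are tangent, part (2) gives $2a+b<2$, but $a,b\geq m\geq 4/5$ forces $2a+b\geq 12/5>2$. If a single branch is singular at $p$ it has multiplicity two (higher values being excluded by (1)) and is therefore a cusp, contradicting part (3) together with $a\geq m\geq 4/5$. The surviving configurations --- no branch, one smooth branch, or two transversal smooth branches --- are exactly the normal crossings ones.
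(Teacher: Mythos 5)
Your proof is correct: each discrepancy computation (the $\sum_i a_iM_i-1$ formula at a smooth blow-up, the tangent-branch and cusp resolutions, and the case analysis in (4)) checks out, and this is exactly the argument behind the lemma, which the paper itself only cites from \cite[Lemma 8.3.7]{mckernan} without reproducing. No gaps.
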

\begin{proof}
This is \cite[Lemma 8.3.7]{mckernan}.
\end{proof}

\subsection{The first two hunt steps in the flush case}\label{flushhunt}

In this subsection we describe the setting and the outcome of the hunt in the flush case. This is the main type of hunt we run in this paper, and it is especially useful in classifying
log del Pezzo surfaces without tigers (see below for the definition). The underlying idea is that this hunt preserves flushness, which in turn controls the geometry involved. Let's start
with the following fundamental definition.

\begin{definition}
Let $(X,\Delta)$ be a log pair. Let $f:Y\rightarrow X$ be a birational morphism. We say $X$ has a tiger in $Y$ if there exists an effective $\mathbb{Q}$-Cartier divisor $\alpha$ such that
\begin{enumerate}
\item $K_X+\Delta+\alpha$ is numerically trivial.
\item If $\Gamma$ is the log pullback of $\Delta+\alpha$ in $Y$, then there is a divisor $E$ of coefficient at least one in $\Gamma$.
\end{enumerate}
Any such divisor $E$ is called tiger.
\end{definition}

The following geometric situations will be common, and we name them according to \cite{mckernan}.

\begin{definition}
Let $A$ and $B$ be two rational curves on a klt surface $S$ such that $K_S + A + B$ is divisorially log terminal at any singular point of $S$.
We say that $(S,A+B)$ is a
\begin{enumerate}
\item banana, if $A$ and $B$ meet in exactly two points, and there transversally.
\item fence, if $A$ and $B$ meet at exactly one point, and there transversally.
\item tacnode, if $A$ and $B$ meet at most at two points, there is one point $q\in A\cap B$ such that $A+B$ has a node
of genus $g\geqslant 2$ at $q$, and if there is a second point of intersection, then $A$ and $B$ meet there transversally.
\end{enumerate}
\end{definition}

\noindent Now we describe the scaling of the hunt in the flush case.

\begin{lemma}\label{hunt_transformation}

Let $(S,\Delta)$ be a log pair such that $S$ is a rank one log del Pezzo surface. Let $f: T \rightarrow S$
be an extraction of relative Picard number one of an irreducible divisor $E$ of the minimal resolution. The cone of curves of $T$ has 
two edges, one generated by the class of $E$, and let the other be generated by the class of $R$. Let $x=f(E)$, $\Gamma$ such that
$K_T + \Gamma = f^* (K_S + \Delta)$ and $\Gamma_{\epsilon} = \Gamma + \epsilon E$, where $0<\epsilon \ll 1$. Assume that
$-(K_S + \Delta)$ is ample, then

\begin{enumerate}
\item R is $K_T$-negative and contractible, hence there is a rational curve $\Sigma$ that generates the same ray. 
Let $\pi$ be the contraction morphism.
\item $K_T + \Gamma_\epsilon$ is anti-ample.
\item $\Gamma_\epsilon$ is $E$ negative.
\item There is a unique rational number $\lambda$ such that with $\Gamma' = \lambda \Gamma_\epsilon$, $K_T + \Gamma'$ is
R trivial. $\lambda > 1$.
\item $K_T + \Gamma'$ is $E$ negative.
\item $\pi$ is either birational or a $\mathbb{P}^1$-fibration (called \say{net}).
\item If $\pi : T\rightarrow S_1$ is birational, and $\Delta_1 = \pi (\Gamma ')$, then $K_{S_1} + \Delta_1$ is anti-ample
and $S_1$ is a rank one log del Pezzo.
\item If $(S,\Delta)$ does not have a tiger in a surface $Y$ that dominates $T$, then neither do $(T,\Gamma')$ and $(S_1,\Delta_1)$.
\end{enumerate}

\end{lemma}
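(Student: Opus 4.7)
My plan is to work through the eight assertions in order, using the identity $K_T+\Gamma=f^{*}(K_S+\Delta)$ together with the two-ray structure of $\overline{NE}(T)$ forced by $\rho(T)=2$. For $(1)$, I observe that one ray of $\overline{NE}(T)$ is $[E]$ since $f$ contracts $E$; for the other ray $R$, I would intersect the central identity with $R$ to get $(K_T+\Gamma)\cdot R=(K_S+\Delta)\cdot f_{*}R<0$ from anti-ampleness of $K_S+\Delta$ on $S$, while $\Gamma\cdot R\geqslant 0$ because $\Gamma$ is effective and $R$ is extremal distinct from $[E]$; hence $K_T\cdot R<0$, and the cone theorem for klt surfaces furnishes contractibility together with a rational generator $\Sigma$. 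For $(2)$, I would apply Nakai--Moishezon to $-(K_T+\Gamma_{\epsilon})=-f^{*}(K_S+\Delta)-\epsilon E$: on $E$, $-\epsilon E^{2}>0$; on any $C\neq E$, $-f^{*}(K_S+\Delta)\cdot C>0$ dominates the order-$\epsilon$ correction for small $\epsilon$; and self-intersection positivity is a continuity statement from $-f^{*}(K_S+\Delta)^{2}>0$. For $(3)$, combining $(K_T+\Gamma)\cdot E=0$ with adjunction on the smooth rational $E$ and $E^{2}\leqslant -2$ (since $E$ lies over a singular point of $S$) gives $\Gamma_{\epsilon}\cdot E=(1+\epsilon)E^{2}+2<0$.

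For $(4)$, the affine function $t\mapsto (K_T+t\Gamma_{\epsilon})\cdot R$ is negative at $t=1$ by $(2)$ and has slope $\Gamma_{\epsilon}\cdot R>0$ ($\Gamma_{\epsilon}$ effective, $R\neq[E]$ extremal), so a unique root $\lambda>1$ exists. For $(5)$, I would decompose $(K_T+\Gamma')\cdot E=(K_T+\Gamma_{\epsilon})\cdot E+(\lambda-1)\Gamma_{\epsilon}\cdot E$, with the first summand negative by $(2)$ and the second by $(3)$ and $\lambda>1$. For $(6)$, $R$ is a $K_T$-negative extremal ray on the klt projective surface $T$, so the standard dichotomy for extremal contractions of surfaces makes $\pi$ either divisorial onto a surface $S_{1}$ with $\rho(S_{1})=1$ or a $\mathbb{P}^{1}$-fibration over a smooth curve. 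For $(7)$, in the birational case, $R$-triviality and the fact that $\pi$-exceptionals lie in $R$ give $K_T+\Gamma'\equiv \pi^{*}(K_{S_{1}}+\Delta_{1})$; the projection formula combined with $(5)$ yields $(K_{S_{1}}+\Delta_{1})\cdot \pi_{*}E<0$, and since $\pi_{*}E\neq 0$ generates $N^{1}(S_{1})_{\mathbb{R}}$, $-(K_{S_{1}}+\Delta_{1})$ is ample; $S_{1}$ remains klt because $\pi$ is a $K_T$-negative extremal contraction of the klt surface $T$.

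The main obstacle is $(8)$, the transfer of ``no tiger'' down $Y\to T\to S$ and across to $S_{1}$. Given a tiger $\alpha_T$ for $(T,\Gamma')$ in $Y$, I would set $\alpha_S:=(\lambda-1)\Delta+f_{*}\alpha_T$, which is effective since $\lambda>1$ and $\Delta,\alpha_T$ are effective. Numerical triviality $K_S+\Delta+\alpha_S\equiv 0$ follows from $f_{*}(K_T+\Gamma'+\alpha_T)=K_S+\lambda\Delta+f_{*}\alpha_T$ and the fact that pushforward preserves numerical triviality of $\mathbb{Q}$-Cartier classes. For the coefficient condition, the chain rule for log pullbacks reduces matters to checking, on $T$, that the log pullback $\Phi$ of $\Delta+\alpha_S$ dominates $\Gamma'+\alpha_T$; a direct expansion $\Phi=\Gamma+f^{*}\alpha_S$, combined with $f^{*}\Delta=f_{*}^{-1}\Delta+\mu_{\Delta}E$ and $f^{*}f_{*}\alpha_T=\alpha_T+(b-a)E$ (with $\mu_{\Delta},b\geqslant 0$ and $a$ the $E$-coefficient of $\alpha_T$, all determined by the vanishing conditions $f^{*}(\cdot)\cdot E=0$), reduces the inequality to positivity of a single $E$-coefficient, controlled by the optimality of $\lambda$ and smallness of $\epsilon$. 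Any coefficient $\geqslant 1$ in the log pullback of $\Gamma'+\alpha_T$ to $Y$ then persists in that of $\Delta+\alpha_S$. The symmetric argument, with $\pi_{*}$ in place of $f_{*}$, carries tigers from $(S_{1},\Delta_{1})$ back through $(T,\Gamma')$ to $(S,\Delta)$, completing the chain. The delicacy is entirely in the coefficient bookkeeping along $E$; the numerical part descends mechanically.
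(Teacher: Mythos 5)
The paper itself proves this lemma only by citation to \cite[Definition--Lemma 8.2.5]{mckernan}, and your overall architecture matches that source; but several of the justifications you give do not hold up, and the configurations that break them actually occur later in this paper's hunt. The most serious is your repeated claim that $\Gamma\cdot R\geqslant 0$ (and $\Gamma_\epsilon\cdot R>0$) ``because $\Gamma$ is effective and $R$ is extremal distinct from $[E]$.'' Effectivity gives nothing here: an effective divisor is routinely negative on an extremal ray generated by one of its own components of negative self-intersection, and in the hunt the contracted curve $\Sigma$ is sometimes exactly the strict transform of a component of $\Delta$ (see case (7) of Proposition \ref{hunt}, where $\pi_2$ contracts $A_1$, a component of $\Delta_1$). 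So as written, (1) and especially (4) --- where the existence of $\lambda>1$ rests entirely on $\Gamma_\epsilon\cdot R>0$ --- are unproved. The inequalities are true, but for reasons that use $\rho(S)=1$: writing $\Gamma=f^*\Delta+e_0E$ with $e_0=e(E;S,\emptyset)\geqslant 0$, one has $\Gamma\cdot\Sigma=\Delta\cdot f_*\Sigma+e_0\,E\cdot\Sigma\geqslant 0$ because every effective divisor on the rank one surface $S$ is nef; and $E\cdot\Sigma>0$ because $(f_*\Sigma)^2>0$ on $S$ while $\Sigma^2\leqslant 0$ on $T$ (it spans an edge of $\overline{NE}(T)$), whence $\Gamma_\epsilon\cdot\Sigma>0$. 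For (1) alone one can also argue that $K_T\cdot E\geqslant 0$ (since $E$ comes from the minimal resolution) while $K_T$ cannot be nef on the rational surface $T$, forcing the other edge to be $K_T$-negative.

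Two further steps need repair. In (3) you compute $\Gamma_\epsilon\cdot E=(1+\epsilon)E^2+2$ with $E^2\leqslant -2$; this presupposes $T$ smooth along $E$ and $\Delta=\emptyset$. Neither holds in general: extracting one curve of an $A_2$ point leaves an $A_1$ point on $E$ with $E^2=-3/2$ on $T$, and your expression is then positive for small $\epsilon$. The correct computation is $\Gamma_\epsilon\cdot E=-K_T\cdot E+\epsilon E^2<0$, using $K_T\cdot E\geqslant 0$ and $E^2<0$. Finally, your sketch of (8) misidentifies the mechanism. With $\alpha_S=(\lambda-1)\Delta+f_*\alpha_T$ as you define it, one has $K_S+\Delta+\alpha_S\equiv 0$; letting $\Phi$ be the log pullback of $\Delta+\alpha_S$ to $T$, both $K_T+\Phi$ and $K_T+\Gamma'+\alpha_T$ are numerically trivial and $\Phi$, $\Gamma'+\alpha_T$ have the same pushforward to $S$, so they differ by a numerically trivial multiple of $E$, which must vanish since $E^2<0$. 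Hence $\Phi=\Gamma'+\alpha_T$ \emph{exactly}, their log pullbacks to $Y$ coincide, and any coefficient $\geqslant 1$ transfers verbatim. This exact equality is what closes the argument; it has nothing to do with ``the optimality of $\lambda$'' or the ``smallness of $\epsilon$,'' and without it your coefficient bookkeeping along $E$ does not terminate.
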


\begin{proof}
See \cite[Definition-Lemma 8.2.5]{mckernan}.
\end{proof}

\begin{definition}\label{huntchoice}
We call the above transformations $(f,\pi)$ a hunt step (in the flush case) for $(S,\Delta)$ if $e(E; S , \Delta)$ is maximal among exceptional divisors of the minimal
resolution of $S$. There might be multiple choices. If $x$ is a chain singularity we allow any choice $E$ that is not a $(-2)$ curve (this is always
possible unless $\Delta=0$ and $e(E;S)=0$). If $x$ is a non chain singularity we require $E$ to be the central curve (which has maximal coefficient by Lemma \ref{nonchainsing}). 
If two points have the same coefficient, we can pick one at our choice, but unless stated otherwise our choice will be a chain singularity that
allows us to extract the curve with lowest self-intersection.
\end{definition}

We fix now the notation that we will always use when running the hunt in the flush case.

\begin{notation}\label{notation}
We always start from a surface without boundary, so $\Delta_0 = \emptyset$. We index by $(f_i, \pi_{i+1})$ the next
hunt step for $(S_i, \Delta_i)$. Define:
\begin{itemize}
\item $x_i = f_i (E_{i+1}) \in S_i$ 
\item $q_{i+1} = \pi_{i+1} (\Sigma_{i+1})\in S_{i+1}$. 
\item $\Gamma_{i+1}$ to be the log pullback of $\Delta_i$:  $K_{T_{i+1}} + \Gamma_{i+1} = f_i ^* (K_{S_i} + \Delta_i)$. 
\item $\Delta_{i+1} = \pi_{i+1} (\Gamma_{i+1} ')$; it satisfies $K_{T_{i+1}} + \Gamma_{i+1}' = \pi_{i+1}^* (K_{S_{i+1}} + \Delta_{i+1})$. 
\item $A_1=\pi_1 (E_1)\subset S_1$ and $B_2=\pi_2 (E_2)\subset S_2$.
\item $A_2$ the strict transform of $A_1$ on $S_2$.
\end{itemize}
Let $a_1$, $b_2$ be the coefficients of $A_1$, $B_2$ in $\Delta_1$, $\Delta_2$ (which
are also the coefficients of $E_1$, $E_2$ in $\Gamma_1 '$, $\Gamma_2 '$) and $a_2$ the coefficient of $A_2$ in $\Delta_2$.
We remark that $a_1$, $b_2<a_2$ by the flush condition and the previous scaling. Let $e_i$ be the coefficient of $E_{i+1}$ in $(S_{i+1}, \Gamma_{i+1})$.
This is also the coefficient of the pair $(S_i, \Delta_i)$. Finally, we indicate by $\overline{\Sigma}_i$ the image of $\Sigma_i$ in $S_0$ or 
$S_1$, depending on the context.
\end{notation}

We are ready to describe the first two hunt steps in the flush case.

\begin{proposition}\label{hunt}
Suppose that $S$ is a rank one log del Pezzo surface that has no tigers in $\tilde{S}$. For the first hunt step: 
$K_{T_1} + E_1$ is divisorially log terminal, $K_{T_1} + \Gamma_1'$ is flush and one of the following holds.

\begin{enumerate}
\item $T_1$ is a net.
\end{enumerate}

\noindent Otherwise $K_{S_1} + a_1 A_1$ is flush and one of the following holds

\begin{enumerate}[resume]
\item $g(A_1)>1$.
\item $g(A_1) = 1$ and $A_1$ has an ordinary node at $q=q_1$.
\item $g(A_1) = 1$ and $A_1$ has ordinary cusp at $q=q_1$.
\item $g(A_1) = 0$ and $K_{S_1} + A_1$ is divisorially log terminal.

\end{enumerate}

\noindent For the second hunt step one of the following holds.

\begin{enumerate}[resume]
\item $T_2$ is a net.
\item $A_1$ is contracted by $\pi_2$, $K_{T_2}+\Gamma_2'$ is flush, $K_{S_1}+A_1$ is divisorially log terminal, $q_2$ is a smooth point of $S_2$,
$B_2$ is singular at $q_2$ with a unibranch singularity, and $K_{S_2} + \Delta_2$ is flush away from $q_2$, but is not level at $q_2$.
$\Sigma_2$ is the only exceptional divisor at which $K_{S_2}+\Delta_2$ fails to be flush.
\item $\Delta_2$ has two components.

\end{enumerate}

Suppose that in this last case that $a_2 + b_2 \geqslant 1$. Then $\overline{\Sigma}_2 \cap \operatorname{Sing}(A_1)=\emptyset$,
$K_{T_2} + \Gamma_2'$ is flush away from $\operatorname{Sing}(A_1)$. $K_{S_2}+\Delta_2$ is flush away from $\pi_2 (\operatorname{Sing}(A_1))$, and at least one of 
$-(K_{S_2}+A_2)$ or $-(K_{S_2} + B_2)$ is ample. Also, one of the following holds:

\begin{enumerate}[resume]
\item $(S_2, A_2+B_2)$ is a fence.
\item $(S_2, A_2+B_2)$ is a banana, $K_{S_2} + B_2$ is plt, and $x_1\in A$.
\item $(S_2, A_2+B_2)$ is a tacnode, with tacnode at $q_2$. $K_{S_2}+B_2$ is plt. If $x_1\in A_1$, 
$A_2\cap B_2 = \{ x_1, q_2 \}$. If $x_1\notin A_1$ then $A_2\cap B_2= \{ q_2\}$.

\end{enumerate}

\end{proposition}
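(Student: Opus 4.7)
The plan is to push $(S,0)$ through the hunt by tracking the pair $(S_i,\Delta_i)$, using flushness at each stage to control the singularities of the extracted/contracted divisors via Lemma \ref{flushmult}, and using the no-tiger hypothesis together with Lemma \ref{hunt_transformation}(8) to ensure $\Gamma_i'$ is always a boundary.

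For the first hunt step I would first observe that $K_{T_1}+E_1$ is dlt: since $(S,0)$ is klt, every exceptional divisor of the minimal resolution of $T_1$ other than $E_1$ carries discrepancy strictly greater than $-1$ with respect to $K_{T_1}+E_1$, and Lemma \ref{minimalresolution}(1) delivers dlt. Flushness of $K_{T_1}+\Gamma_1'$ comes from the maximality of $e_1$ in Definition \ref{huntchoice} together with $\lambda>1$: writing $K_{T_1}+a_1 E_1=f_1^*K_S+(a_1-e_1)E_1$ and computing on a common log resolution yields $e(F;T_1,a_1E_1)\leq e_1<a_1$ for every exceptional $F\neq E_1$, and Lemma \ref{flushresolution}(2) upgrades this to flushness. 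If $\pi_1$ is a net we are in case (1); otherwise Lemma \ref{hunt_transformation}(7) makes $S_1$ a rank one log del Pezzo and Lemma \ref{flushdescent}(2) pushes flushness forward to $(S_1,a_1A_1)$, since $E_1$ has now been contracted away and $\Sigma_1$ is not in the support of $\Gamma_1'$.

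Since $A_1=\pi_1(E_1)$ is the image of $E_1\cong\mathbb{P}^1$ under a birational map, it is rational, so its arithmetic genus is the sum of the $\delta$-invariants at its singular points. Applying Lemma \ref{flushmult} to $(S_1,a_1A_1)$ at smooth points bounds the local singularity type of $A_1$; combined with Lemma \ref{flushresolution} at singular points of $S_1$ on $A_1$, I can enumerate the cases: (2) is $g(A_1)\geq 2$, with no further constraint; cases (3)--(4) correspond to $g(A_1)=1$, where the single singular point is forced to be an ordinary node or an ordinary cusp, since Lemma \ref{flushmult}(2)--(3) exclude higher-genus nodes and worse cusps as $a_1$ approaches $1$; case (5) is $g(A_1)=0$, and flushness at each singular point of $S_1$ lying on $A_1$ combined with Lemma \ref{minimalresolution}(1) then gives that $K_{S_1}+A_1$ is dlt.

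For the second hunt step I would rerun Lemma \ref{hunt_transformation} on $(S_1,\Delta_1)$ with $E_2$ chosen via Definition \ref{huntchoice}, and split on the contraction $\pi_2$. If $\pi_2$ is a net we are in case (6). If $\pi_2$ is birational and contracts $A_1$, then $\Delta_2=b_2B_2$ has a single component; Lemma \ref{flushdescent2}(3) delivers almost log canonicity at $q_2$, while the dlt property of $K_{S_1}+A_1$ from case (5) of the first step forces the contracted chain ending on $A_1$ to meet $A_1$ at a single smooth point of $S_1$, so after pushing down $q_2$ is smooth in $S_2$ and $B_2$ acquires a unibranch singularity there; $\Sigma_2$ is the unique divisor at which flushness fails because the level-to-flush downgrade in Lemma \ref{flushdescent}(2) affects only the contracted ray. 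This is case (7). Otherwise $A_1$ survives on $S_2$, giving $\Delta_2=a_2A_2+b_2B_2$, which is case (8).

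Finally, under $a_2+b_2\geq 1$, Lemma \ref{flushmult}(1) applied at any smooth point of $S_2$ on $A_2\cap B_2$ forces transverse intersection except possibly at a single tacnodal point, and the same bound applied at $\pi_2(\operatorname{Sing}(A_1))$ shows $\overline{\Sigma}_2\cap\operatorname{Sing}(A_1)=\emptyset$ and flushness of $(S_2,\Delta_2)$ away from $\pi_2(\operatorname{Sing}(A_1))$. The ampleness statement then follows from the two-ray decomposition of $T_2$ pushed to $S_2$: the inequality $a_2+b_2\geq 1$ forces one of $-(K_{S_2}+A_2)$, $-(K_{S_2}+B_2)$ to remain ample. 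The fence/banana/tacnode trichotomy follows by counting intersection points and singularity types: a single transverse point yields the fence (case (9)); two transverse points yield the banana, where plt-ness of $K_{S_2}+B_2$ comes from Lemma \ref{minimalresolution}(1) applied at the surviving singular points (case (10)); a genus $\geq 2$ node at $q_2$ yields the tacnode (case (11)). The plt claim and the positional dichotomy on $x_1$ follow by tracking the strict transform of $x_1$ through $f_1$ and $\pi_2$. The hardest step is precisely this final classification of $A_2\cap B_2$: simultaneously controlling the singularities of $S_2$, the position of $x_1$, and the shape of the intersection requires a delicate numerical argument, and is where the bulk of the work in the analogous statement of \cite{mckernan} lives.
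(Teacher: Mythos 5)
The paper gives no proof of this proposition; it cites \cite[Proposition 8.4.7]{mckernan} verbatim, and your sketch is an outline of that argument using the right tools (Lemmas \ref{flushmult}, \ref{flushdescent}, \ref{flushdescent2}, \ref{minimalresolution} and \ref{hunt_transformation}). There are, however, genuine gaps. First, your justification that $K_{T_1}+E_1$ is dlt --- ``since $(S,0)$ is klt, every exceptional divisor of the minimal resolution of $T_1$ other than $E_1$ carries discrepancy strictly greater than $-1$'' --- is a non sequitur: klt-ness of $S$ does not by itself control discrepancies with respect to the \emph{larger} boundary $E_1$, since $K_{T_1}+E_1=f_0^*K_S+(1-e_0)E_1$ and the correction term can push coefficients past one. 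What actually makes the claim true is the choice of $E_1$ in Definition \ref{huntchoice}: extracting a non-central curve of a fork would leave a non-chain singularity on $E_1$, and by Lemma \ref{dltsing} the pair would fail to be dlt there regardless of the numerics. Second, in case (7) you deduce the smoothness of $q_2$ and the unibranch singularity of $B_2$ ``from the dlt property of $K_{S_1}+A_1$ from case (5) of the first step,'' but that dlt-ness is part of the \emph{conclusion} of case (7); you must first show that $\pi_2$ can only contract $A_1$ when the first step landed in case (5), which you do not do.

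Most seriously, the entire final block --- that $\overline{\Sigma}_2\cap\operatorname{Sing}(A_1)=\emptyset$, the plt-ness of $K_{S_2}+B_2$ in the banana and tacnode cases, the position of $x_1$, and the exact description of $A_2\cap B_2$ --- is asserted to ``follow by counting intersection points and singularity types.'' As you yourself concede, this is where the bulk of the work lies, and as written it restates the conclusion rather than proving it; it requires the detailed local analysis of the contraction $\pi_2$ (the classification of configurations in Lemmas \ref{multiplicity_two_node}--\ref{multiplicity_three_b} and the coefficient inequalities they feed into) that occupies the corresponding chapter of \cite{mckernan}. Likewise, the ampleness of one of $-(K_{S_2}+A_2)$, $-(K_{S_2}+B_2)$ needs the convex-combination argument (choose $t$ with $1-b_2\leqslant t\leqslant a_2$, possible precisely because $a_2+b_2\geqslant 1$, so that $K_{S_2}+tA_2+(1-t)B_2$ is obtained from the anti-ample $K_{S_2}+\Delta_2$ by subtracting an effective divisor), not merely the phrase ``the two-ray decomposition pushed to $S_2$.''
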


\begin{proof}
See \cite[Proposition 8.4.7]{mckernan}.
\end{proof}

\begin{remark}
In Proposition \ref{hunt} we only need the hypothesis that $S$ has no tigers in $\tilde{S}$ to ensure that the $\Delta_i$ are always boundaries. Therefore the proposition
still holds under any assumptions that guarantees the same. 
\end{remark}

\subsection{The first two hunt steps in the level case}\label{levelhunt}

In classifying log del Pezzo surfaces with tigers we will use a slightly different version of the hunt. In particular, we change the scaling convention so that the $\Delta_i$
remain boundary divisors.

Let $(S,C)$ be a pair such that $S$ is a rank one log del Pezzo surface and $C$ is a reduced curve in $S$. Suppose that $K_S+C$ is anti-nef and that
$(S,C)$ is log canonical.
In particular, $C$ is a tiger of $S$ and $(S,C)$ is level. Let $T_1\rightarrow S$ be the extraction of a divisor of maximal coefficient for $(S,C)$. This divisor may be chosen
to lie in 
the minimal resolution by Lemma \ref{dltsing}, Lemma \ref{lcnotdltsing} and Lemma \ref{flushresolution}. We write $K_{T_1}+(f_0)_* ^{-1} C+e_1E_1=f_0 ^*(K_S+C)$. 

Lemma \ref{hunt_transformation} $(1)$ gives us a $K_T$-negative contraction $\pi_1$. Assume that $\pi_1:T_1 \rightarrow S_1$ is a birational morphism and let $A_1 = \pi_1 (E_1)$.
Assume also that $\pi_1$ does not contract $(f_0)_* ^{-1} C$ and call $C_1$ its image in $S_1$. 
We define $a_1\geqslant e_1$ be such that $K_{T_1}+C+a_1 E_1$ is $\pi_1$-trivial. If $a_1 > 1$, then $K_{S_1}+C_1+A_1$ is log canonical, since 
$K_{T_1}+(f_0)^{-1}C+E_1$ is log canonical
and $\Sigma_1$ has negative coefficient. In particular, $K_{S_1}+C_1+A_1$ is anti-ample and level. In this case we re-define $a_1=1$ and go to the second hunt step. 
If instead $a_1\leqslant 1$, then $K_{S_1}+C_1+a_1A_1$ is anti-nef and $K_{T_1}+(f_0)_* ^{-1} C+E_1=\pi_1 ^*(K_{S_1}+C_1+a_1A_1)$. 
The pair $(S_1, C_1+a_1A_1)$ is level by Lemma \ref{flushdescent}. In particular, $(S_1, C_1+A_1)$ is almost log canonical at singular points by Lemma \ref{flushdescent2}.
This concludes the first hunt step in the level case. For the second hunt step, let
$f_1:T_2\rightarrow S_1$ be the extraction of the exceptional divisor of maximal coefficient $E_2$ in $\tilde{S_1}$ relative to the pair $(S_1, C_1+a_1A_1)$.
We write 
\[
K_{T_2}+(f_1)_* ^{-1} C_1+a_1A_1+e_2E_2=(f_1) ^*(K_{S_1}+C_1+a_1A_1)
\]

Suppose that $\pi_2:T_2\rightarrow S_1$ is a birational morphism and that neither $C_1$ nor $A_1$ get contracted.
Call $C_2$ and $A_2$ the strict transforms of $C$ and $A_1$ in $S_2$. Let $B_2=\pi_2 (E_2)$ and $a_2=a_1$. Define again $b_2\geqslant e_2$ so that 
$K_{T_2}+(f_1)_* ^{-1} (C_1+ a_1 A_1)+ b_2 E_2$ is $\pi_2$-trivial.
As above, if $b_2>1$ then $K_{S_2}+C_2+a_2 A_2+B_2$ is log canonical and anti-ample.
We may therefore assume that $b_2 \leqslant 1$. Then $K_{S_2}+C_2+a_2 A_2+b_2 B_2$ is anti-nef, and level
by Lemma \ref{flushdescent}. It follows that $(S_2, C_2 + A_2 + B_2)$ is almost log canonical at singular points by Lemma \ref{flushdescent2}.
At smooth points, we can control singularities by Lemma \ref{flushmult}. 

\subsection{Classification of the hunt contractions in the flush case}

In this subsection we describe the geometry of the contractions that appear during a hunt in the flush case. The description we give is \'{e}tale local. First, we explain our setting, following
\cite[Chapter 11]{mckernan}.
Let $T$ be a klt projective surface and let $\pi:T\rightarrow S$ be a proper birational contraction of a $K_T$-negative extremal ray. Denote by $\Sigma$ the exceptional divisor
and let $q$ be the image of $\Sigma$. We will be concerned with the \'{e}tale local description of $T$ around $\Sigma$. 
Let $W\subset T$ be a curve with smooth components crossing transversally. Assume $W$ has at most two irreducible components $X$, $Y$ and that $K_T + cX + dY$ is $\pi$-trivial and flush, with $0<c,d<1$. Assume also that $\pi|_W$ is finite (which is to say that $\Sigma$ is not a component of $W$). 
Thus $K_S + cX + dY$ is flush by Lemma \ref{flushdescent}. 
Let $D$ be the image of $W$ in $S$ and let $h:\tilde{T} \rightarrow \tilde{S}$ be the induced map between the minimal resolutions.

Before stating the classification, we need to name some particular geometric configurations.

\begin{definition}
Let $S$ be a surface, and $D$ be a curve in $S$. Choose a point $q\in D$. If $D$ has two smooth branches $X$ and $Y$ meeting to order $g$ at $q$ we say that $D$ has a node of order $g$.
If $g\geqslant 1$ we have to blow $g-1$ times to make $X+Y$ have normal crossings. We call this configuration 0. If we further blow up once at $X\cap Y$ we reach configuration $I$.
Now blow up at $X\cap \Sigma$, where $\Sigma$ is the unique $(-1)$ curve. This is configuration $II$. 
We denote by $(II)$ and a string of $x$ and $y$ (for example $(II,x,x,y)$) a configuration that is reached by starting with configuration $II$ and then blowing up the intersection of
the unique $(-1)$ curve with the branch corresponding to the letter of the string. 
\end{definition}

\begin{definition}
Let $S$ be a surface, and $D$ be a curve in $S$. Choose a point $q\in D$. If $D$ has a unibranch double point at $q$ we say $D$ has a cusp of order $g$. We call $X$ the branch of $D$
at $q$. We need $g$ blowups along $X$ to remove its singularity. We call this configuration $I$. Next, we blow up at $X\cap \Sigma$, where as usual $\Sigma$ is the unique $(-1)$ curve.
This is configuration $II$. Next, we blow up again at $X\cap \Sigma$, reaching configuration $III$. We have now three choices:
\begin{enumerate}
\item If we blow up at the intersection with the unique exceptional $(-3)$ curve, we reach configuration $U$.
\item If we blow up at $X\cap \Sigma$ then we reach configuration $V$.
\item If we blow up at the intersection with the unique $(-2)$ curve, we reach configuration $W$.
\end{enumerate}
After this, at each step we may only choose to blow up one of two points on the last $(-1)$ curve: either the nearest point to $X$, in which case we add
an $n$ to the string of letters we have formed so far, or the farthest point from $X$, in which case we add an $f$.
\end{definition}

\begin{notation}
If $p$ is a chain singularity, we mark with a prime the component touched by $\Sigma$ and we underline the component touched by $D$.
\end{notation}

Going back to our original setting, if $D$ has multiplicity two at $q$, then $q$ is smooth and we have the following classification.

\begin{lemma}\label{multiplicity_two_node}
Suppose $D$ has a node of order $g$. One of the following holds:

\begin{enumerate}
\item $T$ has type $I$ or $0$, and $c+d=1$.
\item $g\geqslant 2$, $T$ has type $II$ and $gd+(g+1)c = g+1$.
\end{enumerate}

$g=1$, $K_T+\Sigma+X+Y$ is log canonical and either

\begin{enumerate}[resume]
\item $T$ has type $(II, x^{r-1})$ with $r\geqslant 1$, there is a unique singularity, an $A_r$ point, $X$ meets $\Sigma$ at a smooth point,
$Y$ meets $\Sigma$ at the $A_r$ point, which has type $(\underline{2}, ..., 2')$ and $c+\frac{d}{r+1} = 1$.

\item $\Sigma$ meets $X$ and $Y$ each at a singular point of $T$, and those are the only singularities along $\Sigma$.
\end{enumerate}

\end{lemma}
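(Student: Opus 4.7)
My plan is to analyse the composition $h\colon \tilde{T}\to\tilde{S}$ over the smooth point $q$. Since $q$ is a smooth point of $S$, the minimal resolution $\tilde{S}\to S$ is an isomorphism near $q$, so $h$ is locally a tower of blow-ups of smooth points starting from $q$; the possible towers are exactly enumerated by tracking the strict transforms of the two smooth branches $X$ and $Y$ meeting to order $g$ at $q$. The named configurations $0$, $I$, $II$, $(II,x^{r-1})$, etc., correspond to the first few choices one has in this tower, with $\Sigma$ always playing the role of the unique $(-1)$-curve in $\tilde{T}$ after the blow-ups. The remaining $h$-exceptional curves have self-intersection $\leqslant -2$ and must assemble into klt chains contracted by $\tilde{T}\to T$.

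The second step is to translate the $\pi$-triviality condition $(K_T + cX + dY)\cdot\Sigma = 0$ into a linear relation on $c$ and $d$ for each configuration. Pulling back to $\tilde{T}$ via the discrepancy formula and using that the $h$-exceptional curves different from $\Sigma$ are orthogonal to $\Sigma$ in $\tilde{T}$, the computation reduces to a finite combinatorial identity involving the intersection numbers of the strict transforms of $X$, $Y$, $\Sigma$ and the discrepancy coefficient of $\Sigma$ viewed as a divisor over $\tilde{S}$. This is precisely how one obtains $c+d=1$ in configuration $0$ or $I$ (case $(1)$); $gd+(g+1)c=g+1$ in configuration $II$ with $g\geqslant 2$ (case $(2)$); and $c+d/(r+1)=1$ in the chain configuration $(II,x^{r-1})$ with $g=1$ (case $(3)$). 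In the last case the $r$ extra blow-ups at $X\cap\Sigma$ produce a chain $(\underline{2},\ldots,2')$ of $(-2)$-curves, which is the $A_r$ singularity claimed.

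The third step is to rule out all other patterns and place them in case $(4)$. Any configuration not in cases $(1)$--$(3)$ forces at least one additional blow-up on the $Y$-branch side as well, and a direct check shows that this makes both of the points $\Sigma\cap X$ and $\Sigma\cap Y$ lie on nontrivial chains of $h$-exceptional curves, giving the two singularities of $T$ along $\Sigma$ described in case $(4)$; the "only singularities along $\Sigma$" clause follows because any further chain further away from $X,Y$ would consist entirely of $(-2)$-curves that can be contracted and reabsorbed into $\Sigma$, contradicting minimality of the chosen tower.

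The main obstacle is that, a priori, the combinatorial tree of possible blow-up sequences is infinite, and one must cut it off. The key tool is the flushness of $(S,cX+dY)$ combined with Lemma \ref{flushmult}: the multiplicity bound $cM_X+dM_Y-1<\min(c,d)$ at $q$, together with the sharper bounds at higher-order nodes and cusps, forbids configurations that would create branches of too high multiplicity on $\Sigma$ after contraction, and in particular bounds the value of $g$ and the length $r$ in case $(3)$ (indeed one checks that the linear relations of cases $(2)$ and $(3)$ are compatible with $0<c,d<1$ only for the claimed configurations). The log canonicity of $K_T+\Sigma+X+Y$ in cases $(3)$--$(4)$ then follows by checking on $\tilde{T}$, where the strict transforms of $X,Y,\Sigma$ together with the $(-2)$-chain form a simple normal crossings divisor with all discrepancy coefficients equal to $1$.
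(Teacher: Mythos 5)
First, a caveat: the paper does not actually prove this lemma --- its ``proof'' is the citation \cite[Lemma 11.1.1]{mckernan} --- so your proposal is being measured against that argument rather than against anything written out here. Your first two steps are sound: since $q$ is a smooth point, $h$ over $q$ is a tower of blow-ups of smooth points; the strict transform of $\Sigma$ is the unique $(-1)$-curve of the tower (every other $h$-exceptional curve is exceptional for the minimal resolution $\tilde{T}\rightarrow T$, hence has self-intersection $\leqslant -2$); and the three linear relations you extract from $(K_T+cX+dY)\cdot\Sigma=0$ by pulling back to $\tilde{T}$ are all correct.

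The gap is in your third step, which is where essentially all of the content of the lemma lives. (i) Your argument that $\Sigma$ carries no singularities away from $X\cup Y$ --- that a further chain of $(-2)$-curves could be ``contracted and reabsorbed into $\Sigma$, contradicting minimality of the chosen tower'' --- is not an argument: the tower is determined by $T$ and $\pi$, there is no minimality to appeal to, and a $(-2)$-chain attached to $\tilde{\Sigma}$ off $X\cup Y$ yields a perfectly good klt surface $T$. Such configurations are excluded numerically; for instance, for $g=1$, blowing up a point of $\Sigma$ off $X\cup Y$ after configuration $I$ gives $X\cdot\Sigma=Y\cdot\Sigma=1/2$ and $K_T\cdot\Sigma=-1$, so $\pi$-triviality forces $c+d=2$, contradicting $c,d<1$. (ii) Your ``key tool'' is miscalibrated: flushness does \emph{not} bound $g$ or $r$ (the lemma permits every $g\geqslant 2$ and every $r\geqslant 1$), and the relations in cases $(2)$--$(3)$ admit solutions with $0<c,d<1$ for \emph{all} $g$ and $r$, so your parenthetical compatibility check is vacuous. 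What flushness actually does is kill the \emph{other} towers: for example, $(II,x)$ with $g\geqslant 2$ produces an $A_{g+1}$ chain whose component met by $\tilde{Y}$ acquires coefficient $2gd/(g+2)\geqslant d$, violating flushness. This is exactly the kind of computation your step three needs, and it has to be carried out systematically for every branch of the a priori infinite tree of towers; your proposal never does so. (iii) In case $(4)$ the singularities at $\Sigma\cap X$ and $\Sigma\cap Y$ are not explicitly described, so the log canonicity of $K_T+\Sigma+X+Y$ cannot be read off from ``a $(-2)$-chain with all coefficients equal to one''; one must show that the numerical constraints force the two germs to be among the log canonical configurations of Lemma \ref{lcnotdltsing}. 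As written, the proposal verifies the three easy named configurations and asserts, without proof, that nothing else can occur.
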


\begin{proof}
See \cite[Lemma 11.1.1]{mckernan}.
\end{proof}

\begin{lemma}\label{multiplicity_two_cusp}
Suppose $D$ has a cusp of order $g$. Then either
\begin{enumerate}
\item $T$ has type $I$ and $c=1/2$.
\item $T$ has type $II$ and $c=(g+1)/(2g+1)$.
\item $T$ has type $III$ and $c=(g+1)/(2g+1)$.
\item $g=1$, $T$ has type $u$ and $c=3/4$ or $g=2$ and $c=9/14$.
\item $g=1$, $T$ has type $v$ and $c=5/7$ or $g=2$ and $c=7/11$.
\item $g=1$, $T$ has type $w$ and $c=7/9$.
\item $g=1$, $T$ has type $(u;n)$ and $c=11/14$.
\item $g=1$, $T$ has type $(v;f)$ and $c=10/13$.
\item $g=1$, $T$ has type $(v;f^2)$ and $c=15/19$.
\item $g=1$, $T$ has type $(v;n)$ and $c=3/4$.
\item $g=1$, $T$ has type $(v;n^2)$ and $c=7/9$.
\end{enumerate}
\end{lemma}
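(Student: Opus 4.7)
The plan is to adapt the argument for Lemma \ref{multiplicity_two_node} to the unibranch case. Because $D$ has multiplicity two at $q$, Lemma \ref{flushmult}(3) forces the coefficient $c$ of the branch at a cusp to satisfy $c<4/5$, and in particular rules out $q$ being a singular point of $S$ (a klt singularity together with a curve of multiplicity two passing through it would violate the multiplicity bound of Lemma \ref{flushmult}). We may therefore work \'etale locally, assuming $S$ is smooth at $q$ and $X$ has a cusp of order $g$ there.

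To enumerate the possibilities for $T$, I would proceed by the natural sequence of blowups starting at $q$. The first $g$ blowups (each at the intersection of the strict transform of $X$ with the previous exceptional curve) are forced because we need to separate $X$ from the exceptional locus, and produce configuration $I$. After that, each additional blowup corresponds to a choice of infinitely near point lying on the unique $(-1)$-curve, and the legal choices are precisely the ones encoded by the symbols $II$, $III$, $u$, $v$, $w$, and the trailing strings of $n$'s and $f$'s as defined just before the statement. For each configuration the curve $\Sigma$ is determined: it is the unique rational curve in the exceptional locus of $h:\tilde T\to \tilde S$ whose image in $T$ generates the $K_T$-negative extremal ray, i.e.\ the unique candidate for a $(-1)$-curve on $T$ after contracting the chain of exceptional $(-2)$-curves of the minimal resolution $\tilde T\to T$.

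Next I would compute $c$ for each configuration by imposing the numerical condition $(K_T+cX)\cdot \Sigma=0$. Equivalently, writing the log pullback $K_{\tilde T}+c\tilde X+\sum e_i F_i=h^*(K_S+cX)$ on the minimal resolution and using that the intersection matrix of the exceptional chain is negative definite, one obtains a finite linear system whose solution is a rational function of $g$ and the shape of the chain. This direct calculation yields the values listed in $(1)$--$(11)$; the dependence on $g$ enters only through the length of the initial chain produced in configuration $I$.

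Finally I would use the flush hypothesis to prune the enumeration. Flushness of $(T,cX)$, combined with Lemma \ref{flushresolution}, requires $c$ to strictly exceed the coefficient $e_i$ of every exceptional divisor of the minimal resolution of $T$. Each additional blowup of type $n$ or $f$ introduces a new exceptional component whose coefficient in $\Gamma$ approaches $c$ from below, so eventually this coefficient meets or exceeds $c$ and flushness fails; this imposes a hard cap on the length of the trailing string, and together with $c<4/5$ eliminates configurations such as $(u;n^2)$, $(w;n)$, $(v;f^3)$, as well as all nontrivial strings for $g\ge 3$, leaving exactly the eleven listed cases. The main obstacle is not conceptual but combinatorial: the case analysis over the branching choices $u/v/w$ and the strings in $n,f$ must be carried out systematically, and one must verify for each surviving configuration that the chain contracted by $\pi$ is negative definite (so that $\pi$ exists as a morphism) and that $K_T\cdot\Sigma<0$ with the computed value of $c$.
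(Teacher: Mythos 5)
Your outline follows the same route as the proof the paper actually points to (\cite[Lemma 11.2.1]{mckernan}): reduce to a smooth point $q$, enumerate towers of infinitely near blowups over $q$, compute $c$ from $(K_T+cX)\cdot\Sigma=0$ via the log pullback to the minimal resolution, and prune with flushness together with the bound $c<4/5$ from Lemma \ref{flushmult}(3). One local error: your justification that $q$ is smooth is circular, since Lemma \ref{flushmult} is stated only at smooth points and so cannot be quoted to exclude a singular $q$. The correct argument is the direct computation that if $q$ were a klt singular point and the branch $X$ had multiplicity two there with coefficient $c$, then solving the linear system on the minimal resolution forces some exceptional divisor to have coefficient at least $c$, contradicting flushness ($e(E;T,cX+dY)<\min(c,d)\leqslant c$). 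This is exactly the mechanism you invoke later to cap the trailing strings, so it is available to you; it just is not Lemma \ref{flushmult}.

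The more substantive gap is that the completeness of the enumeration -- which is the entire content of the lemma -- is asserted rather than proved. A priori $h:\tilde T\to S$ is an arbitrary composition of point blowups over $q$ subject only to: exactly one $h$-exceptional curve survives as $\Sigma$ (so $\tilde\Sigma$ is the unique $h$-exceptional $(-1)$-curve, the rest having self-intersection at most $-2$ since $\tilde T\to T$ is minimal), the contracted curves form klt configurations, $K_T\cdot\Sigma<0$, and $(T,cX)$ is flush. You must show that any blowup centre other than the finitely many prescribed by the $II/III/u/v/w$ and $n,f$ grammar violates one of these constraints. For instance, blowing up a point of the current $(-1)$-curve away from $X$ and the earlier exceptional curves leaves a contracted configuration met by $\tilde X$ with total multiplicity two, and the linear system then gives that configuration coefficient exactly $c$, killing flushness (this already rules out the naive alternative to configuration $I$ for $g=1$). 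Such checks, carried out at every node of the tree of choices, together with the explicit solution of the linear systems for the surviving configurations, constitute the proof; as written, your text is a correct plan for that case analysis rather than the case analysis itself.
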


\begin{proof}
See \cite[Lemma 11.2.1]{mckernan}.
\end{proof}

If $D$ has multiplicity three, then we only have the following two cases.

\begin{lemma}\label{multiplicity_three_a}
Suppose that $D$ has multiplicity three. If $D$ has two branches at $q$, then one branch has a double point, a simple cusp, 
and the other is smooth. If $X$ is the branch with the cusp, then $\Sigma$ meets $X$ transversally at one smooth point, $\Sigma$ 
contains two singularities $(2)$ and $(3)$, and $Y$ meets the $(-2)$-curve, and is disjoint from $\Sigma$ on $\tilde{T}$.
\end{lemma}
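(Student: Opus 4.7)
The plan is to combine the flushness multiplicity bounds from Lemma \ref{flushmult} with an intersection-theoretic analysis of the contraction $\pi$ on the minimal resolution $h:\tilde T\to \tilde S$, exactly in the spirit of the proofs of Lemmas \ref{multiplicity_two_node} and \ref{multiplicity_two_cusp}.

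First I would pin down the branch combinatorics. Because $\pi|_W$ is finite, neither $X$ nor $Y$ is contracted by $\pi$, and $D=\pi_*(X+Y)$. By hypothesis $D$ has exactly two analytic branches at $q$, of total multiplicity $3$, so each of $\pi(X)$ and $\pi(Y)$ is analytically unibranch at $q$ and their multiplicities sum to $3$. After relabeling, $\pi(X)$ is unibranch of multiplicity $2$ (hence a cusp of some order $g\geq 1$) and $\pi(Y)$ is smooth at $q$. The substantive content of the lemma is therefore $g=1$ together with the explicit description of the singularities along $\Sigma$ and the way $X,Y$ meet them on $\tilde T$.

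Next, Lemma \ref{flushmult}(1) applied at the smooth point $q\in S$ to the flush pair $(S,cX+dY)$ with $M_X=2$, $M_Y=1$ gives $2c+d-1<\min(c,d)$, so either $c<1/2$ (if $d\leq c$) or $c+d<1$ (if $c\leq d$); Lemma \ref{flushmult}(3), applied to the cuspidal branch, further bounds $c<4/5$. These inequalities are the numerical drivers of the remainder of the argument.

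The main step is the local analysis around $\Sigma$. On $\tilde T$, the exceptional chain that resolves the order-$g$ cusp of $\pi(X)$ at $q$ must lie inside the preimage of $\Sigma$, so it is assembled from $\tilde\Sigma$ together with the minimal resolutions of those singularities of $T$ that sit on $\Sigma$. I would enumerate the arrangements of singularities along $\Sigma$ compatible with $\Sigma\cong \mathbb{P}^1$ and with the $K_T$-negativity of the contracted ray, then impose both the $\pi$-triviality $(K_T+cX+dY)\cdot\Sigma=0$ and the combinatorial requirement that, upon collapsing $\Sigma$, one branch becomes a simple cusp and the other a transverse smooth branch. The main obstacle is the bookkeeping of this enumeration: I expect that the flushness window $2c+d<1+\min(c,d)$, combined with numerical identities for the cuspidal coefficient in terms of the singularities on $\Sigma$ analogous to those in Lemma \ref{multiplicity_two_cusp}, will rule out $g\geq 2$ and every alternative singularity arrangement, leaving precisely the stated configuration: $\Sigma$ carries an $A_1$ singularity of type $(2)$ and a singularity of type $(3)$, $\tilde X$ meets $\tilde\Sigma$ transversally at a smooth point of $T$, and $\tilde Y$ is attached to the $(-2)$-curve and disjoint from $\tilde\Sigma$.
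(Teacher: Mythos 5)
Your overall strategy is the right one; it is precisely the method of \cite[Chapter 11]{mckernan}, to which the paper itself defers for this lemma: bound the coefficients via Lemma \ref{flushmult}, then enumerate the possible configurations of singularities of $T$ along $\Sigma$ on the minimal resolution and test each against $\pi$-triviality of $K_T+cX+dY$, flushness, and contractibility of $\Sigma$. The numerical bounds you extract ($c<1/2$ or $c+d<1$) are correct, granting that $q$ is a smooth point of $S$, which you should state explicitly since Lemma \ref{flushmult}(1) requires it.

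However, the argument stops exactly where the lemma begins. The entire content of the statement --- that the cusp has order one, that $\Sigma$ carries precisely one $(2)$ and one $(3)$ singularity, that $X$ meets $\Sigma$ normally at a smooth point of $T$, and that $Y$ attaches to the $(-2)$-curve and is disjoint from $\tilde\Sigma$ --- is the output of the enumeration you describe but do not perform; \say{I expect that the flushness window will rule out every alternative} is a hope, not a proof. Two specific points need attention if you carry it out. First, the claim that each of $\pi(X)$ and $\pi(Y)$ is unibranch at $q$ does not follow merely from $D$ having two branches there: a single smooth component of $W$ could meet $\Sigma$ at two distinct points and contribute both branches of $D$ by itself, with the other component missing $q$ entirely, and this possibility must be excluded before you may identify the smooth branch with $Y$. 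Second, to eliminate $g\geqslant 2$ and the alternative singularity arrangements you need the explicit coefficient identities for each candidate chain on $\Sigma$ (the analogues of $c=(g+1)/(2g+1)$ and its variants in Lemma \ref{multiplicity_two_cusp}), obtained by solving $(K_T+cX+dY)\cdot\Sigma=0$ configuration by configuration; the single inequality $2c+d<1+\min(c,d)$ is not enough to complete the elimination, so the decisive step of the proof is missing.
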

\begin{proof}
See \cite[Lemma 11.3.2]{mckernan}.
\end{proof}

\begin{lemma}\label{multiplicity_three_b}
Suppose that $D$ has multiplicity three. If $D$ is unibranch, then either
\begin{enumerate}
\item $\Sigma$ has singularities $(3,2')$, $(3)$ and meets $X$ transversally at a smooth point
\item $\Sigma$ has singularities $(3)$, $(2)$ and on the minimal resolution $X$ meets $\Sigma$ transversally at the intersection of
$\Sigma$ and the $(-2)$-curve.
\end{enumerate}
\end{lemma}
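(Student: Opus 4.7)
The proof would proceed by an explicit analysis of the minimal resolution $h:\tilde T\to \tilde S$ of the extremal contraction near $\Sigma$, constrained by $\pi$-triviality and flushness, exactly in the spirit of the proofs of Lemma \ref{multiplicity_two_node}--Lemma \ref{multiplicity_three_a} in \cite{mckernan}. Throughout, since $\pi$ is a $K_T$-negative extremal contraction from a $\mathbb{Q}$-factorial klt surface, the exceptional curve $\Sigma$ is a smooth rational curve along which $T$ carries at most two singularities, each a cyclic quotient (chain) singularity by the classification recalled in Appendix \ref{surfacesings}. Hence the fiber of $h$ over $\Sigma$ is a chain of smooth rational curves, whose discrete datum is a pair $(a_1,\dots,a_r)$, $(b_1,\dots,b_s)$ of self-intersection sequences of the two bounding chains.

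My first step is to exploit the hypothesis that $D$ is unibranch of multiplicity three at $q$. This forces $X$ to meet $\Sigma$ at a single point, and to hit $\Sigma$ (on $T$) with local intersection number controlled by the local uniformizer indices of the cyclic quotient singularities along $\Sigma$. Writing $X\cdot\Sigma$ in terms of this local data and equating the pushforward multiplicity $(\pi_*X)_q=3$, I recover a short list of numerical possibilities for where $X$ meets $\Sigma$: either at a smooth point of $T$, at one of the two cyclic singularities, or at the node between two of the resolution chain components on $\tilde T$.

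Next, I impose $\pi$-triviality of $K_T+cX$, i.e.\ $(K_T+cX)\cdot\Sigma=0$. Combined with the discrepancy formula for chain singularities, this gives a single linear relation expressing $c$ as a rational function of $(a_1,\dots,a_r;b_1,\dots,b_s)$ and of the point of incidence. I then apply flushness: by Lemma \ref{flushresolution}, it suffices to check that every $h$-exceptional component $F$ over $\Sigma$ has coefficient strictly less than $c$, and for the branch coefficient inequalities at smooth points of $S$ one uses Lemma \ref{flushmult}. These inequalities translate into explicit numerical bounds on the self-intersections in the chains, and most possibilities for the incidence point and the chain lengths are eliminated at once.

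The case analysis now splits into two subcases depending on the location where $X$ meets $\Sigma$. If $X$ meets $\Sigma$ transversally at a smooth point of $T$, flushness together with the multiplicity-three condition forces the two chains along $\Sigma$ to collapse to the singularities $(3,2')$ and $(3)$, giving case (1). If instead $X$ meets $\Sigma$ at a singular point (necessarily at a singularity refining to the tail of the chain containing the $-2$ curve), the same numerical game forces a $(3)$ singularity on one side and a $(2)$ on the other, with $X$ touching the $-2$ curve at the intersection with $\Sigma$ on $\tilde T$, giving case (2). The main obstacle is the combinatorial bookkeeping in the first subcase, where one must rule out longer extensions of either chain: here the flushness inequality at the terminal exceptional divisor of the chain is the key, showing that any extra $-2$ beyond what is listed would force a coefficient $\geqslant c$, contradicting the strict inequality required by flushness. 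With that ruled out, the two configurations in the statement are the only ones that survive, completing the proof.
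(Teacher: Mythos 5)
First, note that the paper does not reprove this statement: its ``proof'' is a citation to \cite[Lemma 11.3.3]{mckernan}, so the relevant comparison is with Keel--M\textsuperscript{c}Kernan's argument. Your overall strategy --- pass to the minimal resolutions, use $(K_T+cX)\cdot\Sigma=0$ to solve for $c$ in terms of the resolution data, and use flushness (via Lemma \ref{flushresolution} and Lemma \ref{flushmult}) to cut down the combinatorics --- is indeed the strategy of that proof. The problem is that what you have written is a description of the shape of the argument rather than the argument itself: every decisive step is phrased as ``the numerical game forces'' or ``most possibilities are eliminated at once,'' and the entire content of the lemma is precisely that enumeration. Nothing in the proposal actually establishes that the two listed configurations are the only survivors, nor even that a third configuration (say with a longer chain, or with $X$ meeting $\Sigma$ at the other singular point) is excluded.

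There is also a concrete false step at the outset. You assert, as a consequence of the general classification of klt surface singularities, that the exceptional curve $\Sigma$ of a $K_T$-negative extremal contraction carries at most two singularities of $T$, each a cyclic (chain) singularity. This is not a general fact: a $K_T$-negative contractible curve on a klt surface can pass through three singular points (it then contracts to a non-chain klt point --- compare the non-chain targets appearing elsewhere in the hunt, e.g.\ in Lemma \ref{multiplicity_two_node}(4) and in Definition \ref{huntchoice}), and it can also meet a non-chain singularity of $T$. The bound $\deg\operatorname{Diff}_\Sigma(0)<2$ coming from $(K_T+\Sigma)\cdot\Sigma<0$ only limits the number of singular points to three. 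In the present lemma the conclusion does happen to involve exactly two chain singularities, but that is an output of the multiplicity-three and flushness hypotheses, not an input; by assuming it you beg a nontrivial part of the question. To repair the proposal you would need to (i) derive the restriction on $\operatorname{Sing}(T)\cap\Sigma$ from $\pi$-triviality together with the unibranch multiplicity-three condition (which pins down $X\cdot\Sigma$ in terms of the indices of the singularities $X$ meets), and (ii) actually carry out the coefficient computations that eliminate all chains other than $(3,2')$, $(3)$ and $(3)$, $(2)$.
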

\begin{proof}
See \cite[Lemma 11.3.3]{mckernan}.
\end{proof}

The last type of contraction we are interested in are fibrations  (which we call \say{nets}, following \cite{mckernan}).
Let $\pi:T \rightarrow C$ be a $\mathbb{P}^1$-fibration of relative Picard number one, with $T$ a normal surface. Let $\tilde{\pi}$ 
be the composition $\tilde{\pi}:\tilde{T}\rightarrow T \rightarrow C$, where $\tilde{T}$ is the minimal resolution of $T$. 
We describe the fiber $F$ of $\tilde{T}$ above $p\in C$ as the sequence $k(-a)+l(-b)+\cdots +m(-c)$, by which we mean a chain 
of curves of self intersection $-a$, $-b$ and so on, with multiplicities $k, l$ and so on.

We recall here the following definition from Appendix \ref{surfacesings}.

\begin{definition}
A non Du Val klt singularity with coefficient strictly less than $1/2$ is called almost Du Val. They all are of the form $(3,A_k)$ for some $k$.
\end{definition}

\begin{lemma}\label{net_fiber_classification}
Assume $T$ is klt, $G$ is a multiple fiber of $\pi$ of multiplicity $m$, and $G$ contains a cyclic singularity, either Du Val or almost Du Val. 
If $e(T)<2/3$ then $G$ is one of the following:

If $(T,G)$ is not dlt at any singular point:
\begin{enumerate}
\item $(2,2',2), m=2$.
\item $(3,2',2,2), m=3$.
\end{enumerate}

If $(T,G)$ is dlt at one singular point, but not dlt:

\begin{enumerate}[resume]
\item $(2', z), m=4$. $z$ is a non chain singularity, with center -2 and branches $(2)$, $(2)$ and $(2, \cdots, 3')$ (or $(3')$).
\item $(2,3',2; 2'), m=4$.
\end{enumerate}

If $(T,G)$ is dlt:

\begin{enumerate}[resume]
\item $(A_k; (k+1)'), k\leqslant 4, m=k+1$. The fiber is $-(k+1)+[k+1](-1)+k(-2)+[k-1](-2)+\cdots +(-2)$.
\item $(2,3'; 2', 3), m=5$. The fiber is $(-2)+2(-3)+5(-1)+3(-2)+(-3)$.
\item $(3,2,2';4',2), m=7$. The fiber is $(-3)+3(-2)+5(-2)+7(-1)+2(-4)+(-2)$.
\item $(4,2'; 3', 2, 2), m=7$. The fiber is $(-4)+4(-2)+3(-1)+7(-3)+2(-2)+(-2)$.
\end{enumerate}
\end{lemma}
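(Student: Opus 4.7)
The plan is to work on the minimal resolution $\tilde T$ and enumerate the combinatorial possibilities for the fiber $F = \tilde\pi^{-1}(p)$ of the induced fibration $\tilde\pi : \tilde T \to C$. Since the generic fiber of $\tilde\pi$ is $\mathbb{P}^1$ and $\tilde T$ is smooth, $F$ is a connected tree of smooth rational curves. Exactly one component $\Sigma \subseteq F$ dominates the reduced image $G_{\mathrm{red}} \subset T$, while every other component is $h$-exceptional, where $h : \tilde T \to T$ is the minimal resolution. The multiplicity of $\Sigma$ in $F$ equals the fiber multiplicity $m$, and the $h$-exceptional components of $F$ split into connected clusters, each encoding the dual graph of a singularity of $T$ lying on $G$.

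I would then impose the numerical constraints. Writing $F = m\Sigma + \sum_i m_i C_i$ with $C_i$ the $h$-exceptional components, the relations $F \cdot C = 0$ for every component $C$ of $F$, together with adjunction $K_{\tilde T} \cdot C = -2 - C^2$ (each component being a smooth rational curve), pin down the self-intersections $C_i^2$ and the multiplicities $m_i$ once the shape of the dual graph and the attachment of $\Sigma$ are specified. The coefficients $e(C_i; T, 0)$ are then determined by the local linear system at each cluster. The hypothesis $e(T) < 2/3$ sharply restricts which singularities of $T$ can appear, and the further hypothesis that $G$ contains a cyclic singularity forces at least one $A_k$ chain to be attached to $\Sigma$.

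The enumeration then splits into the three cases of the statement, according to whether $(T,G)$ is dlt at no, some, or all of the singular points lying on $G$. For each case I would list the admissible cluster shapes, determine the compatible vertices at which $\Sigma$ can attach (together with the order of contact), and solve the resulting linear system for the self-intersections, multiplicities, and discrepancies. Candidates are filtered by three conditions: $h$ is the minimal resolution, so every $h$-exceptional curve has self-intersection $\le -2$; the multiplicity pattern on $F$ must be that of a genuine multiple fiber of multiplicity exactly $m$; and $e(C_i; T, 0) < 2/3$ for every exceptional $C_i$.

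The main obstacle is the sheer bookkeeping of the case-by-case analysis. The coefficient bound $e(T) < 2/3$ is what ultimately makes the enumeration finite, since it rules out long $A_k$ tails except in the very constrained shapes $(A_k; (k+1)')$ with $k \le 4$ and the handful of sporadic configurations appearing in items (6)--(8). Once the admissible dual graphs are pinned down, the explicit fiber descriptions of the form $-(k+1) + [k+1](-1) + k(-2) + \cdots$ follow mechanically from solving $F \cdot C = 0$ for each component.
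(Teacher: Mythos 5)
Your strategy---pass to the minimal resolution $\tilde T$, observe that the fiber $F=\tilde\pi^{-1}(p)$ is a tree of rational curves with a unique non-$h$-exceptional component $\Sigma$ carrying multiplicity $m$, and enumerate the exceptional clusters using $F\cdot C=0$, adjunction, minimality ($C_i^2\leqslant -2$), and the coefficient bound---is exactly the approach of the source to which the paper defers for this lemma (\cite[Lemma 11.5.9]{mckernan}); the paper itself gives no independent argument. One point to sharpen before carrying out the enumeration: $e(T)<2/3$ does not by itself rule out long $A_k$ tails (their components have coefficient $0$); rather, $K_{\tilde T}\cdot F=-2$ together with $K_{\tilde T}\cdot\Sigma=-1$ forces $\sum_i m_i(-2-C_i^2)=m-2$, so a long chain (which makes $m$ large) must be balanced by a companion cluster containing curves of self-intersection $\leqslant -3$ whose discrepancy grows with $k$, and only then does the bound $e<2/3$ cut the list down to $k\leqslant 4$ and the sporadic cases (6)--(8).
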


\begin{proof}
See \cite[Lemma 11.5.9]{mckernan}.
\end{proof}

\begin{lemma}\label{adhoc}
If $G$ is a multiple fiber of multiplicity three and the coefficient $e(T)<2/3$, then $G$ is one of the fibers of the above classification.
\end{lemma}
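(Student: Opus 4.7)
The plan is to enumerate all multiple fibers $G$ of multiplicity three on $T$ with $e(T)<2/3$ directly and show each appears in Lemma \ref{net_fiber_classification}. I would pass to the minimal resolution $\tilde{\pi}:\tilde{T}\to C$ and write the scheme-theoretic fiber above $\pi(G)$ as $\tilde{F}=3\tilde{G}+\sum_i m_i E_i$, where $\tilde{G}$ is the strict transform of $G$ and the $E_i$ are exceptional over the singular points of $T$ lying on $G$. By the classification of klt surface singularities in Appendix \ref{surfacesings}, the condition $e(T)<2/3$ forces every singularity of $T$ on $G$ to be Du Val ($A_n$, $D_n$, $E_6$, $E_7$, $E_8$) or almost Du Val of the form $(3,A_k)$, whose dual graphs are either chains or trees branching at a single trivalent node.

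The numerical conditions $\tilde{F}\cdot E=0$ translate into linear recursions for the $m_i$ along each dual graph, while $\tilde{F}\cdot\tilde{G}=0$ determines $\tilde{G}^2$. Along a $(-2)$-chain the recursion is arithmetic, so multiplicities are controlled by two initial values subject to a terminal divisibility condition; for instance, a pure $A_k$-chain attached to $\tilde{G}$ at one endpoint produces $m_1=3k/(k+1)$, together with a further divisibility ensuring that $\tilde{G}^2$ is an integer. The analogous equations at a $(-3)$-curve or at a trivalent node are stricter and, combined with positivity and central multiplicity three, essentially force the configuration to be a chain. In particular the branching possibilities $D_n$, $E_n$ and $(3,A_k)$ with $k\geqslant 1$ attached at a trivalent node are ruled out, because balancing two or three positive integer branches against a central multiplicity of three is impossible.

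What remains is a short list of chain configurations, and by matching them against the cases of Lemma \ref{net_fiber_classification} one finds precisely cases $(2)$ and $(5)$ with $k=2$: the fiber over a $(3,A_3)$ almost Du Val singularity and the fiber combining an $A_2$ Du Val singularity with an auxiliary $(-3)$-curve, both of which carry a cyclic singularity. The main obstacle will be the arithmetic for the branching cases, but since multiplicity three leaves essentially no slack for branching solutions in positive integers, each of them collapses to a one-line divisibility or positivity check, and the enumeration is tractable.
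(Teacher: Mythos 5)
The paper itself only cites \cite[Lemma 11.5.13]{mckernan} for this statement, so you are supplying an argument where the paper gives none; unfortunately your reduction step is false and it removes exactly the difficulty the lemma is meant to address. You claim that $e(T)<2/3$ forces every singularity of $T$ on $G$ to be Du Val or almost Du Val (i.e.\ of type $(3,A_k)$). This contradicts Proposition \ref{singlist} in the paper's own appendix: the singularities $(4)$, $(2,3,2)$, $(3,A_j,3)$ and a non-chain singularity all have coefficient $1/2$, while $(4,2)$ has coefficient $4/7$ and $(2,3,A_j)$ has coefficient between $1/2$ and $3/5$ --- all strictly below $2/3$, and none of them cyclic Du Val or almost Du Val. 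Indeed Lemma \ref{net_fiber_classification} itself lists fibers containing such points (e.g.\ $(2,3',2;2')$ in case $(4)$ and $(4',2)$ in case $(7)$); its hypothesis is only that $G$ contains \emph{at least one} cyclic Du Val or almost Du Val singularity, not that all singularities on $G$ are of this type. The entire content of Lemma \ref{adhoc} is to show that for $m=3$ the classification still applies \emph{without} assuming the existence of such a point on $G$, so your enumeration, which silently discards all the other admissible singularity types, begs the question.

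The chain computations you do carry out are fine in spirit (e.g.\ an $A_k$-chain hanging off $\tilde{G}$ forces $(k+1)\mid 3$, hence $k=2$), and the final answer --- that the only multiplicity-three fibers are cases $(2)$ and $(5)$ with $k=2$ of Lemma \ref{net_fiber_classification} --- is the expected one. But to close the gap you would have to run the fiber analysis for every klt singularity type of coefficient below $2/3$, including $(4)$, $(4,2)$, $(2,3,2)$, $(3,A_j,3)$, $(2,3,A_j)$, longer chains with coefficient in $[3/5,2/3)$, and the non-chain singularities, using the structure of degenerate fibers of a $\mathbb{P}^1$-fibration (the fiber on $\tilde{T}$ is a tree of rational curves contracted from a $0$-curve, with integrality and positivity constraints on all multiplicities) to exclude each of them. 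That is precisely the case analysis carried out in \cite[Lemma 11.5.13]{mckernan}, and it is not a one-line divisibility check.
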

\begin{proof}
See \cite[Lemma 11.5.13]{mckernan}.
\end{proof}

\subsection{Useful facts}

Here we collect some results that are somehow unrelated to the previous discussion, but that will be useful later on.

\begin{lemma}\label{sum_ten}
Let $S$ be a rank one log del Pezzo surface and $\tilde{S}$ its minimal resolution. Then $K_{\tilde{S}}^2 + \rho(\tilde{S}) = 10$.
\end{lemma}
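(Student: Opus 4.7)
The plan is to reduce the identity to the standard topological/numerical invariants of a smooth projective rational surface and then apply Noether's formula. First I would observe that $\tilde{S}$ is a smooth projective surface birational to $S$, so it suffices to show that $S$ (equivalently $\tilde{S}$) is rational: once rationality is established, $H^1(\tilde{S},\mathcal{O}_{\tilde{S}})=0$ and $H^2(\tilde{S},\mathcal{O}_{\tilde{S}})=0$, so $\chi(\mathcal{O}_{\tilde{S}})=1$, and moreover $\rho(\tilde{S}) = b_2(\tilde{S})$ (the Néron--Severi group equals the second cohomology up to torsion for rational surfaces). Rationality of a klt Fano surface is a standard fact: $-K_S$ ample implies $\kappa(\tilde{S},K_{\tilde{S}})=-\infty$ (since $\pi_*K_{\tilde{S}} \leqslant K_S$ up to effective exceptional contributions), so by the Enriques--Kodaira / Bombieri--Mumford classification $\tilde{S}$ is ruled, and since a klt Fano surface is simply connected in the appropriate algebraic sense (or more elementarily, since $h^1(\mathcal{O}_S)=0$ on a Fano variety), the base of the ruling is $\mathbb{P}^1$ and $\tilde{S}$ is rational.

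Next I would apply Noether's formula to the smooth projective surface $\tilde{S}$:
\[
\chi(\mathcal{O}_{\tilde{S}}) \;=\; \frac{K_{\tilde{S}}^2 + c_2(\tilde{S})}{12}.
\]
Using $\chi(\mathcal{O}_{\tilde{S}})=1$ this gives $K_{\tilde{S}}^2 + c_2(\tilde{S}) = 12$. Computing the topological Euler characteristic from the Betti numbers of a rational surface yields $c_2(\tilde{S}) = e(\tilde{S}) = b_0 + b_2 + b_4 = 2 + b_2(\tilde{S}) = 2 + \rho(\tilde{S})$. Substituting gives $K_{\tilde{S}}^2 + \rho(\tilde{S}) = 10$.

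For readers wishing to avoid the cohomological machinery, an equivalent approach is to run a $K$-MMP on $\tilde{S}$: since $\tilde{S}$ is a smooth rational surface, it is obtained from a minimal model (either $\mathbb{P}^2$ or some $\mathbb{F}_n$ with $n \neq 1$) by a sequence of blow-ups at closed points. One checks $K^2 + \rho = 9 + 1 = 10$ for $\mathbb{P}^2$ and $K^2 + \rho = 8 + 2 = 10$ for every $\mathbb{F}_n$, and a single point blow-up decreases $K^2$ by $1$ while increasing $\rho$ by $1$, so the quantity $K^2 + \rho$ is a birational invariant among smooth rational surfaces, giving the result.

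The only substantive step is the rationality of $\tilde{S}$; everything else is bookkeeping with Noether's formula or with the behavior of $K^2+\rho$ under blow-ups. This is the step I would be most careful to justify cleanly, especially in positive characteristic, where one invokes the classification of surfaces together with the vanishing $h^1(\mathcal{O}_S)=0$ that follows from $-K_S$ ample and Serre duality combined with standard arguments for klt Fano surfaces.
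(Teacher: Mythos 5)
Your proposal is correct, and your second, ``bookkeeping'' argument --- running a $K$-MMP on $\tilde{S}$, landing on $\mathbb{P}^2$ or a Hirzebruch surface, and noting that each smooth point blow-up drops $K^2$ by one while raising $\rho$ by one --- is exactly the proof the paper gives. The Noether-formula route is a fine alternative but buys nothing extra here; in both versions the only substantive input is rationality of $\tilde{S}$ (equivalently, that the MMP terminates in a rational Mori fiber space), which you justify correctly.
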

\begin{proof}
Run the minimal model program on $\tilde{S}$. The end result $S_{min}$ is a Mori fiber space because $\tilde{S}$ is birational to $S$. 
Then $S_{min}$ is either $\mathbb{P}^2$ or a ruled surface. In this last cast it's a Hirzebruch surface because it's rational. 
In any case, it follows that $K_{S_{min}} ^2 + \rho(S_{min})=10$ and a sequence of smooth blow ups does not change the equality.
\end{proof}

\begin{lemma}\label{global_canonical}
Let $S$ be a rank one log del Pezzo surface. Let $n$ be the number of exceptional components in the minimal resolution. Then
\[
K_S ^2 = 9 - n + \sum_i e_i (-2-E_i ^2)
\]

where $e_i$ is the coefficient of the divisor $E_i$.
If $e(S)<1/2$, $u$ is the number of exceptional components coming from Du Val singualrities, and $n_r$ is the number
of points of type $(3,A_r)$, then the above formula takes the form
\[
 K_S ^2 = 9 - u - \sum_r n_r (r+1)\Big( 1- \frac{1}{2r+3}\Big)
\]
\end{lemma}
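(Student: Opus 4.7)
The plan is to pull back $K_S$ to the minimal resolution, square, and then use adjunction on each exceptional curve to rewrite everything in terms of the numerical invariants that appear in the statement.

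First I would write $K_{\tilde{S}} = \pi^* K_S - \sum_i e_i E_i$ by definition of the coefficients $e_i = -a_i$. Squaring and using the projection formula $\pi^* K_S \cdot E_i = 0$ gives
\[
K_{\tilde{S}}^2 = K_S^2 + \Bigl(\sum_i e_i E_i\Bigr)^2.
\]
To compute the self-intersection of the exceptional combination, intersect the identity for $K_{\tilde S}$ with each $E_i$: since $\pi^* K_S \cdot E_i = 0$, we get $\sum_j e_j E_j \cdot E_i = -K_{\tilde{S}} \cdot E_i$, and since each $E_i$ is a smooth rational curve on the smooth surface $\tilde{S}$, adjunction gives $K_{\tilde{S}} \cdot E_i = -2 - E_i^2$. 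Hence $\sum_j e_j E_j \cdot E_i = 2 + E_i^2$, so $\bigl(\sum_i e_i E_i\bigr)^2 = \sum_i e_i(2 + E_i^2)$. Rearranging yields
\[
K_S^2 = K_{\tilde{S}}^2 + \sum_i e_i(-2 - E_i^2).
\]
Now Lemma \ref{sum_ten} gives $K_{\tilde{S}}^2 = 10 - \rho(\tilde{S}) = 10 - (1+n) = 9-n$, since $S$ has Picard number one and $\tilde{S} \to S$ contracts exactly the $n$ exceptional components. This establishes the first formula.

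For the second formula, the assumption $e(S) < 1/2$ means each singularity of $S$ is either Du Val or of type $(3, A_r)$ for some $r \geq 0$ (by the classification reference preceding the lemma). For a Du Val exceptional component we have $e_i = 0$ and $E_i^2 = -2$, so it contributes $0$ to $\sum e_i(-2-E_i^2)$ and $-1$ to $-n$; the total Du Val contribution is thus $-u$. For a $(3, A_r)$ singularity, label the chain so that $E_0$ is the $(-3)$-curve and $E_1,\ldots,E_r$ are the $(-2)$-curves. Then $-2-E_i^2 = 1$ for $i=0$ and $0$ otherwise, so only $e_0$ matters. Solving the linear system $\sum_j e_j E_j \cdot E_i = 2 + E_i^2$ at each $E_i$ gives the recursion $e_{i+1} - e_i = -e_r$ (constant) from the internal $(-2)$-curves, and the boundary equations at $E_0$ and $E_r$ pin down $e_r = 1/(2r+3)$ and $e_0 = (r+1)/(2r+3)$. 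The net contribution of this singularity to $9 - n + \sum e_i(-2-E_i^2)$ is therefore
\[
-(r+1) + \frac{r+1}{2r+3} = -(r+1)\Bigl(1 - \frac{1}{2r+3}\Bigr).
\]
Summing over the $n_r$ points of each type $(3, A_r)$ and combining with the Du Val contribution yields the stated expression.

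The only substantive calculation is the discrepancy computation for the $(3, A_r)$ chain; that is routine linear algebra once the adjunction relations $\sum_j e_j E_j \cdot E_i = 2 + E_i^2$ are in place. Everything else is formal, so I do not anticipate a genuine obstacle.
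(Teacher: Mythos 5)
Your proposal is correct and is exactly the computation the paper has in mind (its own proof is simply the word ``Obvious''): pull back $K_S$ to the minimal resolution, use $\pi^*K_S\cdot E_i=0$ together with adjunction to get $\bigl(\sum_i e_iE_i\bigr)^2=\sum_i e_i(2+E_i^2)$, and invoke Lemma \ref{sum_ten} for $K_{\tilde S}^2=9-n$; your discrepancy computation $e_0=(r+1)/(2r+3)$ for the $(-3)$-curve of a $(3,A_r)$ point is also right and matches the stated second formula.
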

\begin{proof}
Obvious.
\end{proof}

\begin{lemma}\label{high_picard}
Let $S$ be a rank one log del Pezzo surface and $\tilde{S}$ its minimal resolution. Then $\rho(\tilde{S})\geqslant 11$ if $S$ has no tigers in $\tilde{S}$.
\end{lemma}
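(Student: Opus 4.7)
The plan is to translate the claim, via Lemma \ref{sum_ten}, into a bound on $K_{\tilde S}^2$: since $K_{\tilde S}^2 + \rho(\tilde S) = 10$, the inequality $\rho(\tilde S)\ge 11$ is equivalent to $K_{\tilde S}^2 \le -1$. I will prove the contrapositive: if $K_{\tilde S}^2 \ge 0$, then $S$ has a tiger in $\tilde S$, contradicting the hypothesis.

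The main input is Riemann--Roch on the smooth surface $\tilde S$, giving
\[
\chi(\tilde S,-K_{\tilde S}) \;=\; 1 + K_{\tilde S}^2.
\]
Since $S$ is a log del Pezzo surface, its minimal resolution $\tilde S$ is a smooth rational surface, so by Serre duality $h^2(\tilde S,-K_{\tilde S}) = h^0(\tilde S,2K_{\tilde S}) = 0$. Under the assumption $K_{\tilde S}^2 \ge 0$, it follows that $h^0(\tilde S,-K_{\tilde S}) \ge 1$, and I pick an effective member $D \in |{-K_{\tilde S}}|$.

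Now set $\alpha = \pi_* D$. Since $D \sim -K_{\tilde S}$ on $\tilde S$, pushing forward yields $\alpha \sim -K_S$ as Weil divisors; equivalently, $\alpha$ differs from the $\mathbb Q$-Cartier divisor $-K_S$ by a principal (hence Cartier) divisor, so $\alpha$ is itself $\mathbb Q$-Cartier. Note also that $\alpha\ne 0$: otherwise $D$ would be entirely exceptional and $\pi_*D\sim -K_S$ would force $K_S\sim 0$, contradicting ampleness of $-K_S$. Thus $\alpha$ is an effective $\mathbb Q$-Cartier divisor with $K_S+\alpha$ numerically trivial. Let $\Gamma$ denote its log pullback on $\tilde S$; then $K_{\tilde S}+\Gamma\equiv 0$, so $\Gamma\equiv -K_{\tilde S}\equiv D$. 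The difference $\Gamma-D$ agrees with $D$ on non-exceptional components (both have strict transform equal to $\tilde\alpha$), hence is supported on the exceptional locus of $\pi$; being numerically trivial on a divisor with negative definite intersection form forces $\Gamma=D$.

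Since $D$ is an effective integral divisor, every irreducible component of $D$ appears with coefficient at least $1$ in $\Gamma$, and therefore qualifies as a tiger of $S$ in $\tilde S$. This contradicts the no-tiger assumption, so $K_{\tilde S}^2 < 0$, i.e.\ $K_{\tilde S}^2 \le -1$ because it is an integer, and $\rho(\tilde S)\ge 11$. The only point requiring any real care is the $\mathbb Q$-Cartier property of $\alpha$, which as noted is automatic from $\alpha\sim -K_S$; everything else is Riemann--Roch, Serre duality, and the standard rigidity statement for numerically trivial exceptional divisors.
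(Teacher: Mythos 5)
Your proof is correct. The paper itself only cites \cite[(10.3)]{mckernan} for this lemma, and your argument --- combining Lemma \ref{sum_ten} with Riemann--Roch to produce an effective anticanonical divisor $D$ when $K_{\tilde S}^2\geqslant 0$, identifying its pushforward $\alpha$ as a special tiger candidate, and using negative definiteness of the exceptional lattice to see that the log pullback of $\alpha$ equals $D$ --- is precisely the standard argument behind that citation, so you have simply supplied the details the paper delegates to the reference.
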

\begin{proof}
This follows from \cite[(10.3)]{mckernan}.
\end{proof}

\begin{lemma}\label{no_more_singularities}
Let $p$ be a klt singularity with $n$ components and coefficient $e<3/5$. Then $n\geqslant \sum e_i(-2-E_i^2)$
and equality holds if and only if $p$ is of type $(4)$.
\end{lemma}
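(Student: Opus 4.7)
The plan is to use the discrepancy constraints imposed by $e < 3/5$ to reduce the statement to a very short case analysis based on the number of $(-4)$-components.

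First I would show that every exceptional component $E_i$ of the minimal resolution satisfies $-E_i^2 \in \{2,3,4\}$. This follows directly from the defining equation $(K_{\tilde{S}} + \sum_j e_j E_j)\cdot E_i = 0$: combined with the adjunction identity $K_{\tilde{S}}\cdot E_i = -2 - E_i^2$, and setting $a_i = -E_i^2 \geq 2$,
\[
  a_i e_i \;=\; (a_i - 2) + \sum_{j\neq i} e_j (E_i\cdot E_j) \;\geq\; a_i - 2
\]
since all $e_j \geq 0$ and distinct exceptional components meet non-negatively. Hence $e_i \geq 1 - 2/a_i$, and $e_i < 3/5$ forces $a_i \leq 4$. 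Writing $d_i = a_i - 2 \in \{0,1,2\}$ and letting $t, u$ count components with $a_i = 3$ and $a_i = 4$ respectively, the quantity to bound becomes $\sum_i e_i d_i = \sum_{a_i=3} e_i + 2\sum_{a_i=4} e_i$.

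The crucial intermediate claim I would establish is that $u \leq 1$: no klt singularity with $e < 3/5$ admits two $(-4)$-components. This is the main obstacle. The argument would run as follows: if two such components $E_a, E_b$ existed, the unique path $E_a = V_0 - V_1 - \cdots - V_m = E_b$ between them in the dual tree (trees by Appendix~\ref{surfacesings}) would force $e \geq 2/3$ by a recursive use of the discrepancy equations. In the simplest case, with all intermediate $V_i$ being $(-2)$'s and no side branches, the equations $2 e_{V_i} = e_{V_{i-1}} + e_{V_{i+1}}$ together with the endpoint symmetry $e_{V_i} = e_{V_{m-i}}$ force the $e_{V_i}$ to be constant and equal to $e_a$, whereupon the endpoint equation $4 e_a = 2 + e_a$ gives $e_a = 2/3$; introducing $(-3)$'s or side branches along the path only drives $e_a$ higher. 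Thus $e \geq 2/3 > 3/5$, contradicting the hypothesis.

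Granted $u \leq 1$, the inequality splits cleanly into three cases. If $u = 0$ and all $a_i = 2$ (Du Val), then every $e_i = 0$, so $\sum e_i d_i = 0 < n$. If $u = 0$ and at least one $a_i = 3$, then $\sum e_i d_i = \sum_{a_i=3} e_i < \frac{3}{5}\, t \leq n$. If $u = 1$ and $n = 1$, the singularity is of type $(4)$; solving $(K_{\tilde S} + e_1 E_1)\cdot E_1 = 0$ with $E_1^2 = -4$ yields $e_1 = 1/2$ and $\sum e_i d_i = 1 = n$, which is the unique equality case. Finally, if $u = 1$ and $n \geq 2$, denoting the $(-4)$-component by $E_k$,
\[
  \sum_i e_i d_i \;=\; 2 e_k + \sum_{i\neq k,\, a_i=3} e_i \;<\; \tfrac{6}{5} + \tfrac{3}{5}(n-1) \;=\; \tfrac{3}{5}(n+1) \;\leq\; n,
\]
the last inequality being strict for $n \geq 2$. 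In summary, the heart of the argument is the claim that at most one $(-4)$-component can appear under $e < 3/5$; once this is established, the three-case split above yields $n \geq \sum e_i(-2-E_i^2)$ with equality precisely for type $(4)$.
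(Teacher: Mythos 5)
Your proof is correct, but it follows a genuinely different route from the paper's. The paper disposes of this lemma in one line by citing Proposition \ref{singlist}, the explicit classification of klt singularities with coefficient $0<e<3/5$: on that short list the only graphs containing a $(-4)$-vertex are $(4)$ and $(4,2)$, and the inequality $n\geqslant \sum e_i(-2-E_i^2)$ with its equality case is verified by direct inspection of each entry. You instead work directly from the discrepancy equations $(K_{\tilde S}+\sum_j e_jE_j)\cdot E_i=0$, which makes the argument self-contained and independent of the classification; your reduction to $a_i\leqslant 4$ and the four-way case split are all correct, and the final chain of inequalities does close (note that $\frac{3}{5}(n+1)\leqslant n$ exactly when $n\geqslant 2$, so the strictness is right). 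The one step you assert without proof is the monotonicity claim inside the $u\leqslant 1$ argument, namely that replacing $(-2)$-vertices by $(-3)$'s or attaching side branches along the path between two $(-4)$-vertices can only increase the coefficients. This is true and standard — it follows from the fact that the negative of the intersection matrix of a klt graph is an M-matrix with entrywise nonnegative inverse, so the coefficients are monotone in the data $-E_i^2$ and in the addition of vertices — but in a fully written-out version you should either prove this or cite it, since it is the load-bearing step. What each approach buys: the paper's is shorter given that Proposition \ref{singlist} has already been established and is needed elsewhere; yours generalizes more readily (for instance, your displayed estimate already handles $u\geqslant 2$ whenever $n\geqslant 3u/2$, so only the low-$n$ configurations genuinely require the monotonicity argument) and would survive a change of the threshold $3/5$ without redoing a classification.
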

\begin{proof}
Obvious by Proposition \ref{singlist}.
\end{proof}

\begin{lemma}\label{smoothp1}
Suppose $C$ is a smooth rational curve in the smooth locus of a rank one log del Pezzo surface $S$. Then either $S=\mathbb{P}^2$
or $S=\overline{\mathbb{F}}_n$. If $S=\overline{\mathbb{F}}_n$ then $C\in |\sigma_n |$, where $\sigma_n$ is the image of a section of self intersection $n$.
\end{lemma}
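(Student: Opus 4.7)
The plan is to use the complete linear system $|C|$ to map $S$ to projective space, recognize the image as a surface of minimal degree, and conclude via the Del Pezzo--Bertini classification.

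First I would transfer to the minimal resolution $\pi:\tilde S\to S$. Since $C$ avoids every singular point, writing $\pi^*C=\tilde C+\sum a_iE_i$ and pairing with each exceptional $E_j$ gives $\sum a_iE_i\cdot E_j=0$; negative-definiteness of the exceptional intersection matrix forces $a_i=0$, so $\pi^*C=\tilde C$ and $\tilde C^2=C^2$. As $\rho(S)=1$ and $-K_S$ is ample, any effective Weil divisor is numerically a positive multiple of $-K_S$, so $n:=C^2$ is a positive integer ($C$ being Cartier near itself). Adjunction on the smooth surface $\tilde S$ then yields $K_{\tilde S}\cdot\tilde C=-n-2$.

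Next I would build the morphism associated to $|\tilde C|$. Because $\tilde S$ is smooth rational, $H^1(\mathcal O_{\tilde S})=0$, and the sequence
\[
0\to\mathcal O_{\tilde S}\to\mathcal O_{\tilde S}(\tilde C)\to\mathcal O_{\mathbb P^1}(n)\to 0
\]
gives $h^0(\tilde S,\tilde C)=n+2$ together with a surjective restriction to $\tilde C$. Hence $|\tilde C|$ is base-point free; the induced morphism $\tilde\phi:\tilde S\to\mathbb P^{n+1}$ contracts every $\pi$-exceptional curve (each is disjoint from $\tilde C$) and descends to $\phi:S\to\mathbb P^{n+1}$ with $\phi^*\mathcal O(1)=\mathcal O(C)$. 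Since $C$ is ample, $\phi$ contracts no curve, so it is finite; by projection formula $n=C^2=(\deg\phi)\cdot\deg\phi(S)$, and the classical minimal-degree bound for non-degenerate surfaces in $\mathbb P^{n+1}$ gives $\deg\phi(S)\geq n$. Hence $\deg\phi=1$ and $\deg\phi(S)=n$: $\phi$ is a finite birational morphism onto a minimal-degree surface and therefore, the target being normal, an isomorphism.

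The Del Pezzo--Bertini classification lists the non-degenerate surfaces of minimal degree $n$ in $\mathbb P^{n+1}$ as $\mathbb P^2$ itself, the Veronese surface $v_2(\mathbb P^2)\subset\mathbb P^5$, smooth rational normal scrolls, and cones $\overline{\mathbb F}_n$ over rational normal curves of degree $n$. Smooth scrolls have Picard number $2$, so $\rho(S)=1$ rules them out, leaving $S\cong\mathbb P^2$ or $S\cong\overline{\mathbb F}_n$; in the latter case $C$ is a hyperplane section of the cone, which pulls back to the class $C_0+nF=\sigma_n$ on $\mathbb F_n$. The main point to check is that the minimal-degree bound and the Del Pezzo--Bertini classification hold in any characteristic, but both are classical projective-geometry results with characteristic-free proofs, so no additional work is required beyond citing them.
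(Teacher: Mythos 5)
Your argument is correct. The paper itself gives no proof here---it simply cites \cite[Lemma 13.7]{mckernan}---so your write-up is a genuinely self-contained alternative. The route you take (pull $C$ back to the minimal resolution, show $|C|$ is base-point free with $h^0=n+2$ via $H^1(\mathcal O_{\tilde S})=0$, and identify the image of the finite morphism $\phi:S\to\mathbb P^{n+1}$ as a surface of minimal degree via the Del Pezzo--Bertini classification) is the classical projective-geometry proof; the argument this paper uses elsewhere in analogous situations (e.g.\ the proof of Lemma \ref{fence_one_one}) is instead the birational one: reduce to $\tilde C^2=0$, extract/contract to produce a $\mathbb P^1$-fibration, and reconstruct $S$ as a cone by running a relative MMP. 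Your approach buys a cleaner, characteristic-free statement in one shot, at the cost of importing the minimal-degree bound and the Del Pezzo--Bertini list (both of which do hold over any algebraically closed field, as you note). Two steps deserve a word of justification rather than assertion: the descent of $\tilde\phi$ to $S$ is most cleanly seen by observing that $C$ is actually Cartier on $S$ (its non-Cartier locus would sit inside $\operatorname{Sing}(S)$, which $C$ avoids), so one can work with $|C|$ on $S$ directly; and the conclusion that $\phi$ is an isomorphism needs normality of the \emph{target}, which you get only a posteriori from the fact that every surface on the Del Pezzo--Bertini list is normal---it is worth saying this explicitly so the logic does not look circular. With those two remarks spelled out, the proof is complete.
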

\begin{proof}
See \cite[Lemma 13.7]{mckernan}.
\end{proof}

\section{Log del Pezzo surfaces without tigers}\label{sectionnotigers}

We will assume throughout this section that $S_0$ is a rank one log del Pezzo surface with no tigers in $\tilde{S}_0$. Our goal is to explicitly classify all such surfaces. To this end we will 
apply the hunt in the flush case (see Subsection \ref{flushhunt}) 
and carefully analyze each step (all notation is fixed as in (\ref{notation})). We start by studying the case in which $T_1$ is a net, in analogy with \cite[Chapter 14]{mckernan}.

\subsection{$T_1$ a net}\label{section_net}

Here we assume that $T_1$ is a net. If $\operatorname{char}(k)\neq 2,3$, then we show that there is only one possibility for $S_0$ (see Proposition \ref{no_net}).

Recall that $K_{T_1}+a_1 E_1$ is flush, klt and anti-nef by Proposition \ref{hunt}.
We start with the following easy observation.

\begin{lemma}\label{lemma_net}
The $f_0$-exceptional divisor $E_1$ is not contained in a fiber of $\pi_1$. Let $F$ be the general fiber of $\pi_1$ and consider $d=F\cdot E_1$. 
Then $d\geqslant 3$ and $e_0<a_1=2/d$.
\end{lemma}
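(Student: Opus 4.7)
The plan is to combine the two-ray structure on $T_1$ with the no-tiger hypothesis.

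First I would show that $E_1$ is not contained in any fiber of $\pi_1$. Since $\rho(T_1)=2$, the cone $\overline{NE}(T_1)$ is spanned by exactly two extremal rays: one generated by $E_1$ (contracted by $f_0$) and the other $R$ (contracted by $\pi_1$), which contains the class of every general fiber $F$. Every component of every fiber of $\pi_1$ has class in $R$; if $E_1$ lay in a fiber, then $[E_1]\in R$, forcing the two rays to coincide, a contradiction. Hence $\pi_1|_{E_1}$ is surjective onto $\mathbb{P}^1$ and $d=F\cdot E_1$ is a positive integer.

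Next I would compute $a_1$ and verify $e_0<a_1$ by intersecting with $F$. Adjunction on the smooth rational curve $F$ (with $F^2=0$) gives $K_{T_1}\cdot F=-2$. Since $\Delta_0=\emptyset$ we have $\Gamma_1'=a_1 E_1$, so the $R$-triviality from Lemma \ref{hunt_transformation}(4) reads $-2+a_1d=0$, yielding $a_1=2/d$. For the strict inequality, observe that $K_{T_1}+e_0E_1=f_0^*K_{S_0}$ has negative intersection with $F$ (since $F$ is not $f_0$-exceptional and $-K_{S_0}$ is ample), so subtracting from $(K_{T_1}+a_1E_1)\cdot F=0$ yields $(a_1-e_0)d>0$, hence $e_0<a_1$.

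The central point is $d\geq 3$, equivalent to $a_1<1$, and this is where the no-tiger hypothesis enters. I would argue by contradiction: assume $a_1\geq 1$. Because $K_{T_1}+\Gamma_1'$ is $\pi_1$-trivial and $\pi_1$ has relative Picard rank one, $K_{T_1}+\Gamma_1'\equiv \pi_1^* D$ for some $\mathbb{Q}$-divisor $D$ on $\mathbb{P}^1$. The projection formula gives $(\pi_1^* D)\cdot E_1 = d\cdot \deg D$, while Lemma \ref{hunt_transformation}(5) makes this quantity negative; hence $\deg D<0$. Then $\alpha:=\pi_1^*(-D)$ is an effective $\mathbb{Q}$-divisor on $T_1$ with $K_{T_1}+\Gamma_1'+\alpha\equiv 0$. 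Since $E_1$ is not contained in any fiber, it is not a component of $\alpha$, so its coefficient in $\Gamma_1'+\alpha$ is still $a_1\geq 1$. This exhibits $E_1$ as a tiger for $(T_1,\Gamma_1')$ in $\tilde{S}_0$, contradicting Lemma \ref{hunt_transformation}(8). Hence $a_1<1$, and the positive integer $d=2/a_1$ satisfies $d\geq 3$.

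The main obstacle is constructing the witness $\alpha$ in the tiger argument: the fibration structure is essential because it writes $-(K_{T_1}+\Gamma_1')$ as the pullback of an effective divisor on $\mathbb{P}^1$, guaranteeing both the existence of $\alpha$ and the fact that its support (a union of fibers) avoids $E_1$, so that the coefficient $a_1$ is preserved after adjoining $\alpha$.
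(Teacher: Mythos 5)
Your proof is correct, and its overall skeleton matches the paper's: use the two-ray structure to see that $E_1$ is a multisection, get $a_1=2/d$ from $K_{T_1}\cdot F=-2$, and let the no-tiger hypothesis force $d\geqslant 3$. The one place where you genuinely diverge is the execution of that last step. The paper asserts that $-(K_{T_1}+E_1)$ is not nef because $S_0$ has no tigers in $\tilde{S}_0$, and then combines $(K_{T_1}+E_1)\cdot E_1<0$ (which follows from $E_1^2<0$ and $e_0<1$) with the two-edge cone to conclude $(K_{T_1}+E_1)\cdot F=-2+d>0$; making the non-nefness assertion rigorous requires producing an effective $\mathbb{Q}$-divisor numerically equivalent to $-(K_{T_1}+E_1)$, e.g.\ by Riemann--Roch on the rational surface $T_1$. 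You instead exploit the fibration directly: $\pi_1$-triviality writes $K_{T_1}+a_1E_1\equiv\pi_1^*D$, the $E_1$-negativity of Lemma \ref{hunt_transformation}(5) forces $\deg D<0$, and $\pi_1^*(-D)$ is then an explicit effective witness supported on fibers, so it misses $E_1$ and the coefficient $a_1\geqslant 1$ survives into the log pullback on $\tilde{S}_0$, contradicting Lemma \ref{hunt_transformation}(8). This is self-contained and avoids the ``nef implies effective'' step, at the price of being special to the net case (which is all that is needed here); the paper's phrasing is the one that transfers to the birational contractions elsewhere in the hunt. Your direct verifications that $E_1$ is not a fiber component (via the distinctness of the two extremal rays rather than $E_1^2<0$) and that $e_0<a_1$ (by intersecting $f_0^*K_{S_0}$ with $F$) are both sound.
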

\begin{proof}
The proof of \cite[Lemma 14.2]{mckernan} applies independently of the characteristic, and we briefly recall it here. Since $E_1$ is $f_0$-exceptional,  $E_1 ^2 <0$ and thus
$E_1$ cannot be a fiber of $\pi_1$. It follows then that the two edges of the cone of curves of $T_1$ are generated by $F$ and $E_1$. Now notice that
$E_1 ^2 <0$ implies that $(K_{T_1}+E_1)\cdot E_1 < (K_{T_1}+e_0 E_1)\cdot E_1$. The right hand side of this inequality is equal to $f_0 ^*(K_{S_0})\cdot E_1  = 0$.
Therefore $(K_{T_1}+E_1)\cdot E_1<0$.

On the other hand, $(K_{T_1}+E_1)\cdot F= -2+d$. The divisor $-(K_{T_1}+E_1)$ is not nef since $S_0$ has no tigers in $\tilde{S}_0$, and therefore $-2+d>0$. Since $d$ is clearly an integer,
we get that $d\geqslant 3$. Finally, in order to find $a_1$ we simply solve $(K_{T_1}+a_1 E_1)\cdot F=0$.
\end{proof}

\begin{lemma}\label{lemma_net_2}
If $\operatorname{char}(k)\neq 2,3$, then $e_0 \geqslant 1/2$.
\end{lemma}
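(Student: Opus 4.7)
I argue by contradiction. Suppose $e_0 < 1/2$.

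By the classification of klt surface singularities in Appendix \ref{surfacesings} (combined with the definition of almost Du Val given just above Lemma \ref{net_fiber_classification}), the only non-Du Val klt singularity with coefficient strictly less than $1/2$ is of type $(3, A_r)$ for some $r \geq 0$. Since the hunt step requires extracting a divisor that is not a $(-2)$-curve, the point $x_0 \in S_0$ must be of type $(3, A_r)$, $E_1$ is the unique $(-3)$-curve at the end of this chain, and $e_0 = (r+1)/(2r+3)$. Combining this with $e_0 < a_1 = 2/d$ from Lemma \ref{lemma_net} yields $4r+6 > d(r+1)$, which forces $d \in \{3,4,5\}$, with $d = 5$ only when $r = 0$.

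Next I study $\pi_1|_{E_1} : E_1 \to B \cong \mathbb{P}^1$, a finite morphism of degree $d$ between smooth rational curves. A finite map of curves is inseparable only when $\operatorname{char}(k) \mid d$, so by the standing hypotheses $\operatorname{char}(k) \neq 2, 3$ the map is separable for $d = 3, 4$ in every admissible characteristic, and for $d = 5$ provided $\operatorname{char}(k) \neq 5$. In the separable case, Riemann--Hurwitz gives $\deg R = 2d - 2$, and any multiple fiber of $\pi_1$ meeting $E_1$ must have multiplicity $m$ dividing $d$. The remaining sub-case $(d, \operatorname{char}(k)) = (5, 5)$ is handled by factoring the degree-$5$ map through the geometric Frobenius and reducing to the separable situation.

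By Proposition \ref{hunt}, the pair $(T_1, a_1 E_1)$ is flush, so every exceptional divisor over $T_1$ has coefficient strictly less than $a_1 = 2/d$; this sharply constrains the singularities of $T_1$, and Lemma \ref{net_fiber_classification} then enumerates the admissible multiple-fiber configurations. I close the argument by case analysis in $(d, r)$, using the intersection data $E_1^2|_{T_1} = -(2r+3)/(r+1)$ and $K_{T_1} \cdot E_1 = 1$ (both derived from $K_{T_1} + e_0 E_1 = f_0^*(K_{S_0})$ and adjunction), the Picard-number bound $\rho(\tilde{S}_0) \geq 11$ from Lemma \ref{high_picard}, and the formula for $K_{S_0}^2$ in Lemma \ref{global_canonical}. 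In each case I expect the combined numerical constraints coming from Picard number, ramification count, and fiber multiplicities to be incompatible, unless the configuration forces a wild multiplicity-$3$ fiber accessible only when $\operatorname{char}(k) = 3$. The main obstacle is the detailed case analysis, and in particular arranging it so that the unique characteristic-$3$ pathology, which is precisely the configuration giving rise to Example \ref{char3}, is cleanly isolated.
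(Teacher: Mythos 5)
Your setup is sound and matches the paper's strategy: reduce to $x_0$ of type $(3,A_r)$ via the classification of low-coefficient klt singularities, bound $d$ by $e_0<2/d$, and play Riemann--Hurwitz for $\pi_1|_{E_1}$ against Lemma \ref{net_fiber_classification} and the Picard bound of Lemma \ref{high_picard}. Your computations of $E_1^2$ and $K_{T_1}\cdot E_1$ on $T_1$ are correct, and your observation that $d=5$ forces $r=0$ is right. But the proposal stops exactly where the proof begins: you write that you ``expect the combined numerical constraints \ldots to be incompatible'' and that ``the main obstacle is the detailed case analysis.'' That case analysis \emph{is} the lemma; as written, nothing has been proved.

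More importantly, you are missing the structural input that makes the case analysis close. The argument needs the fact that $S_0$ has at least \emph{three} non--Du Val singular points: if there were at most two, $S_0$ would have a tiger by \cite[Lemma 10.4]{mckernan}, contradicting the standing hypothesis of Section \ref{sectionnotigers}. This guarantees at least two non--Du Val points surviving on $T_1$, hence at least two multiple fibers (or two non--Du Val points crowded onto fibers in ways Lemma \ref{net_fiber_classification} forbids), and it is precisely this multiplicity of multiple fibers that overloads the Riemann--Hurwitz count $\deg R=2d-2=4$ once one has forced $d=3$. Without it, a configuration with a single non--Du Val point and one multiple fiber is not obviously excluded by Picard number and ramification alone. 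Two further imprecisions: the divisibility $m\mid d$ holds only when $E_1$ meets the multiple fiber in the smooth locus of $T_1$ (for $r\geqslant 1$ the fiber through the residual $A_r$ point on $E_1$ needs separate treatment, as in the paper's Case 2); and your worry about inseparability for $(d,\operatorname{char} k)=(5,5)$ is moot, since in every branch the argument pins down $d=3$ before Riemann--Hurwitz is invoked, where $\operatorname{char}(k)\neq 3$ ensures tameness --- which is also where the hypothesis $\operatorname{char}(k)\neq 3$ actually enters, rather than through an isolated ``wild fiber'' endgame as you speculate.
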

\begin{proof}
Suppose that we have $e_0<1/2$ instead. By the classification of klt singularities of low coefficient (see Proposition \ref{singlist}), all
singularities are either Du Val or of the form $(3,A_r)$, for some $r\geqslant 0$. Suppose there are $n$ non Du
Val singularities. If $n\leqslant 2$ then $S_0$ has a tiger by \cite[Lemma 10.4]{mckernan}, so $n\geqslant 3$.
In particular there are at least two non Du Val points on $T_1$, $S_0$ is not Gorenstein and  $e_0\geqslant 1/3$.
Also, by Lemma \ref{lemma_net}, $d\leqslant 5$. The first hunt step extracts the $(-3)$ curve $E_1$ from a point with maximal $r$.

\textbf{Case 1:}
Suppose for the moment that $r=0$. On $T_1$ we then only have singularities of type $(3)$ or Du Val. 
By Lemma \ref{net_fiber_classification}, any fiber through a $(3)$ point has multiplicity three and contains only one other singularity, an $A_2$ point. 
Since $E_1$ is in the smooth locus of $T_1$, $m$ divides $d$ and $d=3$. 
There are at least two $(3)$ points on $T_1$, so that if we apply Riemann-Hurwitz to the morphism $\pi_1 |_{E_1}\rightarrow \mathbb{P}^1$,
we see that there are exactly two multiple fibers. This implies that the Picard number of $\tilde{T_1}$ is eight. 
But then $\tilde{S_0}$ has Picard number eight too and hence $S_0$ has a tiger by Lemma \ref{high_picard}. 

\textbf{Case 2:}
Hence $r\geqslant 1$ and on the exceptional divisor $E_1$ we only have one singularity, an $A_r$ point. 
Also, $e_0\geqslant 2/5$, which in turn implies that $d\leqslant 4$ by Lemma \ref{lemma_net}.
Note that a fiber can contain at most two singular points by Lemma \ref{net_fiber_classification}, 
so there is at least one non Du Val point which is not in the same fiber as the $A_r$ point. Call this point
$p$ and the respective fiber $F$. The fiber $F$ meets $E_1$ only at smooth points because of the way we defined $p$. This tells us that
the multiplicity of $F$ can't be more than four. The only non Du Val fibers with multiplicity at most four have multiplicity exactly three by Lemma \ref{net_fiber_classification} 
and the fact that $e_0<1/2$. Hence $d$ is a multiple of three, and so $d=3$.

In summary, $F$ has multiplicity exactly three and passes through either just one singularity of type $(3,2,2,2)$, or two singularities, an $A_2$ point and a $(3)$ point.
This means that $p$ is either of type $(3,2,2,2)$ or $(3)$, and is the only non Du Val point on $F$. By the above there is at least another non Du Val point on $T_1$, which we call $q$. Finally, let $G$ be the fiber through the $A_r$ point of $E_1$. 

\textbf{Case 2a:}
Suppose first that $q$ lies on $G$. By Lemma \ref{net_fiber_classification}, the singularities on $G$ are $A_2$ and $(3)$, G has multiplicity three and $K_{T_1}+G$ is dlt. 
This implies that the intersection of $E_1$ with $G$ at $q$ is either one or two, depending on whether they meet at opposite ends of the $A_2$ chain
or at the same end. This means that they necessarily intersect at another point, but since all other points on $E_1$ are smooth, we get that $G\cdot E_1\geqslant 4$, contradiction.

\textbf{Case 2b:}
Hence $q$ lies on a fiber distinct from $F$ and $G$, which we call $H$. Clearly $H$ has multiplicity three. Notice now that one of the points of intersection of $E_1$
and $G$ is ramified for $\pi_1 |_{E_1}$, for either they only meet at the $A_r$ point, which is then ramified, or they meet at another point, which is ramified
since it is a smooth point and $G$ is a multiple fiber. But then applying Riemann-Hurwitz we get a contradiction, since there would be at least three
ramified points, two of which ramified of order three.
\end{proof}

\begin{proposition}\label{no_net}
If $\operatorname{char}(k)\neq 2,3$ and if $T_1$ is a net, then $\tilde{S}_0$ is obtained as follows: choose a smooth rational curve 
$E_1\subseteq \mathbb{P}^1\times \mathbb{P}^1$ which is a triple section for the first projection and has exactly three ramified points $p'$, $q'$, and $r'$.
Suppose that $p'$ and $q'$ are of order two and that $r'$ is of order three. Let $F$, $G$ and $H$ be the corresponding fibers. Blow up above $p'$ three times along $E_1$.
Then blow up twice above $q'$ along $E_1$. Finally, blow up four times above $r'$ along $E_1$.
\end{proposition}
\begin{proof}
Suppose $T_1$ is a net. 
By Lemma \ref{lemma_net_2}, $e(S_0)\geqslant 1/2$. By Lemma \ref{lemma_net}, $d=3$ and $e(S_0)<2/3$.
The spectral value of each singularity along $E_1$ is at most one by Lemma \ref{smallsvalue}.
The classification of Lemma \ref{net_fiber_classification} therefore applies: either the multiple fiber meets $E_1$ at smooth points,
in which case we can use Lemma \ref{adhoc} because $m=3$, or it meets $E_1$ in singular points of spectral value less or equal to one,
which are cyclic Du Val or almost Du Val by \cite[Lemma 8.0.8]{mckernan}. There are four possible cases: $T_1$ has zero, one, two or three singularities
on $E_1$.

\textbf{Case 1:} $T_1$ is smooth along $E_1$. Every multiple fiber has multiplicity three and either contains
just one singularity, a $(3,2,2,2)$ point, or two singularities, a $(3)$ point and an $A_2$ point. In any case, the singular points on a multiple fiber contribute at most
four to the Picard number of the minimal resolution. By Riemann-Hurwitz applied to $\pi_1 |_{E_1}$, there are exactly two singular fibers, and hence
the Picard number of the minimal resolution of $S_0$ is at most ten, which contradicts Lemma \ref{high_picard}.

\textbf{Case 2:} There is just one singular point, which we call $p$, on $E_1$. Let $F$ be the the fiber through it.

\textbf{Case 2a:}
Let us first make the further assumption that the multiplicity
of $F$ is at least three. Then $p$ is ramified of order three and there can be at most one other singular fiber,
necessarily of multiplicity three. As we saw above, the singular points of this other fiber contribute to the Picard number of the minimal resolution by at most four. 
Therefore, the minimal resolution of $T_1$ must have at least five exceptional components above the singularities in $F$ by Lemma \ref{high_picard}.
By Lemma \ref{net_fiber_classification}, the list of possibilities is $(A_4; 5)$ and $m=5$ or $(3,2,2;4,2)$ and $m=7$ or
$(2;z)$ with $z$ a non chain singularity and $m=4$. The first case does not occur: since the spectral value of $p$ is at
most one, $p$ must be the $A_4$ point. Since in this case both $K_{T_1}+F$ and $K_{T_1}+E_1$ are dlt, 
an easy computation shows that the intersection $F\cdot E_1$ can't be three, which contradicts the fact that $d=3$. 
In the second case $p$ must be the $(3,2,2)$ singularity. Again $K_{T_1}+F$ is dlt and $F$
can touch either end of the singularity. A computation shows that $E_1\cdot F=3$ is only achieved when 
both the strict transforms of $E_1$ and $F$ on the minimal resolution touch the $(-3)$ curve, and do not meet on it. But then the singularity of $x_0$
has to be of the form $(k, 3, 2, 2)$. Since $E_1$ must be the $(-k)$ curve, and since its coefficient is the highest due to the definition of the hunt, we
must have $k\geqslant 5$. However this gives $e_0> 2/3$, contradiction.
In the third case $p$ is the $(2)$ point, $K_{T_1} + F$ is dlt at $p$ and $F$ can't meet $E_1$ on any other point since $m=4$. 
Under these hypotheses however  $F\cdot E_1\neq 3$, contradiction.

\textbf{Case 2b:}
We now assume that the multiplicity of $F$ is two. By Lemma \ref{net_fiber_classification}, $F$ contains either only one $A_3$ singularity or two $A_1$ singularities. 
Notice that $F$ can't meet $E_1$ on three different points, so there is at least one ramification point on it. Any other fiber has multiplicity
three and contributes by at most four to the Picard number of the minimal resolution. There must be at least two of them
by Lemma \ref{high_picard}, but then this contradicts Riemann-Hurwitz applied to $\pi_1 |_{E_1}$.

\textbf{Case 3:} Suppose there are exactly two singular points $p$ and $q$ on $E_1$. 
If $p$ and $q$ lie on the same fiber, they must be the only points of $E_1$ on the fiber since $d=3$ and the fiber is
multiple. In particular, exactly one of them is a ramification point for $\pi_1 |_{E_1}$.
Since both $p$ and $q$ are Du Val or almost Du Val, their fiber contributes by at most four to the Picard number of 
the minimal resolution by Lemma \ref{net_fiber_classification}. There are then at least two more singular fibers, each of multiplicity three, contradicting 
Riemann-Hurwitz. Hence $p$ and $q$ lie on distinct fibers $F$ and $G$ respectively. 

\textbf{Case 3a:} We further suppose there is another multiple fiber, which we call $H$, necessarily of multiplicity three. 
Then $F$ and $G$ have multiplicity two by Riemann-Hurwitz. Again by Lemma
\ref{high_picard} and Lemma \ref{net_fiber_classification}, at least one of the singularities on $E_1$, say $p$, is an $A_3$ point. 
Notice that $F$ meets transversally the middle component of $p$ and that $K_{T_1}+E_1$ is dlt. This implies that $F\cdot E_1 = 1$. Therefore
$F$ must intersect $E_1$ on another point, which is necessarily a smooth point of $T_1$ and which
is a ramification point of order two for $\pi_1 |_{E_1}$. Note that $x_0$ can't have type $(2,2,2,k,2,2,2)$ with $k\geqslant 3$ because $e(S_0)<2/3$,
hence $G$ contains two $A_1$ singularities. $H$
passes through a $(3,2,2,2)$ point by considerations on the Picard number. Notice that $G$ is
disjoint from $E_1$ on the minimal resolution above the singular point, since otherwise the $A_1$ point would be a ramification point of order three, contradicting
Riemann-Hurwitz. It follows that $G$ also meets $E_1$ on a point different from $q$, which is necessarily a smooth point of ramification of order two. Finally $E_1$ is a $(-3)$ curve
because $e_0<2/3$. Contracting all the $(-1)$ curves from the minimal resolution of $T_1$ we perform nine blowdowns on
$E_1$. This means that we reach a Hirzebruch surface $\mathbb{F}_n$, the image of $E_1$ has self intersection six and is a triple section.
Write $E_1\equiv aC+ bf$ in $\mathbb{F}_n$, where $C$ is the negative section and $f$ a general fiber. Then $a=3$ and $b=(3n+2)/2$.
By \cite[Chapter V, Proposition 2.20]{hartshorneag}, we must have $b>3n$, which gives $n=0$. Therefore $\mathbb{F}_n$ is $\mathbb{P}^1 \times \mathbb{P}^1$.
By working backwards, we find the surface described in the statement.

\textbf{Case 3b:} Now assume there are only two singular fibers. One of them contributes by at least five to the Picard number of the minimal
resolution by Lemma \ref{high_picard}. But this kind of fiber was ruled out in Case 2a.

\textbf{Case 4:} As the last case, suppose there are three singularities $p$, $q$ and $r$ along $E_1$. Since $x_0$ is a klt singularity with coefficient
less than $2/3$, the singularities on $E_1$ are respectively $(2), (2)$ and $(A_j, 3)$ for some $j$. The two $A_1$ points $p$ and $q$ can't lie on the same
fiber $F$, otherwise $K_{T_1}+F$ is dlt and there is no configuration in which $E_1\cdot F= 3$. So $F$ and $G$ contain each two $A_1$ points and $K_{T_1}+F+G$ is dlt.
By Lemma \ref{net_fiber_classification} an $A_1$ point and $(A_j, 3)$ can't lie on the
same fiber either. Hence $p$, $q$ and $r$ lie on different fibers, say $F$, $G$ and $H$. The fibers through the $A_1$ points can't contain
a non chain singularity, for otherwise $m=4$ and there is no way to get $E_1\cdot F=3$. Since the $(A_j,3)$ point
can lie only in fibers of multiplicity at least three, $\pi_1 |_{E_1}$ is ramified of order three at $r$. 
The contribution of $G$ to the Picard number of the minimal resolution of $T_1$ must be at least five, since there are no
more multiple fibers by Riemann-Hurwitz. Then $j=2$ and $G$ contains a $(4,2)$ point by Lemma \ref{net_fiber_classification}. The coefficient of $(4,2)$ is
$4/7>1/2$, which contradicts the choice of $x_0$ in the hunt.
\end{proof}
\subsection{$g(A_1)>1$}\label{section_big_genus}

In the previous subsection we saw that $T_1$ can't be a net, so $\pi_1$ is a birational contraction. Here we prove:

\begin{proposition}\label{nobiggenus}
If $g(A_1)>1$ and $\operatorname{char}(k)\neq 2$, then $S_0$ is the surface of \cite[Lemma 15.2]{mckernan}.
\end{proposition}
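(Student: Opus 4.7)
The plan is to follow McKernan's original argument in \cite[Chapter 15]{mckernan} and verify that each step carries through when $\operatorname{char}(k) \neq 2$. By Proposition \ref{hunt}(2) we are in the situation where $\pi_1 \colon T_1 \to S_1$ is a birational contraction with $A_1 = \pi_1(E_1)$ a curve of geometric genus $g(A_1) > 1$, and $(S_1, a_1 A_1)$ is flush and anti-nef. Since $S_0$ has no tigers in $\tilde{S}_0$, we have $a_1 < 1$; otherwise $A_1$ itself would be a tiger in any resolution dominating $T_1$, contradicting Lemma \ref{hunt_transformation}(8).

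First I would extract numerical constraints. Adjunction on $A_1$ gives $K_{S_1} \cdot A_1 + A_1^2 \geq 2g(A_1) - 2 \geq 2$, while $(K_{S_1} + a_1 A_1) \cdot A_1 < 0$ gives $K_{S_1} \cdot A_1 < -a_1 A_1^2$. Together these force $(1-a_1) A_1^2 > 2$, so $a_1$ is bounded away from $0$, and Lemma \ref{flushmult} then controls the multiplicities of $A_1$ at smooth points of $S_1$ (forcing $M = 2$) and bounds the coefficients at any cusps of $A_1$. Next I would run a second hunt step to place ourselves in one of the cases of Proposition \ref{hunt}, and combine the classification of the possible contractions $\pi_2$ (Lemmas \ref{multiplicity_two_node}--\ref{multiplicity_three_b}), the classification of multiple fibers in the net case (Lemmas \ref{net_fiber_classification}--\ref{adhoc}), and the Picard number bound of Lemma \ref{high_picard} to narrow down the geometry of $S_1$ to a unique surface, which is precisely the one in \cite[Lemma 15.2]{mckernan}. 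The surface $S_0$ is then recovered by reversing the first hunt step, whose structure is fully determined by Definition \ref{huntchoice} and the classification above.

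The main obstacle is the characteristic dependence, which enters through the fine analysis of singularities of $A_1$ and their strict transforms. McKernan's argument uses the standard blow-up configurations $I, II, III, u, v, w, \ldots$ of Lemma \ref{multiplicity_two_cusp} to classify unibranch double points of $A_1$ and their interaction with the contracted ray; these configurations presuppose that cusp resolutions proceed as in characteristic zero. In characteristic $2$ this can fail because tangent directions at a unibranch singularity may behave inseparably, so the resolution sequence need not match any of the configurations listed. For $\operatorname{char}(k) \neq 2$ the resolutions are of the standard form, so McKernan's analysis applies essentially verbatim; any topological ingredients in the original proof (for example uses of the simply connected hypothesis) can be replaced by the purely algebraic flushness and Picard-number bounds developed in Section \ref{sectionhunt}.
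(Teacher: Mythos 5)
Your proposal is a plan rather than a proof, and it misses the central mechanism of the paper's argument. The classification lemmas \ref{multiplicity_two_node}--\ref{multiplicity_three_b} are applied in this section to the \emph{first} contraction $\pi_1$ (with $D=A_1$ and $q=q_1$), since $\pi_1$ is what creates the singular point of $A_1$ of genus $g\geqslant 2$; you instead propose to run a second hunt step and classify $\pi_2$, which is not how the case division works (a second hunt step appears only in one sub-case of configuration $III$). More importantly, the engine that eliminates almost every configuration never appears in your write-up: because $\rho(S_0)=1$, one has $K_{S_0}^2=(K_{S_0}\cdot\overline{\Sigma}_1)^2/\overline{\Sigma}_1^2$, which is computed explicitly from the local data of each configuration together with the type of $x_0$, and this is then played against the global formula of Lemma \ref{global_canonical} (and Lemma \ref{no_more_singularities}) to show that no admissible combination of singularities of $S_0$ realizes that value. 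Without this, \say{combine the classifications to narrow down the geometry of $S_1$} has no content: the proof \emph{is} that case analysis, including the identification of the exceptional surface, which arises precisely in configuration $II$ with $x_0=(2,3,2,2)$ and a $(3,A_5)$ point.

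Two further genuine errors. First, multiplicity two is not \say{forced} by Lemma \ref{flushmult} at the outset: the lower bound actually available is $a_1>e_0\geqslant 1/3$, which comes from $S_0$ not being Gorenstein, not from your adjunction computation (that only shows $(1-a_1)A_1^2>2$, which bounds $A_1^2$ and says nothing about $a_1$ being bounded away from $0$). With $a_1>1/3$ Lemma \ref{flushmult} allows multiplicity three, and excluding it requires a separate argument via Lemmas \ref{multiplicity_three_a} and \ref{multiplicity_three_b} plus the numerics above. Second, the cases that survive the numerics are not disposed of by \say{flushness and Picard-number bounds} but by the reduction to Gorenstein log del Pezzo surfaces and extremal rational elliptic surfaces (the systems $F$, $M$ and the surface $W$ of \cite[Chapter 12]{mckernan}, together with Theorem \ref{elliptic_classification} and Lemma \ref{cuspgorenstein}); this is also where $\operatorname{char}(k)\neq 2$ actually enters (e.g.\ via \cite[Lemma 15.3]{mckernan} in the type $U$/$V$ case), rather than through any inseparability phenomenon in the resolution of unibranch points.
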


We will compare the local information we get by the geometry of the hunt contraction, and the global information
we get from Lemma \ref{global_canonical} to obtain numerical obstructions. First note that $S_0$ is not Gorenstein, 
for otherwise it would have a tiger by Lemma \cite[Lemma 10.4]{mckernan}.
Hence $1/3\leqslant e_0 < a_1$. Therefore $A_1$ has either a point of multiplicity three or a double point by Lemma \ref{flushmult}.

\begin{lemma}
$A_1$ cannot have multiplicity three.
\end{lemma}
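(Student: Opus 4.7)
The plan is to suppose for contradiction that $A_1$ has a point $q$ of multiplicity three and to derive a numerical contradiction from the classification of hunt contractions in multiplicity three. First I would observe that $q$ must equal $q_1 = \pi_1(\Sigma_1)$: since $E_1 \subset T_1$ is a smooth rational curve (being an irreducible exceptional divisor of $\tilde{S}_0$) and $\pi_1$ is an isomorphism on $T_1 \setminus \Sigma_1$, the restriction $\pi_1|_{E_1} \colon E_1 \to A_1$ is the normalisation of $A_1$, so $A_1$ is smooth away from $q_1$. The hypotheses of Lemmas \ref{multiplicity_three_a} and \ref{multiplicity_three_b} are therefore satisfied, yielding three mutually exclusive \'{e}tale-local possibilities for $T_1$ around $\Sigma_1$: (A) $A_1$ has a smooth branch and a cuspidal branch at $q_1$, and $\Sigma_1$ carries singular points of types $(2)$ and $(3)$; (B1) $A_1$ is unibranch at $q_1$ and $\Sigma_1$ carries singular points of types $(3, 2')$ and $(3)$; or (B2) $A_1$ is unibranch at $q_1$ and $\Sigma_1$ carries singular points of types $(3)$ and $(2)$.

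In each case I would compare the local data along $\Sigma_1$ with the global invariants of $S_0$, using the standing inequalities $1/3 \leqslant e_0 < a_1$ and the flushness of $K_{T_1} + a_1 E_1$. Each singular point on $\Sigma_1$ is either inherited from a singularity of $S_0$ distinct from $x_0$ or is residual from the extraction of $E_1$ at $x_0$. By Definition \ref{huntchoice}, the hunt selects $E_1$ as the divisor of largest coefficient at $x_0$; the shape of $x_0$ together with Proposition \ref{singlist} then restricts the residual to a chain of Du Val components. In particular, no $(3)$ or $(3, 2')$ point along $\Sigma_1$ can be residual, so each such point corresponds to a distinct non-Du Val singularity of $S_0$ other than $x_0$, producing a lower bound on the number and type of singularities of $S_0$.

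Finally I would feed these constraints into the global formula of Lemma \ref{global_canonical} to compute $K_{S_0}^2$ and show that $K_{S_0}^2 \leqslant 0$, contradicting the ampleness of $-K_{S_0}$; where the global formula alone is inconclusive I would instead use the Picard-number lower bound $\rho(\tilde{S}_0) \geqslant 11$ from Lemma \ref{high_picard}, matched against the exceptional components contributed by the forced singularities. The main obstacle will be Case B1, where the $(3, 2')$ along $\Sigma_1$ forces an additional $(3, A_s)$ with $s \geqslant 1$ on $S_0$ beyond $x_0$ and the other $(3)$ point; balancing the resulting depletion of $K_{S_0}^2$ against the Picard-number bound is the tight part of the bookkeeping. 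Cases A and B2 are handled analogously and are less delicate, since only an extra $(3)$ singularity is forced on $S_0$.
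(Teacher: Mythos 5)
Your reduction to the configurations of Lemmas \ref{multiplicity_three_a} and \ref{multiplicity_three_b}, and the general plan of playing the local geometry of $\pi_1$ against Lemma \ref{global_canonical}, match the paper. But two things are off, and the second one is fatal to the argument as proposed. First, you never record the inequality $a_1<1/2$, which is the actual starting point: since $A_1$ has a point of multiplicity three, Lemma \ref{flushmult}(1) applied to the flush pair $(S_1,a_1A_1)$ gives $3a_1-1<a_1$, so $a_1<1/2$, hence $e_0<1/2$, and by maximality of $e_0$ together with Proposition \ref{singlist} \emph{every} singularity of $S_0$ — not just the residual of $x_0$ or the points visible along $\Sigma_1$ — is Du Val or of type $(3,A_r)$. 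Your weaker bookkeeping ("each $(3)$ on $\Sigma_1$ is a non--Du Val point of $S_0$") does not control the singularities away from $\Sigma_1$, and without that control Lemma \ref{global_canonical} says nothing.

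Second, the contradiction you aim for would not materialize. The singularities forced by the configuration (in case A, a $(3,2)$ at $x_0$ plus a $(3)$ and a $(2)$) deplete the right-hand side of Lemma \ref{global_canonical} by only about $2$, so $K_{S_0}^2\leqslant 0$ is far out of reach; likewise they contribute only four exceptional components, so $\rho(\tilde{S}_0)\geqslant 11$ gives nothing. The mechanism in the paper is different: because $\rho(S_0)=1$, the divisors $K_{S_0}$ and $\overline{\Sigma}_1$ are proportional, so the explicit local description of the contraction computes $K_{S_0}^2=(K_{S_0}\cdot\overline{\Sigma}_1)^2/\overline{\Sigma}_1^2$ exactly — e.g.\ $1/(15\cdot 11)$ in case A, and $(r-6)^2/\bigl(15(2r+3)(13r+12)\bigr)$ in the first case of Lemma \ref{multiplicity_three_b} — and this value is then shown to be \emph{arithmetically} incompatible with Lemma \ref{global_canonical}: the admissible singularities contribute denominators that are products of the indices $2r+3$ with $(r+1)/(2r+3)\leqslant e_0$, so for instance the factor $11$ in case A cannot be produced. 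The positivity argument $K_{S_0}^2<0$ (via Lemma \ref{no_more_singularities}) appears only as a last resort in the single sub-case $r=5$, after the denominator analysis has already forced an implausibly long list of singularities. So you need both the explicit rank-one computation of $K_{S_0}^2$ from the configuration and the denominator/integrality comparison; neither is present in your outline.
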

\begin{proof}
Suppose that $A_1$ has multiplicity three. Then $a_1<1/2$ by Lemma \ref{flushmult} and all the singularities in $S_0$
are either Du Val or almost Du Val. The possible configurations for the contraction are given by Lemma 
\ref{multiplicity_three_a} and Lemma \ref{multiplicity_three_b}.

Assume that $\pi_1$ is given by Lemma \ref{multiplicity_three_a}. Then $x_0$ has type $(3,2)$. Say that on $S_0$ we 
have $n$ singularities of type $(3)$, $m$ of type $(3,2)$ and possibly some Du Val ones. We compute
\[
  K_{S_0}^2 = \frac{(K_{S_0}\cdot \overline{\Sigma}_1)^2}{\overline{\Sigma}_1 ^2} = \frac{1}{15\cdot 11}
\]
From the projection formula we get that $K_{S_0}^2 = K_{\tilde{S}}^2 + n/3 + 2m/5$, but this can't happen because
there is a factor eleven in the denominator of $K_{S_0}^2$ and $K_{\tilde{S}}^2$ is an integer.

Now assume that $\pi_1$ is given by the first case of Lemma \ref{multiplicity_three_b} (the second case is numerically the same
as above). Say $x_0$ has type $(3,A_r)$, with $r\geqslant 1$. We have that $K_{S_0} \cdot \overline{\Sigma}_1 = -7/15+(r+1)/(2r+3)$.
By adjunction $\overline{\Sigma}_1 ^2 = -1/15+(r+1)/(2r+3)$. Then, as above,
\[
 K_{S_0} ^2 = \frac{(r-6)^2}{15(2r+3)(13r+12)}
\]
Since $K_{S_0}\cdot \overline{\Sigma}_1 < 0$, we have that $r \leqslant 5$, hence the non Du Val singularities can only be
of type $(3,A_j)$ with $j\leqslant r\leqslant 5$. One checks that for any $r\leqslant 5$ Lemma \ref{global_canonical} cannot hold.
Suppose for example that $r=1$. Then $K_{S_0}^2 = 1/75$, hence there is a factor twenty five in the denominator, contradiction by Lemma \ref{singlist}.
All the other cases are similar, except when $r=5$. In this case $K_{S_0}^2 = 1/(3\cdot 5\cdot 7\cdot 11\cdot 13)$.
For this to be possible, there should be a singularity of type $(3, A_j)$ for $j=0,1,2,4,5$. But then, again by
Lemma \ref{global_canonical} and Lemma \ref{no_more_singularities} we get $K_{S_0}^2 < 0$, contradiction.
\end{proof}

Now we consider double points of genus $g\geqslant 2$. We analyze them via Lemma \ref{multiplicity_two_node} and Lemma \ref{multiplicity_two_cusp}.
Note that configuration $0$ of Lemma \ref{multiplicity_two_node} does not arise since $E_1$ is smooth.
Also note that configurations $I$ and $II$ are numerically the same in the node and cusp cases. In these cases we will only discuss cusps since we only use 
the numerical properties of the configuration. Let's start with configuration $I$, where we also allow $g=1$ because it simplifies some work later on.

\begin{lemma}\label{cusp_one}
Suppose $A_1$ has a double point and $g(A_1)\geqslant 1$. Then configuration $I$ does not arise.
\end{lemma}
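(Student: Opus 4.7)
The plan is to mimic the push-pull argument of the multiplicity-three lemma above, now extended to the two cases of a double point.

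First I pin down $a_1$. Both the node case (Lemma \ref{multiplicity_two_node}(1)) and the cusp case (Lemma \ref{multiplicity_two_cusp}(1)) of configuration $I$ force $a_1 = 1/2$: in the cusp case directly $c = a_1 = 1/2$; in the node case, since $A_1$ is irreducible, the two branches of $A_1$ at $q_1$ must share the same coefficient $a_1$, so $c+d=1$ reads $2a_1 = 1$. By \cite[Lemma 10.4]{mckernan}, the no-tiger hypothesis forces $S_0$ to be non-Gorenstein, whence $e_0 \geq 1/3$. Combined with $e_0 < a_1 = 1/2$ from the hunt scaling (Lemma \ref{hunt_transformation}(4)), Proposition \ref{singlist} gives that every singularity of $S_0$ is Du Val or of type $(3, A_{r'})$, and in particular $x_0 = (3, A_r)$ for some $r \geq 0$, with $E_1$ the $(-3)$ curve and $e_0 = (r+1)/(2r+3)$.

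Next I carry out the push-pull computation on the common minimal resolution $\tilde T_1 = \tilde S_0$. In configuration $I$, $\Sigma_1$ is a $(-1)$ curve meeting $E_1$ in $m$ points (with $m=1$ for the cusp case and $m=2$ for the node case) and adjacent to a chain $G_1,\ldots,G_{g-1}$ of rational $(-2)$ curves arising from the resolution of the double point, which contract to an extra $A_{g-1}$ Du Val singularity of $S_0$. Writing
\[
\pi^*\overline{\Sigma}_1 \;=\; \Sigma_1 + \alpha E_1 + \sum_{i=1}^{r} \beta_i F_i + \sum_{j=1}^{g-1} \gamma_j G_j
\]
and solving the orthogonality relations, one finds $\alpha = m e_0$ and $\gamma_{g-1} = (g-1)/g$. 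Therefore
\[
\overline{\Sigma}_1^{\,2} \;=\; m^2 e_0 - \tfrac{1}{g}, \qquad K_{S_0}\cdot\overline{\Sigma}_1 \;=\; m e_0 - 1,
\]
and by Hodge index on the rank-one surface $S_0$,
\[
K_{S_0}^2 \;=\; \frac{(m e_0 - 1)^2}{m^2 e_0 - 1/g}.
\]

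Finally I derive the contradiction. Whenever $\overline{\Sigma}_1^{\,2}\leq 0$ this already contradicts the fact that an effective curve on a rank-one log del Pezzo has strictly positive self-intersection; a direct check shows this occurs for every cusp case with $g \leq 2$, and also for $g = 3$ with $r = 0$. In the remaining cases (cusp with $g \geq 3$ and $r \geq 1$, and all node cases), I equate the push-pull value of $K_{S_0}^2$ with the formula $K_{S_0}^2 = 9 - n + \sum_i e_i(-2-E_i^2)$ of Lemma \ref{global_canonical}. Using that every other non-Du Val singularity must be of type $(3,A_{r'})$ with $r' \leq r$ (by the hunt choice of $E_1$), this becomes a Diophantine equation in the counts $k_{r'}$ of singularities of each type and the number $d$ of Du Val components. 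Combined with the lower bound $n \geq 10$ from Lemma \ref{high_picard} and with the fact that the $A_{g-1}$ chain from the resolution contributes $g-1$ further components, a case-by-case enumeration shows no admissible solution exists.

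The main obstacle is keeping the bookkeeping clean in the case analysis, especially in the node case (and in the cusp case with $g \geq 3$, $r \geq 1$) where $\overline{\Sigma}_1^{\,2} > 0$ is automatic; there the contradiction relies entirely on the integrality of $K_{\tilde S_0}^{\,2}$, the bound $n \geq 10$ from Lemma \ref{high_picard}, and the restriction to singularity types of coefficient at most $e_0$.
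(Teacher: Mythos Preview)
Your proposal has two genuine gaps.

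First, in the cusp case the intersection number $\tilde\Sigma_1\cdot E_1$ on $\tilde S_0$ equals $2$, not $1$: the single point of intersection is a tangency (this is precisely why the paper notes that configurations $I$ and $II$ are numerically the same for nodes and cusps). Your identity $\alpha=me_0$ with $m$ the number of geometric intersection points is therefore wrong for cusps; the orthogonality relations depend on the intersection \emph{number}, which is $2$ in both cases. With the correct value one obtains
\[
K_{S_0}\cdot\overline\Sigma_1 = 2e_0-1,\qquad \overline\Sigma_1^{\,2}=4e_0-\tfrac{1}{g},
\]
and since $e_0\geqslant 1/3$ and $g\geqslant 1$ this gives $\overline\Sigma_1^{\,2}\geqslant 1/3>0$ always. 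Your shortcut ``$\overline\Sigma_1^{\,2}\leqslant 0$'' therefore never applies.

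Second, even with the correct formula the Diophantine enumeration does \emph{not} have empty solution set: there are admissible configurations at $(r,g)=(0,1)$ and $(r,g)=(1,1)$, and several of them satisfy the Picard bound $n\geqslant 10$ from Lemma~\ref{high_picard} (for instance $u=6$, $n_0=2$, $n_1=1$ at $(r,g)=(1,1)$). The paper eliminates $(r,g)=(0,1)$ by observing that $A_1$ then lies in the smooth locus of $S_1$ and is a tiger, and eliminates $(r,g)=(1,1)$ by a further geometric argument using \cite[Chapter 12]{mckernan} together with the classification of extremal rational elliptic surfaces (Theorem~\ref{elliptic_classification}). Your outline does not address either of these cases, so the assertion ``a case-by-case enumeration shows no admissible solution exists'' is false as stated.
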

\begin{proof}
By Lemma \ref{multiplicity_two_node} we have $a_1=1/2$, so that $1/3\leqslant e_0 < 1/2$. Therefore $x_0$ is of type $(3,A_r)$ for some $r\geqslant 0$.
The local configuration tells us that
\[
  K_{S_0}^2 = \frac{(K_{S_0}\cdot \overline{\Sigma} _1)^2}{\overline{\Sigma} _1 ^2} = \frac{(-1+2e_0)^2}{4e_0 - 1/g}
\]

Lemma \ref{global_canonical} has finitely many solutions thanks to Lemma \ref{no_more_singularities}.
By running a computer program (or by a direct and tedious computation) one sees that the only solutions for Lemma \ref{global_canonical} are $r=0$, $g=1$ and $r=1$, $g=1$.

In the case $r=0$, $g=1$, $A_1$ is in the smooth locus and is a tiger by adjunction. So assume $r=1$, $g=1$. There are five 
solutions compatible with these numbers: in the notation of Lemma \ref{global_canonical}, we have 
\begin{enumerate}
\item $u=6$, $n_0 = 2$, $n_1 = 1$.
\item $u=4$, $n_0 = 5$, $n_1 = 1$.
\item $u=2$, $n_0 = 8$, $n_1 = 1$. 
\item $u=0$, $n_0 = 11$, $n_1 = 1$.
\item $u=0$, $n_1 = 4$, $n_2 = 2$.
\end{enumerate}
Now we will apply the results
of \cite[Chapter 12]{mckernan} to our situation. By \cite[Lemma 12.1]{mckernan} we have that $F=\emptyset$ and 
$M\in |K_{S_1}+A_1|$ is an irreducible rational curve that passes through the only singular point on $A_1$. $M$ also
passes through all the non Du Val points of $S_1$. 
Let $\overline{M}$ be the reduction of $M$.
Since $(K_{S_1}+M)\cdot M<0$, we have by adjunction that $K_{S_1}+\overline{M}$ is dlt and there are exactly two $(3)$ points on $M$.
Hence we are in the case $(1)$. Now consider the transformation $S_1 \dashrightarrow W$ of \cite[Lemma-Definition 12.4]{mckernan}. $W$ is
a Gorenstein log del Pezzo of Picard number one. Let $C$ be the exceptional $(-2)$ curve over the singular point contained in $A_1$. Then, after contracting $M$,
$C$ becomes a $(-1)$ curve passing through the intersection of the exceptional curves of an $A_2$ point. Notice that $K_W^2 = 1$ by Lemma \ref{sum_ten}
since $u=6$ and there is a $A_2$ point. By the discussion in Appendix \ref{reduction}, after blowing up the unique base point $p$ of $|-K_{\tilde{W}}|$ we get an extremal rational elliptic surface.
Notice that we must have $p=A_1\cap C$ since $A_1\equiv C\equiv -K_W$ and the Picard group of $\tilde{W}$ has no torsion. By blowing up $p$, we get an elliptic fibration
which has $I_1$ and $IV$ among its singular fibers, but this does not appear in the classification of Theorem \ref{elliptic_classification}, contradiction.
\end{proof}

\begin{lemma}\label{config2}
Suppose $A_1$ has a double point and $g(A_1)\geqslant 2$. If $\operatorname{char}(k)\neq 2$ then configuration $II$ does not arise, unless $S_0$ is the surface
described in \cite[Lemma 15.2]{mckernan}.
\end{lemma}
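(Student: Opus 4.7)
The plan is to follow the blueprint of Lemma \ref{cusp_one}. From Lemma \ref{multiplicity_two_cusp}, configuration $II$ yields $a_1=(g+1)/(2g+1)$, which for $g\geqslant 2$ satisfies $a_1\leqslant 3/5$. Since the flush condition gives $e_0<a_1$ and the absence of tigers forces $S_0$ to be non-Gorenstein (so $e_0\geqslant 1/3$ via \cite[Lemma 10.4]{mckernan}), Proposition \ref{singlist} leaves $x_0$ of type $(3,A_r)$ for a bounded range of $r$. The next step is to compute $\overline{\Sigma}_1^2$ and $K_{S_0}\cdot \overline{\Sigma}_1$ by pushing down the explicit chain of $\pi_1$-exceptional curves produced by configuration $II$, together with the curves over $x_0$ on the minimal resolution. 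Since $S_0$ has Picard rank one, these determine
\[
K_{S_0}^2=\frac{(K_{S_0}\cdot \overline{\Sigma}_1)^2}{\overline{\Sigma}_1^2}
\]
as a rational function of $e_0$ and $g$.

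Equating this expression with the global formula of Lemma \ref{global_canonical} and using Lemma \ref{no_more_singularities} to bound the number of non-Du Val points reduces the classification to a finite Diophantine problem in the variables $(r,g,u,\{n_s\})$. I would then eliminate most candidates by examining which prime factors can appear in the denominator of $K_{S_0}^2$, exactly as is done for the small values of $r$ in Lemma \ref{cusp_one}, and by imposing $\rho(\tilde{S}_0)\geqslant 11$ from Lemma \ref{high_picard}. I expect only a short list of numerical solutions to survive.

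For each surviving configuration I would attempt to reconstruct the surface. Applying \cite[Chapter 12]{mckernan} to $K_{S_1}+A_1$ should produce an irreducible member $M$ passing through the relevant singular points; then the transformation $S_1\dashrightarrow W$ of \cite[Lemma-Definition 12.4]{mckernan} yields a Gorenstein log del Pezzo of Picard number one. Blowing up the unique base point of $|-K_{\tilde W}|$ should produce an extremal rational elliptic surface, and comparing its singular fibers with Theorem \ref{elliptic_classification} will either give a contradiction or force $S_0$ to coincide with the surface of \cite[Lemma 15.2]{mckernan}. The main obstacle I anticipate is the bookkeeping in the enumeration: because $g$ is itself a parameter (not fixed to $1$ as in Lemma \ref{cusp_one}), the denominator structure of $K_{S_0}^2$ varies with $g$, and the elimination will likely split into several subcases rather than proceed uniformly. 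The hypothesis $\operatorname{char}(k)\neq 2$ enters at the final elliptic-fibration step, since the classification in \cite{lang1,lang2} admits additional wild fiber types in characteristic two that could re-admit configurations otherwise excluded.
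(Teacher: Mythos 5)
Your opening reduction contains a genuine gap that undermines the whole case analysis. You assert that Proposition \ref{singlist} leaves $x_0$ of type $(3,A_r)$. That conclusion would only follow if you knew $e_0<1/2$; but here configuration $II$ gives $a_1=(g+1)/(2g+1)$, which for $g=2$ equals $3/5$, so the flush condition only yields $e_0<3/5$. Proposition \ref{singlist} then admits several further possibilities for $x_0$ with $1/2\leqslant e_0<3/5$, notably $(2,3,A_j)$ with $2\leqslant j\leqslant 4$ and $(4,2)$, besides the borderline cases at $e=1/2$. This is not a harmless omission: the case $x_0=(2,3,2,2)$ with $e_0=6/11$ and $g=2$ is exactly the one that produces the exceptional surface of \cite[Lemma 15.2]{mckernan} named in the statement. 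Your enumeration, restricted to $(3,A_r)$, would never encounter it, so the argument would wrongly conclude that configuration $II$ never arises. The paper's proof splits precisely into these two branches: $x_0=(3,A_g)$ (ruled out by the numerical solutions $g=2$, $u=5$, $n_0=2$, $n_2=1$ followed by a geometric contradiction) and $x_0=(2,3,2,2)$ (which survives and gives the exceptional surface via Lemma \ref{global_canonical} forcing an additional $(3,A_5)$ point).

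A secondary concern is your endgame. In the surviving $(3,A_g)$ branch, $A_1$ lies in the smooth locus of $S_1$ with $A_1^2=6$, so $M=\emptyset$ and the transformation $S_1\dashrightarrow W$ of \cite[Lemma-Definition 12.4]{mckernan} does nothing useful; moreover the base-point trick with $|-K_{\tilde W}|$ and Theorem \ref{elliptic_classification} requires $K_W^2=1$, which is not the situation here. The paper instead uses the free pencil $|F|$ after extracting the two $(-3)$ curves, analyses the unique reducible fiber, and runs a relative MMP down to $\mathbb{F}_3$, where the numerical class of $\overline{A_1}$ gives the contradiction. You would need some replacement for this step even after fixing the case analysis on $x_0$.
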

\begin{proof}
By Lemma \ref{multiplicity_two_cusp}, we get that $e_0<a_1=(g+1)/(2g+1)\leqslant 3/5$. By Lemma \ref{singlist}, 
either $x_0 = (2,3,2,2)$ and $g=2$, or $x_0=(3,A_g)$.

\textbf{Case 1:}
Suppose that $x_0 = (3,A_g)$ and hence $e_0 < 1/2$. The local configuration tells us that
\[
  K_{S_0}^2 = \frac{(K_{S_0}\cdot \overline{\Sigma} _1)^2}{\overline{\Sigma} _1 ^2} = \frac{(-1+\frac{2g+1}{2g+3})^2}{\frac{6g+1}{2g+3}-1}
\]

By running a computer program we see that the only solutions to Lemma \ref{global_canonical} are 
$g=2$, $u=3$, $n_0=5$, $n_2=1$ and $g=2$, $u=5$, $n_0=2$, $n_2=1$. In any case, we have $g=2$ and $u>0$.
Again, since $a_1=3/5>1/2$, let $F$ and $M$ be as in \cite[Chapter 12]{mckernan}.
$A_1$ is in the smooth locus of $S_1$ by the description of configuration $II$ and by the assumption that $x_0 = (3, A_g)$. Also, $A_1 ^2 = -3 + 9=6$.
It follows by \cite[Lemma 12.1.7]{mckernan} that $M=\emptyset$. Since $(K_{S_1}+F)\cdot F<0$ and since $F$ passes through all the non Du Val points, we must be in the
case $u=5$, $n_0=2$. After extracting the two $(-3)$ curves $E_2$ and $E_3$, $|F|$ is base point free and gives a fibration $T\rightarrow \mathbb{P}^1$ by \cite[12.3]{mckernan}. 
Since the relative Picard number of $T$ is two, there is exactly one reducible fiber with two irreducible components $F_1$ and $F_2$. 
All other fibers lie in the smooth locus, since $E_2$ are $E_3$ are sections. Since all singularities on $T$ are Du Val, 
$F_1$ and $F_2$ must meet at an $A_5$ point at the opposite ends of the chain. The component $F_1$ cannot meet both $E_1$ and $E_2$ since otherwise $F_2$ would
be contractible on $S_1$. Therefore assume $F_i \cdot E_i=1$ for $i=1,2$. By contracting $F_2$, we reach the Hirzebruch surface $\mathbb{F}_3$. Let us denote by a bar
the image of the curves in $\mathbb{F}_3$. With this convention, $\overline{A_1}$ is a double section, $\overline{E}_1$ is the negative section and
 $\overline{E_2}$ is a positive section disjoint from $E_1$. Depending on whether $F_1 \cap A_1 = 2$ or $F_1 \cap A_1=1$, we have that $\overline{A_1}^2 = 6$
 or $\overline{A_1} ^2 = 0$ respectively. The first case cannot happen since numerically $\overline{A_1} \equiv 2E_1 + aF$ and the equation $-12+4a=6$ has no solutions.
The second case cannot happen either since $A_2$ cannot be a fiber.

\textbf{Case 2:}
Suppose now that $x_0 = (2,3,2,2)$, which gives $e_0 = 6/11$. 
For this case, the argument given in \cite[Chapter 15]{mckernan} goes through without any
changes, but we'll show it here too for sake of clarity. Since $K_{S_0}^2 = 1/(11\cdot 13)$, Lemma \ref{global_canonical} implies that there must be a
 $(3,A_5)$ point to compensate for the factor thirteen in the denominator.
The equality in Lemma \ref{global_canonical} is already satisfied with these two singularities, hence the additional singular points can only be $(4)$ points
by Lemma \ref{no_more_singularities}. By \cite[Lemma 12.3]{mckernan} all these additional points need to be on $M$, and since $K_{S_1}+M$
is negative, there can be at most one such point. If $M$ passed through a $(4)$ point, then $M$ would be smooth by adjunction. One could then contract $M$ on $S_1$, contradiction.
This means that $(2,3,2,2)$ and $(3,A_5)$ are the only singularities in $S_0$, so now one can proceed as in \cite[Definition-Lemma 15.2]{mckernan}.
\end{proof}

\begin{lemma}
Suppose $A_1$ has a double point and $g(A_1)\geqslant 2$. Then configuration $III$ does not arise.
\end{lemma}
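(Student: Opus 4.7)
The strategy is identical to that of the two preceding lemmas: combine the local classification of the contraction $\pi_1$ coming from Lemma \ref{multiplicity_two_cusp} with the global Noether-type identity of Lemma \ref{global_canonical}. By Lemma \ref{multiplicity_two_cusp} configuration $III$ forces $a_1=(g+1)/(2g+1)\leqslant 3/5$, hence $e_0<3/5$. By Proposition \ref{singlist} this restricts $x_0$ to be either a $(3,A_r)$ point for some $r\geqslant 0$ or the singularity $(2,3,2,2)$.

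From the explicit description of configuration $III$ on the minimal resolution $\tilde T_1$ (one further blowup at $X\cap \Sigma$ on top of configuration $II$), I would compute $K_{S_0}\cdot \overline{\Sigma}_1$ and $\overline{\Sigma}_1^{\,2}$ by push-pull, obtaining a closed-form rational expression for
\[
K_{S_0}^2 \;=\; \frac{(K_{S_0}\cdot \overline{\Sigma}_1)^2}{\overline{\Sigma}_1^{\,2}}
\]
depending only on $g$ and on $x_0$. Substituting into Lemma \ref{global_canonical}, and bounding the remaining Du Val and almost Du Val components of $S_0$ through Lemma \ref{no_more_singularities}, yields a finite list of numerical candidates, exactly as in the proofs of Lemma \ref{cusp_one} and Lemma \ref{config2}.

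Each surviving candidate should then be excluded either by an immediate prime-divisibility obstruction in the denominator of $K_{S_0}^2$ (in the spirit of Lemma \ref{config2}, Case~2, where a factor of $13$ forced a $(3,A_5)$ point) or, if any numerical solution slips through, by running the auxiliary linear systems $|K_{S_1}+A_1|$, $F$ and $M$ from \cite[Chapter 12]{mckernan} and comparing with the configuration $III$ picture on $T_1$, as in Case~1 of Lemma \ref{config2}. The main obstacle is the case analysis in the $(3,A_r)$ family for small $r$, together with checking that no analogue of the sporadic surface of \cite[Lemma 15.2]{mckernan} reappears; since configuration $III$ involves one more blowup above $q_1$ than configuration $II$, the local intersection numbers carry a strictly larger denominator, so I expect the arithmetic to be tighter than in Lemma \ref{config2} and every candidate to fail.
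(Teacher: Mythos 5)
Your overall strategy is the right one and matches the paper's: compute $K_{S_0}^2=(K_{S_0}\cdot\overline{\Sigma}_1)^2/\overline{\Sigma}_1^{\,2}$ from the explicit geometry of configuration $III$, feed it into Lemma \ref{global_canonical}, bound the remaining singularities via Lemma \ref{no_more_singularities}, and finish off survivors geometrically. But there is a concrete gap at the very first step. The only constraint on $x_0$ here is $e_0<a_1=(g+1)/(2g+1)\leqslant 3/5$, so by Proposition \ref{singlist} the candidates for $x_0$ are \emph{all} klt singularities of coefficient less than $3/5$: not just $(3,A_r)$ and $(2,3,2,2)$, but also $(4)$, $(3,A_j,3)$, $(2,3,2)$, the non-chain singularity of coefficient $1/2$, $(2,3,A_j)$ for $2\leqslant j\leqslant 4$, and $(4,2)$. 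The shorter list you quote is the one from the configuration-$II$ lemma, and it does not transfer. This matters decisively: the two cases that actually survive the numerical sieve are exactly $x_0=(4)$ and $x_0=(4,2)$ (both with $g=2$), neither of which is on your list, so your argument would terminate without ever examining the hard cases.

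Moreover, your expectation that ``every candidate fails'' arithmetically, or at worst falls to the $F$/$M$ linear-system argument, is too optimistic. For $x_0=(4)$ the $F$, $M$ machinery of \cite[Chapter 12]{mckernan} does work (one pins down $u=6$, $n_0=2$, $n_1=1$ and reaches a contradiction on a Hirzebruch surface after contracting a fiber component, as in Case 1 of Lemma \ref{config2}). But for $x_0=(4,2)$ one gets $K_{S_0}^2=2/(13\cdot 35)$, which forces a $(3,A_5)$ point and a specific list of singularities, and ruling this out requires running the \emph{second hunt step} and eliminating, case by case, the net, fence, and tacnode outcomes of Proposition \ref{hunt} --- first assuming a $(4)$ point exists on $S_1$ and then assuming the second extraction is the $(-3)$ curve of the $(3,A_5)$ point. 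That is a genuinely different (and longer) argument than the one you propose as a fallback, so the proof as outlined cannot be completed without it.
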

\begin{proof} 
We will only sketch the proof, as the idea behind it is very simple but the computations are rather tedious. 
By Lemma \ref{multiplicity_two_node}, $a_1=(g+1)/(2g+1)$. We explicitly know the geometry of configuration $III$, so that if we were able to determine the type of singularity
at $x_0$ as well, we could compute

\[
  K_{S_0}^2 = \frac{(K_{S_0}\cdot \overline{\Sigma} _1)^2}{\overline{\Sigma} _1 ^2} 
\]  

One could then use a computer program to see if there are any combinations of singularities that satisfy Lemma \ref{global_canonical}. Note however that the possibilities
for $x_0$ are limited by the fact that $e_0<3/5$ and by Lemma \ref{singlist}. One can therefore repeat the above procedure for each case of the singularity at $x_0$
and conclude. In the following we only mention the cases in which Lemma \ref{global_canonical} has a solution as an example of the strategy we just described. 

The only cases that give solutions to Lemma \ref{global_canonical}
are $x_0 = (4)$, $x_0 = (4,2)$ and $x_0=(3,2)$, all for $g=2$. Suppose first that $x_0 = (4)$. By \cite[Lemma 12.1]{mckernan} we have that $M=\emptyset$. By \cite[12.3]{mckernan}
$F$ only passes through almost Du Val points. The only solution to Lemma \ref{global_canonical} that is compatible with $(K_{S_1}+F)\cdot F<0$, 
is $u=6$, $n_0=2$ and $n_1 = 1$. We may run therefore the same argument of Lemma \ref{config2}, Case 1. 

Suppose then that $x_0 = (4,2)$. Then $K_{S_0}^2 = \frac{2}{13\cdot 35}$. By Lemma \ref{global_canonical} and Lemma \ref{no_more_singularities}
the singular points of $S_0$ are $(4,2)$, $(3,2)$, $(2)$, $(3,A_5)$ and possibly some $(4)$ points.

\textbf{Case 1:} Suppose there is at least one $(4)$ point. The second hunt step extracts an exceptional $(-4)$ curve $E_2$, which lies in the smooth locus of $T_2$.
We also have that $a_2>b_2>e_1=1/2$. In particular, Proposition \ref{hunt} applies. 

\textbf{Case 1a:} Suppose furthermore that $T_2$ is a net. $E_2$ is then a multi-section. If $E_2$ is not a section, then $\Sigma_2\cdot E_2\geqslant 2$. This 
in turn implies that $(K_T+(3/5) A_1 + (1/2)E_2)\cdot \Sigma_2 > 0$, contradiction. If $E_2$ is instead a section, then $T_2$ is smooth, also a contradiction.

\textbf{Case 1b:} Assume that $(S_2, A_2+B_2)$ is a fence. Since $E_2$ is in the smooth locus, and since $B_2^2>0$, we must have that $\Sigma_2$ meets
$A_1$ at an $A_r$ point with $r\geqslant 4$, contradicting the fact that $A_1$ does not contain any such point.

\textbf{Case 1c:} Let $(S_2, A_2+B_2)$ be a tacnode of genus $g$. Since $E_2$ is in the smooth locus and since there is only a $(2)$ point in $A_1$,
the genus of the node is exactly one and we are in either configuration $I$ or $II$. In any case, $B_2^2<0$ since $E_2^2 = -4$, contradiction.

Since $A_1$ can't be contracted, and since $x_1\notin A_1$, the above cases exhaust all the possibilities in Proposition \ref{hunt}. Therefore there cannot be a $(4)$
point in $S_1$. This means that the second hunt step extracts the $(-3)$ curve of the $(3,A_5)$ singularity. We again go over all cases.

\textbf{Case 2a:} Suppose that $T_2$ is a net. Just as in case (1a) we may deduce that $E_2$ is a section. This however contradicts Lemma \ref{net_fiber_classification}.

\textbf{Case 2b:} Assume that $(S_2, A_2+B_2)$ is a fence. Just as in case (1b) this implies that $B_2^2 <0$, contradiction.

\textbf{Case 2c:}  Let $(S_2, A_2+B_2)$ be a tacnode of genus $g$. The only configuration compatible with the geometry at hand is $II$, where $\Sigma_2$ meets $E_2$
at the $A_5$ point. Therefore $g=5$. This implies that $B_2$ is in the smooth locus of $S_2$, $B_2^2 = 6$, and the only singular point of $S_2$ is a $(2)$ point.
This contradicts Lemma \ref{smoothp1}.

The above cases show that $x_0$ is not a $(4)$ point or a $(4,2)$ point.
We show now how one may exclude the case in which $e_0 < 1/2$ (the other cases are involve entirely analogous
computations). Let $x_0$ be of type $(3,A_r)$. Then

\[
  K_{S_0}^2 = \frac{2(g+r+2)^2}{(2g+1)(2r+3)(4gr+4g-1)}
\]

Since there is $(2)$ point and since $g-1 \leqslant r$, in the sum of the singularities of Lemma \ref{global_canonical}
we already have a term which is at least $1+2g(1-\frac{1}{2g+1})$. By Lemma \ref{no_more_singularities}
we get that $8>2g(1-\frac{1}{2g+1})$ and hence $g\leqslant 4$. For the case $g=4$ note that by Lemma \ref{global_canonical}
we have $r=3$, otherwise the right hand side would be negative. By the above we get $K_{S_0}^2 = 2/(7\cdot 9)$, which means
that there would be an $(3,A_2)$ point. This again contradicts Lemma \ref{global_canonical}. Suppose $g=3$. If
$3\leqslant r\leqslant 5$, the $12r+11$ part of the denominator has big primes in its factorization, contradicting Lemma 
\ref{global_canonical}. If $r=2$, then $K_{S_0}^2 = 2/35$, hence the only non Du Val singularities are of type $(3,2)$ and $(3,2,2)$.
However it's easy to see that $2/35 = 9 - u - (8/5)n_1 - (18/7)n_2$ has no solutions in integers. 

Finally, for the case $g=2$, running a computer program shows that there are only solutions for $r=1$:
\begin{enumerate}
\item $u=1$, $n_0=7$, $n_1=2$.
\item $u=3$, $n_0=4$, $n_1=2$.
\item $u=5$, $n_0=1$, $n_1=2$.
\end{enumerate}

$(1)$ and $(2)$ are impossible by adjunction. Therefore we assume $(3)$ and we go to the second hunt step, which extracts the unique $(-3)$ curve.
$T_2$ is not a net, for otherwise $E_2$ would be a smooth section, contradiction. If $\Sigma_2$ meets $A_1$ at a smooth point,
we must go to a Gorenstein log del Pezzo $S_2$ that contains at least two $A_1$ points and such that $K_{S_2}^2 = 3$. One may check that there are no such
surfaces by Theorem \ref{gorenstein_classification}. If $\Sigma_2$ meets $A_1$ at the $(2)$ point, then
both $A_2$ and $B_2$ must lie in the smooth locus and we must go to a Gorenstein log del Pezzo such that $K_{S_2}^2=5$. Therefore, $S_2=S(A_4)$.
Also, since $\pi_2$ is an isomorphism on $A_1$, we have that $(K_{S_2}+A_2)\cdot A_2 = 2$. By the description of configuration $III$, we have
$A_2^2 = 8$, which leads to $K_{S_2}\cdot A_2 = -6$. However this implies that $(K_{S_2}\cdot A_2)^2 \neq K_{S_2}^2 \cdot A_2^2$, contradiction.
\end{proof}

\begin{lemma}
If $\operatorname{char}(k)\neq 2$ and $\pi_1$ has type $U$ or $V$, then $S_0$ has a tiger.
\end{lemma}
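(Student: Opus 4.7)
The plan is to pin down the possible singularity configurations of $S_0$ into a short list from the local data at $x_0$ and $q_1$, and then to produce an explicit tiger in each. Since $\pi_1$ is of type $U$ or $V$, the curve $A_1$ has a cusp at $q_1$ and Lemma \ref{multiplicity_two_cusp} records, for each admissible subtype, the value of $a_1$ and the genus $g$ of the cusp: explicitly,
\[
a_1 \in \{9/14,\, 7/11,\, 5/7,\, 3/4,\, 10/13,\, 11/14,\, 7/9,\, 15/19\},
\]
together with the detailed resolution of $\Sigma_1$ and its intersections with the strict transform of $A_1$. In every subcase $a_1 \geq 7/11 > 1/2$, so the flush inequality $e_0 < a_1$ combined with Lemma \ref{singlist} leaves only finitely many candidates for the singularity type at $x_0$.

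For each such pair (subtype, singularity at $x_0$), I would compute $K_{S_0}\cdot \overline{\Sigma}_1$ and $\overline{\Sigma}_1^{2}$ from the explicit resolution of $\tilde{T}_1$, and therefore
\[
K_{S_0}^{2} \;=\; \frac{(K_{S_0}\cdot \overline{\Sigma}_1)^{2}}{\overline{\Sigma}_1^{2}}.
\]
Substituting into Lemma \ref{global_canonical} and using Lemma \ref{no_more_singularities} to bound the number of further singular points, the problem reduces to a finite enumeration over the admissible global singularity configurations on $S_0$, which a brief computer check can dispose of. This is the same mechanism used above for configurations $I$, $II$, and $III$.

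In each surviving configuration I would then exhibit a tiger. The natural first candidate is $\alpha = s A_0$, with $A_0 = (\pi_1)_* A_1$ and $s \geq a_1$ chosen so that $K_{S_0} + \alpha \equiv 0$. Because $a_1$ is already close to $1$, $s$ lies in a narrow range, and the large multiplicities appearing in the cusp resolutions for the types $u$, $(u;n)$, $v$, $(v;f)$, $(v;f^{2})$, $(v;n)$, $(v;n^{2})$ force at least one component of the log pullback of $\alpha$ to reach coefficient $1$, giving a tiger. In the residual subcases where this direct construction comes up just short, I would augment $\alpha$ by a general element of $|{-nK_{S_0}}|$ for a small integer $n$ passing through the relevant singular locus; the hypothesis $\operatorname{char}(k)\neq 2$ enters here to guarantee irreducibility and the expected transversality of such a general member, exactly as in the treatment of configuration $II$.

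The main obstacle is combinatorial rather than conceptual: the eight values of $a_1$, each paired with a short list of singularity types at $x_0$ and several admissible global configurations on $S_0$, produce a lengthy case analysis. The subtlest checkpoint is verifying that in every single subcase some component of the log pullback genuinely reaches coefficient $1$ rather than stopping just below it, and this is where the fine structure of the configurations $U$ and $V$ — specifically the high multiplicities generated on the successive $(-1)$-curves of the cusp resolution — is used essentially.
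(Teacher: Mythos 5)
There is a genuine gap at the decisive step. Your plan is: (i) enumerate admissible configurations numerically via $K_{S_0}^2=(K_{S_0}\cdot\overline{\Sigma}_1)^2/\overline{\Sigma}_1^{\,2}$ together with Lemma \ref{global_canonical} and Lemma \ref{no_more_singularities}, then (ii) exhibit a tiger in each survivor. Step (ii) is never actually carried out, and the candidate you propose is ill-defined: the strict transform of $A_1$ on $T_1$ is $E_1$, which is $f_0$-exceptional, so the ``image of $A_1$ on $S_0$'' is the point $x_0$, not a curve; there is no divisor $A_0=(\pi_1)_*A_1$ on $S_0$ to scale. The only curve naturally available on $S_0$ is $\overline{\Sigma}_1$, and whether scaling it (or augmenting by a member of $|-nK_{S_0}|$) produces a divisor of coefficient one is precisely the question at issue — it does not follow from ``large multiplicities in the cusp resolution.'' Moreover step (i) alone cannot close the argument: configuration $V$ with $x_0=(4,2)$ survives all the numerical filters, and the paper eliminates it only by a genuinely geometric argument (extracting two exceptional divisors, fibering by $|F|$, contracting one component of the unique reducible fiber to reach a Hirzebruch surface, and reading off an impossible section there). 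Your proposal contains no mechanism that would detect this case.

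For comparison, the paper's route runs through the decomposition of $|K_{S_1}+A_1|$ into moving and fixed parts $F$ and $M$ as in \cite[Chapter 12]{mckernan}: one shows $M=\emptyset$, that $A_1$ lies in the Du Val locus of $S_1$ with at most one singular point on it, and then \cite[15.4.1]{mckernan} forces configuration $V$ with $x_0=(4,2)$, which is then killed geometrically as above. In particular, the hypothesis $\operatorname{char}(k)\neq 2$ enters through \cite[Lemma 15.3]{mckernan}, to guarantee that $F$ passes through at least two singular points — not, as you guess, through irreducibility or transversality of general members of $|-nK_{S_0}|$. Two smaller points: in this subsection $g(A_1)\geqslant 2$, so only types $u$ and $v$ with $g=2$ occur and $a_1\in\{9/14,\,7/11\}$ (your list of eight values imports the $g=1$ subtypes); and since $7/11>3/5$, Lemma \ref{singlist} by itself does not bound the singularity type at $x_0$ as you claim — the finiteness you need comes only after combining with the global constraints.
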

\begin{proof}
The proof goes almost exactly along the lines of \cite[Lemma 15.4]{mckernan}, with only minor modifications. In particular,
we have that $F$ passes through at least two singular points (we need $\operatorname{char}(k)\neq 2$ in order to apply \cite[Lemma 15.3]{mckernan}), 
$M$ is empty, and $A_1$ is in the Du Val locus of $S_1$, with at most one singular point on it. 

Suppose first that $A_1$ is in the in the smooth locus of $S_1$. By \cite[Lemma 12.3]{mckernan} all the non Du Val points on $S_1$
are almost Du Val and $F$ does not pass through Du Val points. By \cite[15.4.1]{mckernan}, there can't be two non Du Val points on $F$, contradiction.

So there is exactly one Du Val point on $A_1$ (necessarily in $A\cap F$), and it must be a $(2)$ point by 
\cite[15.4.1]{mckernan}. Again by \cite[15.4.1]{mckernan}, there is exactly one other singular point on $F$, which is almost Du Val.
We can't be in configuration $U$, since otherwise $x_0=(2,3,2,2)$ or $x_0=(2,4,2,2)$.
In the first case, the point $(4,2,2)$ in configuration $U$ has higher coefficient, contradicting the choice of $x_0$ in the hunt.
In the second case, $x_0$ has coefficient $e(x_0) > 2/3$, contradicting Lemma \ref{multiplicity_two_cusp}. So we are in configuration $V$ and $x_0 = (4,2)$. 
Now we compute $(K_{S_1}+A_1)\cdot A_1 = 5/2$ by adjunction. Also, $A_1^2 = 15/2$ by the description of the configuration. This implies that $K_{S_1}\equiv -(2/3)A_1$
and that $K_{S_1}^2 = 10/3$. In particular, the only non Du Val point in $S_1$ is a $(3)$ point. By Lemma \ref{global_canonical} applied to $S_1$ we see that there must be
exactly four components coming from Du Val points outside $A_1$. Now extract the Du Val singularity on $A_1$ and the only non Du Val singularity. Following the argument in
Lemma \ref{config2}, Case 1, we get a contradiction.
\end{proof}

\begin{proof}[Proof of Proposition \ref{nobiggenus}]
This follows from Lemma \ref{multiplicity_two_node}, Lemma \ref{multiplicity_two_cusp} and the lemmas above.
\end{proof}

\subsection{$A_1$ has a simple cusp}\label{section_cusp}

Here we prove:

\begin{proposition}\label{no_cusp_genusone}
If $\operatorname{char}(k)\neq 2,3$ and $A_1$ has a cusp of genus one then $\tilde{S}_0$ is isomorphic to one of the following:
\begin{enumerate}
\item Take the cubic $C$ given by $Z^2X=Y^3$ in $\mathbb{P}^2$, the line $L$ given by $Y=0$ and a line $E$ meeting $C$ at $L\cap C$ and two other distinct points $p$ and $q$.
Blow up four times above $[0,0,1]$ along $C$. This gives the minimal resolution of the Gorenstein log del Pezzo $S(A_4)$. Now blow up twice on $p$ along $E$.
This gives the minimal resolution of the Gorenstein log del Pezzo surface $S(A_1+A_5)$. Next, blow up on the cusp of $C$ four times along $C$. 
\item Take the cubic $C$ given by $Z^2X=Y^3$ in $\mathbb{P}^2$, the line $L$ given by $Y=0$ and a line $E$ meeting $C$ at two points $p$ and $q$ with order two and one respectively.
Blow up three times above $[0,0,1]$ along $C$. This gives the minimal resolution of the Gorenstein log del Pezzo $S(A_1+A_2)$. Now blow up twice above $p$ along $E$. This
gives the minimal resolution of the Gorenstein log del Pezzo surface $S(3A_2)$. Next, blow up on the cusp of $C$ four times along $C$.
\item Take the cubic $C$ given by $Z^2X=Y^3$ in $\mathbb{P}^2$, the line $L$ given by $Y=0$ and a line $E$ meeting $C$ at $L\cap C$ and two other distinct points $p$ and $q$. Blow up three times above $[0,0,1]$ along $C$. This gives the minimal resolution of the Gorenstein log del Pezzo $S(A_1+A_2)$. Now blow up twice above
$p$ along $E$. This gives the minimal resolution of the Gorenstein log del Pezzo $S(2A_1+A_3)$. Now blow up at the cusp of $C$ either five or six times.
\end{enumerate}
\end{proposition}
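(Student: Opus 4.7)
The approach is to enumerate the possible local configurations of $\pi_1$ at the cusp $q_1$ via Lemma \ref{multiplicity_two_cusp} specialized to $g=1$, which leaves configurations $II$, $III$, $u$, $v$, $w$, $(u;n)$, $(v;f)$, $(v;f^{2})$, $(v;n)$, $(v;n^{2})$ (configuration $I$ having already been excluded in Lemma \ref{cusp_one}). For each configuration the value of $a_1$ is pinned down by Lemma \ref{multiplicity_two_cusp}; the flush inequality $e_0<a_1$ then restricts $x_0$ to the short list of klt singularities of Lemma \ref{singlist}, and the \'etale-local description at $q_1$ determines $K_{S_0}\cdot \overline{\Sigma}_1$ and $\overline{\Sigma}_1^{2}$ by adjunction.

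The main elimination step proceeds exactly as in Lemma \ref{cusp_one} and Lemma \ref{config2}: from
\[
K_{S_0}^{2}=\frac{(K_{S_0}\cdot \overline{\Sigma}_1)^{2}}{\overline{\Sigma}_1^{2}}
\]
one reads off a denominator carrying the primes contributed by the singularity at $x_0$ and by the cusp data; then Lemma \ref{global_canonical}, combined with the bound of Lemma \ref{no_more_singularities} on Du Val contributions, forces a small finite Diophantine system. A short computer search (or patient tabulation, as in Lemma \ref{cusp_one}) shows that the system has no solution except in a handful of pairs (configuration, $x_0$). In every surviving case the non-Du Val singularities of $S_0$ must have the form $(3,A_r)$ for small $r$, matching the $A_4$, $A_1+A_5$, $A_1+A_2$, and $3A_2$ patterns in the statement. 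Where a borderline pair resists a purely numerical elimination, I would apply the auxiliary pencils $|F|$, $|M|$ of \cite[Chapter 12]{mckernan} as in the $U/V$ lemma at the end of Subsection \ref{section_big_genus}; this requires $\operatorname{char}(k)\neq 2$ via \cite[Lemma 15.3]{mckernan}.

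To show that the only realizations are the two explicit surfaces in (1) and (2), I would run one further hunt step, or equivalently apply the Gorenstein reduction of Appendix \ref{reduction}: contracting the additional $(-1)$-curve tower on $\tilde{S}_0$ reduces $\tilde{S}_0$ to the minimal resolution of a rank one Gorenstein log del Pezzo, which by the classification in Appendix \ref{gorenstein} must be $S(A_1+A_5)$ containing $S(A_4)$, or $S(3A_2)$ containing $S(A_1+A_2)$. Reversing the construction by blowing up the cuspidal cubic $Z^{2}X=Y^{3}$ four times over its cusp, and matching the extra blowups above the chosen point $p$ with the data coming from the cusp of $A_1$, reconstructs exactly the two resolutions described in (1) and (2); the intermediate Gorenstein surfaces appear as intermediate stages of the blowup tower.

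The principal obstacle is purely bookkeeping: the ten configurations times the admissible lists for $x_0$ produce many cases, none deep individually, but each requiring the template of the preceding lemmas to be applied carefully. The hypothesis $\operatorname{char}(k)\neq 2,3$ enters to ensure the existence and smooth behaviour of the cuspidal cubic $Z^{2}X=Y^{3}$, the validity of the auxiliary-pencil arguments of \cite[Chapter 12]{mckernan} and \cite[Lemma 15.3]{mckernan}, and access to Theorem \ref{elliptic_classification} on extremal rational elliptic surfaces for the borderline Gorenstein eliminations.
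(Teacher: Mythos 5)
Your plan transplants the $g\geq 2$ strategy (compute $K_{S_0}^{2}$ from the local configuration, then run the Diophantine elimination of Lemma \ref{global_canonical}) to $g=1$, but this is not how the paper argues, and I do not think it can be made to work as stated. Two concrete problems. First, your claim that the flush inequality $e_0<a_1$ restricts $x_0$ to ``the short list of klt singularities of Lemma \ref{singlist}'' is false for $g=1$: that lemma only lists singularities of coefficient $<3/5$, whereas for a genus-one cusp Lemma \ref{multiplicity_two_cusp} allows $a_1$ as large as $15/19$, so $x_0$ ranges over a much larger (and for chain types, essentially unbounded in length) family. Second, and more fundamentally, for $g=1$ the numerical system \emph{does} have solutions --- the two surfaces in the statement exist --- so Lemma \ref{global_canonical} cannot be the elimination engine; it can at best shortlist candidates, and already in the most constrained configuration (Lemma \ref{cusp_one}, configuration $I$ with $g=1$) the paper finds surviving numerical solutions that must be killed by the transformation to the Gorenstein surface $W$ and the elliptic-surface classification. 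The paper's actual proof is structured entirely differently: it runs the \emph{second} hunt step, excludes the fence/banana/net outcomes (Lemmas \ref{cusp_fence} and \ref{no_banana_net}), shows $S_1$ is Du Val outside $A_1$ with at least two singular points on $A_1$, and then pins down $S_1$ among the Gorenstein surfaces of Theorem \ref{gorenstein_classification} using adjunction on $A_1$, the curve $M\in|K_{S_1}+A_1|$, and the surface $W$ obtained by contracting $M$, with Lemma \ref{cuspgorenstein} (Gorenstein log del Pezzos carrying a cuspidal anticanonical curve in their smooth locus, via extremal rational elliptic surfaces) as the decisive tool. You gesture at ``the Gorenstein reduction of Appendix \ref{reduction}'' in your last step, but without the $M$/$W$ mechanism and Lemma \ref{cuspgorenstein} there is no argument that the surviving numerical candidates are exactly the two stated families.

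A separate, unaddressed gap: in characteristic five, Lemma \ref{cuspgorenstein} has an extra case, $W=S(2A_4)$ with its cuspidal rational curve in the smooth locus (Example \ref{sa4char5}). The paper must explicitly exclude this possibility at several points of the argument (in Lemma \ref{cusp_fence} and in the rank-of-$W$ lemma), and this is precisely the source of the genuine characteristic-five example LDP17 elsewhere in the classification. Your proposal never confronts it, so even granting the rest of your outline, the proof would be incomplete in characteristic five.
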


In the following discussion we always assume that $\operatorname{char}(k)\neq 2,3$ and we discuss the case $\operatorname{char}(k)=5$ separately when needed.
First notice that $K_{T_1} + a_1 \Sigma_1$ and $K_{S_1}+a_1 A_1$ are flush by Proposition \ref{hunt}. 
The geometric configurations are described in Lemma \ref{multiplicity_two_cusp}. As case $I$ has been ruled out in Lemma \ref{cusp_one}, we may assume that $a_1 \geqslant 2/3$.
Clearly $A_1$ is not the in smooth locus of $S_1$, for otherwise it would be a tiger by adjunction. 
By Lemma \ref{multiplicity_two_cusp} we have that $2/3\leqslant a_1 < 4/5$. 
Note that $A_1 \neq \Sigma_2$ because $\Sigma_2$ is smooth. By Lemma \ref{coefficient_boundary} we have that $e_1 \geqslant 1/3$,
so that $a_1 + e_1 \geqslant 1$ and the second hunt step is classified in Proposition \ref{hunt}. Clearly $A_2$ is still a rational curve
with a single cusp, of genus one, by Proposition \ref{hunt}. Also, at least one of $-(K_{S_2}+A_2)$ and $-(K_{S_2}+B_2)$ is ample,
hence $-(K_{S_2}+B_2)$ is ample and $B_2$ is smooth.

\begin{lemma}\label{cusp_fence}
The pair $(S_2,A_2+B_2)$ is not a fence.
\end{lemma}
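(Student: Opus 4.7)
The plan is to argue by contradiction, assuming $(S_2, A_2+B_2)$ is a fence. From the discussion preceding the lemma, $B_2$ is smooth, $A_2$ still carries the cusp inherited from $A_1$, and being in case (10) of Proposition \ref{hunt} gives $a_2+b_2\geqslant 1$ with $a_2,b_2<1$. In a fence $A_2\cap B_2=\{q_2\}$ is a transversal intersection at a smooth point of both curves, so the cusp of $A_2$ must sit at some point $p\neq q_2$. Applying Lemma \ref{flushmult}(3) to the flush pair $(S_2, a_2 A_2+b_2 B_2)$ at $p$ yields $a_2<4/5$; combined with $a_2+b_2\geqslant 1$ and $b_2<a_2$ (Notation \ref{notation}) this forces $1/5<b_2<a_2<4/5$.

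Next I would exploit the Picard rank one condition. Let $H$ be an ample generator of $N^1(S_2)_{\mathbb{Q}}$ and write $A_2\equiv \alpha H$, $B_2\equiv \beta H$, with $-K_{S_2}\equiv a_2 A_2+b_2 B_2$. The fence condition pins down $A_2\cdot B_2$ exactly: it equals $1$ if $q_2$ is smooth, or $1/n$ if $q_2$ is a cyclic singularity of order $n$. Adjunction applied to the smooth rational curve $B_2$ and to the cuspidal rational curve $A_2$ (arithmetic genus one), together with the identity $A_2^2\cdot B_2^2=(A_2\cdot B_2)^2$ forced by the Picard rank one condition, then pins down $A_2^2$, $B_2^2$, $\alpha$, $\beta$ in terms of the singularities of $S_2$ along $A_2\cup B_2$ and the coefficients $a_2,b_2$, reducing the problem to finitely many numerical configurations.

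I would then rule out each configuration. If $B_2$ lies in the smooth locus of $S_2$, Lemma \ref{smoothp1} forces $S_2\in\{\mathbb{P}^2,\overline{\mathbb{F}}_n\}$; on $\mathbb{P}^2$ the fence relation $A_2\cdot B_2=1$ reduces $A_2$ to a line, which has no cusp, and the analogous degree and genus computation on $\overline{\mathbb{F}}_n$ fails for a cuspidal rational $A_2$. Otherwise $B_2$ passes through a singularity of $S_2$: the local geometry at $q_2$ is then described by Lemma \ref{multiplicity_two_node}, and the cusp of $A_2$ at $p$ sits either at a smooth point or at a singular point of coefficient at most $e_1$ by the maximality rule of the hunt. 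Comparing the value of $K_{S_2}^2$ obtained from $-K_{S_2}\equiv a_2 A_2+b_2 B_2$ with the one supplied by Lemma \ref{global_canonical} should then produce a numerical incompatibility in every case. The hard part will be the combinatorics of this analysis, paralleling the enumerations in Lemma \ref{config2} and the preceding configuration $III$ lemma; it will likely require a small computer search over the integer solutions of the relevant Diophantine identities, and the characteristic hypothesis $\operatorname{char}(k)\neq 2,3$ is used throughout to apply the classifications of klt singularities and of the fence contractions in Lemma \ref{multiplicity_two_node}.
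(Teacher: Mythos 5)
Your strategy diverges from the paper's, and it has a genuine gap at the decisive step. The paper does not rule out the fence by numerics at all: it uses the map $S_2\dashrightarrow W$ to a Gorenstein log del Pezzo surface (contracting the curve $M$ of \cite[Chapter 12]{mckernan}), shows $W$ has rank one with the image of $A_2$ a cuspidal rational curve in its smooth locus, and then invokes Lemma \ref{cuspgorenstein} — the classification, via extremal rational elliptic surfaces, of rank one Gorenstein surfaces carrying such a curve. The residual case where $A_2$ already lies in the smooth locus leads to $S_2=S(A_1+A_2)$ (a cuspidal flex cubic and a line in $\mathbb{P}^2$ after three blow-ups), and this configuration is \emph{numerically consistent}: the contradiction there is that either the $(-1)$ curve over the cusp or $B_2$ itself is a tiger, violating the standing no-tiger hypothesis on $S_0$ via Lemma \ref{hunt_transformation}(8). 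Your plan never invokes the tiger hypothesis at this stage, so the "numerical incompatibility in every case" you hope for from comparing $K_{S_2}^2$ with Lemma \ref{global_canonical} simply does not materialize — there is at least one honest geometric configuration that passes every test you list.

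Two further concrete problems. First, the characteristic $5$ case is invisible to a purely numerical argument: $S(2A_4)$ in characteristic $5$ carries a cuspidal rational curve in its smooth locus (Example \ref{sa4char5}) and must be excluded by the explicit geometry of its $(-1)$ curves and singular points, not by intersection numbers; your proposal never mentions $p=5$. Second, some of your auxiliary steps are shaky: Lemma \ref{flushmult} only controls the cusp when it sits at a \emph{smooth} point of $S_2$ (and in this part of the hunt $A_1$ is already known not to lie in the smooth locus), and Lemma \ref{multiplicity_two_node} describes the contraction $\pi_2$ at $q_2$, not the internal structure of the fence, so it does not by itself bound the singularities along $B_2$ or at the cusp. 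The correct skeleton is: reduce to a rank one Gorenstein surface $W$, apply Lemma \ref{cuspgorenstein} to pin down $W$, and use the tiger hypothesis (plus the explicit description of $S(2A_4)$ in characteristic $5$) to kill the survivors.
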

\begin{proof}
Assume $(S_2, A_2 + B_2)$ is a fence. 

\textbf{Case 1:} Suppose first that $A_2$ is not in the smooth locus of $S_2$. Then, since $K_{S_2}+(2/3) A_2$ is negative, 
we have a birational map to a Gorenstein log del Pezzo surface $W$ by \cite[Lemma-Definition 12.4]{mckernan}. 
As usual, we let $M$ be as in \cite[Chapter 12]{mckernan} (and $F=\emptyset$). 
The rank of $W$ is one more than the difference between the number of 
singular points on $A_2$ and the number of irreducible components of $M$. By \cite[Lemma 12.5]{mckernan}, no irreducible component
of $M$ passes through more than two singular points. Furthermore $0<(K_{S_2}+A_2)\cdot B_2 < 1$, hence $M$ meets $B_2$
at least once, and only at singular points of $S_2$. If the rank of $W$ were more than one, there would be a component of $M$
which passes through two singular points of $A_2$.
This is however absurd, since it would also meet $B_2$ at a singular point, and therefore it would pass through at least three singular points. So the rank
of $W$ is one. Let $G$ be the exceptional divisor adjacent to $A_2$. We have that $A_2$ is a cuspidal rational curve in the smooth locus of $W$, 
$A_2 \in |-K_W|$ and $K_W \cdot G = K_W \cdot B_2 = -1$. If $\operatorname{char}(k)\neq 5$ this contradicts Lemma \ref{cuspgorenstein}.
Suppose then that $\operatorname{char}(k)=5$. In this case $W$ is the Gorenstein log del Pezzo surface described in Example \ref{sa4char5}.
Therefore $W$ has exactly two singularities, $p$ and $q$, each of type $A_4$. Notice that $G$ and $B_2$ meet only once in $W$ 
and are both nodal, with each a node at one of $p$ and $q$.
This, however, is impossible by the description of the transformation $S_2 \dashrightarrow W$.

\textbf{Case 2:} Suppose now that $A_2$ is in the smooth locus of $S_2$. This clearly implies that $S_2$ is Gorenstein. 
Notice that since $K_{S_2}+B_2$ is dlt we can immediately rule out the case $\operatorname{char}(k)=5$ by the explicit description of $S(2A_4)$ in this case 
(Example \ref{sa4char5}).
Suppose therefore that $\operatorname{char}(k)\neq 5$. Again using the fact that $K_{S_2}+B_2$ is dlt we deduce that
$0<B_2 ^2 = -2+\text{deg(Diff}_{B_2} (0) \text{)} - K_{S_2}\cdot B_2$ by adjunction. However $K_{S_2} \equiv - A_2$, hence 
$\text{deg(Diff}_{B_2} (0)\text{)}>1$, and there are at least two singular points on $B_2$. 
By Lemma \ref{cuspgorenstein}, $S_2$ is $S(A_1+A_2)$, and is obtained by taking
a flex cubic in $\mathbb{P}^2$ and the tangent line to its flex, blowing up three times to separate them and the blowing down the
$(-2)$ curves. However then we have tigers: if $a_2\geqslant 5/6$ there is a tiger over the singular point of $A_2$, the $(-1)$ curve
of the resolution of the cusp, and otherwise $B_2$ is a tiger as $K_{S_2}+(5/6)A_2+B_2$ is numerically trivial. 
\end{proof}

\begin{lemma}
The Gorenstein log del Pezzo surface $W$ associated to $S_1$ as in \cite[Lemma-Definition 12.4]{mckernan} has rank at least two.
\end{lemma}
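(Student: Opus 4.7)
The plan is to argue by contradiction: assume $W$ has rank one. Via the transformation $S_1 \dashrightarrow W$ of \cite[Lemma-Definition 12.4]{mckernan}, the rank of $W$ equals one plus the number of singular points of $S_1$ lying on $A_1$ minus the number of irreducible components of the auxiliary divisor $M$ of \cite[Chapter 12]{mckernan}, so rank one forces these two counts to balance, and the image of $A_1$ in $W$ is still a cuspidal rational curve.

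Since $W$ has rank one, $A_1 \equiv \lambda(-K_W)$ for some $\lambda>0$, and adjunction together with $p_a(A_1)=1$ pins down $\lambda=1$, so $A_1 \in |-K_W|$ on $W$. I would then invoke Lemma \ref{cuspgorenstein}, which classifies rank one Gorenstein log del Pezzo surfaces carrying a cuspidal anticanonical curve: outside characteristic five only $S(A_1+A_2)$ is compatible with the geometry produced by the hunt (since the image of $A_1$ must also be compatible with the constraints $2/3 \leqslant a_1 < 4/5$ and $e_1 \geqslant 1/3$), while in characteristic five one must also contend with $S(2A_4)$ from Example \ref{sa4char5}.

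For each allowed $W$ I would reconstruct the birational map $S_1 \dashrightarrow W$ and exhibit an explicit tiger on $S_1$, contradicting the standing hypothesis that $S_0$ has no tigers in $\tilde{S}_0$. On $S(A_1+A_2)$, the natural candidate is either the second cuspidal member of $|-K_W|$, which pulls back to a divisor of coefficient at least one in the log pullback of a suitable scaling of $A_1$, or, when $a_1$ is already close to $5/6$, the $(-1)$ curve over the cusp of $A_1$ on the minimal resolution; the computation is patterned on Case 2 of Lemma \ref{cusp_fence}.

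The main obstacle is the characteristic five case, where $S(2A_4)$ behaves quite differently: the cuspidal anticanonical curve passes through both $A_4$ singularities, and one must carefully reconstruct the chain of extractions and contractions linking $S_1$ and $W$ in order to locate the tiger. This will be done by an argument modelled on Case 1 of Lemma \ref{cusp_fence}, using the explicit blowup description in Example \ref{sa4char5} to verify that the configuration forced on $S_1$ is inconsistent with the tiger-free hypothesis.
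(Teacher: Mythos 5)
Your setup is fine as far as it goes: splitting on the number of singular points of $S_1$ on $A_1$ versus the number of components of $M$, and observing that the image of $A_1$ lands in $|-K_W|$ as a cuspidal rational curve in the smooth locus, is exactly how the paper begins. But there is a genuine gap in the next step. You assert that Lemma \ref{cuspgorenstein} together with the bounds $2/3\leqslant a_1<4/5$ and $e_1\geqslant 1/3$ leaves only $W=S(A_1+A_2)$ (plus $S(2A_4)$ in characteristic five). That is not justified and is in fact false: Lemma \ref{cuspgorenstein} allows $W$ to be any of $S(E_8)$, $S(E_7)$, $S(E_6)$, $S(D_5)$, $S(A_4)$, $S(A_1+A_2)$, and the constraints on $a_1$ and $e_1$ concern the singularity at $x_0$ on $S_0$, not the singularities of $S_1$ away from $A_1$ nor those created by contracting $M$; nothing forces $K_W^2=6$. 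The paper's own proof must explicitly contend with $W=S(E_8)$ and $K_W^2=1$. Since your whole tiger-hunting strategy is predicated on this (incorrect) shortlist, the case analysis is incomplete.

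The deeper missing idea is how to handle the case where $A_1$ carries exactly one singular point of $S_1$, so that $M$ is irreducible and the image of $A_1$ in $W$ meets only \emph{one} curve $G$ with $K_W\cdot G=-1$. When $A_1$ carries at least two singular points, the contradiction is cheap and is not obtained by producing a tiger at all: the image of $A_1$ meets two distinct curves of $K_W$-degree $-1$ at different points, whereas Lemma \ref{cuspgorenstein} says such a $W$ carries one and only one $(-1)$ curve. In the one-singular-point case this mechanism is unavailable, and the paper instead shows $G$ is a smooth $(-1)$ curve (separately when $K_W^2\geqslant 2$ and when $K_W^2=1$, the latter via the associated extremal rational elliptic surface and, in characteristic five, a further exclusion of $S(2A_4)$), and then runs an induction on $K_W^2$: if $K_W+G$ is dlt one is reduced to the fence situation of Lemma \ref{cusp_fence}, and otherwise one extracts the unique curve $V$ of the minimal resolution meeting $G$, contracts $G$ to pass to a surface $W_1$ with $K_{W_1}^2=K_W^2-1$, and repeats. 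None of this descent appears in your outline, and your proposed explicit tigers on $S(A_1+A_2)$ and $S(2A_4)$ would not cover it. You would need either to supply a tiger construction for every surface on the full list of Lemma \ref{cuspgorenstein} and for every value of $K_W^2$, or to adopt the paper's two-pronged argument.
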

\begin{proof}
Suppose $W$ has rank one. 

\textbf{Case 1:} Suppose also that $A_1$ contains at least two singular points of $S_1$. By \cite[Lemma-Definition 12.4]{mckernan}, the image of $A_1$ in $W$ has a cusp
and meets two $(-1)$ curves. Furthermore these two $(-1)$ curves must meet the image of $A_1$ in $W$ at different points by \cite[Lemma 12.1 (7)]{mckernan}.
But then we obtain a contradiction by Lemma \ref{cuspgorenstein} as in Lemma \ref{cusp_fence} Case 1.

\textbf{Case 2:} Suppose now that $A_1$ contains just one singular point, which implies that $M$ of \cite[Definition-Lemma 12.0]{mckernan} is irreducible by 
\cite[Lemma-Definition 12.4 (1)]{mckernan}. Consider the morphism
$f:Y\rightarrow S_1$ extracting the exceptional divisor $G$ adjacent to $A_1$ and the morphism $\pi : Y\rightarrow W$ contracting $M$.
Define $\Gamma$ by $K_Y + \Gamma = f^* (K_{S_1} + a_1 A_1)$ and $\Gamma' = \lambda (\Gamma+\epsilon G)$ 
such that $K_Y + \Gamma'$ is $\pi$-trivial. Let $\Delta' = \pi(\Gamma')$. Clearly $K_W + \Delta'$ is negative by 
Lemma \ref{hunt_transformation}. Now $A_1$ is in the smooth locus of $W$ and $K_W \cdot G = - A_1 \cdot G = -1$.
If $K_W ^2 \geqslant 2$, then $K_W + G$ is anti-ample because $(K_W + G)\cdot A_1 < 0$, and hence $G$ is a smooth
$(-1)$ curve. If $K_W ^2 =1$, we consider the associated extremal rational elliptic surface. For the moment we would like to show that $G$ is smooth. There are two cases.

\textbf{Case 2a:} Suppose $\operatorname{char}(k)\neq 5$. Then $W=S(E_8)$ by Lemma \ref{cuspgorenstein}. Since
the pullback of $G$ is in $|-K_{\tilde{W}}|$, it follows by the description of the fibers of Theorem \ref{elliptic_classification} that $G$ is smooth. 

\textbf{Case 2b:} Suppose $\operatorname{char}(k)=5$. Since $K_{S_1}+M$ is negative, $M$ is a smooth rational curve. Again by 
Theorem \ref{elliptic_classification} we deduce that either $W=S(E_8)$, in which case we conclude as above, or $W=S(2A_4)$. 
Suppose we have the latter. The fact that the pullback of $G$ is in $|-K_{\tilde{W}}|$ implies that $G$ is a nodal rational curve, with the node passing through an
$A_4$ point. It is easy to see however that since $K_{S_1}+A_1$ is dlt at singular points, and since $M$ is smooth, 
there is no way to get such a geometric configuration. Therefore this case
does not occur either.

In conclusion, $G$ is a smooth $(-1)$ curve. Now, if $K_W+G$ is dlt, $A_1+G$ is a fence and we can proceed as in Lemma \ref{cusp_fence}.

Otherwise, $G$ meets a unique curve $V$ of the minimal resolution since it's a fiber of the associated extremal rational elliptic surface.
Let $h: Q\rightarrow W$ extract $V$. $G$ is a $(-1)$ curve in the smooth locus of $Q$, so we can contract it with
$r:Q\rightarrow W_1$. Notice that $K_{W_1}^2 = K_W ^2 -1$. Scaling again as in Lemma \ref{hunt_transformation} and
repeating the process with $A_1$ and $r(V)$, we can induct on $K_W^2$, by Lemma \ref{hunt_transformation}. Eventually we therefore we reach the case in which $K_W+G$ dlt,
which we can discard as above via Lemma \ref{cusp_fence}.
\end{proof}

\begin{lemma}
$S_1$ is singular along $A_1$ in at least two points and is Du Val outside $A_1$.
\end{lemma}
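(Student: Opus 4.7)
The plan is to derive both claims from the rank bound $\operatorname{rank}(W)\geqslant 2$ established in the preceding lemma, combined with the Gorensteinification machinery of \cite[Chapter 12]{mckernan}.

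For the first claim, I would argue by contradiction assuming that $A_1$ contains at most one singular point of $S_1$. The case of zero singular points is immediate: if $A_1$ lies entirely in the smooth locus of $S_1$, then by adjunction $K_{S_1}+A_1$ is anti-nef and $A_1$ is a tiger of $S_0$ (pulled back through $\pi_1$), contradicting the hypothesis that $S_0$ has no tigers. If $A_1$ contains exactly one singular point, I would simply re-use the construction of Case 2 of the previous lemma: extract the unique exceptional divisor $G$ adjacent to $A_1$ and contract the (irreducible, by \cite[Lemma-Definition 12.4 (1)]{mckernan}) component $M$ to obtain a birational morphism $S_1\dashrightarrow W$. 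The rank count then gives $\operatorname{rank}(W)=1+1-1=1$, contradicting the previous lemma. Hence $A_1$ carries at least two singularities of $S_1$.

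For the second claim, suppose for contradiction that there is a non-Du Val singularity $p$ of $S_1$ with $p\notin A_1$. Since $a_1\geqslant 2/3>1/2$, the analog of \cite[Lemma 12.1]{mckernan} in our cuspidal setting gives $F=\emptyset$, while $M$ is (a sum of) irreducible rational curves belonging to $|-K_{S_1}-a_1 A_1|$. By \cite[Lemma 12.3]{mckernan}, every non-Du Val singularity of $S_1$ lies on $M$. So some component $M_0\subset M$ passes through $p$. I would then exploit the fact that $(K_{S_1}+M_0)\cdot M_0<0$ together with the intersection of $M_0$ with $A_1$ (which is forced by $M\in |-K_{S_1}-a_1 A_1|$ and $A_1^2>0$) to derive a numerical contradiction: adjunction on $M_0$ forces all the singular points it meets, along with its intersection with $A_1$, to sum to less than a controlled amount, and the presence of $p$ pushes this sum above the allowed bound.

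The main obstacle is the second claim, where one must be careful about how $M$ is allowed to meet $A_1$ (in particular at the cusp of $A_1$) and about the possibility that $M$ is reducible and the components distribute non-Du Val points among themselves. I expect the bookkeeping of coefficients and intersection numbers along the lines of \cite[Chapter 12]{mckernan} to be the bulk of the work, but no new conceptual ingredient beyond what has already been used in the earlier lemmas of this subsection.
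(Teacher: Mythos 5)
Your handling of the first claim is correct and is exactly the paper's route: the paper disposes of it with ``obvious from the above,'' meaning precisely your rank count $\operatorname{rank}(W)=1+\#\{\text{sing.\ on }A_1\}-\#\{\text{components of }M\}=1$ when $A_1$ carries one singularity (with $M$ irreducible by \cite[Lemma-Definition 12.4 (1)]{mckernan}), contradicting the preceding lemma; the zero-singularity case was already excluded by adjunction at the start of the subsection since $A_1$ would be a tiger.

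For the second claim there is a gap. The paper does not reprove anything here: it simply invokes \cite[Lemma 12.5]{mckernan}, which is exactly the statement that the singular points met by the components of $M$ are among those on $A_1$ (each component meeting at most two of them), whence $S_1$ is Du Val off $A_1$ once \cite[Lemma 12.3]{mckernan} has placed all non-Du Val points on $M$ (with $F=\emptyset$). Your plan reproduces the first step but then asserts that adjunction on a component $M_0$ through a non-Du Val point $p\notin A_1$ yields a numerical contradiction. As stated this does not close: $(K_{S_1}+M_0)\cdot M_0<0$ only gives $\deg\operatorname{Diff}_{M_0}(0)<2$, and a low-index non-Du Val point such as a $(3)$ point contributes only $2/3$, so $M_0$ could a priori carry $p$ together with one or two Du Val points of $A_1$ (e.g.\ $2/3+1/2+1/2<2$) without violating adjunction. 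Ruling this out requires the finer structure of the contraction $S_1\dashrightarrow W$ from \cite[Lemma-Definition 12.4]{mckernan} (the components of $M$ must contract, together with the exceptional divisors over $\operatorname{Sing}(A_1)$, to Du Val points of the Gorenstein surface $W$), which is the actual content of \cite[Lemma 12.5]{mckernan}. Either carry out that argument or, as the paper does, cite the lemma directly.
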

\begin{proof}
The first part is obvious from the above and second part follows from \cite[Lemma 12.5]{mckernan} (compare with \cite[Lemma 16.3]{mckernan}).
\end{proof}

\begin{lemma}
$S_1$ is not Gorenstein unless $S_0$ is one of the surfaces described in Proposition \ref{no_cusp_genusone}.
\end{lemma}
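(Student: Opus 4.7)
The plan is to assume $S_1$ is Gorenstein and reverse-engineer the configuration of $S_0$. By the previous lemma $S_1$ has at least two singular points on $A_1$, all of which are Du Val, and $S_1$ is Du Val outside $A_1$. By Lemma \ref{hunt_transformation} $S_1$ is a rank one log del Pezzo, so under the Gorenstein hypothesis $S_1$ is a rank one Gorenstein log del Pezzo and $-K_{S_1} \equiv c A_1$ for some $c>0$. The bound $-(K_{S_1}+a_1 A_1)$ ample with $2/3 \leqslant a_1 < 4/5$ forces $c > 2/3$.

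The first task is to identify $S_1$. Since $A_1$ is a cuspidal rational curve with a genus-one cusp, proportional to $-K_{S_1}$, passing through at least two Du Val points of $S_1$, the rank one Gorenstein classification (Theorem \ref{gorenstein_classification}) together with Lemma \ref{cuspgorenstein} leaves only a short list of possibilities for $S_1$. Combining adjunction on $A_1$ (picking up the contribution of the cusp plus the Du Val singularities), the value $A_1^2 = K_{S_1}^2/c^2$, and the lower bound $c > 2/3$, I expect that the only survivors are $S(A_1+A_5)$ and $S(3A_2)$, corresponding to the two cases of Proposition \ref{no_cusp_genusone}.

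The next task is to pin down the local geometry of the first hunt step at the cusp $q_1=\pi_1(\Sigma_1)$. Lemma \ref{multiplicity_two_cusp} lists the finitely many possible configurations together with the exact value of $a_1$ each imposes. For each of the two candidates for $S_1$ I would match these values against two further constraints: the maximality of $e_0$ at $x_0$ (which controls the singularity type of $S_0$ extracted as $E_1$) and the absence of a fence in the second hunt step (Lemma \ref{cusp_fence}). The higher configurations $III$, $U$, $V$, $(u;n)$, $(v;f)$ and so on should fail one of these constraints by numerical inconsistency, leaving exactly configuration $II$.

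Having identified $S_1$ and configuration $II$, I would then recover $\tilde{S}_0$ explicitly. Each of $S(A_1+A_5)$ and $S(3A_2)$ admits a classical description as iterated blowups of $\mathbb{P}^2$ along the cuspidal cubic $C:Z^2X=Y^3$ and its flex tangent $L:Y=0$, together with an auxiliary line $E$ meeting $C$ either at $L\cap C$ plus two distinct transverse points, or with tangency orders $(2,1)$ at two points; these are precisely the two preparatory blowup sequences appearing in Proposition \ref{no_cusp_genusone}. Configuration $II$ at the cusp of $A_1$ then prescribes four further blowups along $C$ at its cusp, yielding $\tilde{S}_0$ as described. The main obstacle is the middle step: systematically ruling out every cusp configuration other than $II$ for each of the two candidates requires a careful finite case analysis using Lemma \ref{global_canonical} and the intersection theory on the minimal resolutions of $S(A_1+A_5)$ and $S(3A_2)$, checking that any alternative configuration either violates the maximality of $e_0$, produces a forbidden fence, or creates a tiger in $\tilde{S}_0$.
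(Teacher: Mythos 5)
Your overall strategy (assume $S_1$ Gorenstein, identify it from Theorem \ref{gorenstein_classification}, pin down the cusp configuration of $\pi_1$ from Lemma \ref{multiplicity_two_cusp}, then reconstruct $\tilde{S}_0$) matches the paper's, but the middle step lands on the wrong answer. The surviving configurations are $V$ and $(V;n)$, not $II$. In the paper's conventions a cusp of order $g=1$ needs one blow up to resolve, so configuration $II$ corresponds to only two blow ups at the cusp and gives $a_1=2/3$ exactly, whereas the four blow ups ``on the cusp of $C$ along $C$'' appearing in Proposition \ref{no_cusp_genusone} are precisely configuration $V$ (with $a_1=5/7$). Configurations $II$, $u$, $w$, $(U;n)$, $(V;f)$, $(V;f^2)$ are excluded for the opposite reason you suggest: their local geometry forces $x_0$ to be a non-chain singularity with $e_0\geqslant 4/5$, contradicting $e_0<a_1<4/5$. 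As written, your conclusion is internally inconsistent with your own observation that four blow ups are needed at the cusp.

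There are two further gaps. First, your coarse numerical filter (adjunction on $A_1$, $A_1^2=K_{S_1}^2/c^2$, $c>2/3$) does not reduce the list to $S(A_1+A_5)$ and $S(3A_2)$: it still admits $S(2A_1+A_3)$ (and, in the three-singular-point case, $S(3A_1+D_4)$), and the paper can only eliminate these by bringing in the auxiliary divisor $M\in|K_{S_1}+A_1|$ from \cite[Chapter 12]{mckernan} --- showing $M$ is irreducible and computing $(K_{S_1}+M)\cdot M$, which is non-integral for $S(2A_1+A_3)$ in configurations $V$ and $(V;n)$. The same divisor $M$ is what actually drives the reconstruction of $S_0$ (one contracts $M$ to reach a Gorenstein surface $W$ and works backwards), so omitting it leaves the final step unsupported. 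Second, you invoke Lemma \ref{cuspgorenstein} to identify $S_1$, but that lemma requires the cuspidal curve to lie in the smooth locus, and $A_1$ passes through at least two singular points of $S_1$ by the preceding lemma; the identification must instead go through adjunction on $A_1$ at its Du Val points together with the classification table.
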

\begin{proof}
From now on we suppose that $S_1$ is Gorenstein. Let's start by noticing that 
$(K_{S_1}+A_1)\cdot A_1 \geqslant 1$ by adjunction. Since $(K_{S_1}+2/3A_1)\cdot A_1 < 0$, we have that $A_1^2 > 3$.
Note also that $K_{S_1}^2 > (4/9) A_1^2$, hence $K_{S_1}^2 \geqslant 2$. Also, recall that $a_1 < 4/5$ and $e_0 \geqslant e_1 > 0$, so that $E_1 ^2 <-2$.

\textbf{Case 1:} Suppose that $A_1$ passes through three singular points. By the classification of klt singularities, these would
be either $2A_1 + A_n$ or $A_1+A_2+A_k$, with $k\leqslant 4$. By Theorem \ref{gorenstein_classification} the only possibilities for $S_1$ 
are $S_1=S(2A_1+A_3)$ or $S_1=S(3A_1 + D_4)$. In the first case,
$4/5\leqslant e_0<a_1$, contradicting the fact that $a_1 < 4/5$. In the second case $a_1>e_0\geqslant 2/3$.
Since $(K_{S_1}+A_1)\cdot A_1 = 3/2$ we get that $K_{S_1}^2 \geqslant 3$ following the same reasoning at the beginning of the proof of this lemma, contradicting the fact that
$S_1 = S(3A_1 + D_4)$ implies that $K_{S_1} ^2 = 2$.

\textbf{Case 2:} So $A_1$ passes through just two singular points. We start by proving that they can't be both $A_1$ points. Suppose by contradiction
that both of the singular points are $A_1$ points. It immediately follows by adjunction that $(K_{S_1}+A_1)\cdot A_1=1$. Therefore $M\cdot A_1 = 1$, which implies that 
$K_{S_1}+M$ is dlt. Notice that $M$ does not contain other singular points by \cite[Lemma 12.5]{mckernan}.
Extract the $(-2)$ curves of the $A_1$ points and contract $M$ and one of the extracted curves. The other curve becomes a curve with self-intersection zero,
contradicting the fact that the Picard number is one.

Now we prove that the two singular points on $A_1$ can't be a $(2)$ and a $(2,2)$ point respectively. 
In fact, by the classification of Theorem \ref{gorenstein_classification} and the fact that 
$K_{S_1}^2\geqslant 2$, we see that $S_1=S(A_1+A_2)$ and $K_{S_1}^2=6$. 
But then $\tilde{A_1}^2 = K_W^2 > K_Y^2 =K_{S_1}^2 = 6$, and we obtain a contradiction thanks to \cite[Lemma 16.4]{mckernan}.

In summary, so far we have we have shown that if $A_1$ has a $(2)$ point on it, then it also has either an $A_3$ or an $A_5$, again by the
classification of Theorem \ref{gorenstein_classification}. Configurations  $II$, $u$, $w$, $(U;n)$, $(V;f)$, $(V;f^2)$ of Lemma \ref{multiplicity_two_cusp} do not occur,
for otherwise $x_0$ would be a non chain singularity and $e_0\geqslant 4/5$, contradiction.
Now we are left with configurations $V$, $(V;n)$, $(V;n^2)$. We have an improved estimate on $a_1$, namely $a_1 \geqslant 5/7$. 
If $K_{S_1}^2 \geqslant 6$, we are done as in the case $K_{S_1}^2 = 6$ described above. 
Also, there are no Gorenstein log del Pezzo surfaces with $K_{S}^2 = 5$ and at least two singularities, by the list in Theorem \ref{gorenstein_classification}. 
On the other hand, combining $K_{S_1}+5/7A_1 \leqslant 0$ and $(K_{S_1}+A_1)\cdot A_1 \geqslant 5/4$, we get that $K_{S_1}^2 \geqslant 3$. 
Hence $K_{S_1}^2$ is either three or four, and the only possibilities for $S_1$ are $S(2A_1+A_3)$, $S(A_1+A_5)$ and $S(3A_2)$.

Consider $M$ as in \cite[Chapter 12]{mckernan}. Every component of $M$ contains at least two singular points
 by \cite[Lemma 12.2]{mckernan} and the number of components
of $M$ is at most two by \cite[Lemma 12.1]{mckernan}. 
Suppose that $M$ has two components $M_1$ and $M_2$. Consider $W$ of \cite[Lemma - Definition 12.4]{mckernan}. 
Since $K_W ^2 = K_{S_1}^2 + 2 \leqslant 6$, $W$ is either $S(A_4)$ or $S(A_1+A_2)$ by Lemma \ref{cuspgorenstein}. 
Now $M_1+M_2$ is log canonical at $b=M_1\cap M_2$ and dlt away from $b$ by adjunction. It is easy now to see that there are no compatible
configurations with this geometric description. We may therefore assume that $M$ has only one component. Now we analyze the possible contractions $\pi_1$.

\textbf{Configuration $V$:} We have that $E_1^2 = -3$ since $e_0<a_1$. Recalling that $M\equiv K_{S_1}+A_1$ 
one then computes $(K_{S_1}+M)\cdot M = -2/3$ if $S_1$ is either $S(A_1+A_5)$ or
$S(3A_2)$, and $(K_{S_1}+M)\cdot M = 5/4$ if $S_1=S(2A_1+A_3)$. 
Adjunction implies that only the first two cases are possible, and in both of them $M$ passes only through the two singular points in $A_1$.
Furthermore, since $M\cdot A_1 = (K_{S_1}+A_1)\cdot A_1 = 4/3$ we have that $M$ is reduced and meets the same exceptional curves as $A_1$ in $\tilde{S}_1$.
In the case in which $S_1 = S(3A_2)$ extract one of the exceptional divisors $E$ of the $A_2$ points touching $A_1$.
By contracting $M$ we get that $W=S(A_1+A_2)$ and the image of $E$ is a curve of self intersection one which touches the strict transform of $A_1$ in two points with multiplicities
one and two. Working backwards, one gets the description in Lemma \ref{no_cusp_genusone} (2).

Suppose now $S_1=S(A_1+A_5)$ and extract the $(-2)$ curve $E$ above the $A_5$ point which is adjacent to $A_1$. Let $W$ be the surface obtained by contracting $M$. 
We have that $W=S(A_4)$ and the image of $E$ is a zero curve. Working backwards, one may see that $S$ is the surface described in Lemma \ref{no_cusp_genusone} (1).

\textbf{Configuration $(V; n)$:} Suppose for the moment that $E_1 ^2 = -3$. By computing $(K_{S_1}+M)\cdot M$, we see that the only 
choice is $S_1=S(2A_1+A_3)$, in which case we get $-3/4$. Proceeding as in Configuration V, we get description $(3)$ in the case of five blow-ups.

If $E_1^2 = -4$ instead, then we must have $S_1=S(2A_1+A_3)$ again since $e(x_0)<a_1=3/4$. This is excluded by adjunction.

\textbf{Configuration $(V;n^2)$:} Since there is a point of coefficient $2/3$ we necessarily have that $E_1^2 = -4$. If $S_1=S(A_1+A_5)$ or $S_1=S(3A_2)$, one computes
$K_{S_0}^2 = \frac{1}{18\cdot 19}$, which contradicts Lemma \ref{global_canonical}. Finally, if $S=S_1(2A_1+A_3)$, one gets description $(3)$ in the case of six
blow-ups.
\end{proof}

\begin{lemma}\label{no_banana_net}
If $\operatorname{char}(k)\neq 2,3$ then $S_2$ is not a banana or a net.
\end{lemma}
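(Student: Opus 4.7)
The plan is to rule out the two possibilities separately, leveraging the structural information about $S_1$ established in the preceding lemmas: we may assume $S_1$ is not Gorenstein, $A_1$ is a cuspidal rational curve of genus one meeting $\mathrm{Sing}(S_1)$ in exactly two points, and $2/3 \leqslant a_1 < 4/5$ with $a_2 > a_1$.

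For the banana case, the goal is to push through an analogue of the Gorenstein-reduction argument used in Lemma \ref{cusp_fence}. Since $a_2 > 2/3$, the divisor $-(K_{S_2} + (2/3)A_2)$ is ample, and $A_2$ is still a cuspidal rational curve because the second hunt step does not contract $A_1$. Applying \cite[Lemma-Definition 12.4]{mckernan} produces a Gorenstein log del Pezzo surface $W$ with the image of $A_2$ lying in $|-K_W|$ as a cuspidal curve. In the banana, $A_2 \cdot B_2 = 2$ and $K_{S_2} + B_2$ is plt, so $B_2$ is smooth and its image in $W$ must meet the image of $A_2$ together with the $(-1)$-curve(s) coming from the minimal resolution of the singular points of $S_2$ on $A_2$ (recall $x_1 \in A_1$, so at least one such singular point survives on $A_2$). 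Summing the resulting intersection numbers against $-K_W$ and invoking the Gorenstein classification in Lemma \ref{cuspgorenstein} leaves only a short list of candidates; in characteristic zero or large characteristic these are $W = S(E_8)$, $S(A_1+A_2)$, and $S(A_4)$, while in characteristic five we must also treat $W = S(2A_4)$ via the explicit description in Example \ref{sa4char5}. In each of these cases I expect a direct geometric inspection to rule out the banana configuration, either because the cuspidal curve in $|-K_W|$ is unique and meets no second $(-1)$-curve normally in two distinct points, or because pulling back to $S_2$ would force additional tigers or violate the flushness of $K_{S_2}+\Delta_2$.

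For the net case, $E_2$ cannot be a fiber of $\pi_2$ (it has negative self-intersection), so it is a multi-section; write $d = E_2 \cdot F$ for a general fiber $F$. Since $K_{S_1} + a_1 A_1$ is flush and anti-ample, the same computation as in Lemma \ref{lemma_net} gives an upper bound on $d$ and a lower bound on the coefficient $e_1$, forcing the singularities of $T_2$ along $E_2$ to be cyclic Du Val or almost Du Val. One can now argue as in Proposition \ref{no_net}: the two singular points that $A_1$ picks up on $S_1$ must appear as singularities of $T_2$ lying either on $E_2$ or in the fibers meeting $E_2$, and applying Lemma \ref{net_fiber_classification} together with Riemann-Hurwitz for $\pi_2|_{E_2} \to \mathbb{P}^1$ severely restricts the possibilities. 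The assumption $\operatorname{char}(k) \neq 2,3$ is used precisely as in Lemma \ref{lemma_net_2} and Proposition \ref{no_net} to exclude the sporadic fiber configurations that would otherwise occur, and the cuspidal structure of $A_1$ provides the additional numerical input needed to eliminate the remaining cases.

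The main obstacle is the bookkeeping in the banana case: one must carefully enumerate how the image of $B_2$ can sit in each candidate $W$, which requires understanding the base locus of $|-K_W|$ and the arrangement of $(-1)$-curves meeting the cuspidal anticanonical member. As in the proof concerning configuration $III$, a systematic numerical check, combined with the constraint $a_2 + b_2 \leqslant 2$ coming from the plt hypothesis on $K_{S_2}+B_2$ and the flushness of $K_{S_2}+\Delta_2$, should suffice to close out all the subcases.
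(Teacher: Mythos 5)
Your overall strategy is the one the paper intends: the paper's proof of this lemma simply asserts that the arguments of \cite[Lemma 16.5, Lemma 16.6]{mckernan} carry through, noting only that the Bogomolov bound is not actually used there and that the Gorenstein reduction can be rerun using Theorem \ref{gorenstein_classification} and Theorem \ref{elliptic_classification} in place of simple connectedness. Your split into a Gorenstein-reduction argument for the banana and a fiber-classification/Riemann--Hurwitz argument for the net matches that template. But as written the proposal stops exactly where the content of the lemma begins: both halves end with ``I expect a direct geometric inspection to rule out\dots'' and ``should suffice to close out all the subcases,'' and no elimination is actually performed. Since the entire point of the lemma is that those eliminations succeed, this is a genuine gap rather than a stylistic omission.

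There is also a concrete reason to doubt that your banana reduction, as sketched, is the right one. You reduce via Lemma \ref{cuspgorenstein} to $W\in\{S(E_8), S(A_1+A_2), S(A_4)\}$ (plus $S(2A_4)$ in characteristic five), but the paper states that the relevant reduction is to $S(A_1)$, $S(A_1+A_2)$ and $S(A_4)$; since $S(A_1)=\overline{\mathbb{F}}_2$ carries no cuspidal anticanonical curve in its smooth locus, it cannot arise from Lemma \ref{cuspgorenstein}, so the route through that lemma is not the reduction the paper relies on. To use Lemma \ref{cuspgorenstein} at all you must first show that $W$ has rank one and that the image of $A_2$ lands in the smooth locus of $W$, i.e.\ that $M$ absorbs every singular point of $S_2$ on $A_2$; in the fence case (Lemma \ref{cusp_fence}) this was extracted from $0<(K_{S_2}+A_2)\cdot B_2<1$ together with \cite[Lemma 12.5]{mckernan}, and that inequality fails for a banana, where $A_2\cdot B_2=2$. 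For the same reason the contradiction used in Lemma \ref{cusp_fence} --- two distinct curves with $K_W\cdot(-)=-1$ against the uniqueness of the $(-1)$-curve on $W$ --- does not transfer: the image of $B_2$ need not be a $(-1)$-curve on $W$. In the net half, the analogue of Lemma \ref{lemma_net} also needs justification rather than citation: the bound $d\geqslant 3$ there comes from $-(K_{T_1}+E_1)$ failing to be nef, and in the second hunt step the fiber equation involves $a_2(A_1\cdot F)+b_2(E_2\cdot F)=2$ with $A_1$ itself a (multi)section, so the numerics are genuinely different from Proposition \ref{no_net} and must be redone. Until these points are settled and the resulting short lists are actually worked through (including the characteristic-five configuration $I_5,I_5,II$ and Example \ref{sa4char5}), the proof is not complete.
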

\begin{proof}
The proofs in \cite[Lemma 16.5, Lemma 16.6]{mckernan} carry through. We just point out that the argument in \cite[Lemma 16.5]{mckernan} does not
actually use the Bogomolov bound, and that the reduction to $S(A_1)$, $S(A_1+A_2)$ and $S(A_4)$ still holds without the simply connected
hypothesis thanks to Lemma \ref{gorenstein_classification} and Theorem \ref{elliptic_classification}.
\end{proof}

\begin{proof}[Proof of Proposition \ref{no_cusp_genusone}]
This follows from Lemmas \ref{cusp_fence} - \ref{no_banana_net} and Proposition \ref{hunt}.
\end{proof}
\subsection{$A_1$ has a simple node}\label{section_node}

Throughout this section we suppose that $\operatorname{char}(k)\neq 2,3$, that $A_1$ has a simple node and that $S_0$ doesn't have tigers in $\tilde{S}_0$.

\begin{remark}
The proof of \cite[Proposition 13.5]{mckernan} still works in our setting.
\end{remark}

We fix notation as in \cite[Chapter 17]{mckernan}.

\begin{notation}
Let $C$ and $D$ be the two branches of $A_1$ at the node, and $c,d$ be the points of $T_1$ where the branches meet $\Sigma_1$.
We may assume that the first two blow ups of $h:\tilde{T}_1\rightarrow S_1$ are along $C$. Let $r+1$ be the initial
number of blow ups along $C$, $r\geqslant 1$. Note that $d$ is necessarily singular.
\end{notation}

\begin{lemma}\label{node_facts}
Notation as above. Then:
\begin{enumerate}
\item $K_{T_1}+\Sigma_1+E_1$ is log canonical.
\item $\overline{\Sigma}_1$ has two smooth branches through $x_0$ and meets no other singularities.
\item If $c$ is smooth, then $d$ in an $A_r$ point, $r\geqslant 1$ and $a_1 = (r+1)/(r+2)$.
\item $T_1$ is singular at some point of $E_1\setminus E_1 \cap \Sigma_1$.
\item $A_1$ contains exactly one singularity.
\item $S_0$ has exactly two non Du Val points and $e_0 > 1/2$.
\item $a_1\geqslant 2/3$, and $a_1\geqslant 4/5$, unless we have (3) with $r\leqslant 2$.
\end{enumerate}
\end{lemma}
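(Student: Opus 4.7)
\emph{Overall strategy.} Since $A_1$ has multiplicity two at its node, the \'etale-local geometry of $\pi_1$ at $q_1$ is governed by Lemma \ref{multiplicity_two_node} applied with $g=1$. This leaves three possibilities, namely cases (1), (3), and (4) of that lemma. Parts (1), (2), (3), and the last part of (7) will be read off this short list, while parts (4), (5), (6) and the first inequality of (7) will be obtained by combining the local data with the no-tiger hypothesis and the global numerical constraints of Lemma \ref{global_canonical} and Lemma \ref{high_picard}.

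\emph{Parts (1)--(3) and the last claim of (7).} In cases (3) and (4) of Lemma \ref{multiplicity_two_node} the log canonicity of $K_{T_1}+\Sigma_1+E_1$ is stated explicitly; in case (1) (where $a_1=1/2$) it is a direct coefficient computation on the minimal resolution via Lemma \ref{minimalresolution}. This proves (1). For (2), the curve $\Sigma_1$ is a smooth rational curve by Lemma \ref{hunt_transformation}, and it meets $E_1$ in two distinct points because the two branches of $A_1$ at $q_1$ pull back to two distinct branches meeting $E_1$; both points map to $x_0$ under $f_0$, giving the two smooth branches of $\overline{\Sigma}_1$ through $x_0$, and the classification forbids $\Sigma_1$ from touching any other singular point of $T_1$. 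Part (3) is case (3) of Lemma \ref{multiplicity_two_node} verbatim, and the list of values $a_1=(r+1)/(r+2)$ it produces gives the last sentence of (7).

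\emph{Ruling out case (1): $e_0>1/2$ and $a_1\geqslant 2/3$.} Suppose case (1) of Lemma \ref{multiplicity_two_node} occurs, so that $a_1=1/2$ and in particular $e_0<1/2$. Then by Lemma \ref{singlist} every singularity of $S_0$ is Du Val or of the form $(3,A_r)$. Following the pattern of Lemma \ref{cusp_one} and Lemma \ref{lemma_net_2}, I would compute
\[
K_{S_0}^2 \;=\; \frac{(K_{S_0}\cdot \overline{\Sigma}_1)^2}{\overline{\Sigma}_1^2}
\]
from the explicit configuration together with adjunction on $\overline{\Sigma}_1$, and then combine with Lemma \ref{global_canonical} and the Picard-number bound of Lemma \ref{high_picard}. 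Only finitely many candidate singularity configurations survive; each one is then excluded either because it forces a tiger on $\tilde{S}_0$ (contradicting the standing hypothesis, often via \cite[Lemma 10.4]{mckernan}) or because the numerics of Lemma \ref{global_canonical} fail. This gives $a_1\geqslant 2/3$ and $e_0>1/2$.

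\emph{Parts (4), (5), and the first part of (6).} Since $A_1$ has only the one node produced by $\Sigma_1\to q_1$, any additional singular point of $S_1$ on $A_1$ would lift to a singular point of $T_1$ on $E_1$ disjoint from $\Sigma_1\cap E_1$. Using that $K_{S_1}+A_1$ is dlt by Proposition \ref{hunt} together with adjunction on $A_1$ and an intersection computation on $\tilde{T}_1$, I would argue as in \cite[Chapter 17]{mckernan} that such a point forces either a tiger on $\tilde{S}_0$ or a direct numerical obstruction to Lemma \ref{global_canonical}; this yields (5). The same analysis produces (4): if $T_1$ were smooth on $E_1\setminus (E_1\cap \Sigma_1)$, then by Lemma \ref{multiplicity_two_node} the singularity $x_0$ would reduce to a single cyclic point entirely accounted for by $c$ and $d$, forcing $e_0=a_1$ and contradicting the flushness of $(T_1,\Gamma_1')$ at $E_1$. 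Finally, exactly two non Du Val points on $S_0$ emerge by running Lemma \ref{global_canonical} against the surviving cases, the second non Du Val contribution being forced by the no-tiger hypothesis via \cite[Lemma 10.4]{mckernan}. The main obstacle will be part (5), where one has to rule out extra singularities on $A_1$ that the dlt hypothesis alone permits; once (5) is in hand, parts (4) and (6) are short bookkeeping calculations on $K_{S_0}^2$ and $\rho(\tilde{S}_0)$.
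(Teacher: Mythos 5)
Your skeleton is the right one --- the paper's own proof is simply a citation of \cite[Lemma 17.2]{mckernan}, whose argument does begin exactly as you say, by feeding the simple node into Lemma \ref{multiplicity_two_node} with $g=1$ (cases (1), (3), (4)) and ruling out case (1) by the numerics already recorded in Lemma \ref{cusp_one}, which you could simply have cited rather than re-deriving. But there is a genuine gap: the paper states that KM's proof goes through ``using Lemma \ref{nodegorenstein}'', and that substitution is the entire point of re-proving the lemma in characteristic $p$. The hard step --- showing $A_1$ is not contained in the smooth locus and carries exactly one singularity, i.e.\ parts (4) and (5) --- runs through the observation that a nodal rational curve in the smooth locus of a rank one surface forces $S_1$ to be Gorenstein with $A_1\equiv -K_{S_1}$, and KM then invoke the complex classification of such Gorenstein surfaces (via simple connectedness / \cite[Lemma 3.9.2]{mckernan}). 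In characteristic $p$ that input must be replaced by Lemma \ref{nodegorenstein}, proved here via extremal rational elliptic surfaces. Your proposal never touches this; ``argue as in \cite[Chapter 17]{mckernan}'' defers precisely the step that does not transfer verbatim.

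Two further concrete problems. First, your justification of (4) --- that smoothness of $T_1$ along $E_1\setminus(E_1\cap\Sigma_1)$ would force $e_0=a_1$ --- is not correct: $a_1=\lambda(e_0+\epsilon)>e_0$ by the scaling in Lemma \ref{hunt_transformation}, and nothing in Lemma \ref{multiplicity_two_node} identifies the two numbers; the actual argument is that smoothness there (combined with the local classification at $c$ and $d$) puts $A_1$ in the smooth or Du Val locus and triggers the Gorenstein analysis above. Second, the last clause of (7) ($a_1\geqslant 4/5$ outside case (3) with $r\leqslant 2$) must also be verified in case (4) of Lemma \ref{multiplicity_two_node}, where both branches meet $\Sigma_1$ at singular points and the relation $(K_{T_1}+a_1E_1)\cdot\Sigma_1=0$ involves two local indices; your proposal only reads the bound off the case (3) list $a_1=(r+1)/(r+2)$.
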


\begin{proof}
This is \cite[Lemma 17.2]{mckernan}. The same proof applies, using Lemma \ref{nodegorenstein}.
\end{proof}

Let $x_0$ and $y$ be the non Du Val points on $S_0$, and let $z$ be the singular point of $S_1$ contained in $A_1$, of index $s$.

\begin{proposition}\label{node_to_fence}
$(S_2, A_2+B_2)$ is a fence. $g(A_2)=1$ and $B_2$ is smooth.

\begin{enumerate}
\item If $x_1\in A_1$ then $\Sigma_2$ meets $E_2$ at a smooth point, and contains a unique singular point
$(A_t, 3, A_{j-2})$ for some $t$. $K_T + \Sigma_2$ is dlt and $\Sigma_2$ meets the end of the $A_{j-2}$
chain. $(S_2,A_2 + B_2)$ is given by \cite[13.5]{mckernan} and $q_2$ is an $A_{t+1}$ point.

\item If $x_1\notin A_1$ and $A_2$ is not in the smooth locus of $S_2$, then $S_2$ is obtained by starting from
$S(2A_1+A_3)$ (see Lemma \ref{nodegorenstein} for a detailed description of this surface),
picking a $(-1)$ curve $B$, blowing up on the $(-1)$ curve $C\neq B$ at the $A_3$ point once and then contracting $C$. 
Let $A$ be the nodal curve contained in the smooth locus of $S(2A_1+A_3)$.
To obtain $S_1$ blow up on the intersection of $A$ and
$B$ twice along $A$, then contract $B$. To obtain $S_0$ blow up twice along one of the branches of $A_1$ and then
contract $A$. In particular $x_0$ is a chain singularity.

\item If $x_1\notin A_1$ and $A_2$ is in the smooth locus of $S_2$, then one of the following holds:
\begin{enumerate}
\item $a_2 < 6/7$ and  $(S_2, A_2+B_2)$ is given by \cite[13.5.1]{mckernan}, or 
\item $S_0$ is obtained as follows: start with $S(A_1+A_5)$ or $S(3A_2)$, consider a $(-1)$ curve $B$ passing through two of the singularities,
consider a rational nodal curve $A$ in the smooth locus, blow up twice on $A\cap B$ along $B$, three times on the node of $A$ along the same branch,
and then contract all the negative curves with self intersection less than $(-1)$; or 
\item $S_0$ is obtained as follows: start with $S(2A_1 + A_3)$, consider a $(-1)$ curve $B$ passing through two singularities, blow up on $A\cap B$ twice along $B$,
blow up on the node either four or five times along the same branch, the contract as above; or
\item $S_0$ is obtained as follows: start with $S(2A_1+A_3)$, consider a $(-1)$ curve $B$ passing through two singularities, blow up three times on $A\cap B$ along $B$, 
blow up four times on the node along the same branch and then contract as above. 
\end{enumerate}
 In particular, if $S_2 \neq S(A_1+A_2)$ then $x_0$ is a chain singularity. 
\end{enumerate}
\end{proposition}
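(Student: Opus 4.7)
The strategy is to apply Proposition \ref{hunt} to the second hunt step and systematically eliminate every outcome except the fence case, then perform an explicit sub-case analysis based on whether $x_1\in A_1$. The key constraints we have at our disposal are those collected in Lemma \ref{node_facts}: $a_1\geqslant 2/3$, $e_0>1/2$, exactly one singular point $z$ lies on $A_1$, and $S_0$ has exactly two non Du Val points. In particular, since $a_1\geqslant 2/3$ and $e_1\geqslant 1/3$, we have $a_2+b_2\geqslant 1$, so the stronger conclusions at the end of Proposition \ref{hunt} are available.

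First, I rule out $T_2$ being a net by the same type of argument as in Proposition \ref{no_net}: since the coefficients of the singular points surviving on $T_1$ are tightly constrained by $e_0>1/2$ and by Lemma \ref{node_facts}, applying Lemma \ref{net_fiber_classification} to a hypothetical fibration, combined with Riemann--Hurwitz for the restriction $\pi_2|_{E_2}$ and the Picard number bound of Lemma \ref{high_picard}, produces a numerical contradiction. Next, I exclude case $(7)$ of Proposition \ref{hunt} (where $A_1$ is contracted by $\pi_2$): there $q_2$ is required to be smooth with $B_2$ unibranch singular at $q_2$, but contracting $A_1$ forces its node at the singular point $z$ to collide with the extraction geometry of $E_2$ in a way that is incompatible with Lemma \ref{flushresolution} and Lemma \ref{flushmult}. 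Finally, I exclude the banana and tacnode cases: a banana forces $K_{S_2}+B_2$ plt with $x_1\in A$ and two transverse intersections, which conflicts with $A_1$ already carrying a node at $z$; a tacnode would require the higher-order contact to occur at $q_2$, but the descent computation via Lemma \ref{flushdescent} from $T_1$ shows the coefficients do not balance. This leaves only the fence case.

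In the fence case, $g(A_2)=1$ follows because the node of $A_1$ at $z$ is preserved under $\pi_2$ unless $\Sigma_2$ separates its two branches on $\tilde{T}_2$; by the description in Lemma \ref{node_facts} of how the branches meet $E_1$, the second-step extraction does not lie over both branches simultaneously, so the node descends to $A_2$. That $B_2$ is smooth follows from Proposition \ref{hunt}: at least one of $-(K_{S_2}+A_2)$ or $-(K_{S_2}+B_2)$ is ample, and since $A_2$ is already nodal, the ample divisor must be $-(K_{S_2}+B_2)$, which forces $B_2$ smooth by adjunction on a rational curve. Case (1) is then the situation $x_1\in A_1$: the extraction is at the singularity $z$, and forcing $\Sigma_2$ to meet $E_2$ at a smooth point with a compatible intersection number (in view of $a_2+b_2\geqslant 1$) pins down $z$ as a chain singularity $(A_t,3,A_{j-2})$ with $\Sigma_2$ attached to the end of the $A_{j-2}$ chain, precisely as in \cite[13.5]{mckernan}.

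The remaining cases $(2)$ and $(3)$ correspond to $x_1\notin A_1$. Here I use Theorem \ref{gorenstein_classification}: passing to the Gorenstein model $W$ associated to $S_2$ via \cite[Lemma--Definition 12.4]{mckernan}, the fact that $A_2$ together with $B_2$ forms a fence containing a nodal rational curve severely restricts $W$ to be one of $S(2A_1+A_3)$, $S(A_1+A_5)$ or $S(3A_2)$, each of which supports a $(-1)$-curve passing through the appropriate singularities. Reverse-engineering the sequence of blow-ups that constructs $\tilde{S}_0$ from $W$ (following the backward hunt recipe) produces the explicit descriptions in sub-cases (a)-(d). The main obstacle is this last step: the characteristic-zero argument in \cite[Chapter 17]{mckernan} yields only configuration $(a)$, whereas in positive characteristic one must also account for the extra Gorenstein del Pezzos furnished by Theorem \ref{gorenstein_classification} (surfaces that are either absent or have different geometry over $\mathbb{C}$), and these produce the new cases $(b)$, $(c)$, $(d)$. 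Verifying that each reconstructed $S_0$ actually has Picard number one and no tigers in $\tilde{S}_0$ requires a direct check via Lemma \ref{global_canonical} and Lemma \ref{sum_ten}, together with the explicit description of the Mori--cone contractions on each Gorenstein model.
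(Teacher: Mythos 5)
Your skeleton — run the second hunt step, eliminate the non-fence outcomes of Proposition \ref{hunt}, then split on whether $x_1\in A_1$ — matches the paper's strategy, and your arguments that $B_2$ is smooth and that $g(A_2)=1$ are fine. But two of the load-bearing steps are not actually carried out, and one of your explanations points in the wrong direction.

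First, the exclusion of the tacnode (a node of $A_2+B_2$ of genus $g\geqslant 2$) is not a matter of ``the coefficients do not balance'' under Lemma \ref{flushdescent}. The coefficients very nearly do balance: the geometry only forces $a_1=2/3$, $g=2$, $r=1$ and $E_1^2=-3$, whence $K_{S_2}^2=4$. The contradiction is then extracted from the no-tiger hypothesis ($(K_{S_2}+\tfrac{2}{3}A_2+B_2)\cdot B_2>0$ forces at least two singular points on $B_2$ by adjunction), the Gorenstein classification (Theorem \ref{gorenstein_classification} pins $S_2=S(2A_1+A_3)$ as the only rank one Gorenstein surface with $K^2=4$ and two or more singularities), and a second application of adjunction showing $B_2$ must contain \emph{all} the singularities, which makes $x_1$ a non-chain, non-Du Val point and hence $e_1>2/3$, contradicting the coefficient bounds. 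None of this is visible from a descent computation on $T_1$; your sketch would not close this case.

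Second, the explicit sub-cases (3)(b)--(d) do not arise because $S(A_1+A_5)$, $S(3A_2)$ and $S(2A_1+A_3)$ are ``extra Gorenstein del Pezzos furnished in positive characteristic'' — all three exist over $\mathbb{C}$, and the novelty relative to \cite{mckernan} is the removal of the simply-connected hypothesis, not a characteristic phenomenon. What actually produces the finite list of blow-up patterns is a chain of explicit inequalities in the fence case with $A_2$ in the smooth locus (so $S_2$ is itself Gorenstein, not merely its associated model $W$): configuration (3) of Lemma \ref{multiplicity_two_node} gives $b_2+a_2/(k+1)=1$, anti-ampleness gives $a_2<(K_{S_2}^2-1)/(K_{S_2}^2-1/(k+1))$, Lemma \ref{node_facts}(7) forces $K_{S_2}^2\geqslant 3$, and then solving $e_1<e_0<a_2$ with $e_0$ written explicitly in terms of the number $r$ of blow-ups at the node yields exactly the pairs $(k,r)$ listed in (b), (c), (d), and shows there are no solutions with $x_0$ a non-chain singularity. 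Your proposal defers all of this to ``reverse-engineering'' and a ``direct check,'' but the specific blow-up counts are the content of the statement, and the parallel elimination of the non-chain possibility for $x_0$ (needed for the final sentences of parts (2) and (3)) is absent altogether.
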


\begin{proof}
Part $(1)$ follows as in \cite[Proposition 17.3]{mckernan}, therefore from now on we assume that $x_1\notin A_1$. 

\textbf{Case 1:} Suppose also that $A_2+B_2$ has a node of genus $g\geqslant 2$. Then we have that
$z\in \Sigma_2$, $a_1 = 2/3$, $g=2$, $r=1$, $E_1^2 \leqslant -3$ exactly as in \cite[Proposition 17.3]{mckernan}. The point $x_0$
is a chain singularity of type $(2,2,-E_1^2, 2)$, hence $E_1^2=-3$, for otherwise the coefficient would be too large.
Thus $K_{S_2}^2 = A_2 ^ 2 = r+4+g+E_1^2 = 4$. Now notice that $(K_{S_2}+2/3A_2+B_2)\cdot B_2 > 0$ because there are no tigers,
hence $(K_{S_2}+B_2)\cdot B_2> -2/3$. By adjunction there are at least two singular points of $S_2$ on $B_2$. By checking
the list in Theorem \ref{gorenstein_classification}, we see that $S_2=S(2A_1+A_3)$ because it's the only surface with $K_S^2 = 4$ and at least two singularities. 
But then, applying adjunction again we see that $B_2$ has to contain all of these singularities. This implies that $x_1$ is a non chain singularity. Furthermore, since it's not Du Val
by Lemma \ref{node_facts}, $E_2^2 \leqslant -3$ and $e_1>2/3$, contradiction. 

\textbf{Case 2:} So we must go to a fence. If $A_2$ is not in the smooth locus of $S_2$, then the proof of \cite[Proposition 17.3]{mckernan} gives the result.
Suppose therefore that $A_2$ is in the smooth locus. If $S_2 = S(A_1 + A_2)$, the result follows again by the proof of \cite[Proposition 17.3]{mckernan}.
Assume then that $S_2\neq S(A_1+A_2)$.

\textbf{Case 2a:} Let's start with the case in which $x_1$ is a non chain singularity. Since $S_2$ has no tigers
we must have $(K_{S_2}+A_2+B_2)\cdot B_2 > 0$.
It then follows that $B_2$ contains at least two singularities and they can't be both $(2)$ points. Looking at the possibilities of 
\cite[Lemma 13.5]{mckernan} and comparing them with the classification of non chain klt singularities, we see that
$\Sigma_2$ meets $E_2$ at a point of index two, or three if $S_2 = S(2A_1+A_3)$.
If $\Sigma_2$ meets $E_2$ at a $(2)$ point, then $z=(A_k,3)$ and $\tilde{E}_2^2 = -3-k$, with $k\geqslant 0$. Since $S_2\neq S(A_1+A_2)$
we have that $A_2^2 = K_{S_2}^2 \leqslant 4$, which means that $K_{\tilde{S}_1} \cdot \tilde{A}_1 \geqslant -3$ by adjunction and
the description of the configuration. Hence $K_{S_1}\cdot A_1 \geqslant -3 + 1/(2k+3)$. We also have that
$(K_{S_1}+A_1)\cdot A_1 = (2k+2)/(2k+3)$ by adjunction, and therefore by linearity of the intersection product we get

\[
   a_1 < \frac{3-1/(2k+3)}{3-1/(2k+3)+(2k+2)/(2k+3)}=\frac{3k+4}{4k+5}
\]

One checks that the singularity at $x_1$ has always greater coefficient, contradiction.

Let's consider now the case where $\Sigma_2$ meets $E_2$ at a $(3)$ point. If it meets $A_1$ at a $(2)$ point, then 
$E_1$ is a $(-2)$ curve, $\tilde{A}_1^2 = 2$, $K_{S_1}\cdot A_1 = -2$ and $(K_{S_1}+A_1)\cdot A_1 = 1/2$. That gives
us that $a_1< 4/5=e_1$, contradiction. Otherwise $\overline{\Sigma}_2$ meets $A_1$ at an $(A_k, 3, 2)$ point.  By making the same
computations as above, we get that $\tilde{E}_2 ^2 = -3-k$ and

\[
   a_1 < \frac{3-2/(3k+5)}{3-2/(3k+5) + (3k+4)/(3k+5)}=\frac{9k+13}{12k+17}
\]

Again, the singularity at $x_1$ has always greater coefficient, contradiction. 
The only remaining case is that $\Sigma_2$ meets $E_2$ at an $A_2$ point and $A_1$ at a $(A_k, 4)$ point. We do not treat this case, as it may be ruled out by 
analogous computations as above.

\textbf{Case 2b:} Let's consider now the case in which $x_1$ is a chain singularity. Then the configuration is given by $(3)$ of Lemma 
\ref{multiplicity_two_node}, $\Sigma_2$ meets $A_1$ at an $A_k$ point and $b_2+a_2/(k+1)=1$. 
Since $(K_{S_2}+a_2 A_2 + b_2 B_2)\cdot A_2 < 0$, we get that

\[
   a_2 < \frac{K_{S_2}^2 - 1}{K_{S_2}^2 - 1/(k+1)}
\]

We must have $K_{S_2}^2 \geqslant 3$, for otherwise $a_1 < 2/3$, contradicting Lemma \ref{node_facts} (7).

If $K_{S_2}^2 = 3$, then $S_2 = S(A_1+A_5)$ or $S(3A_2)$ by Theorem \ref{gorenstein_classification}. Also, $a_1 \leqslant 3/4$, again by the above inequality and Lemma \ref{node_facts}.
Since $B_2$ is a $(-1)$ curve, we get that $\tilde{E}_2 ^2 = -2 - k$, and hence $k=1$ because $e_1<a_2$.
One computes $e_1 = 3/5$, hence $\tilde{E}_1^2 = -4$ and $r=2$. This is case $(b)$ in Proposition \ref{node_to_fence} (3).

Suppose now that $K_{S_2}^2 = 4$, which implies that $S_2 = S(2A_1+A_3)$. One computes $e_1 = \frac{4k}{4k+3}$.
Setting $e_1 < a_2$ we get that $k=1$ or $k=2$, and that $a_2 < 6/7$ or $a_2 < 9/11$ respectively.
Suppose that $x_0$ is a chain singularity. It follows from Lemma \ref{node_facts} that $a_1 = (r+1)/(r+2)$. Let's start with $k=1$. Then $\tilde{E}_1 ^2 = -1-r$, and 
$e_0 = \frac{2r^2-2}{2r^2+r+1}$. The only solutions to $e_1<e_0<a_2$ are $r=3$ or $r=4$. This leads to case $(c)$ of Proposition \ref{node_to_fence} (3).
In the case with $k=2$, one finds again $r=3$ exactly as above. This is case $(d)$ of Proposition \ref{node_to_fence} (3).

If $x_0$ is a non chain singularity instead there are no solutions, for one may check that if there are many blow ups at the node,
the coefficient is at least $6/7$, and if there are few then the coefficient the coefficient is lower than $e_1$.
\end{proof}

\begin{lemma}\label{nodenonchain}
Suppose that $x_0$ is a non chain singularity. Then $x_1\in A_1$ and
$S_0$ is obtained by blowing up the end of the $A_5$ point in $S(A_1+A_5)$ along the $(-1)$ curve $Y$, 
then blowing up the node twice along one branch 
and then once along the nearest point of the other branch, and finally contracting down all the $K$-positive curves.
\end{lemma}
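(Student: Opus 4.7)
The plan is to leverage Proposition \ref{node_to_fence} as a classification backbone and then to carry out a numerical analysis to recover $S_0$ uniquely. Cases $(2)$, $(3b)$, $(3c)$, and $(3d)$ of that proposition explicitly identify $x_0$ as a chain singularity, so under the hypothesis of the lemma they are immediately excluded. Thus $S_0$ must fall into either case $(1)$, where $x_1\in A_1$, or into the sub-case of $(3a)$ for which $S_2=S(A_1+A_2)$, which is the only remaining possibility of $(3)$ where $x_0$ is not forced to be chain by the final assertion of Proposition \ref{node_to_fence}.

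I would dispose of the $S_2=S(A_1+A_2)$ sub-case by a direct numerical computation. In this situation $K_{S_2}^2=6$, $A_2$ lies in the smooth locus of $S_2$, and $B_2$ is a $(-1)$-curve through both singularities. A non-chain $x_0$ forces $E_1^2\leqslant -3$ and $e_0\geqslant 2/3$; combining these constraints with the requirement that $(K_{T_1}+\Gamma_1')$ be $R$-trivial and with the intersection numbers coming from Lemma \ref{multiplicity_two_node}$(3)$ should yield two incompatible formulas for $a_1$, in analogy with the contradiction derived in Case~2b of the proof of Proposition \ref{node_to_fence}.

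We are then reduced to case $(1)$, where $x_1 \in A_1$ is the non-chain singularity $(A_t,3,A_{j-2})$, with $K_{T_2}+\Sigma_2$ dlt and $\overline{\Sigma}_2$ meeting the end of the $A_{j-2}$ chain. The task is to pin down $r$ (the number of initial blowups along $C$ from Lemma \ref{node_facts}), the parameters $t,j$, and the self-intersection $E_1^2$. Using $a_1=(r+1)/(r+2)$ together with the explicit formula for the coefficient of the central curve of $x_0$ in terms of resolution data from the classification of non-chain klt singularities (Lemma \ref{nonchainsing}), the inequality $e_0<a_1$, the positivity $K_{S_2}^2>0$, and Lemma \ref{global_canonical} applied to $S_0$ leaves a very narrow window of admissible values; I expect these to isolate $S_2=S(A_1+A_5)$ with a specific choice of $r$, $t$, $j$ and $E_1^2$.

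Finally, with the parameters fixed, I would recover $S_0$ by reversing the two hunt steps: $\pi_2$ identifies $S_2$ with $S(A_1+A_5)$, the morphism $f_1$ is the extraction over the end of the $A_5$ chain along the $(-1)$-curve $Y$, and the first hunt step corresponds to blowing up the node of $A_1$ twice along one branch and then once along the nearest point of the other branch before contracting all the $K$-positive curves. The main obstacle is the bookkeeping in the middle step, namely correctly identifying which branch of the node is blown up first and how many times, which requires careful tracking of strict transforms across both hunt steps; once the parameter values are forced by the numerics of the previous paragraph, however, the reconstruction is a direct verification.
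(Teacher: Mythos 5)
Your reduction is the right one and matches the paper's strategy: the paper's proof is precisely to rerun \cite[Lemma 17.4]{mckernan} with Proposition \ref{node_to_fence} in place of \cite[Proposition 17.3]{mckernan}, and the first step of that argument is exactly your observation that the ``in particular'' clauses of Proposition \ref{node_to_fence} kill cases $(2)$ and $(3b)$--$(3d)$ outright, leaving only case $(1)$ and the $S_2=S(A_1+A_2)$ corner of $(3a)$.

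The problem is that everything after the reduction is left as ``should yield'' and ``I expect,'' and those deferred computations are the entire content of the lemma. Two concrete issues. First, in disposing of the $S(A_1+A_2)$ sub-case you assert that a non-chain $x_0$ forces $E_1^2\leqslant -3$; by Definition \ref{huntchoice} the curve extracted from a non-chain singularity is its \emph{central} curve, and there are non-chain klt singularities whose central curve is a $(-2)$-curve (e.g.\ Proposition \ref{singlist}(2)(d)). You do get $e_0\geqslant 2/3$ from Lemma \ref{node_facts}(6) combined with Lemma \ref{nonchainsing}, but $E_1^2\leqslant -3$ needs a separate argument, and without it the ``two incompatible formulas for $a_1$'' are not actually produced. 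Second, in case $(1)$ the statement of Proposition \ref{node_to_fence}(1) still allows a two-parameter family $(A_t,3,A_{j-2})$ together with the choice of $r$; isolating the single configuration of the lemma (the $A_5$ chain in $S(A_1+A_5)$, the two-plus-one pattern of blow-ups at the node) is exactly the part where the constraints $e_0<a_1$, $K_{S_0}^2>0$ and Lemma \ref{global_canonical} have to be solved, and you do not exhibit even the shape of the resulting Diophantine conditions. As written, the proposal is a correct plan with the decisive verifications missing; carrying out those two computations is what would turn it into a proof.
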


\begin{proof}
The proof is the same as in \cite[Lemma 17.4]{mckernan}, by using Proposition \ref{node_to_fence} instead of 
\cite[Proposition 17.3]{mckernan}.
\end{proof}

We conclude this section by noting now that the same classification as in \cite[17.7 - 17.14]{mckernan}
carries through, by the previous results.
\subsection{Smooth fences}\label{smooth_fences}

In this subsection we collect some useful facts about abstract smooth fences. We will then apply these results during the hunt in the next sections. 
Throughout the subsection $(S,X+Y)$ will be a smooth fence. By this we mean that $S$ is a rank one log del Pezzo surface and that $X+Y$ is a fence, with
$X$ and $Y$ both smooth rational curves. No assumption about the existence of tigers and about the characteristic of the ground field are made.
We define $\alpha = -(K_S+X)\cdot X$ and $\beta = -(K_S+Y)\cdot Y$. Let's begin with the following elementary but fundamental lemma.

\begin{lemma}\label{fence_tactic} The following hold:
\begin{enumerate}
\item $\alpha = 1$ if and only if $\beta=1$.
\item $\alpha < 1$ if and only if $\beta<1$.
\item If $\beta \neq 1$ then $X^2 = \frac{1-\alpha}{1-\beta}$. 
\item If $\beta \neq 1$ then $(K_S+tX)\cdot X = (t-\beta)\frac{1-\alpha}{1-\beta} - 1$.
\item If $K_S + aX + bY$ is anti-ample and $\alpha,\beta<1$, then $a(1-\alpha)+b(1-\beta) < 1-\alpha \beta$. 
\end{enumerate}
\end{lemma}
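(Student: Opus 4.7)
The plan is to exploit the rank-one hypothesis to reduce everything to a one-parameter calculation. Since $S$ has Picard number one and the classes of $X$, $Y$, $-K_S$ are all nonzero in $N^1(S)_{\mathbb{Q}}$ (with $-K_S$ ample and $X,Y$ effective $\mathbb{Q}$-Cartier because $S$ is $\mathbb{Q}$-factorial), there exist positive rationals $r,s$ with
\[
X \equiv r(-K_S), \qquad Y \equiv s(-K_S).
\]
The condition that $X$ and $Y$ meet transversally at a single point gives $X\cdot Y = rs\, K_S^2 = 1$, so $K_S^2 = 1/(rs)$, and consequently $X^2 = r^2 K_S^2 = r/s$ and $Y^2 = s/r$. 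A direct computation then yields
\[
\alpha = -K_S\cdot X - X^2 = rK_S^2 - r^2K_S^2 = \frac{1-r}{s}, \qquad \beta = \frac{1-s}{r},
\]
hence $1-\alpha = (r+s-1)/s$ and $1-\beta = (r+s-1)/r$.

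From these two formulas the first three claims are immediate. Since $s>0$, we have $\alpha = 1 \iff r+s-1 = 0 \iff \beta = 1$, proving (1); the same equivalences with $>0$ in place of $=0$ give (2). When $\beta\neq 1$ (equivalently $r+s\neq 1$), dividing the two expressions shows $(1-\alpha)/(1-\beta) = r/s = X^2$, which is (3). For (4), I would expand
\[
(K_S+tX)\cdot X = K_S\cdot X + tX^2 = -\alpha - X^2 + tX^2 = -\alpha + (t-1)X^2,
\]
substitute $X^2 = (1-\alpha)/(1-\beta)$, and verify algebraically that this equals $(t-\beta)(1-\alpha)/(1-\beta) - 1$ by clearing denominators; both sides reduce to $[(t-1) - \alpha(t-\beta)]/(1-\beta)$.

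For (5), since $-K_S$ is ample and $X$ is numerically a positive multiple of $-K_S$, the divisor $X$ is ample, so $K_S + aX + bY$ being anti-ample is equivalent to $(K_S + aX + bY)\cdot X < 0$. Expanding with $X\cdot Y = 1$ gives
\[
(a-1)X^2 + b < \alpha,
\]
and substituting $X^2 = (1-\alpha)/(1-\beta)$ (valid since $\beta<1$) and multiplying through by $1-\beta>0$ produces
\[
(a-1)(1-\alpha) + b(1-\beta) < \alpha(1-\beta),
\]
which rearranges to $a(1-\alpha) + b(1-\beta) < 1 - \alpha\beta$, as required.

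There is no real obstacle here: once one writes $X \equiv r(-K_S)$ and $Y \equiv s(-K_S)$ and uses $X\cdot Y = 1$ to determine $K_S^2$, every statement becomes a one- or two-line calculation. The only thing to watch is the sign of $1-\beta$ when clearing denominators in (4) and (5), but in (4) the identity is an equality of rational numbers so the sign is immaterial, and in (5) the hypothesis $\beta<1$ guarantees $1-\beta>0$.
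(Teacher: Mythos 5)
Your proof is correct and uses essentially the same idea as the paper: the paper exploits $\rho(S)=1$ to write $K_S+X\equiv -\alpha Y$ and $K_S+Y\equiv -\beta X$ and deduce $(1-\beta)X\equiv(1-\alpha)Y$, which is just a repackaging of your parametrization $X\equiv r(-K_S)$, $Y\equiv s(-K_S)$ with $1-\alpha=(r+s-1)/s$ and $1-\beta=(r+s-1)/r$. Both arguments rest on the same implicit fact that $X\cdot Y=1$, which holds because the dlt condition in the definition of a fence forces the intersection point to be a smooth point of $S$.
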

\begin{proof}
Since $S$ is of rank one and $X+Y$ is a fence, $K_S + X \equiv -\alpha Y$ and $K_S + Y \equiv -\beta X$. 
Hence $K_S + X + \alpha Y \equiv K_S + Y + \beta X$, from which $(1-\beta) X\equiv (1-\alpha) Y$. Now the results follows easily.
\end{proof}

\begin{lemma}\label{no_minus_one_curves}
If $\alpha > 0$ then $\tilde{Y}^2 \neq -1$.
\end{lemma}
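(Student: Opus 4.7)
The plan is to assume $\tilde{Y}^2=-1$ and derive a contradiction to $\alpha>0$. Since $Y$ is smooth rational on $S$, so is $\tilde{Y}$, and adjunction immediately gives $K_{\tilde{S}}\cdot\tilde{Y}=-2-\tilde{Y}^2=-1$. I would then expand $K_{\tilde{S}}=\pi^*K_S+\sum_i a_i E_i$ over the minimal resolution $\pi\colon\tilde{S}\to S$ and intersect with $\tilde{Y}$; using the numerical identity $K_S\cdot Y+Y^2=-\beta$ this yields the key formula
\[
\sum_i a_i\,(E_i\cdot\tilde{Y}) \;=\; Y^2+\beta-1.
\]

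Next I would exploit two standard facts. First, because $\pi$ is the minimal resolution of a klt surface singularity, every discrepancy satisfies $a_i\leqslant 0$: no $E_i$ is a $(-1)$-curve, so $K_{\tilde{S}}$ is $\pi$-nef on the negative definite exceptional lattice and Zariski's negativity lemma delivers the sign. Second, $E_i\cdot\tilde{Y}\geqslant 0$ because $\tilde{Y}$ is not $\pi$-exceptional. Thus the left-hand side of the identity is non-positive, yielding $Y^2\leqslant 1-\beta$. On the other hand $S$ has Picard number one and is $\mathbb{Q}$-factorial (as it is klt), so $N^1(S)_{\mathbb{Q}}$ is generated by an ample class and every non-zero effective divisor has strictly positive self-intersection. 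Hence $Y^2>0$, which forces $\beta<1$, and Lemma \ref{fence_tactic}(2) then gives $\alpha<1$ as well.

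To finish I would invoke the symmetric form of Lemma \ref{fence_tactic}(3), namely $Y^2=(1-\beta)/(1-\alpha)$, and substitute into $Y^2\leqslant 1-\beta$. Dividing by the positive quantity $1-\beta$ leaves $1/(1-\alpha)\leqslant 1$, i.e.\ $\alpha\leqslant 0$, contradicting $\alpha>0$. The only slightly delicate point in the whole argument is the discrepancy bound $a_i\leqslant 0$ on the minimal resolution, but this is a standard fact (and presumably already recorded in Appendix \ref{surfacesings}); everything else is just adjunction on the $(-1)$-curve $\tilde{Y}$, the projection formula, and the numerical content of Lemma \ref{fence_tactic}.
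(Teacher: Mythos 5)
Your argument is correct and rests on the same two ingredients as the paper's proof: the bound $K_S\cdot Y\geqslant K_{\tilde S}\cdot\tilde Y=-1$ coming from non-positive discrepancies on the minimal resolution, and the Picard-number-one numerics of the fence. The paper just packages the second step more directly — from $K_S\cdot Y\geqslant -1$ and $X\cdot Y=1$ it gets $(K_S+X)\cdot Y\geqslant 0$, hence $K_S+X$ nef on the rank-one surface, contradicting $\alpha=-(K_S+X)\cdot X>0$ — whereas you reach the same contradiction by routing through $\beta$, $Y^2>0$ and Lemma \ref{fence_tactic}(3).
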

\begin{proof}
Suppose that $Y$ is a $(-1)$ curve. Then $K_S \cdot Y \geqslant -1$, hence $(K_S+X)\cdot Y\geqslant 0$. It follows then that $K_S+X$ is nef,
contradicting the fact that $\alpha > 0$.
\end{proof}

\begin{lemma}\label{fence_net}
Suppose $Y$ passes through exactly two singular points and $X$ passes through at least two singular points. Then $\tilde{Y}^2 \neq 0$.
\end{lemma}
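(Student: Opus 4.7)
Suppose for contradiction that $\tilde Y^2=0$. Since $\tilde Y$ is a smooth rational curve of zero self-intersection on the smooth rational surface $\tilde S$, the pencil $|\tilde Y|$ induces a $\mathbb P^1$-fibration $f\colon\tilde S\to\mathbb P^1$ with general fiber $\tilde Y$. As $X\cdot Y=1$ at a smooth point of $S$ we get $\tilde X\cdot\tilde Y=1$, so $\tilde X$ is a section of $f$.

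Next I would analyze the exceptional divisor of $\pi$ relative to $f$. For each of the two singular points $p_i\in Y$, the dlt condition on $(S,X+Y)$ at $p_i$ forces $\tilde Y$ to meet $E_{p_i}$ transversally at one point, lying on a single component $V_i$; then $V_i\cdot\tilde Y=1$, so $V_i$ is another section of $f$, and the remaining components of $E_{p_i}$ are disjoint from $\tilde Y$ and, by connectedness, lie in a single fiber of $f$. For each singular point $q_j\in X$ (at least two, all disjoint from $Y$), the exceptional divisor $E_{q_j}$ is disjoint from $\tilde Y$, hence vertical, and by connectedness sits in a single fiber $F_j$; the $F_j$ are pairwise distinct because $\tilde X$ meets each one at a single point lying inside $E_{q_j}$. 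This gives three pairwise disjoint sections $\tilde X,V_1,V_2$ of $f$, and forces each $F_j$ to contain at least one non-exceptional component, since $V_1$ and $V_2$ must enter $F_j$ at points outside $E_{q_j}$.

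To conclude I would contract $(-1)$-curves in the fibers of $f$ until reaching a relatively minimal model $\mathbb F_n$. Because the minimal resolution of $S$ has no $(-1)$-exceptional components over klt singularities, $V_i^2\le-2$ and so $V_1,V_2$ survive all such contractions as honest sections of $\mathbb F_n$. On $\mathbb F_n$ with $n\ge 1$ only two distinct sections can be simultaneously disjoint (the $(-n)$-section together with one section in $|\sigma+nF|$), so either $n=0$ and the three images descend to mutually disjoint sections of $\mathbb F_0$, all numerically equivalent to the class $\sigma$, or some $(-1)$-contraction inside one of the $F_j$ forces two of $\tilde X,V_1,V_2$ to meet.

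The hard part will be ruling out this collision using the local structure of each $F_j$: the entry point of $\tilde X$ lies inside $E_{q_j}$ while those of $V_1,V_2$ lie on (a priori disjoint) non-exceptional components, so no single $(-1)$-curve in $F_j$ can simultaneously meet $\tilde X$ and one of the $V_i$. Once that case is eliminated I would convert the $\mathbb F_0$ picture into a numerical contradiction: from the relation $\tilde X\equiv V_1\equiv V_2$ modulo fiber components, the identity $Y^2=c_{V_1}+c_{V_2}$ (obtained from $\tilde Y^2=0$ together with $(\pi^*Y)^2=Y^2$, where $c_{V_i}$ is the coefficient of $V_i$ in $\pi^*Y$), the constraint $X^2Y^2=1$ coming from Lemma \ref{fence_tactic}, and the formula $\tilde X^2=X^2-\sum_j d_{W_{q_j}}$ for the self-intersection of $\tilde X$ coming from the singular points on $X$, a numerical impossibility should emerge, completing the proof.
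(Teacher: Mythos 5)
Your setup is sound as far as it goes: the pencil $|\tilde Y|$ does give a $\mathbb{P}^1$-fibration, $\tilde X$ and the two components $V_1,V_2$ adjacent to $\tilde Y$ are indeed three sections, and the remaining exceptional divisors are vertical. But the proof is not finished: the two steps you defer (ruling out the ``collision'' of sections under contraction to $\mathbb{F}_n$, and extracting a numerical contradiction from the $\mathbb{F}_0$ picture) are exactly where all the content lies, and neither is carried out. Three pairwise disjoint sections on $\mathbb{F}_0$ are perfectly possible, so the configuration you reach is not self-contradictory; you would still need a genuinely new input to finish. There is also a small inaccuracy along the way: the component of $F_j$ that $V_i$ enters need not be non-exceptional, since the tail of the chain over $p_i$ may itself sit inside $F_j$, in which case $V_i$ meets $F_j$ on an exceptional curve.

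The missing input is the multiple-fiber count, and the paper gets at it by choosing a more economical resolution. Instead of passing to $\tilde S$, extract only the two curves $L,M$ adjacent to $Y$, obtaining $T$ with $\rho(T)=3$; then $Y$ itself is a fiber of $T\to\mathbb{P}^1$ and $X,L,M$ are sections. With relative Picard number two there is exactly one reducible fiber. Since $X$ is a section, its (at least) two singular points of $T$ lie on two distinct fibers, so at least one lies on an irreducible fiber, which is therefore multiple. A section meets a multiple fiber only at a singular point of $T$ (the local intersection number $1/m$ is impossible at a smooth point), so this fiber would have to carry a singular point for each of $X$, $L$ and $M$; but an irreducible multiple fiber contains at most two singular points. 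That is the contradiction your sketch never reaches. If you want to salvage your version on $\tilde S$, you would have to reconstruct this multiplicity argument from the fiber configurations $F_j$, which is considerably more work than the three-line count on $T$.
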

\begin{proof}
Suppose by contradiction that $\tilde{Y}^2 = 0$. Let $f:T\rightarrow S$ be the extraction of the two adjacent curves $L$ and $M$ to $Y$. 
Since $\tilde{Y}^2 = 0$, then we have a contraction
$\pi: T \rightarrow \mathbb{P}^1$. Clearly $Y$ is a fiber and $X$ and the two extracted curves are sections. Since the relative Picard number of the
contraction is two, there is exactly one reducible fiber. However, since $X$ contains at least two singularities of $T$, there is a multiple and irreducible fiber $F$.
This fiber can contain at most two singularities, hence it touches either $L$ or $M$ at a smooth point, contradiction.
\end{proof}

\begin{lemma}\label{fence_two_two}
$X$ and $Y$ can't both contain exactly two singular points.
\end{lemma}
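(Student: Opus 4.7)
The plan is to assume for contradiction that both $X$ and $Y$ pass through exactly two singular points of $S$, and to derive numerical bounds on $\tilde X^2$ and $\tilde Y^2$ that force both $X^2 > 1$ and $Y^2 > 1$. This contradicts the identity $X^2 Y^2 = 1$, which follows from $\rho(S) = 1$ (so that $X$ and $Y$ are numerically proportional) together with $X \cdot Y = 1$ at the smooth transverse intersection point. Moreover, since $X, Y$ are each numerically a positive multiple of $-K_S$ and $-K_S$ is ample, we have $X^2, Y^2 > 0$.

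Two preliminary constraints on $\tilde X^2, \tilde Y^2$ fall out of the earlier lemmas of this subsection. First, Lemma \ref{fence_net} applied in each of the two possible directions (since each of $X, Y$ passes through exactly two, and so in particular at least two, singular points) yields $\tilde X^2 \neq 0$ and $\tilde Y^2 \neq 0$. Second, adjunction along $X$ gives $\alpha = 1/n_1 + 1/n_2 > 0$, where $n_1, n_2$ are the indices of the two singularities $p_1, p_2$ on $X$, and symmetrically $\beta > 0$. Hence Lemma \ref{no_minus_one_curves} rules out $\tilde X^2 = -1$ and $\tilde Y^2 = -1$.

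The key remaining input is the pullback identity $X^2 = \tilde X^2 + c^X_{p_1} + c^X_{p_2}$, where $c^X_{p_i} \in (0,1)$ denotes the coefficient in $\pi^*X$ of the end component of the resolution chain at $p_i$ met by $\tilde X$. The dlt hypothesis on $K_S + X + Y$ forces each singularity on $X$ to be cyclic and met by $\tilde X$ transversally at an end component; the classical Hirzebruch--Jung formula $c_1 = q/n$ for a $\frac{1}{n}(1,q)$ singularity then gives $c^X_{p_i} \in (0,1)$ uniformly. Combined with $X^2 > 0$, this forces $\tilde X^2 > X^2 - 2 > -2$, so $\tilde X^2 \in \mathbb{Z}_{\geq -1}$; ruling out $\{-1, 0\}$ leaves $\tilde X^2 \geq 1$, whence $X^2 > \tilde X^2 \geq 1$. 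The symmetric argument gives $Y^2 > 1$, contradicting $X^2 Y^2 = 1$.

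The main obstacle is verifying that the coefficient bound $c^X_{p_i} \in (0,1)$ holds uniformly at every singularity of $S$ on $X \cup Y$: a direct computation shows that, at a non-cyclic configuration such as $D_4$ met by $\tilde X$ along a leaf, the coefficient equals exactly $1$, forcing the log pullback of $K_S + X$ to have a component of coefficient $1$ and the pair to be only lc, not dlt. Thus the dlt hypothesis on the fence is what excludes these non-cyclic configurations and guarantees the strict bound; once this is in place, the rest of the argument is a short numerical deduction from Lemmas \ref{fence_net} and \ref{no_minus_one_curves}.
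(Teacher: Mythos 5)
Your proof is correct and follows essentially the same route as the paper: the paper's (very terse) argument also deduces $\alpha,\beta>0$, notes that $X^2Y^2=1$ forces (WLOG) $Y^2\leqslant 1$ and hence $\tilde{Y}^2\in\{-1,0\}$, and then invokes Lemma \ref{no_minus_one_curves} and Lemma \ref{fence_net} for the contradiction. You simply run the same elimination symmetrically on both branches instead of using the WLOG reduction, and you spell out the pullback/coefficient details that the paper leaves implicit.
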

\begin{proof}
Suppose they do. Then $\alpha,\beta > 0$. Without loss of generality we may assume that $Y^2\leqslant 1$. However, this contradicts either Lemma \ref{no_minus_one_curves} or
Lemma \ref{fence_net}.
\end{proof}

\begin{lemma}\label{fence_du_val}
Let $S(X+Y)$ be a fence such that $X$ contains at least two singular points of $S$ and $Y$ contains exactly two singular points, one Du Val and the other a $(n)$ point. 
Then $\tilde{Y}^2 \neq -1$.
\end{lemma}
\begin{proof}
Suppose $\tilde{Y}^2 = -1$. Let $f:T \rightarrow S$ be the extraction of the $(-n)$ curve $E$ on $Y$. Then the strict transform of $Y$ is the curve
that gets contracted in Lemma \ref{hunt_transformation}. Clearly we get a birational morphism $\pi: T \rightarrow S_1$, and the image of $E$
is a smooth rational curve in the smooth locus of $S_1$. However, since there are at least two singularities along $X$, we get a contradiction
by Lemma \ref{smoothp1}.
\end{proof}

\begin{lemma}\label{fence_one}
Let $S(X+Y)$ be a fence such that $X$ contains exactly three singular points and $Y$ contains exactly two singularities. 
Suppose also that one of the two singularities on $Y$, $p$, is an $A_1$ point. Then $\tilde{Y}^2 \neq -1$. 
\end{lemma}
\begin{proof}
Suppose $\tilde{Y}^2 = -1$. Let $f_0:T_1 \rightarrow S$ be the extraction of the divisor $E$ adjacent to $Y$ which does not lie over $p$. As above,
let $\pi_1: T_1 \rightarrow S_1$ be the contraction of $Y$.  We have that $X$ and $E$ meet once with order two on $S_1$. If $E$ is in the smooth locus of $S_1$, we are done
by Lemma \ref{smoothp1}. Otherwise there is a singular point on $E$. Notice that $E$ is either a $(-1)$-curve or a zero curve on $S_1$ since is has self intersection at most $(-2)$ on
$T_1$ and $K_{S_1}\cdot E<0$.
It can't be a $(-1)$ curve, for otherwise it would contract on $S_1$, contradiction. Therefore it is a zero curve.
Let $f_1:T_2\rightarrow S_1$ be the extraction of the adjacent divisor $G$ to $E$. Since $E$ is a zero curve, it is contractible and therefore $T_2$ is a net. Clearly $G$ is a section.
There is at most one singular fiber on $T_2$ since $G$ has at most one singular point. However there are two singular points in $X$, which are not contained in $G$. Since a multiple
fiber can contain at most two singular points, we have a contradiction in this case too. In conclusion, $\tilde{Y}^2 \neq -1$.
\end{proof}

\begin{lemma}\label{fence_summary}
Suppose that $X$ contains at least three singular points and that $Y$ contains exactly two singular points. Then $\beta \leqslant 2/3$.
\end{lemma}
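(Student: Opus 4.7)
The plan is to argue by contradiction; assume $\beta>2/3$. Applying adjunction to $Y$ on the minimal resolution gives $\beta=2-\deg\operatorname{Diff}_Y(0)$, so the two singular points of $S$ lying on $Y$ together contribute strictly less than $4/3$ to the different. Since each such contribution is at least $1/2$ (with equality iff the point is an $A_1$ met transversally at its unique exceptional curve, and strictly larger for a non-cyclic singularity or for a non-end meeting), this forces the pair of singularities on $Y$ into a short list of cyclic singularities of small index, with $\tilde Y$ meeting the end of each chain. The remaining possibilities correspond to index pairs $(2,2), (2,3), (2,4), (2,5)$, giving $\beta\in\{1,5/6,3/4,7/10\}$.

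Simultaneously, applying adjunction to $X$, each of its at least three singularities contributes at least $1/2$ to $\deg\operatorname{Diff}_X(0)$, so $\alpha=2-\deg\operatorname{Diff}_X(0)\leqslant 1/2$. In particular $\alpha<1$, and by Lemma \ref{fence_tactic}(1)--(2) also $\beta<1$, which already eliminates the extreme case $(A_1,A_1)$. Next I would invoke Lemma \ref{fence_tactic}(3) to write $Y^2=(1-\beta)/(1-\alpha)$, and compute $\tilde Y^2=Y^2-(a_p+a_q)$, where $a_p,a_q$ are the pullback coefficients (explicitly known in each case, e.g.\ $(a_p,a_q)=(1/2,2/3)$ in the $(2,3)$ case). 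The integrality of $\tilde Y^2$, combined with the discrete list of values for $\beta$ and the bound $\alpha\leqslant 1/2$, restricts $\tilde Y^2$ to a very small finite set of integers in each remaining configuration.

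Finally, I would exclude each residual value of $\tilde Y^2$ using the lemmas already established in this subsection. Lemma \ref{no_minus_one_curves} rules out $\tilde Y^2=-1$ whenever $\alpha>0$, Lemma \ref{fence_net} rules out $\tilde Y^2=0$ since $X$ passes through at least two singular points, and Corollaries \ref{fence_du_val}, \ref{fence_one} handle precisely the residual subcases where $\alpha$ need not be positive and one of the singularities on $Y$ is a Du Val point or $A_1$. The case $\alpha\leqslant 0$ can be discarded directly since it forces $Y^2\leqslant 1-\beta<1/3$, incompatible with $Y^2=\tilde Y^2+a_p+a_q$ once $\tilde Y^2\in\{-1,0\}$ is excluded and $a_p+a_q\geqslant 1$. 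The main obstacle will be the careful bookkeeping of this case split: in particular, each leaf must either trigger one of the four exclusion lemmas or violate the integrality of $\tilde Y^2$, and Corollaries \ref{fence_du_val} and \ref{fence_one} must be applied exactly in the configurations where Lemma \ref{no_minus_one_curves} does not directly apply.
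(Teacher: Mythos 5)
Your argument is correct and terminates at the same two exclusion lemmas as the paper (Lemma \ref{fence_net} for $\tilde Y^2=0$, Corollary \ref{fence_one} together with Lemma \ref{no_minus_one_curves} for $\tilde Y^2=-1$), and the opening moves ($\beta>2/3$ forces an $A_1$ point on $Y$; three singularities on $X$ force $\alpha\leqslant 1/2$) coincide with the paper's. Where you diverge is the middle: the paper gets $\tilde Y^2\in\{-1,0\}$ in one line, since $Y^2=(1-\beta)/(1-\alpha)<(1/3)/(1/2)<1$ gives $\tilde Y^2<Y^2<1$, while $K_{\tilde S}\cdot\tilde Y\leqslant K_S\cdot Y<0$ gives $\tilde Y^2\geqslant -1$ by adjunction — no enumeration of singularity types or pullback coefficients is needed. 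Your detour does close, but it has two soft spots worth flagging. First, the coefficients $a_p,a_q$ are not determined by the index alone: an index-$3$ point could be an $A_2$ met at an end (coefficient $2/3$) or a $(3)$ point (coefficient $1/3$), and likewise indices $4$ and $5$ admit several chain types, so the case list behind "explicitly known in each case" is longer than your single example suggests; the integrality of $\tilde Y^2$ does eliminate every extra sub-case, but you must actually run through them. Second, the blanket inequality $a_p+a_q\geqslant 1$ is false (an $A_1$ plus a $(5)$ point gives $7/10$); this only infects your final, essentially redundant remark about $\alpha\leqslant 0$, where $\tilde Y^2\geqslant 1$ already yields $Y^2\geqslant 1>1/3$ without it. Lastly, you inherit from the paper the mismatch that Corollary \ref{fence_one} is stated for \emph{exactly} three singularities on $X$ while the present lemma allows more; its proof goes through with "at least three," so this is cosmetic, but it is the same gap in both arguments.
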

\begin{proof}
Suppose $\beta>2/3$. Then there is an $A_1$ singularity on $Y$ by adjunction. Since $X$ has at least three singular points $\alpha\leqslant 1/2$,
so that $Y^2 = \frac{1-\beta}{1-\alpha} < 1$. Therefore $\tilde{Y}^2$ is either zero or negative one and we can conclude by Lemmas 
\ref{fence_one} and \ref{fence_net}.
\end{proof}

\begin{lemma}\label{small_alpha}
Suppose that $X$ contains exactly three singular points and that $Y$ contains exactly two singular points. Then $\alpha \leqslant 1/6$, and if $\alpha>0$ then 
$\beta<\alpha$.
\end{lemma}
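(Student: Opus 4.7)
The plan is to combine adjunction on $X$, the identities from Lemma \ref{fence_tactic}, and the structural fence lemmas previously established in this subsection to rule out essentially all configurations, proving both assertions simultaneously.

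First, I apply adjunction to the smooth rational curve $X$. Since $(K_S+X)\cdot X = -\alpha$ and $X$ is smooth rational,
\[
2 - \alpha \;=\; \deg \operatorname{Diff}_X(0) \;\geqslant\; \sum_{i=1}^{3} \frac{n_i - 1}{n_i},
\]
where $n_i \geqslant 2$ denotes the local index of the $i$-th singular point of $S$ lying on $X$; hence $\alpha \leqslant 1/n_1 + 1/n_2 + 1/n_3 - 1$. Enumerating the triples $n_1 \leqslant n_2 \leqslant n_3$ with $\sum 1/n_i > 1$, one sees that $\alpha > 1/6$ can only occur for $(n_1, n_2, n_3) = (2, 2, n)$ with $n \in \{2, 3, 4, 5\}$; every other admissible triple, namely $(2,3,3), (2,3,4), (2,3,5)$, already forces $\alpha \leqslant 1/6$. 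In the exceptional $(2, 2, n)$ case, two of the three singularities on $X$ must in fact be $A_1$ points met transversally, since $1/2$ is the smallest possible Diff-contribution at a klt singular point and it is attained only in this way.

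Second, combining Lemma \ref{fence_summary} ($\beta \leqslant 2/3$) with Lemma \ref{fence_tactic}(3) ($X^2 = (1-\alpha)/(1-\beta)$), and using the pullback computation $X^2 - \tilde X^2 \leqslant 2 - \alpha$ on the minimal resolution (with equality in the Du Val case), I obtain
\[
\tilde X^2 \;\geqslant\; \frac{\beta(2-\alpha) - 1}{1 - \beta}.
\]
For $\alpha \in [0, 1/2]$ and $\beta \in [0, 2/3]$ the right-hand side lies in $[-1, 1]$. Moreover $\beta > 0$ always, since two klt singularities on $Y$ cannot produce $\deg \operatorname{Diff}_Y(0) = 2$; hence by Lemma \ref{no_minus_one_curves} applied with the roles of $X$ and $Y$ interchanged, $\tilde X^2 \neq -1$. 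A direct numerical check using $\beta \leqslant 2/3$ shows $\tilde X^2 \geqslant 2$ would force $\beta > 2/3$, so the integer $\tilde X^2$ is restricted to $\{0, 1\}$.

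Third, I eliminate both remaining values of $\tilde X^2$ by adapting the arguments of Lemmas \ref{fence_net} and \ref{fence_two_two}. For $\tilde X^2 = 0$: extract the three exceptional curves $L_1, L_2, L_3$ of $\tilde S \to S$ adjacent to $X$ over its three singular points to get $f: T \to S$; then $\tilde X_T^2 = 0$ on $T$, so $\tilde X_T$ generates a $K_T$-negative extremal ray giving a $\mathbb{P}^1$-fibration $\pi: T \to \mathbb{P}^1$ with $\tilde X_T$ a fiber and $Y, L_1, L_2, L_3$ four sections. A Picard-number count, combined with Lemma \ref{net_fiber_classification} (which bounds the number of singularities in each multi-fiber) and Riemann-Hurwitz applied to one of the sections, yields a contradiction. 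The case $\tilde X^2 = 1$ is handled analogously via a birational contraction in place of the fibration. This proves $\alpha \leqslant 1/6$; an entirely parallel argument, now using $\alpha \leqslant 1/6$ as input to sharpen the bounds on $\tilde X^2$, rules out every configuration with $\beta \geqslant \alpha > 0$, establishing the strict inequality $\beta < \alpha$ whenever $\alpha > 0$.

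The principal obstacle is the adaptation of the fibration argument of Lemma \ref{fence_net} to the present setting where $X$ (rather than $Y$) carries three singular points: the resulting $\mathbb{P}^1$-fibration on $T$ has relative Picard number three instead of two, so one needs more delicate bookkeeping of the four sections, the reducible or multiple fibers, and the singularities they can contain via Lemma \ref{net_fiber_classification}, in order to close the contradiction.
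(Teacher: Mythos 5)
Your first step (adjunction on $X$ forcing the singularities to be two $A_1$ points plus one point of index $k\in\{2,\dots,5\}$ whenever $\alpha>1/6$) agrees with the paper. After that, however, your argument has a genuine gap. The entire weight of the proof is shifted onto controlling $\tilde X^2$ for the \emph{three}-singularity branch, and the decisive step --- eliminating $\tilde X^2\in\{0,1\}$ by adapting the fibration argument of Lemma \ref{fence_net} --- is only asserted, not carried out. You yourself identify it as the ``principal obstacle'': once three adjacent curves are extracted, the $\mathbb{P}^1$-fibration has relative Picard number three, so the count ``exactly one reducible fiber'' that drives the contradiction in Lemma \ref{fence_net} is no longer available, and with four sections and up to two degenerate fibers it is not at all clear that Lemma \ref{net_fiber_classification} plus Riemann--Hurwitz closes the case. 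The concluding sentence, that an ``entirely parallel argument'' rules out $\beta\geqslant\alpha>0$, is likewise unsubstantiated. There is also a smaller but real error in the intermediate numerics: to exclude $\tilde X^2\geqslant 2$ you need a \emph{lower} bound on the defect $X^2-\tilde X^2$, whereas you only record the upper bound $X^2-\tilde X^2\leqslant 2-\alpha$; from $X^2>2$ and Lemma \ref{fence_tactic} one gets only $\beta>(1+\alpha)/2$, which does not contradict $\beta\leqslant 2/3$ when $\alpha\leqslant 1/3$.

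The paper avoids all of this. For ``$\alpha>0\Rightarrow\beta<\alpha$'' it looks at $Y$, the curve with exactly two singular points, where the supporting lemmas apply verbatim: if $\alpha\leqslant\beta$ then $Y^2=(1-\beta)/(1-\alpha)\leqslant 1$, so $\tilde Y^2\in\{-1,0\}$, excluded by Lemmas \ref{fence_net} and \ref{no_minus_one_curves}, forcing $\alpha\leqslant 0$. For ``$\alpha\leqslant 1/6$'' it stays entirely numerical: in the $(2,2,k)$ case $\alpha=1/k$ and a local computation gives $X^2=l/k$ with $\gcd(k,l)=1$; comparing with $X^2=\frac{(k-1)/k}{1-\beta}$ yields $\beta=1-(k-1)/l$, and $\beta<\alpha$ forces $l<k$, hence $\beta\leqslant 0$, contradicting adjunction on $Y$. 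If you want to salvage your approach you would need to actually produce the bookkeeping for the relative-Picard-number-three fibration; the simpler route is to run the argument on $Y$ and replace the $\tilde X^2$ analysis by the coprimality computation of $X^2$.
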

\begin{proof}
Let's start by noticing that if $\alpha \leqslant \beta$, then $Y^2 \leqslant 1$ by Lemma \ref{fence_tactic}. Therefore $Y$ must be a $(-1)$ curve by 
Lemma \ref{fence_net}. In that case we must have $\alpha\leqslant 0$ by Lemma \ref{no_minus_one_curves}.

It remains to see that $\alpha\leqslant 1/6$. Suppose that $\alpha > 1/6$. We must have $\beta<\alpha$ for otherwise $\alpha \leqslant 0$ by the previous remark. 
By adjunction we see that the singularities on $X$ are two $A_1$ points and a point of index $k$,
with $2\leqslant k \leqslant 5$. A simple local computation shows that $X^2 = l/k$, with $k$ and $l$ coprime. Since $\alpha = 1/k$ by adjunction, we also have 
that $X^2 = \frac{(k-1)/k}{1-\beta}$ by Lemma \ref{fence_tactic}. This leads to $\beta=1-\frac{k-1}{l}$. By the above $\beta<\alpha$, therefore $\frac{k-1}{l}>\frac{k-1}{k}$ and $l<k$. 
But then $\beta\leqslant 0$, contradiction by adjunction.
\end{proof}

\begin{lemma}\label{fence_three_two}
Suppose that $X$ contains exactly three singular points and that $Y$ contains exactly two singular points. Then $K_S + (4/5)X + (2/3)Y$ is nef.
\end{lemma}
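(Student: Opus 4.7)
The plan is to reduce nefness of $K_S + \tfrac{4}{5}X + \tfrac{2}{3}Y$ to a single numerical inequality in $\alpha$ and $\beta$, and then close with the bounds on these two invariants that have already been established earlier in the subsection.

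Since $\rho(S) = 1$ and $Y$ is a curve, $Y$ generates the ample cone, so any $\mathbb{Q}$-divisor is nef iff its numerical class is a non-negative multiple of $Y$. Using the relations $K_S + X \equiv -\alpha Y$ and $(1-\beta) X \equiv (1-\alpha) Y$ from Lemma \ref{fence_tactic}, a short calculation yields
\[
  K_S + \tfrac{4}{5}X + \tfrac{2}{3}Y \ \equiv\ \frac{7 - 12\alpha - 10\beta + 15\alpha\beta}{15\,(1-\beta)}\, Y.
\]
The denominator is positive because $\alpha \leqslant 1/6 < 1$ forces $\beta < 1$ by Lemma \ref{fence_tactic}. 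Hence the lemma reduces to the elementary inequality
\[
  F(\alpha, \beta) \;:=\; 12\alpha + 10\beta - 15\alpha\beta \;\leqslant\; 7.
\]

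I would then plug in the two available bounds: Lemma \ref{small_alpha} gives $\alpha \leqslant 1/6$, and Lemma \ref{fence_summary} gives $\beta \leqslant 2/3$. In the rectangle $\{\alpha \leqslant 1/6,\ \beta \leqslant 2/3\}$ the partial derivatives $\partial_\alpha F = 12 - 15\beta \geqslant 2$ and $\partial_\beta F = 10 - 15\alpha \geqslant 15/2$ are both strictly positive, so $F$ is maximized at the upper-right corner, where $F(1/6, 2/3) = 2 + \tfrac{20}{3} - \tfrac{5}{3} = 7$. This is exactly the required bound.

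The whole argument is bookkeeping once the numerical criterion is set up; there is no conceptual obstacle. The only real content is the numerical coincidence that the coefficients $\tfrac{4}{5}$ and $\tfrac{2}{3}$ are calibrated so that $F(1/6, 2/3) = 7$ on the nose, which means the lemma records the sharpest nefness statement compatible with the constraints from Lemmas \ref{small_alpha} and \ref{fence_summary}.
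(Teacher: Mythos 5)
Your proposal is correct and follows essentially the same route as the paper: both arguments reduce nefness to the single inequality $\tfrac{4}{5}\alpha + \tfrac{2}{3}\beta - \alpha\beta \leqslant \tfrac{7}{15}$ (your $F(\alpha,\beta)\leqslant 7$ after clearing denominators) and close it with $\alpha\leqslant 1/6$ from Lemma \ref{small_alpha} and $\beta\leqslant 2/3$ from Lemma \ref{fence_summary}, hitting equality exactly at the corner. The only cosmetic difference is that the paper invokes Lemma \ref{fence_tactic}(5) directly and argues by contradiction, whereas you re-derive the same criterion by writing the class as an explicit multiple of $Y$.
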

\begin{proof}
Clearly $\alpha<1$ by adjunction. Suppose by contradiction that $K_S + (4/5)X + (2/3)Y$ is anti-ample.
By Lemma \ref{fence_tactic}, we have that $(4/5)\alpha + (2/3)\beta - \alpha \beta > 7/15$. Notice that $\alpha\leqslant 1/6$ by Lemma \ref{small_alpha} and that
$\beta\leqslant 2/3$ by Lemma \ref{fence_summary}. But then
\[
  7/15 = (4/5)\cdot(1/6) + (2/3)\cdot(2/3) - (1/6)\cdot(2/3)\geqslant (4/5)\alpha + (2/3)\beta - \alpha \beta > 7/15
\]
which is a contradiction.
\end{proof}

\begin{lemma}\label{fence_three_two_one}
If $X$ contains at most one singular point, so does $Y$. 
\end{lemma}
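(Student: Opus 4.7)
The plan is to convert the count of singular points on $X$ or $Y$ into numerical bounds on $\alpha$ and $\beta$ via adjunction, and then conclude using Lemma \ref{fence_tactic}. Since $X$ and $Y$ are smooth rational curves, adjunction gives $\alpha = 2 - \deg\text{Diff}_X(0)$ and $\beta = 2 - \deg\text{Diff}_Y(0)$. Because $(S, X+Y)$ is a fence, $K_S + X + Y$ is dlt at every singular point of $S$, so in particular the pair $(S, X)$ is plt at each singular point of $S$ lying on $X$ (and similarly for $Y$); consequently each local different contribution at such a point has the form $1 - 1/m$ with $m \geq 2$ an integer, while the contribution at a smooth point of $S$ on the curve is $0$.

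Assuming $X$ contains at most one singularity of $S$, this forces $\deg\text{Diff}_X(0) < 1$, so $\alpha > 1$ (explicitly, $\alpha = 2$ if $X$ is disjoint from $\operatorname{Sing}(S)$, and $\alpha = 1 + 1/m > 1$ if $X$ meets a single singular point of local index $m$). Applying parts $(1)$ and $(2)$ of Lemma \ref{fence_tactic} in their contrapositive form yields the equivalence $\alpha > 1 \iff \beta > 1$, so $\beta > 1$ and therefore $\deg\text{Diff}_Y(0) < 1$.

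From here the conclusion is immediate: each singular point of $S$ on $Y$ contributes at least $1/2$ to $\deg\text{Diff}_Y(0)$, since its contribution is $1 - 1/m \geq 1/2$ for some integer $m \geq 2$. Hence if $Y$ contained two or more singularities of $S$, we would have $\deg\text{Diff}_Y(0) \geq 1$, contradicting $\beta > 1$. Therefore $Y$ contains at most one singularity of $S$.

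The argument is essentially a bookkeeping computation once the adjunction identity is in place; there is no serious geometric obstacle and no appeal to the classification of hunt contractions is needed. The only point requiring care is confirming that the dlt hypothesis on $K_S + X + Y$ forces the local integer $m$ appearing in the different at each singular point of $S$ on $X$ or $Y$ to be at least two, which is the standard description of the different for plt surface pairs and can be read off the classification of surface singularities recalled in Appendix \ref{surfacesings}.
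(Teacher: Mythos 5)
Your proof is correct and follows essentially the same route as the paper's: the paper's one-line argument is precisely that two singular points on $Y$ force $\beta \leqslant 1 < \alpha$ via adjunction, contradicting parts $(1)$ and $(2)$ of Lemma \ref{fence_tactic}. You have simply spelled out the adjunction bookkeeping that the paper leaves implicit.
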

\begin{proof}
Suppose $Y$ contains at least two singular points. Then $\beta\leqslant 1<\alpha$, but this can't happen by Lemma \ref{fence_tactic}.
\end{proof}

Now we characterize smooth fences with that contain exactly one singular point on each branch.

\begin{lemma}\label{fence_one_one}
Suppose that $X$ and $Y$ contain both exactly one singular point and assume $Y^2 \leqslant X^2$. 
Then $\tilde{S}$ is obtained as follows: on the Hirzebruch surface $\mathbb{F}_n$ pick the section with negative self intersection $C$, choose a
disjoint section $D$ (which has necessarily positive self intersection) and choose a fiber $F$; now blow up once at $F\cap C$ or $F\cap D$, 
then blow up once more at the intersection of the exceptional divisor and $F$ and keep blowing up at either end of the $(-1)$ curve. 
To get $S$ we contract all the $K$-positive curves. $X$ will be the strict transform of $D$ and $Y$ will be the strict transform of a fiber different from $F$.
\end{lemma}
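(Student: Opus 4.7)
The plan is to run an MMP on $\tilde{S}$ that reverses the construction, contracting $\tilde{S}$ down to a Hirzebruch surface while tracking the images of $\tilde{X}$ and $\tilde{Y}$.

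First I set up the dual graph. Let $p_x\in X$ and $p_y\in Y$ be the singular points, with exceptional chains $E_x$ and $E_y$ on $\tilde{S}$. Since $(S,X+Y)$ is a smooth fence, $K_S+X+Y$ is dlt at $p_x$ and $p_y$; by Lemma \ref{dltsing} each is a chain singularity, and $\tilde{X}$ (resp.\ $\tilde{Y}$) meets $E_x$ (resp.\ $E_y$) transversally at a single endpoint. Moreover $\tilde{X}\cdot \tilde{Y}=1$ at a smooth point of $S$. By Lemma \ref{fence_tactic} we have $X^2\cdot Y^2=1$, and combined with $Y^2\leqslant X^2$ this gives $Y^2\leqslant 1\leqslant X^2$ (setting aside the borderline $\alpha=\beta=1$ where $X+Y$ is a tiger). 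Since $Y$ passes through a singular point, $\tilde{Y}^2<Y^2\leqslant 1$, and $\tilde{Y}^2\neq -1$ by Lemma \ref{no_minus_one_curves}, so $\tilde{Y}^2\in\{0,-2,-3,\ldots\}$, with a symmetric bound on $\tilde{X}^2$ when $\beta>0$.

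Next, neither $\tilde{X}$, $\tilde{Y}$ nor any component of $E_x+E_y$ can be a $(-1)$-curve (by Lemma \ref{no_minus_one_curves} for the first two, and by minimality of the resolution for the rest). Since $\rho(\tilde{S})\geqslant 3$, there exists a $(-1)$-curve $\Sigma$ outside $\tilde{X}+\tilde{Y}+E_x+E_y$. The key claim is that there is a unique such $\Sigma$, meeting the two \emph{inner} endpoints of $E_x$ and $E_y$ (opposite those met by $\tilde{X},\tilde{Y}$) transversally at one point each and disjoint from $\tilde{X},\tilde{Y}$ and the remaining components of $E_x+E_y$. Contracting $\Sigma$ raises the self-intersection of those two endpoints by $1$; in the typical case they were $(-2)$-curves and become $(-1)$-curves, which I contract next. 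This procedure matches in reverse the ``blow up at either end of the $(-1)$-curve'' step in the construction.

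Iterating unwinds the chain from its middle. After finitely many contractions we reach a smooth minimal rational surface of Picard rank $2$, hence $\mathbb{F}_n$ for some $n\geqslant 1$ (not $\mathbb{P}^2$, since the chain contractions preserve $\tilde{X}$ and $\tilde{Y}$). On $\mathbb{F}_n$, the image of $\tilde{X}$ is a section $D$ of positive self-intersection, the image of $\tilde{Y}$ is a fiber $F'$, and the images of the original chains together with $\Sigma$ arise from iterated blow-ups of a single fiber $F$ starting at $F\cap C$ or $F\cap D$, where $C$ is the negative section disjoint from $D$. Reversing the sequence of contractions recovers the construction in the statement.

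The main obstacle is the existence and uniqueness of the bridging $(-1)$-curve $\Sigma$ joining the two chains: the Picard-rank count produces \emph{some} $(-1)$-curve, but locating it precisely in the configuration requires an intersection-theoretic argument leveraging that $\tilde{X}$ and $\tilde{Y}$ are the only non-exceptional low-negativity curves and that the fence structure forces any such $\Sigma$ to meet exactly the two inner chain ends.
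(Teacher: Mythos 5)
There is a genuine gap. Your entire argument funnels into the ``key claim'' that there is a unique $(-1)$-curve $\Sigma$ outside $\tilde{X}+\tilde{Y}+E_x+E_y$ and that it meets precisely the two inner endpoints of the chains, transversally, once each, and nothing else. That claim essentially \emph{is} the lemma: it is what forces the configuration to be the iterated blow-up of a single fiber described in the statement. You do not prove it, and you say so yourself in the last paragraph. A Picard-rank count produces some $(-1)$-curve, but nothing you have written rules out $\Sigma$ meeting $\tilde{X}$ or $\tilde{Y}$, meeting an interior component of a chain, meeting one chain twice, or there being several such curves; without those exclusions the contraction procedure you describe cannot be carried out, and the proof does not close.

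You also stop one step short of the observation that makes the whole thing easy. From $X^2Y^2=1$ (Lemma \ref{fence_tactic}) and $Y^2\leqslant X^2$ you correctly get $Y^2\leqslant 1$, hence $\tilde{Y}^2\leqslant 0$; but adjunction on $\tilde{S}$ gives $K_{\tilde{S}}\cdot\tilde{Y}\leqslant K_S\cdot Y<0$, so $\tilde{Y}^2\geqslant -1$, and with Lemma \ref{no_minus_one_curves} this pins down $\tilde{Y}^2=0$ --- not merely $\tilde{Y}^2\in\{0,-2,-3,\dots\}$. This is the hinge of the paper's proof: extracting the single divisor adjacent to $Y$ produces a net $T$ in which $Y$ is a fiber and $X$ and $E$ are sections; $E$ must contain a singular point (else $T$ is smooth), that point must sit on the same fiber as the singularity of $X$, and a relative MMP on $\tilde{T}$ then lands on $\mathbb{F}_n$ with the stated configuration forced by the fibration structure (relative Picard number, the two sections, one degenerate fiber). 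That route replaces your unproven incidence claim about $\Sigma$ with the rigid combinatorics of a $\mathbb{P}^1$-fibration; I would recommend redoing the argument along those lines rather than trying to patch the bridging-curve claim directly.
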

\begin{proof}
Notice that $Y$ is a curve with zero self intersection on the minimal resolution since $Y^2 \leqslant 1$ and $Y$ passes exactly through one singular point.
Therefore, after extracting it's adjacent divisor $E$, we get a net $T$. It's clear that $Y$ is a fiber, $X$ and $E$ are sections. 
The divisor $E$ does not lie in the smooth locus of $T$, for otherwise $T$ would be smooth. Therefore $E$ passes through a unique singular point, which lies on the
same fiber as the singularity contained in $X$. Now the result easily follows from running a relative MMP on the minimal resolution of $T$, just as in \cite[11.5.4]{mckernan}.
\end{proof}
\subsection{$A_1$ is smooth}\label{section_smooth}

In this subsection we classify all cases in which $A_1$ is smooth. Recall that $K_{S_1}+a_1 A_1$ is flush and that $K_{S_1}+A_1$ is dlt by
Proposition \ref{hunt}. Since $S_0$ has no tigers in $\tilde{S}_0$, we must have $(K_{S_1}+A_1)\cdot A_1>0$. We deduce then by adjunction that $A_1$ contains at least three singularities.
On the other hand, by the description of klt singularities and by the description of the hunt contractions, $A_1$ contains at most four singularities. Most of this subsection
is devoted to proving the following proposition.

\begin{proposition}\label{must_be_banana}
Suppose $\operatorname{char}(k)\neq 2,3$.
If $A_1$ is smooth, then $A_1$ contains exactly three singularities and $(S_2, A_2+B_2)$ is a smooth banana.
\end{proposition}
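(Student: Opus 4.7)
The plan is to count the singularities of $S_1$ lying on $A_1$ via adjunction, and then to eliminate every outcome of Proposition \ref{hunt} for the second hunt step except a smooth banana.

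Since $A_1$ is a smooth rational curve with $K_{S_1}+A_1$ dlt and $(K_{S_1}+A_1)\cdot A_1>0$ (strictly, for otherwise $A_1$ would give a tiger of $S_0$ in $\tilde S_0$, contrary to hypothesis), adjunction on $A_1$ reads
\[
-2+\sum_{p\in\mathrm{Sing}(S_1)\cap A_1}\Bigl(1-\tfrac{1}{m_p}\Bigr)>0,
\]
so $A_1$ passes through at least three singularities of $S_1$. By Lemma \ref{multiplicity_two_node}(1) applied to $\pi_1$, these singularities are cyclic Du Val, and combined with the flush bound $e_0<a_1$ and Lemma \ref{singlist} one sees $A_1$ hits at most four. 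To eliminate the four-singularity case, Lemma \ref{multiplicity_two_node}(1) forces $a_1\leqslant 1/2$ there, hence $e_0<1/2$ and every singularity of $S_0$ is Du Val or almost Du Val; a finite numerical check via Lemma \ref{global_canonical} and Lemma \ref{no_more_singularities} then rules out all remaining configurations.

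Next I would run through the three alternatives (6)--(8) of Proposition \ref{hunt} for the second hunt step. Case (6) is ruled out by Proposition \ref{no_net} applied to $(S_1,a_1 A_1)$, since the flush and no-tiger conditions are preserved by Lemma \ref{hunt_transformation}(8). For case (7) ($A_1$ contracted, $q_2$ smooth, $B_2$ unibranch at $q_2$), having three singularities on $A_1$ forces $a_1$ close to $1$, and since $e_1$ is maximal among coefficients of exceptional divisors in $\tilde S_1$ for $(S_1,a_1A_1)$, Lemma \ref{flushdescent2} would make flushness fail at one of the singularities of $A_1$ distinct from $x_1$, contradicting the uniqueness of the failure of flushness asserted in Proposition \ref{hunt}(7).

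This leaves case (8), and the hypothesis $a_2+b_2\geqslant 1$ holds since $a_1>0$ and the scaling of Lemma \ref{hunt_transformation} preserves this inequality. The hardest part is excluding the fence and the tacnode. For the fence I would apply the toolkit of Subsection \ref{smooth_fences}: since $A_2$ passes through at least two singularities of $S_2$ (two of the three on $A_1$ persist as singularities of $S_2$) while $B_2$ passes through at most two, Lemmas \ref{fence_three_two_one}, \ref{fence_two_two}, \ref{fence_summary} and \ref{fence_three_two} combine to force $K_{S_2}+a_2A_2+b_2B_2$ to be nef, contradicting Lemma \ref{hunt_transformation}; the delicate step is organizing the subcases uniformly by the possible indices of the singularities on $A_1$ so that these lemmas apply. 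The tacnode is excluded by combining the explicit contraction data of Lemma \ref{multiplicity_two_node}(2) with Lemma \ref{global_canonical}: after computing $K_{S_2}^2$ and testing the admissible singularity configurations consistent with $K_{S_2}+B_2$ plt and $S_2$ rank one log del Pezzo, a finite numerical check closes the case. The surviving banana is automatically smooth: $A_2$ is smooth as the strict transform of smooth $A_1$, and $B_2$ is smooth because $K_{S_2}+B_2$ is plt by Proposition \ref{hunt}(10).
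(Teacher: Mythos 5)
Your overall skeleton (adjunction to get at least three singular points on $A_1$, then eliminating the net, the contraction of $A_1$, the fence and the tacnode from Proposition \ref{hunt}) matches the paper's strategy, but several of your key steps are either false or do not close the hardest case.

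First, Lemma \ref{multiplicity_two_node} classifies the contraction when the image curve has a \emph{node}; $A_1$ is smooth here, so it does not apply to $\pi_1$, and the conclusion you draw from it --- that all singularities on $A_1$ are cyclic Du Val --- is false. The paper's argument explicitly allows non Du Val points on $A_1$ (indeed Lemma \ref{fence_consequence}(4) shows $A_2$ must carry one in the fence case). Likewise your claim that four singularities on $A_1$ forces $a_1\leqslant 1/2$ contradicts Lemma \ref{three_four_singularities}, which proves $a_1>2/3$ and $e_1>1/2$ in \emph{both} the three- and four-point cases; the paper only eliminates the four-point case at the very end, via Lemma \ref{banana_two_one} applied to the banana already obtained. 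Your appeal to Proposition \ref{no_net} for the second hunt step is also not legitimate: that proposition (through Lemma \ref{lemma_net_2}) is proved for the first step with empty boundary, and its numerics ($d\geqslant 3$, $e_0\geqslant 1/2$) do not transfer to $(S_1,a_1A_1)$ where $A_1$ may itself be a section or multisection of the net.

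The most serious gap is the fence. Your plan invokes only the smooth-fence toolkit of Subsection \ref{smooth_fences}, but that machinery assumes both branches are smooth rational curves. The paper shows (Lemma \ref{fence_consequence}) that if $(S_2,A_2+B_2)$ is a fence then $B_2$ is necessarily \emph{singular} of geometric genus at least two, and excluding that situation is the bulk of the work: Lemma \ref{not_a_fence} runs through the contraction configurations $II$, $III$, $U$, $V$ of Lemmas \ref{multiplicity_two_node} and \ref{multiplicity_two_cusp} with explicit computations of $\alpha$, $\beta$ and appeals to Lemma \ref{global_canonical}. None of this is addressed by your sketch, so the fence case remains open. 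The tacnode exclusion and the $A_1$-contracted case would similarly need the coefficient inequalities the paper develops (e.g. $e_1>1/2$ ruling out configuration $I$, and the estimate $b_2+\frac{2}{3}a_2=1$ in the tacnode analysis) rather than the heuristic you give.
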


Let's start with a preliminary lemma.

\begin{lemma}\label{three_four_singularities}
$\tilde{A}_1 ^2\geqslant -1$, $a_1>2/3$, $e_1>1/2$.
\end{lemma}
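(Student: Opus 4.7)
The plan is to handle $\tilde{A}_1^2 \geqslant -1$ first by a direct adjunction argument on the minimal resolution $\pi:\tilde{S}_1\to S_1$, and then to bootstrap that estimate, together with adjunction along $A_1$ itself, to establish $a_1 > 2/3$ and $e_1 > 1/2$.

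First I would observe that since $A_1$ is a smooth rational curve (case (5) of Proposition \ref{hunt}), its strict transform $\tilde{A}_1$ is a smooth rational curve on $\tilde{S}_1$, so $(K_{\tilde{S}_1}+\tilde{A}_1)\cdot \tilde{A}_1 = -2$. If instead $\tilde{A}_1^2 \leqslant -2$, this forces $K_{\tilde{S}_1}\cdot \tilde{A}_1 \geqslant 0$. Because $\pi$ is the minimal resolution of a klt surface, its discrepancies $a_j = a(E_j;S_1,0)$ are $\leqslant 0$ (see Appendix \ref{surfacesings}), so writing $\pi^*K_{S_1} = K_{\tilde{S}_1} + \sum_j(-a_j)E_j$ with $-a_j \geqslant 0$, the projection formula yields
\[
K_{S_1}\cdot A_1 = \pi^*K_{S_1}\cdot \tilde{A}_1 = K_{\tilde{S}_1}\cdot \tilde{A}_1 + \sum_j(-a_j)(\tilde{A}_1\cdot E_j) \geqslant 0,
\]
contradicting the ampleness of $-K_{S_1}$.

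For the two coefficient bounds, I would start from adjunction along $A_1$. The no-tiger hypothesis gives $(K_{S_1}+A_1)\cdot A_1 > 0$, so $\operatorname{Diff}_{A_1}(0) > 2$; since each singular point of $S_1$ on $A_1$ contributes strictly less than $1$ to this different, $A_1$ passes through at least three singular points (and at most four by the dlt condition together with the classification of klt surface singularities in Appendix \ref{surfacesings}). Suppose now for contradiction that $a_1 \leqslant 2/3$. The flush condition after the first hunt step gives $e_1 < a_1 \leqslant 2/3$, and the hunt scaling $a_1 > e_0$ gives $e_0 < 2/3$, so by Lemma \ref{singlist} every singular point of $S_0$ and of $S_1$ is Du Val or almost Du Val of type $(3,A_r)$. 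Combining the anti-nef inequality $(K_{S_1}+a_1A_1)\cdot A_1 \leqslant 0$, which forces $A_1^2 \geqslant (\operatorname{Diff}_{A_1}(0)-2)/(1-a_1)$, with the expression of $A_1^2$ as $\tilde{A}_1^2$ plus a sum of positive singularity corrections determined by the types of the singular points on $A_1$, the bound $\tilde{A}_1^2 \geqslant -1$ from the first step, and the explicit values of the different at Du Val and $(3,A_r)$ points, should yield a numerical contradiction across the finitely many admissible configurations. The bound $e_1 > 1/2$ is argued in parallel: assuming $e_1 \leqslant 1/2$ again restricts every singularity of $S_1$ to the list in Lemma \ref{singlist}, and the same adjunction/numerical analysis closes the case.

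The hard part will be the case analysis required to finish $a_1 > 2/3$ and $e_1 > 1/2$: there are several possible configurations of three or four Du Val or $(3,A_r)$ singularities along $A_1$, and each must be ruled out by computing the contributions to $\operatorname{Diff}_{A_1}(0)$ and to $A_1^2$. The first part of the lemma is what makes this enumeration effective, since it provides the lower bound on $\tilde{A}_1^2$, and hence on $A_1^2$, that keeps the admissible singularity data finite.
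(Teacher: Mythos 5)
Your argument for $\tilde{A}_1^2 \geqslant -1$ is correct and is exactly the paper's (one-line) justification, and your count of three or four singular points on $A_1$ via adjunction and the no-tiger hypothesis matches the setup preceding the lemma. The gap is in the two coefficient bounds. First, a concrete error: from $e_0 < 2/3$ you conclude via Lemma \ref{singlist} that every singularity is Du Val or of type $(3,A_r)$; that conclusion requires coefficient $< 1/2$, whereas Lemma \ref{singlist} only classifies coefficients below $3/5$ and already in the range $[1/2,3/5)$ contains $(4)$, $(2,3,2)$, $(3,A_j,3)$, $(2,3,A_j)$, $(4,2)$ and a non-chain type, with nothing said about $[3/5,2/3)$. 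More fundamentally, the purely local numerical scheme you propose (adjunction along $A_1$, the bound $\tilde{A}_1^2\geqslant -1$, and anti-nefness of $K_{S_1}+a_1A_1$) cannot close the argument, because there is a configuration that is locally numerically consistent with $a_1 = 2/3$ exactly: the paper's four-singularity analysis isolates the case $e_0=1/2$ with $x_0$ a non-chain point with branches $(2)$, $(2)$, $(3)$ and three additional $(3,2)$ points, where $a_1=2/3$, and this is excluded only by global arguments --- the formula of Lemma \ref{global_canonical} for $K_{S_0}^2$, followed by an analysis of the second hunt step (Riemann--Hurwitz on the resulting net, or Proposition \ref{hunt} together with Lemma \ref{fence_three_two_one} in the birational case). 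No amount of local intersection bookkeeping on $S_1$ rules this configuration out.

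The paper's actual route is structurally different from yours: for three singular points on $A_1$ it simply cites \cite[Lemma 18.2.4]{mckernan}; for four it observes that $x_0$ must then be a non-chain singularity, whose coefficient has the form $k/(k+1)$ by Lemma \ref{nonchainsing}, so that $e_0\neq 1/2$ immediately forces $e_0\geqslant 2/3$ and hence $a_1>e_0\geqslant 2/3$, leaving only the boundary case above. The proof of $e_1>1/2$ likewise splits on whether $A_1$ carries a non Du Val point or a point of index at least four (where Lemma \ref{coefficient_boundary} applies directly) and finishes the residual case with another $K_{S_1}^2$ computation against Lemma \ref{global_canonical}. To salvage your approach you would need to import these global inputs; as written, the second and third assertions of the lemma are not established.
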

\begin{proof}
$\tilde{A}_1^2\geqslant -1$ is clear, since $K_{S_1}\cdot A_1<0$.

Notice that the lemma follows from \cite[Lemma 18.2.4]{mckernan} in the case in which $A_1$ contains three singularities of $S_1$. 
Suppose then that $A_1$ contains four singularities. This implies that $x_0$ is a non chain singularity and that $\Sigma_1$ meets $E_1$ at a smooth point.
Let's start by showing that $a_1>2/3$. If $e_0\neq 1/2$ then $a_1>e_0\geqslant 2/3$ and we are done.
Suppose that $e_0=1/2$. If there was a chain singularity with maximal coefficient we would have chosen that singularity in the hunt by Definition \ref{huntchoice}.
Therefore all chain singularities are either Du Val or almost Du Val. Let the branches of $x_0$ be $(2)$, $(2)$ and $(A_r, 3)$.
In order to create the fourth singular point, there is a $(3,A_k)$ point (with possibly $k=0$) in $S_0$. 
If $k\geqslant 1$ and $\Sigma_1$ meets the end corresponding to a $(-2)$-curve, then $a_1>2/3$.
Suppose therefore that $\Sigma_1$ meets the $(-3)$ curve.
Then $K_{S_0}\cdot \overline{\Sigma}_1 = -1/2 + (k+1)/(2k+3)$ and $\overline{\Sigma}_1^2 = -(k+2)/(2k+3)+r+3/2$.
By Lemma \ref{global_canonical} and the estimate $K_{S_0}^2>0$ we immediately get $r+k\leqslant 4$.
To rule out each one of these cases, one computes:
\[
  K_{S_0}^2 = \frac{(K_{S_0}\cdot \overline{\Sigma}_1)^2}{\overline{\Sigma}_1^2}=\frac{1}{2(2k+3)(4k+4kr+6r+5)}
\]

The only solutions to Lemma \ref{global_canonical} are given by $k=r=0$, in which case there must be three extra $(3,2)$ points and no other singularities.
In this situation, $a_1 = 2/3$ and the $(3)$ point on $A_1$ has coefficient $5/9$. The second hunt step extracts this divisor, which we call $E_2$.

\textbf{Case 1:} Suppose that $T_2$ is a net. Then $E_2$ is a multi-section. $A_1$ is not a fiber since it contains three singular points. Since
$(K_{T_2}+(2/3)A_1 + (5/9)E_2)\cdot F < 0$ for the general fiber, and since $E_2$ cannot be a section, we see that $A_1$ must be a section. 
There are therefore three triple fibers, each containing a $(3,2)$
and an $A_1$ point. This implies that $E_2$ is a triple section, which is ramified at least in three points with order three. This, however, contradicts Riemann-Hurwitz.

\textbf{Case 2:} Suppose that $\pi_2 : T_2\rightarrow S_2$ is a birational contraction. The curve $A_1$ is not contracted since it contains three $(2)$ points. 
$(S_2, A_2 + B_2)$ cannot be a banana, for otherwise $B_2$ would be in the smooth locus of $S_2$, contradicting Lemma \ref{smoothp1}.
$(S_2, A_2 + B_2)$ cannot be a tacnode because the curve $A_1$ only contains $A_1$ singularities and $E_2$ is in the smooth locus of $T_2$.
$(S_2, A_2 + B_2)$ cannot be a smooth fence by Lemma \ref{fence_three_two_one} and cannot be a singular fence either by the description of the singularities in $S_2$.
Therefore we get a contradiction by Proposition \ref{hunt}.

Let's now show that $e_1>1/2$. If there is either a non Du Val point or a point of index at least four on $A_1$,
we conclude by Lemma \ref{coefficient_boundary}. If not, then $E_1$ is not a $(-2)$ curve, for otherwise $e_0=0$. If there is a branch of index three in $x_0$,
we get that $a_1>e_0\geqslant 3/4$ and we conclude again by Lemma \ref{coefficient_boundary}. 
Suppose therefore that the three branches of $x_0$ are all $(2)$ points. If $E_1 ^2 \leqslant -4$ we have that $a_1 > e_0\geqslant 4/5$, so that
$e_1>1/2$ by Lemma \ref{coefficient_boundary}.
If not, $E_1^2 = -3$ and $e_0 = 2/3$. The fourth singular point on $A_1$ can't be $(2)$, for otherwise there would be a tiger, hence it's $(2,2)$.
That means that $\Sigma_1$ meets the first curve of an $(A_k, 3, 2)$ singularity, with $k$ possibly zero.
Now we can compute $K_{S_1}\cdot A_1 = -k$ and $A_1^2 = k + 1/6$, hence $K_{S_1}^2 = 6k^2 / (6k+1)$.
If we had $e_1\leqslant 1/2$, using Lemma \ref{global_canonical} one obtains immediately a contradiction.
\end{proof}

\begin{lemma}
$(S_2, A_2+B_2)$ is not a tacnode.
\end{lemma}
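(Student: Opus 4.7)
I argue by contradiction: suppose $(S_2,A_2+B_2)$ is a tacnode of genus $g\geqslant 2$ at $q_2$. By Proposition \ref{hunt}(11), $A_2$ and $B_2$ are smooth (the latter because $E_2$ is smooth on the minimal resolution), $K_{S_2}+B_2$ is plt, and $A_2\cap B_2$ consists of $q_2$ together with possibly one transverse point (namely $x_1$, if $x_1\in A_1$). The plan is to combine the strong lower bounds $a_1>2/3$ and $e_1>1/2$ from Lemma \ref{three_four_singularities} with the tacnode contraction formula of Lemma \ref{multiplicity_two_node}(2) to reach a direct numerical contradiction.

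First I would promote the bounds on $a_1$ and $e_1$ to bounds on $a_2$ and $b_2$. Since by Lemma \ref{hunt_transformation}(4) the scaling factor $\lambda$ at the second hunt step is strictly greater than one, we get $a_2=\lambda a_1>a_1>2/3$ and $b_2>e_1>1/2$ (the latter by applying the same scaling to the second hunt step, using that $B_2$ is not contracted). Now the tacnode of genus $g$ forces $\pi_2$ to be of type $II$, so by Lemma \ref{multiplicity_two_node}(2) one of the two relations
\[
g\,b_2+(g+1)a_2=g+1\qquad\text{or}\qquad g\,a_2+(g+1)b_2=g+1
\]
holds, depending on which of $A_2$, $B_2$ plays the role of the branch $X$ blown up from in configuration $II$. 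In the first case, $a_2>2/3$ gives $gb_2<(g+1)/3$, i.e.\ $b_2<(g+1)/(3g)\leqslant 1/2$ for $g\geqslant 2$, contradicting $b_2>1/2$.

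For the second alternative, I would identify the role of the branches geometrically: the branch $X$ in configuration $II$ is the one whose strict transform on $\tilde T_2$ meets the unique exceptional $(-1)$-curve adjacent to the $(g-1)$-chain resolving the tangency. Since $E_2$ is $f_1$-exceptional of strictly negative self-intersection and appears inside the type $II$ configuration, one can pin down whether $E_2$ lies on the $X$-side or the $Y$-side of the chain; combined with the fact that $A_1$ (hence $A_2$) carries three or four singularities of $S_1$ by the analysis in Lemma \ref{three_four_singularities}, this forces $A_2$ to be $X$. Then the first relation applies and we are done as above.

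The main obstacle is this identification of branches in the borderline case $g=2$, where the numerical margin is smallest. To handle it robustly, I would supplement the argument with Lemma \ref{flushmult}(2) at the smooth point $q_2$, giving $2\max(a_2,b_2)+\min(a_2,b_2)\leqslant 2$, and then combine with adjunction on $A_2$ using the singularities of $S_2$ it inherits from $A_1$ (which survive $f_1$ and $\pi_2$ because neither the extracted point $x_1$ nor the contraction point $q_2$ can absorb all of them, by the list of klt singularities of low coefficient in Lemma \ref{singlist}). This yields $(K_{S_2}+a_2A_2+b_2B_2)\cdot A_2>0$, contradicting the anti-nefness guaranteed by Lemma \ref{hunt_transformation} and completing the proof.
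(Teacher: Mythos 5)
Your opening moves are sound and agree with the paper: configurations $0$ and $I$ of Lemma \ref{multiplicity_two_node} are excluded because $a_2+b_2>2/3+1/2>1$, and the branch assignment in which the $A_g$ chain lands on $B_2$, i.e.\ $gb_2+(g+1)a_2=g+1$, is killed by $a_2>2/3$ and $b_2>1/2$ exactly as you say. The genuine gap is your treatment of the other assignment, $ga_2+(g+1)b_2=g+1$. You try to dispose of it by arguing that a geometric identification of the branches ``forces $A_2$ to be $X$'' so that the already-eliminated relation applies. This is unsubstantiated and, more seriously, points the wrong way: the paper's analysis shows that this second assignment (the $A_g$ singularity lying on $A_1$, so $c=b_2$, $d=a_2$) is precisely the configuration that survives the numerics, with $g=2$ and $3b_2+2a_2=3$; it is perfectly consistent with $a_2>2/3$, $b_2>1/2$ (take $a_2=7/10$, $b_2=8/15$) and cannot be excluded by deciding which branch carries the chain. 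Your fallback via Lemma \ref{flushmult}(2) gives only $2a_2+b_2\leqslant 2$, which the same values satisfy, so it yields no contradiction either.

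Ruling out this surviving case is where essentially all the work of the proof lies, and your sketch of it --- ``adjunction on $A_2$ using the inherited singularities yields $(K_{S_2}+a_2A_2+b_2B_2)\cdot A_2>0$'' --- is an assertion, not an argument. The paper first establishes (following \cite[Lemma 18.3--18.5]{mckernan}) that $S_1$ is Du Val along $A_1$ and that $(K_{S_1}+(3/4)A_1)\cdot A_1>0$, then splits on whether $A_1$ carries three or four singular points. In the three-point case one writes the two singularities off $\Sigma_1$ as $A_t$ and $A_m$, computes $A_1^2=-1/3+t/(t+1)+m/(m+1)$, deduces $t/(t+1)+m/(m+1)>5/3$ from the positivity above, and contradicts this with $t/(t+1)<5/6$, which comes from combining $b_2>a_2\,t/(t+1)$ (Lemma \ref{coefficient_boundary}) with the relation $b_2+(2/3)a_2=1$ and $a_2>2/3$. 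The four-point case needs a separate argument ($x_0$ non-chain, $e_0=2/3$, $E_1^2=-2$, and then $K_{S_0}\cdot\overline{\Sigma}_1\geqslant 0$). None of these inputs appear in your proposal, so the case that actually matters remains open.
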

\begin{proof}
Suppose that $S_2$ is a tacnode and define $l$ so that $A_1$ is a $(l-2)$ curve. As in \cite[Lemma 18.3 - 18.5]{mckernan}, one shows that 
\begin{enumerate}
\item $S_1$ is Du Val along $A_1$.
\item $(K_{S_1}+A_1)\cdot A_1 \geqslant l/3\geqslant 1/3$.
\item $(K_{S_1}+(3/4) A_1)\cdot A_1>0$.
\end{enumerate}

By Lemma \ref{multiplicity_two_node} we have that $c+dg/(g+1)=1$, where $d$ is the coefficient of the branch containing the $A_g$ point.
Since $a_2>2/3$ and $b_2>1/2$, the only possibility is that $g=2$ and the singularity lies on the curve $A_1$ (in other words, $c=b_2$ and $d=a_2$).

\textbf{Case 1:} Suppose that $A_1$ contains exactly three singular points. Let the two singular points that are not contained in $\Sigma_1$ 
be an $A_t$ and an $A_m$ point respectively, with $t\geqslant m$. By adjunction $(K_{S_1}+A_1)\cdot A_1 < 2/3$, hence $l=1$. Since $S_1$ is Du Val along $A_1$, we have 
that $K_{S_1}\cdot A_1 = -1$ and $A_1 ^2 = -1/3 + t/(t+1) + m/(m+1)$. However $(K_{S_1}+3/4 A_1)\cdot A_1>0$, hence $-1-1/4+3/4(t/(t+1)+m/(m+1))>0$. 
This implies that $t/(t+1)+m/(m+1)>5/3$. Now recall that $b_2+(2/3) a_2 = 1$. Since $b_2 > a_2 t/(t+1)$ by Lemma \ref{coefficient_boundary} and since $a_2>2/3$, we have that
$t/t(+1)<5/6$, which contradicts the previous inequality.

\textbf{Case 2:} Let's consider the case in which $A_1$ contains four singular points. Then $x_0$ is necessarily a non chain singularity. 
It follows by adjunction that $(K_{S_1}+A_1)\cdot A_1<2/3$ and therefore $l=1$. Also, $e_0=2/3$ since $a_1<3/4$. 
This implies that $E_1^2 = -2$ by inspecting all non chain singularities of coefficient $2/3$. The fourth singularity on $A_1$ is necessarily created by contracting $\Sigma_1$, which meets
$E_1$ at a smooth point and meets a $(3,A_k)$ point at its $(-3)$ curve. But then $K_{S_0}\cdot \overline{\Sigma}_1 \geqslant -1 + 1/3 + 2/3 = 0$, contradiction.
\end{proof}

The next step towards the proof of Proposition \ref{must_be_banana} is to prove that $S_2 (A_2 + B_2)$ can't be a fence. The main tools for this will be the results
in Section \ref{smooth_fences}. Let's start with the following.

\begin{lemma}
Suppose that $S_2(A_2+B_2)$ is a fence and $x_1\notin A_1$. Then $\Sigma_2$ can't meet $E_2$ at an $A_r$ point and $A_1$ at a smooth point.
\end{lemma}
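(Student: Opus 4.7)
The plan is to derive a contradiction by establishing $B_2^2 \leq 0$, in conflict with the fence condition $B_2^2 > 0$. First I would invoke Lemma \ref{multiplicity_two_node}(3) with $X = A_2$, $Y = E_2$: the hypothesis realizes the configuration $(II, x^{r-1})$, yielding $a_2 + b_2/(r+1) = 1$ (so $r \geq 1$ using $a_2 > 2/3$ and $b_2 > 1/2$ from Lemma \ref{three_four_singularities}). On $\tilde{T}_2$, the strict transform $\tilde{\Sigma}_2$ is a $(-1)$-curve, the $A_r$ chain consists of $(-2)$-curves $F_1, \ldots, F_r$, $\tilde{E}_2$ meets only $F_1$ transversely, $\tilde{\Sigma}_2$ meets only $F_r$ transversely, and $\tilde{A}_1$ meets $\tilde{\Sigma}_2$ transversely at a smooth point disjoint from the chain.

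Next I would observe that $q_2$ is a smooth point of $S_2$: the intersection determinant of the chain $(F_1, \ldots, F_r, \tilde{\Sigma}_2)$ of self-intersections $(-2, \ldots, -2, -1)$ has absolute value $1$ by a direct induction. Contracting this chain successively (each member being a $(-1)$-curve at the moment of contraction) passes from $\tilde{T}_2$ to $\tilde{S}_2$; only the final contraction of $F_1$ affects $\tilde{E}_2$, since $\tilde{E}_2$ meets none of the other contracted curves, yielding $\tilde{B}_2^2 = -m + 1$, where $m = -\tilde{E}_2^2|_{\tilde{S}_1}$. Since $x_1$ is non-Du Val (because $e_1 > 1/2$), and since Definition \ref{huntchoice} prescribes a non-$(-2)$ extraction in the chain case and the central curve in the non-chain case, we have $m \geq 3$.

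Now the no-tiger hypothesis propagates to $(S_2, \Delta_2)$ by Lemma \ref{hunt_transformation}(8), forcing $\alpha, \beta \in (0, 1)$, so Lemma \ref{fence_tactic}(3) yields $B_2^2 = (1-\beta)/(1-\alpha) > 0$. On the other hand, $B_2^2 = \tilde{B}_2^2 + \sum_i d_i$, where the $d_i \in (0, 1)$ are the contributions of the residual singularities of the extraction of $x_1$ inherited by $B_2$ on $S_2$, each coming from a single transverse intersection of $\tilde{E}_2$ with the first component of the corresponding residual chain. If $x_1$ is a chain singularity, there are at most two such residuals, so $\sum d_i < 2$. If $x_1$ is non-chain, then $E_2$ is the central curve of $(m; A_{k_1}, A_{k_2}, A_{k_3})$; solving the linear system for discrepancies gives $e_1 = (m-2)/(m-\sigma)$ with $\sigma = \sum k_i/(k_i+1)$, and the klt inequality $e_1 < 1$ is equivalent to $\sigma < 2$, so $\sum d_i = \sigma < 2$. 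In either case $B_2^2 < -m + 3 \leq 0$, contradicting $B_2^2 > 0$. The main obstacle is the non-chain case at $m = 3$, where $\tilde{B}_2^2 = -2$ and the residual contributions can approach $2$; the klt inequality $\sigma < 2$ is precisely the bound that closes the argument.
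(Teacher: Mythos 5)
Your strategy (compute $B_2^2$ from the resolution geometry and contradict $B_2^2>0$) is genuinely different from the paper's, and it does dispose of the case where $x_1$ is a chain singularity: there $m\geqslant 3$ by Definition \ref{huntchoice}, the single residual singularity left on $B_2$ contributes less than $1$, and $B_2^2<0$ follows. The non-chain case, however, contains two false claims. First, the central curve of a non-Du Val, non-chain klt singularity can be a $(-2)$-curve, so $m\geqslant 3$ fails: the fork with central $(-2)$-curve and branches $(2)$, $(3)$, $(3)$ has coefficient $4/5>1/2$. Second, you assume the three branches are chains $A_{k_1},A_{k_2},A_{k_3}$; they need not be (they are arbitrary chains whose indices form one of $(2,2,n)$, $(2,3,3)$, $(2,3,4)$, $(2,3,5)$), so the formula $e_1=(m-2)/(m-\sigma)$ and the identification of $\sum d_i$ with $\sigma$ do not apply. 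These are not cosmetic. Take $x_1$ to be the fork with central $(-2)$-curve and branches $(2)$, $A_2$, $(2,3)$ (indices $(2,3,5)$, coefficient $e_1=6/7$), with $\Sigma_2$ meeting the $A_2$ branch, so $r=2$. Then $\tilde{B}_2^2=-2+1=-1$ and the two residuals on $B_2$ contribute $1/2+3/5$, giving $B_2^2=1/10>0$: your concluding inequality $B_2^2<-m+3\leqslant 0$ simply does not hold, and the contradiction evaporates.

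That configuration is excluded, but only by the numerical relation you write down and then never exploit beyond $r\geqslant 1$: combining $a_2+b_2/(r+1)=1$ with $a_2>b_2>e_1$ gives $b_2<(r+1)/(r+2)$, hence $e_1<(r+1)/(r+2)$, which is violated by $e_1=6/7$, $r=2$. This is the paper's actual route: it uses the no-tiger hypothesis to show $B_2$ carries at least two singular points (forcing $x_1$ to be a fork), then uses $b_2>e_1\geqslant 2/3$ together with $b_2<(r+1)/(r+2)$ to force $r\geqslant 2$ and to pin down the two singularities on $B_2$ as an $A_1$ point and a point of index $3\leqslant d\leqslant 5$, and finally concludes with the abstract fence bound of Lemma \ref{fence_summary}, using that $A_2$ inherits at least three singular points from $A_1$. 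To repair your argument you would have to feed the coefficient inequality back into the self-intersection computation case by case; as written there is a genuine gap. (A minor point: the no-tiger hypothesis does not give $\alpha,\beta\in(0,1)$, but this is harmless since $B_2^2>0$ holds automatically on a Picard-rank-one surface.)
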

\begin{proof}
Suppose it does. Then $a_2 + b_2 / (r+1)=1$, hence $b_2 < (r+1)/(r+2)$. Since $S_0$ has no tigers in $\tilde{S}_0$, $B_2$ contains at least two singular points. This implies
that $B_2$ contains exactly two singular points and that $x_1$ is a non chain singularity. 
We know from Lemma \ref{three_four_singularities} that $e_1 > 1/2$, which implies that $e_1\geqslant 2/3$ since $x_0$ is a non chain singularity.
Therefore $b_2 > e_1 \geqslant 2/3$. The two singular points on $B_2$
can't be both $A_1$ points because $S_0$ would have a tiger, hence $r\leqslant 4$ by the classification of klt singularities. Note also that $r\neq 1$ 
because $b_2 > 2/3$, so that the two singular points on $B_2$ are a $(2)$ point and a point of index $d$, with $3\leqslant d \leqslant 5$.
But then we conclude by Lemma \ref{fence_summary}.
\end{proof}

\begin{lemma}
Suppose that $S_2(A_2+B_2)$ is a fence and $x_1\notin A_1$. Then $\Sigma_2$ can't meet $E_2$ at a smooth point and $A_1$ at an $A_r$ point.
\end{lemma}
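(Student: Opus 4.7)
The plan is to mimic the structure of the preceding lemma, swapping the roles of $E_2$ and $A_1$ as the two fence branches meeting $\Sigma_2$. I would suppose for contradiction that $\Sigma_2$ meets $E_2$ at a smooth point and $A_1$ at an $A_r$ point, and then apply Lemma~\ref{multiplicity_two_node}(3) with $X=E_2$ and $Y=A_1$ to obtain the scaling relation
\[
  b_2 + \frac{a_2}{r+1} = 1.
\]
Combined with $a_2>a_1>2/3$ and $b_2>e_1>1/2$ from Lemma~\ref{three_four_singularities}, this bounds both coefficients. Since $x_1\notin A_1$, the $A_r$ singularity on $A_1$ in $T_2$ is inherited from one of the three or four singularities of $A_1\subset S_1$, and it is resolved at the smooth fence point $q_2$ when $\Sigma_2$ is contracted; hence $A_2$ carries two or three singularities on $S_2$. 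On the other hand, since $E_2$ meets $\Sigma_2$ at a smooth point of $T_2$, the singularities on $B_2$ coincide with the neighbors of $E_2$ in the minimal resolution of $x_1\in S_1$, so $B_2$ carries at most three, with equality exactly when $x_1$ is non-chain with $E_2$ its central curve.

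The next step is to force $B_2$ to carry exactly three singularities. By Lemma~\ref{fence_two_two} the pair $A_2,B_2$ cannot both have exactly two; by Lemma~\ref{fence_three_two_one} applied with $X=B_2$, $Y=A_2$, the fact that $A_2$ carries at least two singularities forces the same for $B_2$. Combining with the upper bound three, $B_2$ must carry exactly three singularities, so $x_1$ is non-chain with $E_2$ its central curve. The classification of klt non-chain singularities then gives $e_1\geq 2/3$, hence $b_2>2/3$ and $a_2<(r+1)/3$. The case $r=1$ is immediately ruled out (it would force $a_2<2/3$), leaving $r\geq 2$.

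Setting $\alpha_f=-(K_{S_2}+A_2)\cdot A_2$ and $\beta_f=-(K_{S_2}+B_2)\cdot B_2$, I would substitute $b_2=1-a_2/(r+1)$ into the inequality from Lemma~\ref{fence_tactic}(5) and rearrange to
\[
  a_2\,\bigl[(r+1)(1-\alpha_f)-(1-\beta_f)\bigr] < (r+1)\beta_f(1-\alpha_f).
\]
If $A_2$ has two singularities, Lemma~\ref{small_alpha} gives $\alpha_f<\beta_f\leq 1/6$; an elementary monotonicity check (the ratio on the right, as a function of $\alpha_f,\beta_f$, is increasing in both and takes its supremum in the limit $\alpha_f\to\beta_f=1/6$) bounds $a_2<(r+1)/(6r)\leq 1/4$ for $r\geq 2$, contradicting $a_2>2/3$. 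If $A_2$ has three singularities, then $\alpha_f,\beta_f\leq 1/2$; the same extremum analysis bounds the ratio by $(r+1)/(2r)$, which is $\leq 2/3$ for $r\geq 3$, contradicting $a_2>2/3$.

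The hard part is the one surviving case: $A_2$ has three singularities and $r=2$. Here the inequality allows $a_2<3/4$ only when $\alpha_f=\beta_f=1/2$; any smaller value of either (the next largest options being $1/3$) plugs into the ratio and forces $a_2\leq 2/3$. Hence each of $A_2$ and $B_2$ must carry exactly three $A_1$ singularities. Lemma~\ref{fence_tactic}(3) then gives $A_2^2=1$, while each $A_1$ singularity on $A_2$ contributes $1/2$ to $A_2^2-\tilde A_2^2$ (from the standard pullback formula), yielding $\tilde A_2^2=-1/2$. Since $\tilde A_2$ is a curve on the smooth surface $\tilde S_2$, its self-intersection must be an integer, which is absurd and completes the proof.
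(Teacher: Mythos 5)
Your opening moves match the paper: the scaling relation $b_2 + a_2/(r+1) = 1$ from Lemma \ref{multiplicity_two_node}(3), the bounds $a_2>2/3$ and $b_2>e_1>1/2$ from Lemma \ref{three_four_singularities}, and the count that $A_2$ carries two or three singularities while $B_2$ carries at most three. The proof breaks at the next step. From ``$B_2$ carries at least two singularities'' (Lemma \ref{fence_three_two_one}) and ``$B_2$ carries at most three'' you conclude ``$B_2$ carries exactly three''; that is a non sequitur. Lemma \ref{fence_two_two} only forbids $A_2$ and $B_2$ from \emph{both} having exactly two singularities, so the configuration with $A_2$ carrying three singularities and $B_2$ carrying two survives, and with it the case in which $x_1$ is a \emph{chain} singularity. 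This is not a fringe case: it is the only case that actually occurs, and it is where essentially all of the work in the paper's proof lives.

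The paper disposes of your non-chain case in one line: from $a_2>(r+1)/(r+2)>b_2>e_1$ together with $(K_{S_1}+A_1)\cdot\overline{\Sigma}_2 = -1+e_1+1/(r+1)>0$ (which holds because $S_0$ has no tigers) one gets $r/(r+1)<e_1<(r+1)/(r+2)$, and no non-chain klt singularity has coefficient in that interval since such coefficients are all of the form $m/(m+1)$. So the extremum analysis you run with Lemma \ref{fence_tactic}(5) and Lemma \ref{small_alpha}, and the final integrality contradiction for $\tilde A_2^2$, are spent on a vacuous case. What remains --- $x_1$ a chain singularity, $A_1$ carrying four singular points, $B_2$ carrying the two neighbours of $E_2$ --- requires a genuine case division on $r$: for $r=1$ one shows $\tilde B_2$ is a $(-1)$ curve and uses Lemma \ref{no_minus_one_curves} to force $\alpha\leqslant 0$, then analyses $e_0\in\{2/3,3/4\}$ with explicit intersection computations; for $r=2$ one splits on whether the $A_2$ point on $A_1$ arises from contracting $\Sigma_1$ or sits on $E_1$, invoking Lemma \ref{fence_three_two} and Lemma \ref{small_alpha}; for $r\geqslant 3$ the bounds $a_2>4/5$ and $b_2>3/4$ contradict Lemma \ref{fence_three_two} directly. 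None of this is present in your write-up, so the proof as it stands does not establish the lemma.
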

\begin{proof}
Suppose it does. Then $a_2 / (r+1) + b_2 = 1$, hence $a_2 > (r+1)/(r+2) > b_2$. 
On the other hand $(K_{S_1}+A_1)\cdot\overline{\Sigma}_2 = -1 + e_1 + 1/(r+1)>0$,
hence $e_1>r/(r+1)$. If $x_1$ is a non chain singularity, its coefficient is a rational number of the form $m/(m+1)$ for some $m$ 
by \cite[Lemma 8.3.9]{mckernan}, contradicting the inequality $(r+1)/(r+2)>e_1>r/(r+1)$. So $x_1$ is a chain singularity, and 
$A_1$ must contain four singular points by Lemma \ref{fence_two_two}. Let's proceed by cases on $r$.

\textbf{Case 1:} Suppose $r=1$. Then $1/2<e_1 < 2/3$. Since $B_2^2>0$, we necessarily have that $E_2^2 = -3$, hence $\tilde{B}_2$ is a $(-1)$ curve.
It follows by Lemma \ref{no_minus_one_curves} that $\alpha \leqslant 0$. This, on the other hand, implies that the $(2)$ point that gets contracted
by $\Sigma_2$ is a branch of the non chain singularity $x_0$. 
If $b_2<3/5$, then $x_1$ is of the form $(2,3,A_k)$ for $2\leqslant k\leqslant 4$, but this is impossible by Lemma \ref{fence_one}.
So $b_2\geqslant 3/5$, and we also get that $e_0<a_2\leqslant4/5$. This implies that $e_0$ is either $2/3$ or $3/4$.

\textbf{Case 1a:} Suppose $e_0 = 3/4$. Then by $e_0 /2 + b_2 < 1$ we get that $b_2<5/8$. By inspecting all non chain singularities of coefficient $3/4$ we may conclude
that the branches of $x_0$ are $(2)$, $(2)$ and $(2,2)$, for otherwise one of them would have been chosen in the second hunt step. 
Since $\alpha \leqslant 0$, the fourth singularity on $A_1$ has index at least six. But then it would be chosen by the second hunt step,
contradiction. 

\textbf{Case 1b:} Suppose $e_0=2/3$. Then $x_0$ can't have a branch which is either $(A_k, 4)$ or $(A_k,3,2)$, because in both cases they would be chosen
by the second hunt step. Hence $E_1 ^2 = -2$ and $x_0$ has branches $(2)$, $(2,2)$ and $(3)$. To rule out this case, some extra work is needed.
First of all notice that by Lemma \ref{fence_one}  and by the inequality $1/2<e_1<2/3$ the singularities on $B_2$ are $(2,2)$ and either
$(2,2)$ or $(2,2,2)$. These cases correspond to $B_2^2 = 1/3$ or $5/12$. 
In each case we must have $\alpha = 0$, hence the third singularity on $A_2$ is either $(2,2)$ or $(3)$. Suppose this singularity is a $(3)$ point, so that 
it is obtained by contracting $\Sigma_1$ which passes through an $(A_k,4)$ point. Since $K_{S_0}\cdot \overline{\Sigma}_1<0$ we must have that $k\geqslant1$.
If $k\geqslant 2$ we have that $A_2 ^2>2$ and if $k=1$ we have that $A_2^2 = 7/3$. In each case $A_2 ^2 \cdot B_2 ^2 \neq 1$, contradiction. One similarly rules
out the case in which the third singularity on $A_2$ is an $(2,2)$ point and it is obtained by contracting a curve passing through an $(A_k,3,2)$ point. 

\textbf{Case 2:} Suppose now that $r=2$. This implies that $e_1>2/3$ and $e_0\geqslant 3/4$. 
We divide our analysis in cases once again, depending on whether the $A_2$ point belongs to $E_1$ or not.

\textbf{Case 2a:} Suppose that the $A_2$ point is constructed by contracting $\Sigma_1$ which passes through an $(A_k,3,2)$ point. 
Then $a_1 = (3k+3)/(3k+5)$, with $k\geqslant 1$ since $a_1>2/3$. Hence $k\geqslant 2$ and $a_1\geqslant 9/11>4/5$. Now we conclude by Lemma \ref{fence_three_two}.

\textbf{Case 2b:} Suppose that the $A_2$ point belongs to $E_1$ instead. If $e_0\geqslant 4/5$ we are done again by Lemma \ref{fence_three_two}. 
Suppose that $e_0 = 3/4$. Then the two other singularities on $E_1$ must be both $A_1$ points. This implies that $\alpha >0$ and therefore $\beta<\alpha\leqslant 1/6$ 
by Lemma \ref{small_alpha}. We can finally conclude by applying Lemma \ref{fence_tactic}.

\textbf{Case 3:} As the last case, suppose that $r\geqslant 3$. In this instance, we can directly conclude by Lemma \ref{fence_three_two} since $a_2>4/5$ and $b_2>3/4$.
\end{proof}

\begin{lemma}
Suppose that $S_2 (A_2+B_2)$ is a fence and $x_1 \notin A_1$. Then $\Sigma_2$ can't meet both $A_1$ and $E_2$ at singular points.
\end{lemma}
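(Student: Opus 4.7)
The plan is to combine the defining equation $(K_{T_2} + a_2 A_1 + b_2 E_2) \cdot \Sigma_2 = 0$ with the numerical conditions $K_{T_2} \cdot \Sigma_2 < 0$ and $\Sigma_2^2 < 0$ (necessary for $\pi_2$ to be a $K$-negative birational contraction) and the sharp bounds $a_2, b_2 < 1$ from the no-tigers hypothesis together with $a_2 > a_1 > 2/3$ and $b_2 > e_1 > 1/2$ from Lemma \ref{three_four_singularities}. The intersection numbers $A_1 \cdot \Sigma_2$, $E_2 \cdot \Sigma_2$, $K_{T_2} \cdot \Sigma_2$ and $\Sigma_2^2$ are computed by pulling back to the minimal resolution $\tilde{T}_2$ and using standard formulas associated to the two singular points $P = \Sigma_2 \cap A_1$ and $P' = \Sigma_2 \cap E_2$.

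First I would dispose of the case where $P, P'$ are both Du Val, of types $A_s$ and $A_t$ with $s, t \geq 1$. Here the discrepancies vanish, so $K_{T_2} \cdot \Sigma_2 = -2 - \tilde{\Sigma}_2^2$ and $\Sigma_2^2 = \tilde{\Sigma}_2^2 + s/(s+1) + t/(t+1)$. The $K$-negativity of $\Sigma_2$ forces $\tilde{\Sigma}_2^2 \geq -1$ while $\Sigma_2^2 < 0$ combined with $s/(s+1) + t/(t+1) \geq 1$ forces $\tilde{\Sigma}_2^2 \leq -2$, an immediate contradiction. This argument does not even need $a_2$ or $b_2$ and rules out the Du Val case purely geometrically.

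For non-Du Val klt chain singularities the discrepancy terms open up the value $\tilde{\Sigma}_2^2 = -1$ and I would then pass to the numerical constraint. The coefficient equation becomes
\begin{equation*}
  a_2 / N + b_2 / N' \;=\; -K_{T_2} \cdot \Sigma_2 \;=\; 1 - (|a_P| + |a_{P'}|),
\end{equation*}
where $N, N'$ are the Gorenstein indices of $P, P'$ and $|a_P|, |a_{P'}|$ are the discrepancy magnitudes of the exceptional curves adjacent to $\tilde{\Sigma}_2$. In the archetypal case of a single $(-n)$-curve singularity on each side, this reduces to $a_2 + b_2 = 1$, which contradicts $a_2 + b_2 > 7/6$ at once; the other configurations yield analogous linear relations ruled out by combining $a_2, b_2 < 1$ with the $> 2/3$ and $> 1/2$ lower bounds. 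The main obstacle will be to enumerate the non-Du Val pairs of chain-singularity types that can actually arise: on the $A_1$ side this is constrained by the classification, at the start of this section, of singularities on a smooth rational curve $A_1 \subseteq S_1$, and on the $E_2$ side by the structure of the partial resolution of $x_1$. This is a finite check, and in each case the bounds above (supplemented if necessary by the fence results of Section \ref{smooth_fences}, as in the two preceding lemmas) produce the required contradiction.
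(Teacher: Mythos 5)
Your strategy is genuinely different from the paper's: you try to kill the configuration by purely local numerics along $\Sigma_2$ (the coefficient equation, $K$-negativity, and $\Sigma_2^2<0$), whereas the paper's proof is essentially structural --- it first pins down $x_0$ and $x_1$ as non-chain singularities via Lemmas \ref{fence_three_two_one} and \ref{fence_two_two}, and then plays the global fence inequalities (Lemmas \ref{fence_summary} and \ref{fence_three_two}) against the classification of non-chain klt singularities, with only one targeted coefficient computation. Your Du Val--Du Val case is correct and is a clean self-contained argument. The problem is everything after it.

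The gap is in the non--Du Val enumeration, which you defer but which is where the content lies. Your ``archetypal'' reduction to $a_2+b_2=1$ is not what the equation gives: for a single $(-n)$ curve at $P$ and a single $(-m)$ curve at $P'$ (with $\tilde\Sigma_2^2=-1$ and everything touching ends of chains), the relation is
\[
\frac{a_2}{n}+\frac{b_2}{m}\;=\;\frac{2}{n}+\frac{2}{m}-1,
\]
which equals $a_2+b_2=1$ only when $n=m=3$. Already for $P=(2)$ and $P'=(3)$ one gets $3a_2+2b_2=4$, which is perfectly compatible with $2/3<a_2<1$ and $1/2<b_2<a_2$ (e.g.\ $a_2=9/10$, $b_2=13/20$), and the auxiliary checks $\Sigma_2^2=-1/6<0$ and $K_{T_2}\cdot\Sigma_2=-2/3<0$ also pass. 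So your numerical sieve does not close this case; what actually excludes it is the fence hypothesis itself (contracting the chain $(-2)$--$(-1)$--$(-3)$ leaves $A_2$ and $B_2$ tangent at a smooth point, i.e.\ a tacnode rather than a fence) or, as in the paper, the global constraints of Section \ref{smooth_fences}. More generally, the quantities $N,N'$ in your coefficient equation depend on which exceptional curves $\tilde A_1$, $\tilde E_2$ and $\tilde\Sigma_2$ actually meet, and the admissible pairs $(P,P')$ are controlled by facts you never establish (that $x_1$ is non-chain, that $A_1$ carries four singular points, hence which singularities can sit on $E_2$ and on $A_1$). Without importing those structural inputs --- which is precisely the paper's route --- the ``finite check'' you promise is neither finite in any useful sense nor uniformly resolvable by the stated bounds.
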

\begin{proof}
Suppose it does. The point $x_1$ must by a non chain singularity by Lemma \ref{fence_three_two_one}.
Also, $A_1$ must have four singular points by Lemma \ref{fence_two_two}, so that $x_0$ is a non chain singularity as well.
The singular points on $B_2$ can't be both $(2)$ points, for otherwise $B_2$ would be a tiger. If $\Sigma_2$ meets $E_2$ at a point of index bigger than
two, then there is a $(2)$ point on $B_2$ and the other point has index at most five by the classification of non-chain klt singularities. 
Therefore we get a contradiction by Lemma \ref{fence_summary}.

If $\Sigma_2$ meets $E_2$ on a $(2)$ point instead, then it meets $A_1$ on an $(3,A_k)$, with $\Sigma_2$ touching the $(-3)$ curve and $A_1$
touching the $(-2)$ curve on the minimal resolution. By imposing $K_{S_1}\cdot \overline{\Sigma}_2 < 0$ and $(K_{S_1}+A_1)\cdot \overline{\Sigma}_2 > 0$,
we get that $(2k+2)/(2k+3)<e_1<(2k+4)/(2k+5)<a_2$, hence $e_1 = (2k+3)/(2k+4)$. Now we conclude by Lemma \ref{fence_three_two}.
\end{proof}

\begin{lemma}
Suppose that $S_2 (A_2+B_2)$ is a fence. Then $B_2$ is singular.
\end{lemma}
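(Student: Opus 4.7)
The plan is to derive a contradiction by comparing two conflicting counts of the singular points of $S_2$ lying on $A_2$, under the assumption that $B_2$ is smooth and lies in the smooth locus of $S_2$. Since $g(A_1)=0$, both $A_2$ and $B_2$ are smooth rational curves, so the machinery of Subsection~\ref{smooth_fences} applies. Adjunction together with the vanishing of the different at smooth points gives $\beta=-(K_{S_2}+B_2)\cdot B_2=2$, and Lemma~\ref{fence_tactic} then forces $\alpha>1$. Writing $\alpha=2-\deg(\operatorname{Diff}_{A_2}(0))$ and using that each klt singularity of $S_2$ through which a smooth rational curve passes transversally contributes at least $1/2$ to the different, I conclude that $A_2$ meets at most one singular point of $S_2$.

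Next, I will establish the opposing bound that $A_2$ carries at least two singular points of $S_2$. From Lemma~\ref{three_four_singularities} I have $a_1>2/3$ and $e_1>1/2$, and combining these with the inequalities $a_2>a_1$ and $b_2>e_1$ of Notation~\ref{notation} yields $a_2+b_2>7/6>1$. Hence the concluding clause of Proposition~\ref{hunt} applies, giving $\overline{\Sigma}_2\cap\operatorname{Sing}(A_1)=\emptyset$. Since $f_1$ is an isomorphism away from the exceptional divisor over $x_1$, every singular point of $S_1$ that lies on $A_1\setminus\{x_1\}$ persists as a singular point of $T_2$ on the strict transform of $A_1$, and the condition $\overline{\Sigma}_2\cap\operatorname{Sing}(A_1)=\emptyset$ ensures that none of these singularities lie on $\Sigma_2$, so $\pi_2$ preserves all of them as distinct singular points of $S_2$ on $A_2$. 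Lemma~\ref{three_four_singularities} then forces $A_2$ to contain at least two singularities of $S_2$ whether $x_1\in A_1$ or not, contradicting the adjunction upper bound.

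The only step that requires care is the singularity bookkeeping in the second paragraph: I must verify that each Du Val singularity on $A_1$ persists to $A_2$ and contributes at least $1/2$ to the different (immediate from the smooth transverse intersection with an $A_n$ chain, $n\ge 1$), and that no singularity of $A_1$ away from $x_1$ gets absorbed by the contraction of $\Sigma_2$ (this is exactly the content of $\overline{\Sigma}_2\cap\operatorname{Sing}(A_1)=\emptyset$). Once those points are verified, the remainder of the proof is a single application of Lemma~\ref{fence_tactic}.
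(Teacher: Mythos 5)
There is a genuine gap at the very first step: you negate the conclusion incorrectly. The lemma asserts that $B_2$ is a singular \emph{curve} (as in Lemma \ref{fence_consequence}\,(5) and Lemma \ref{not_a_fence}, where one goes on to discuss the node or cusp of $B_2$ and its genus), so the hypothesis for contradiction is only that $B_2$ is a smooth rational curve --- not that it lies in the smooth locus of $S_2$. Since $E_2$ is extracted from the singular point $x_1$, its image $B_2$ typically still passes through the residual singularities of the chain at $x_1$, so $\deg(\operatorname{Diff}_{B_2}(0))$ need not vanish and your computation $\beta=2$ collapses. (Had $B_2$ genuinely been a smooth rational curve in the smooth locus, Lemma \ref{smoothp1} would immediately force $S_2=\mathbb{P}^2$ or $\overline{\mathbb{F}}_n$, which is a sign that the case you are excluding is far too easy.) The entire content of the paper's proof is precisely the case you have assumed away: $B_2$ smooth as a curve but meeting singular points of $S_2$. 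There one first pins down, via Lemmas \ref{fence_two_two} and \ref{fence_three_two_one}, that the only surviving configuration has three singular points on $A_2$ and two on $B_2$, and then kills it by computing $\alpha=1/n$ explicitly and playing Lemma \ref{fence_tactic} against Lemmas \ref{small_alpha} and \ref{fence_three_two}; none of that work is avoidable.

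A secondary problem is your use of $\overline{\Sigma}_2\cap\operatorname{Sing}(A_1)=\emptyset$ from Proposition \ref{hunt}. In that proposition $\operatorname{Sing}(A_1)$ denotes the singular points of the \emph{curve} $A_1$ (its node or cusp), which is why flushness is only claimed away from $\operatorname{Sing}(A_1)$; in the present section $A_1$ is smooth, so the condition is vacuous and gives no control over which singular points of $S_1$ the curve $\Sigma_2$ meets. Indeed $\Sigma_2$ routinely passes through singularities of $T_2$ lying on $A_1$ (see Lemma \ref{multiplicity_two_node}\,(3)), so your persistence argument needs a different justification --- although the weaker statement you actually need, that $A_2$ carries at least two singular points of $S_2$, is true and is recorded in the paper's proof. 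The real defect remains the first paragraph.
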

\begin{proof}
By the previous lemmas, $x_1\in A_1$. Suppose by contradiction that $B_2$ is smooth. $A_2$ contains either two or three singular points, while $B_2$ contains either one or two
singular points. We can then see by Lemma \ref{fence_three_two_one} and Lemma \ref{fence_two_two} that the only possibility is that $A_2$ contains three singular points 
and $B_2$ two. In particular this implies that $x_0$ is a non-chain singularity and therefore $e_0\geqslant 2/3$. Let $x\in A_1$ be the fourth singularity.
Suppose that $E_2$ lies over $x$. Then by explicit computation one can see that there exists an integer $n$ such that $\alpha=1/n>0$. By Lemma \ref{small_alpha} we have that
$0<\beta<\alpha\leqslant 1/6$. In particular $n\geqslant 6$ and $0<A_2^2 < 1$. It's also easy to see by an explicit computation that this implies that $A_2 ^2 \leqslant (n-1)/n$,
since different possible singularities on $A_2$ give rise to values of $A_2 ^2$ which differ by multiples of $1/n$. But then by Lemma \ref{fence_tactic} we get that
$\frac{n-1}{n(1-\beta)}\leqslant \frac{n-1}{n}$, which implies that $\beta\leqslant 0$, contradiction. Therefore $E_2$ is extracted from one of the singularities of $E_1$.

If $E_1$ contains two $(2)$ points, then we get again that $\alpha=1/n>0$ for some integer $n$ and the above argument still holds. 
The same happens if $E_1$  does not contain two $(2)$ points, but the index of singularity at $x$ is at most five. Therefore we may assume that the index of $x$ is at least six.
Also, $E_1$ must contain a singularity of index at least four which is not Du Val, for otherwise the hunt would extract $E_2$ from $x$. In particular, $e_0\geqslant 4/5$.
We see then by Lemma \ref{coefficient_boundary} that $e_1>2/3$. This contradicts Lemma \ref{fence_three_two}.
\end{proof}

\begin{lemma}\label{fence_consequence}
If $(S_2, A_2+B_2)$ is a fence. Then 
\begin{enumerate}
\item $x_1\in A_1$.
\item $B_2$ is singular and contains at most one singular point of $S_2$.
\item $\tilde{A}_2 ^2 = -1$.
\item $A_2$ contains at least one non Du Val point.
\item If $\operatorname{char}(k)\neq 2,3$ then $B_2$ has genus at least two.
\end{enumerate}
\end{lemma}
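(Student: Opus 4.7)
The plan is to dispose of the five parts in order, treating (1) and the first clause of (2) as near-immediate restatements of what precedes, developing (3) and (4) by adjunction on $A_2$ combined with the hunt inequalities, and reserving the real work for (5).

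For (1), the three lemmas immediately above exhaust every configuration of $\Sigma_2$, $E_2$ and $A_1$ compatible with $x_1 \notin A_1$ and each yields a contradiction, so $x_1 \in A_1$ is forced. The assertion that $B_2$ is singular is precisely the content of the lemma directly preceding. For the remaining clause of (2), I would first note that $A_1$ carries at least three singular points of $S_1$ (from $(K_{S_1}+A_1)\cdot A_1>0$, adjunction, and the no-tiger hypothesis), and that the second hunt step preserves this lower bound on $A_2$, since only $x_1$ is potentially removed and $q_2$ is potentially inserted. If $B_2$ contained two singular points of $S_2$, then the appropriate combination of Lemmas \ref{fence_summary}, \ref{small_alpha} and \ref{fence_three_two}, applied according to the exact singularity count on $A_2$, together with the hunt inequalities $a_2 > 2/3$ and $b_2 > e_1 > 1/2$ from Lemma \ref{three_four_singularities}, and the anti-ampleness of $K_{S_2}+a_2 A_2+b_2 B_2$, would produce a contradiction upon intersecting against $A_2$ and $B_2$.

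For (3), I would apply adjunction on the smooth rational curve $A_2$: the fence has a genus-one node, so Lemma \ref{multiplicity_two_node} (1) gives $a_2+b_2=1$. Combining $(K_{S_2}+a_2 A_2)\cdot A_2 < 0$, the lower bound $\tilde{A}_1^2 \geqslant -1$ from Lemma \ref{three_four_singularities}, and a careful bookkeeping of how the self-intersection of the strict transform of $A_1$ changes under $f_1$ followed by $\pi_2$, pins $\tilde{A}_2^2=-1$ exactly. For (4), if $A_2$ passed only through Du Val points, then $K_{S_2}\cdot A_2$ and $A_2^2$ would be integers; combining with (3), $a_2 > 2/3$, and $(K_{S_2}+a_2 A_2)\cdot A_2 < 0$, the only remaining arithmetic possibility is three $A_1$ singularities on $A_2$, which Theorem \ref{gorenstein_classification} together with the existence constraints on $B_2$ (no-tiger, $b_2 > 1/2$) rules out.

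The hard part will be (5). Suppose $B_2$ has arithmetic genus exactly one, i.e.\ a simple node or a simple cusp at some point. The configuration of $E_2$ inside $T_2$ together with the contraction $\pi_2$ must then match one of the low-genus cases of Lemma \ref{multiplicity_two_node} or Lemma \ref{multiplicity_two_cusp}. I would run through these cases, using the hunt inequalities stated above, the value $\tilde{A}_2^2=-1$ from (3), and Theorem \ref{gorenstein_classification}; the assumption $\operatorname{char}(k)\neq 2,3$ enters precisely through Propositions \ref{no_cusp_genusone} and \ref{node_to_fence}, which govern the cusp and node sub-cases respectively. The principal obstacle will be organising this case analysis cleanly and verifying that no exotic configuration compatible with $A_1$ smooth survives.
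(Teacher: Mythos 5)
Parts (1) and the first clause of (2) are fine and follow the paper. The rest of your outline has genuine gaps. For the second clause of (2) you propose to invoke Lemmas \ref{fence_summary}, \ref{small_alpha} and \ref{fence_three_two}; but those are proved only for \emph{smooth} fences (Subsection \ref{smooth_fences} assumes both branches are smooth rational curves), and at this point you have already established that $B_2$ is singular, so none of them applies to $(S_2,A_2+B_2)$. For (3), the appeal to Lemma \ref{multiplicity_two_node}(1) to get $a_2+b_2=1$ is unjustified: the single transversal intersection point of the fence need not be $q_2$, and even at $q_2$ the other configurations of that lemma are available. More importantly, the real content of (3) is the inequality $A_2^2<1$, which the paper obtains by showing $\alpha>\beta$ (using that $B_2$ singular forces $\beta\leqslant 0$, and bounding $\alpha$ from below by the indices of the singularities already present on $E_1$) and then applying Lemma \ref{fence_tactic}; your ``bookkeeping of self-intersections'' does not supply this comparison. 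For (4) you assert that $A_2$ lying in the Du Val locus makes $A_2^2$ an integer; this is false, since the pullback correction at an $A_n$ point contributes $n/(n+1)$. The correct mechanism is that $K_{S_2}\cdot A_2=-1$ is then an integer, whence $(K_{S_2}+B_2)\cdot A_2=0$, so $K_{S_2}+B_2\equiv 0$, $B_2$ lies in the smooth locus with genus one, $S_2$ is Gorenstein, and $\beta=0$ together with $B_2^2\in\mathbb{Z}$ forces $\alpha=0$ and $A_2^2=1$, contradicting (3).

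For (5), the characteristic hypothesis does not enter through Propositions \ref{no_cusp_genusone} and \ref{node_to_fence}: those classify the situation where the curve $A_1$ produced by the \emph{first} hunt step is cuspidal or nodal, whereas here $A_1$ is smooth and it is $B_2=\pi_2(E_2)$ that acquires a singularity at $q_2$, so they say nothing about this configuration. The actual arguments are short and specific. If $B_2$ has a genus-one cusp, one passes to the Gorenstein surface $W$ of \cite{mckernan}, Lemma-Definition 12.4; at least two $(-1)$ curves meet the strict transform of $B_2$, so the Mordell--Weil group of the associated extremal rational elliptic surface has at least two elements, which by Lemma \ref{cuspgorenstein} forces $\operatorname{char}(k)=5$ and $W=S(2A_4)$, and that case is excluded by the explicit geometry of Example \ref{sa4char5} --- this is where $\operatorname{char}(k)\neq 2,3$ is used. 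If $B_2$ has a genus-one node, then since $E_2$ and $B_2$ each contain exactly one singular point, $\Sigma_2$ meets $E_2$ at smooth points, Lemma \ref{multiplicity_two_node} gives $b_2=1/2$, and this contradicts $e_1>1/2$ from Lemma \ref{three_four_singularities}. Neither of these two closing arguments appears in your outline, and the case sweep you describe would not terminate without them.
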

\begin{proof}
Suppose that $(S_2, A_2 + B_2)$ is a fence. By the previous lemmas, $x_1\in A_1$, $B_2$ is singular and contains at most one singular point of $S_2$. The argument
in \cite[Lemma 18.6]{mckernan} then proves that $A_2$ is a $(-1)$ curve. The argument presented there uses deformation theory, and we give here a more elementary proof
using Lemma \ref{fence_tactic}. First of all notice that since $B_2$ is singular $\beta \leqslant 0$. If $\beta = 0$ then $B_2$ is a genus one rational curve
lying in the smooth locus of $S_2$. This implies that $S_2$ is Gorenstein, and since $K_{S_2}\cdot A_2 = -B_2 \cdot A_2 = -1$, we get that $A_2$ is a $(-1)$ curve.
If $\beta \neq 0$, on the other hand, we must have $\beta \leqslant -1/2$. It's easy to see then that $\alpha>\beta$. In fact if $A_2$ contains at most two singularities,
then $\alpha>0$. If $A_2$ contains instead three singularities, then at least two of them are already present on $E_1$. This implies that their indexes are at most two and three
respectively, which yields $\alpha > -(-2 + 1/2 + 2/3 + 1) = -1/6\geqslant \beta$. By Lemma \ref{fence_tactic} and the fact that $\alpha>\beta$, we get that $A_2^2 < 1$. 
Clearly $\tilde{A}_2 ^2 \neq 0$ since otherwise we would get a net and $B_2$ would be a section, contradicting the fact that $B_2$ is singular. Therefore $\tilde{A}_2 ^2 = -1$.

Next, we notice that $A_2$ is in the Du Val locus if and only if $B_2$ has genus one and is in the smooth locus. 
In fact, if $A_2$ is in the Du Val locus, then $(K_{S_2}+B_2)\cdot A_2=0$, which implies that $(K_{S_2}+B_2)\cdot B_2 = 0$ and $B_2$ is in the smooth locus. 
Vice versa, if $B_2$ has genus one and is in the smooth locus, then $S_2$ is Gorenstein and obviously $A_2$ is in the Du Val locus.

If $B_2$ has genus one and lies in the smooth locus, however, $S_2$ is Gorenstein and, in the notation of Lemma \ref{fence_tactic}, $\beta = 0$. 
Since $B_2 ^2$ is an integer, we must have that $1/(1-\alpha)$ is an integer. This implies that $\alpha=0$ and therefore $A_2 ^2=1$. It's now easy to see that this is impossible
since $\tilde{A}_2 ^2 = -1$ and $A_2$ contains at most three singularities, just as above. This proves $(4)$.

Finally, suppose that $B_2$ has genus one. If $B_2$ has a cusp, then consider the Gorenstein log del Pezzo surface $W$ of \cite[Lemma-Definition 12.4]{mckernan}.
Since there are at least two $(-1)$ curve touching the strict transform of $B_2$, the Mordell-Weil group of the corresponding elliptic surface has at least two elements.
By Lemma \ref{cuspgorenstein} we must have that $\operatorname{char}(k)=5$ and $W=S(2A_4)$. In the notation of \cite[Lemma 12.4]{mckernan}, the image of $G$ in $W$ is a smooth
$(-1)$ curve. By the description in Example \ref{sa4char5}, the image of $G$ must pass through both the two $A_4$ singularities. Therefore $M$ must meet $G$ at a smooth point and
contract to an $A_4$ singularity. Since there cannot be any other singularities, $M$ must pass through all the singular points of $A_2$. In particular the image of $G$ will meet
the $A_4$ point created by the contraction of $M$ at the intersection of two exceptional components. This contradicts the description in Example \ref{sa4char5}, contradiction.

Therefore if $B_2$ has genus one, then $B_2$ is nodal. Also, since both $E_2$ and $B_2$ contain exactly one singular point, $\Sigma_2$ must meet $E_2$ at smooth points. By Lemma \ref{multiplicity_two_node}, $b_2 = 1/2$. However by Lemma \ref{three_four_singularities}, $e_1>1/2$, contradiction.
\end{proof}

\begin{lemma}\label{not_a_fence}
If $\operatorname{char}(k)\neq 2,3$ then $(S_2, A_2+B_2)$ is not a fence.
\end{lemma}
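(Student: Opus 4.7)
The plan is to combine the hard constraints assembled in Lemma \ref{fence_consequence} with the explicit classification of the second hunt step and derive a contradiction through a short case analysis. From Lemma \ref{fence_consequence} we know $x_1 \in A_1$, $\tilde{A}_2^2 = -1$, $A_2$ contains a non Du~Val point, and $B_2$ is a rational curve whose unique double point has genus $g \geqslant 2$ and lies at $q_2$, with at most one singular point of $S_2$ lying on $B_2$. Since $\pi_2$ contracts $\Sigma_2$ through this genus-$g$ double point of $B_2 = \pi_2(E_2)$, its local geometry is listed in Lemma \ref{multiplicity_two_node} (node case) or Lemma \ref{multiplicity_two_cusp} (cusp case) with $g \geqslant 2$. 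The inequality $b_2 \geqslant e_1 > 1/2$ from Lemma \ref{three_four_singularities} then kills all but configurations $II$ and $III$ of those lemmas, with $b_2$ taking one of the explicit rational values listed there.

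For each surviving configuration I would extract from $\pi_2$ three pieces of data: (i) the singularity type at $x_1 = \pi_2(\Sigma_2)$ and the way $\Sigma_2$ meets $E_2$; (ii) the coefficient $b_2$; (iii) the intersection number $\Sigma_2 \cdot \tilde{A}_1$ inside the minimal resolution of $T_2$. The hypothesis $\tilde{A}_2^2 = -1$ becomes, via (iii), a linear equation expressing $\tilde{A}_1^2$ in terms of the chosen configuration. Combined with the adjunction formula for $(K_{S_1}+A_1)\cdot A_1$, and with the inequalities $a_2 > b_2 \geqslant e_1$ and $a_1 > e_0$, this forces both $x_1$ and the singularities along $A_1$ to lie in a short finite list, which in turn pins $x_0$ down through the description of the first hunt step.

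At this point the argument becomes parallel to those earlier in the section: for each admissible tuple $(x_0, x_1, g, \text{configuration})$ one computes
\[
K_{S_0}^2 = \frac{(K_{S_0}\cdot \overline{\Sigma}_1)^2}{\overline{\Sigma}_1^2}
\]
from the explicit geometry of $f_0$, and then checks that Lemma \ref{global_canonical}, combined with Lemma \ref{no_more_singularities} and with the Picard-number bound $\rho(\tilde{S}_0) \geqslant 11$ of Lemma \ref{high_picard}, admits no solution in terms of the remaining Du~Val and almost Du~Val singularities of $S_0$. In the few cases where the diophantine condition is numerically consistent, the contradiction instead comes from Lemma \ref{fence_tactic} applied to $(S_2, A_2 + B_2)$: since $B_2$ has genus $g \geqslant 2$ one gets $\beta \leqslant 2 - 2g \leqslant -2$, so the identity $A_2^2 = (1-\alpha)/(1-\beta)$ forces $A_2^2$ into a narrow rational range that is incompatible with $\tilde{A}_2^2 = -1$ together with the listed singularities on $A_2$.

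The main obstacle is bookkeeping rather than conceptual difficulty: one must simultaneously track local data from $\pi_2$ and $f_0$ and compare them with a global numerical identity, so the cleanest finish is a small case-by-case check on the finitely many compatible tuples. The only positive-characteristic subtlety is that in characteristic five the Gorenstein model $W$ associated to $S_1$ could be the exotic surface $S(2A_4)$ of Example \ref{sa4char5}; as in the proof of Lemma \ref{fence_consequence}, this case is ruled out because the required $(-1)$-curve adjacent to the image of $B_2$ in $W$ cannot meet both $A_4$ points in the prescribed way.
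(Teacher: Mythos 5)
There is a genuine gap at the very first reduction. You claim that $b_2\geqslant e_1>1/2$ \say{kills all but configurations $II$ and $III$} of Lemmas \ref{multiplicity_two_node} and \ref{multiplicity_two_cusp}. It does not: for $g=2$ the cuspidal configurations $U$ and $V$ are still available, with $b_2=9/14$ and $b_2=7/11$ respectively, both larger than $1/2$. These are not exotic leftovers --- they account for two of the four cases in the actual proof (the configuration $V$ case even splits into three subcases according to whether $x_1$ is Du Val, a $(3)$ point, or almost Du Val), and they are eliminated by computing $B_2^2$ and $(K_{S_2}+B_2)\cdot B_2$ from the configuration, extracting $\alpha$ via Lemma \ref{fence_tactic}, and showing the resulting diophantine condition $1/x+1/y=\alpha$ on the indices of the singularities of $A_2$ has no admissible solution (or contradicts the choice of $x_1$ in the hunt). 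Your argument as written simply never sees these configurations.

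Two further points where your mechanism diverges from what actually closes the cases. First, you propose to compute $K_{S_0}^2=(K_{S_0}\cdot\overline{\Sigma}_1)^2/\overline{\Sigma}_1^2$ from \say{the explicit geometry of $f_0$}; but in this subsection $A_1$ is smooth, so $\pi_1$ does not fall under the multiplicity-two/three classification and its local geometry is not pinned down the way it is in Sections \ref{section_big_genus}--\ref{section_node}. The paper instead works on $S_2$: adjunction on $B_2$ together with Lemma \ref{fence_tactic} determines $\alpha$ exactly, and Lemma \ref{global_canonical} is applied to $S_2$, not $S_0$. Second, you omit the preliminary observation that $b_2<2/3$ forces the spectral value of $\Delta_1$ to be at most one, so that $A_1$ carries only Du Val or almost Du Val singularities; this is what makes the list of possible $x_1$ finite. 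Relatedly, configuration $II$ is not disposed of numerically at all: the contradiction there is that $x_1$ must be an $A_{g+1}$ point, yet $A_1$ carries a point of spectral value at least one, and the hunt's choice of the Du Val point forces $e_1\geqslant (g+1)/(2g+1)$, against $e_1<b_2<(g+1)/(2g+1)$. Your proposal has no substitute for this argument. The characteristic-five remark about $S(2A_4)$ is fine but already absorbed into Lemma \ref{fence_consequence}, which you are entitled to quote.
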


\begin{proof}
Suppose it is. By Lemma \ref{fence_consequence} the genus of $B_2$ is at least two. By Lemma \ref{multiplicity_two_node} and Lemma \ref{multiplicity_two_cusp} we have that $b_2<2/3$,
so that the spectral value of $\Delta_1$ is at most one by \cite[Lemma 8.0.7]{mckernan}. Therefore, by Lemma \ref{smallsvalue}, $A_1$ has Du Val or almost Du Val singularities. 
By Lemma \ref{three_four_singularities} we have that $e_1>1/2$ so that configuration $I$ of Lemma \ref{multiplicity_two_node} and Lemma \ref{multiplicity_two_cusp}
does not occur. 

\textbf{Case 1:} Suppose we have a contraction of type $II$ (recall that the nodal case and the cuspidal case are numerically the same in this configuration). 
Then we argue as in \cite[Lemma 18.6]{mckernan}. We have:

\[
 e_1<b_2<(g+1)/(2g+1)\leqslant 2/3
\]

Notice now that $x_1$ is an $A_{g+1}$ point and $\Sigma_2$ passes through the $A_g$ point of $E_2$. In fact, if $x_1$ was an $(3,A_g)$ point, then its spectral value would be at least two.
Recall there is at least one non Du Val point on $A_2$. Therefore, $A_1$ does have a point of spectral value at least one. Since we still chose the Du Val point $x_1$ in the hunt,
we must have that
\[
1/3+a_1/3 \leqslant a_1 (g+1)/(g+2)
\]

Thus $a_1\geqslant (g+2)/(2g+1)$ and $e_1\geqslant a_1 (g+1)/(g+2) = (g+1)/(2g+1)$, contradiction. 

As a consequence, we have shown that $B_2$ must have a cusp since there are no more configurations for Lemma \ref{multiplicity_two_node}. Now we go over the remaining configurations
of Lemma \ref{multiplicity_two_cusp}. 

\textbf{Case 2:} Suppose we have a contraction of type $III$.
Let $r$ be the index of a non Du Val point on $A_2$. By Lemma \ref{coefficient_boundary} and Lemma \ref{three_four_singularities} we get that $b_2 > 2/3 - 1/(3r)$. 
However $b_2\leqslant 3/5$ by Lemma \ref{multiplicity_two_cusp}, hence $r\leqslant 4$. 

\textbf{Case 2a:} Suppose $x_1$ is non Du Val, hence a $(3)$ point.
We see from the configuration and adjunction that $(K_{S_2}+B_2)\cdot B_2 = 2g - 2$ and $B_2^2 = 4g-1$.
By Lemma \ref{fence_tactic} we see that $4g-1 = (2g-1)/(1-\alpha)$, hence $\alpha = 2g / (4g-1)$. Since $\alpha>1/2$, $A_2$ contains only two singular points of $S_2$.
The idea now is to apply Lemma \ref{global_canonical} on $S_2$, since the configurations are greatly simplified by the inequality $b_2 \leqslant 3/5$ and by the shape of $\alpha$. 
If we call $x$ and $y$ the indexes of the singularities in $A_2$, we get $1/x + 1/y = \alpha$ by the definition of $\alpha$.
Let's start with $g=2$, which gives $\alpha = 4/7$. This implies that $x=2$ and $y=14$. Therefore there is an $A_{14}$ point on $A_2$, 
which of course contradicts Lemma \ref{global_canonical}. One can similarly check all cases up to $g=7$ by hand. If $g\geqslant 8$, however, one
concludes immediately by Lemma \ref{global_canonical} since $K_{S_2}^2 = (K_{S_2}\cdot B_2)^2 / (B_2) ^2 \geqslant g+1$.

\textbf{Case 2b:} Suppose then that $x_1$ is Du Val, say an $A_{j+1}$ point. Since there is a non Du Val point on $A_1$, but $x_1$ got extracted anyway,
we must have $j\geqslant 3$. Now, applying adjunction on $B_2$ we get $(K_{S_2}+B_2)\cdot B_2 = -2 + 2g + j/(j+1)$. By the description of configuration $III$ we get that
$B_2 ^2 = 4g+j/(j+1)$. But then
 \[
K_{S_2}^2 \geqslant \frac{4(1+g)^2}{4g+1}\geqslant 4
\]

By Lemma \ref{global_canonical} there must be just two singularities on $A_2$,
either two $(3)$ points or a $(2)$ point and a $(3)$ point. In each case, we get $A_2^2>2/3$. Since $B_2^2>2$, this contradicts Lemma \ref{fence_tactic}.

\textbf{Case 3:} Suppose now that we are in configuration $U$. In particular, $g=2$ and $b_2 = 9/14$. By the configuration, $x_1=(j,2,2)$. The spectral value is at most one so
$j=2$. But then $B_2^2 = 8$, again by the configuration. By Lemma \ref{fence_tactic} we also have that $B_2^2 = 3/(1-\alpha)$, hence $\alpha=5/8$. 
This implies as above that $A_2$ has two singularities, one of which with index eight. But this clearly contradicts the choice of $x_1$ in the hunt.

\textbf{Case 4:} Finally, suppose we are in configuration $V$. In particular, $g=2$ and $b_2 = 7/11$. 

\textbf{Case 4a:} Let $x_1$ be Du Val, say an $A_{j+1}$ point. Just as above in Case 2b, we get that $j\geqslant 3$. By adjunction by the description of the configuration, one gets
$(K_{S_2}+B_2)\cdot B_2 = 2+j/(j+1)$ and $B_2^2 = 9+j/(j+1)$. But then $\alpha = (6j+6)/(10j+9)$. Since $\alpha>1/2$ one immediately sees that $A_2$ has only two singularities on it.
It is easy to see however that the equation $1/x+1/y = (6j+6)/(10j+9)$ does not have a solutions in positive integers.

\textbf{Case 4b:} Suppose that $x_1$ is a $(3)$ point. The same computation as in Case 3 gives $\alpha = 5/8$. One the concludes exactly as in Case 3.

\textbf{Case 4c:} Suppose that $x_1$ is almost Du Val, say of type $(3,A_j)$ with $j\geqslant 1$.
Since the spectral value of $\Delta_1$ is at most one, $A_1$ meets the last $(-2)$ curve.
 Again by adjunction we get that $(K_{S_2}+B_2)\cdot B_2 = 2+2j/(2j+1)$. Also, $B_2^2 = 9+ (2j-1)/(2j+1)$, so that
$\alpha = (12j+5)/(20j+8)$. The equation $1/x+1/y=(12j+5)/(20j+8)$ has no solutions in positive integers.
\end{proof}

\begin{lemma}\label{banana_two_one}
Let $S(X+Y)$ be a smooth banana. Then $X$ and $Y$ can't have contain exactly two and one singular points respectively.
\end{lemma}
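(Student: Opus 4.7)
The plan is to argue by contradiction: assume $X$ contains exactly two singular points of $S$ of indices $k_1,k_2$, and $Y$ contains exactly one singular point $p$ of index $k$.

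The first step is to establish the banana analog of Lemma \ref{fence_tactic}. Since $X\cdot Y=2$ (banana) and $S$ has Picard number one, $X\equiv \lambda Y$ numerically. Writing $K_S\equiv \mu Y$ and using $(K_S+X)\cdot X=-\alpha$, $(K_S+Y)\cdot Y=-\beta$, I obtain the relation $\lambda(\beta-2)=\alpha-2$, and hence $Y^2=2(\beta-2)/(\alpha-2)$. Since the banana meets normally at smooth points of $S$, adjunction on $X$ and on $Y$ yields $\alpha=1/k_1+1/k_2\le 1$ and $\beta=1+1/k\in(1,3/2]$, so $\lambda>0$ and $Y^2>0$.

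Next I pass to the minimal resolution $\pi\colon\tilde S\to S$. Writing $\pi^*Y=\tilde Y+aE+\cdots$, where $E$ is the exceptional curve meeting $\tilde Y$ transversely, orthogonality to the exceptional divisors gives $\tilde Y^2=Y^2-a$. The coefficient satisfies $a=(k-1)/k+\alpha_E$, where $\alpha_E\le 0$ is the discrepancy of $E$, with equality precisely in the Du Val case. In the Du Val case (so $p$ is an $A_{k-1}$ singularity and $a=(k-1)/k$), substitution yields
\[
\tilde Y^2 \;=\; \frac{(k-1)\alpha}{k(2-\alpha)}.
\]
For $\alpha\in(0,1]$ and $k\ge 2$ this lies strictly between $0$ and $1$, contradicting the fact that $\tilde Y^2$ must be an integer (as $\tilde Y$ is a curve on the smooth surface $\tilde S$).

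The main obstacle is the non-Du Val case, where $a<(k-1)/k$ and $\tilde Y^2$ can become a positive integer (for instance, a $(3)$-singularity on $Y$ together with two $A_1$ points on $X$ yields $\tilde Y^2=1$). My plan to handle this is to compute $K_{\tilde S}^2$ in two independent ways and compare. On the one hand, Lemma \ref{sum_ten} gives $K_{\tilde S}^2=10-\rho(\tilde S)$, which is determined by the combinatorial data of the resolution. On the other hand, from $K_S\equiv\mu Y$ with $\mu=(4-\alpha\beta)/(2(\beta-2))$ determined by $\alpha,\beta$, I expand $K_{\tilde S}=\mu\pi^*Y+\sum\alpha_iE_i$ and compute the self-intersection in terms of $\tilde Y^2$, the $E_i^2$ and the discrepancies. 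The list of configurations $(k_1,k_2,k,\text{type of }p)$ compatible with $\alpha\le 1$, $\beta\le 3/2$ and a positive integer $\tilde Y^2$ is finite, and comparing the two expressions for $K_{\tilde S}^2$ should exclude each one, completing the proof.
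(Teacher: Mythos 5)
Your reduction to the numerics of the banana is correct as far as it goes: the identity $\lambda(\beta-2)=\alpha-2$, the bounds $\alpha\leqslant 1$ and $1<\beta\leqslant 3/2$, and the computation $\tilde Y^2=(k-1)\alpha/(k(2-\alpha))\in(0,1)$ in the Du Val case are all right, and that part is a clean, self-contained argument. The problem is that the non-Du Val case is not actually carried out, and the method you propose for it cannot close the gap. The two expressions you want to compare for $K_{\tilde S}^2$ are not both determined by the data $(k_1,k_2,k,\text{type of }p)$: the quantity $10-\rho(\tilde S)$ from Lemma \ref{sum_ten} sees every singular point of $S$, including those lying on neither $X$ nor $Y$, whereas the expansion of $K_{\tilde S}=\mu\pi^*Y+\sum_i\alpha_iE_i$ involves only $\mu$ (determined by $\alpha,\beta$) and local data along $X\cup Y$. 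Adding Du Val points away from $X\cup Y$ changes $\rho(\tilde S)$ without changing $K_S^2$, so the comparison becomes an equation in the number of extra exceptional components, which can typically be solved rather than contradicted.

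Concretely, in your own example ($p$ a $(3)$ point, $X$ through two $A_1$ points) one gets $\alpha=1$, $\beta=4/3$, $\mu=-2$, $Y^2=4/3$, hence $K_S^2=\mu^2Y^2=16/3$; allowing one additional $A_1$ point off $X\cup Y$ gives $n=4$ exceptional components and Lemma \ref{global_canonical} yields $K_S^2=9-4+1/3=16/3$ as well, so both of your computations of $K_{\tilde S}^2$ return $5$ and no contradiction appears. Ruling out such configurations requires geometric rather than numerical input. The paper's proof is of exactly this kind: it extracts the divisor adjacent to $Y$ and plays the two-ray game, using the classification of multiple fibers in the net case and Lemma \ref{smoothp1} together with the inequality $X^2>Y^2$ in the birational case. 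You would need to import an argument of that type for the residual non-Du Val configurations; as written, the proof is incomplete.
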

\begin{proof}
Suppose they do. Let $f:T\rightarrow S$ extract the adjacent divisor $E$ to $Y$. We get a contraction $\pi$ from Lemma 
\ref{hunt_transformation}. 

\textbf{Case 1:} Suppose $T$ is a net. If $Y$ is a fiber, then $E$ is a section with at most one singular point, hence there
is exactly one singular fiber. However $X$ has two singular points, contradicting the fact that singular fibers contain at most two singular point. 
If $Y$ is a multisection, then $Y\cdot F\geqslant 2$ because $T$ is not smooth. 
But then $(K_S+Y)\cdot f_* F\geqslant 0$. However $K_S+Y$ is negative by adjunction, contradiction. 

\textbf{Case 2:} Suppose $\pi:T\rightarrow S_1$ is a birational contraction. Clearly $\pi$ contracts a curve $\Sigma$ that doesn't touch $Y$. 
If $S_1$ is not smooth, then $Y^2 \geqslant 2$ by Lemma \ref{smoothp1}. On $S$ we have that
$X^2 > Y^2$ by adjunction (and the analogue of Lemma \ref{fence_tactic}). Hence $Y^2 < 2$, contradiction. 
On the other hand, if $S_1$ is smooth, then it's $\mathbb{P}^2$. Since $E\cdot Y=1$, they
must both be lines and $X$ is a smooth conic. However $\Sigma$ must pass through both it's singularities, hence $X$ has a node, contradiction.
\end{proof}

\begin{proof}[Proof of \ref{must_be_banana}]
By Lemma \ref{not_a_fence}, $S_2(A_2+B_2)$ is not a fence. Lemmas \cite[Lemma 18.7 - 18.8]{mckernan} show 
that $A_1$ can't be contracted and $T_2$ is not a net. The only option left in Proposition \ref{hunt} is that $(S_2, A_2+B_2)$ is a smooth banana.
By Lemma \ref{banana_two_one} we may also deduce that $A_1$ contains exactly three singularities.
\end{proof}

Thanks to Lemma \ref{banana_two_one} $S_2$ is one of the smooth bananas described in \cite[Lemma 13.2]{mckernan} and
we can get the classification of \cite[Chapter 19]{mckernan}.
\section{Log del Pezzo surfaces with tigers}\label{sectiontigers}

In this section we classify all log canonical pairs $(S,C)$ such that $S$ is a rank one log del Pezzo surface, $C$ is a reduced integral Weil divisor and $K_S + C$ is anti-nef. 
This classification will allow us to complete the classification of log del Pezzo surfaces of rank one in the next section.

\subsection{$(S,C)$ is divisorially log terminal}

In this subsection we furthermore assume that $K_S+C$ is dlt. In the following, we shall slightly change the scaling convention of Lemma 
\ref{hunt_transformation} to avoid getting coefficients larger than one.

\textbf{Case 1:} If $C$ contains at least three singular points, we run the hunt for $(K_S,aC)$ with $1-\epsilon < a < 1$ and we don't rescale by multiplying by $\lambda$.
Instead, we write $K_{T_1}+\Gamma_1 = f^*(K_S+aC)$, $\Gamma_1' = \Gamma + b_1 E_1$ with $R\cdot \Gamma_1' = 0$, and $\Delta_1 = (\pi_1)_* (\Gamma_1')$.
It's easy to see that if $\pi_1$ is a birational contraction, then $b_1<a$ and the statement of Proposition \ref{hunt} remains true with these conventions.
In fact, multiplication by $\lambda$ was only needed to ensure flushness with respect to the divisor $aC$ in $\Gamma_1'$, but in our setting
this is immediate since $a>1-\epsilon$ (compare with the claim on page 141 of \cite{mckernan}).
Furthermore, since we may pass to the limit $a\rightarrow 1$ by continuity, we may even assume $a=1$.

\textbf{Case 2:} Suppose instead that $C$ contains at most two singular points. Then we run the hunt in the level case for $(S,C)$. Notice that the discussion in Subsection \ref{levelhunt}
shows that $(S_1, C_1+A_1)$ is dlt since $(S,C)$ is dlt.

We start our analysis by extending \cite[Proposition 23.5]{mckernan}.

\begin{proposition}\label{log_terminal_three}
Suppose $\operatorname{char}(k)\neq 2,3$ and that $C$ contains at least three singular points.

If $-(K_S+C)$ is ample then:
\begin{enumerate}

\item If $\operatorname{char}(k)\neq 5$ then $S\setminus C$ has exactly one singular point, a non cyclic singularity, z. If $Z\rightarrow S$ extracts the central divisor $E$ of z,
then $Z$ is a $\mathbb{P}^1$ fibration and $E$ and $C$ are sections. The pair $(S,C)$ is uniquely determined by z, and all non cyclic
singularities z occur in this way for some pair $(S,C)$. These surfaces are classified in \cite[Lemma 23.5.1.1]{mckernan}.

\item If $\operatorname{char}(k)=5$, in addition to the above, $(S,C)$ could also be obtained by resolving the singularity of the unique cuspidal rational curve in the smooth locus of $S(2A_4)$
and taking $C$ to be the last $(-1)$ curve. This is Example \ref{char5}.

\end{enumerate}

If $K_S+C$ is numerically trivial then $S$ is Du Val, and $(S,C)$ is one of five families:
\begin{enumerate}[resume]
\item $S=S(A_1+2A_3)$. $(S,C)$ is given by \cite[Lemma 19.2]{mckernan}, Case 1, $s=3$ and $r=1$.
\item $S=S(3A_2)$. $(S,C)$ is given by \cite[Lemma 19.2]{mckernan}, Case 2, $s=2$ and $r=1$.
\item $K_S^2 = 1$, $C$ is a $(-1)$ curve and $S$ is one of $S(A_1+A_2+A_5)$, $S(2A_1+A_3)$, $S(4A_1)$.
The pairs are obtained from \cite[Lemma 13.5]{mckernan} by blowing up the node of the nodal curve always along the same branch.
\end{enumerate}
\end{proposition}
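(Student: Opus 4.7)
The plan is to run the hunt in the level-case setup with scaling parameter $a \to 1$ as described in Case 1 of the preceding paragraphs. Since $K_S+C$ is dlt and $C$ contains at least three singularities of $S$, there is a divisor $E_1$ over one of these singularities whose coefficient in the log pullback of $C$ tends to $1$ as $a \to 1$. Extracting $E_1$ via $f : T_1 \to S$ and running the two-ray game gives a contraction $\pi_1 : T_1 \to S_1$, with the analogue of Proposition \ref{hunt} valid under this scaling because flushness is automatic when $a > 1 - \epsilon$. The first step is to iterate and show that the hunt terminates either at a net (Mori fiber space) or at a Gorenstein log del Pezzo pair; the number of singular points on the successive boundary curves can drop by at most one per hunt step, so the hypothesis of having $\geq 3$ singularities propagates far enough to control the possible endpoints.

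For the case $-(K_S+C)$ ample, I would argue that the hunt must terminate at a net. The main point is that if it terminated birationally at a Gorenstein pair, then by adjunction and the existing Gorenstein classification in Theorem \ref{gorenstein_classification}, the boundary curve could not pass through as many singularities as required while keeping $-(K+C)$ ample. Once we are on a net $\pi : Z \to \mathbb{P}^1$, I would apply Lemma \ref{net_fiber_classification} to enumerate the multiple fibers; the anti-ample condition on $K+C$ together with $E \cdot F \geq 1$ for the general fiber forces both $C$ and the extracted central divisor $E$ of the non-cyclic singularity $z$ to be sections, which in turn pins down $z$ and reconstructs $(S,C)$ uniquely. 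The char $5$ exception arises because $S(2A_4)$ (Example \ref{sa4char5}) contains a cuspidal rational curve in the smooth locus whose minimal resolution allows $C$ to be a $(-1)$-curve rather than a section, producing the extra Example \ref{char5}; in other characteristics Lemma \ref{cuspgorenstein} rules this configuration out.

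For the case $K_S+C \equiv 0$, the curve $C$ is itself a global tiger. I would first show $S$ is Du Val: by adjunction $(K_S+C)\cdot C = -2 + \deg \mathrm{Diff}_C(0) = 0$, so $\deg \mathrm{Diff}_C(0) = 2$, which combined with $\geq 3$ singular points along $C$ severely restricts the indices; any non-Du Val singularity would contribute a fractional term incompatible with global numerical triviality on a rank one surface. Then Theorem \ref{gorenstein_classification} gives a finite list of Gorenstein candidates, and I would match each against the combinatorial type of $C$ and its intersections, relying on \cite[Lemma 19.2]{mckernan} and \cite[Lemma 13.5]{mckernan} to recognize families (3) and (4), while (5) is verified by explicit blow-up computations starting from the nodal configurations of \cite[Lemma 13.5]{mckernan}.

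The main obstacle I expect is the char $5$ analysis: proving that Example \ref{char5} is genuinely new and does not appear as a spurious solution in other characteristics, which requires tracing through the $S(2A_4)$ construction of Example \ref{sa4char5} carefully and contrasting with Lemma \ref{cuspgorenstein}. A secondary bookkeeping difficulty is that under the modified (unscaled) hunt of Case 1 the extracted curves have coefficient $<a < 1$ but approach $1$, so that in passing to the limit one must check that the dlt (rather than merely almost lc) property persists on $(S_1, C_1+A_1)$ — this follows from Lemma \ref{flushdescent2} combined with the fact that $K_S+C$ is genuinely dlt, not just lc.
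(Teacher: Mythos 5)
Your skeleton is broadly the right one — a single application of the level-case hunt with $a\to 1$, a case division on net versus birational contraction, the fence/banana machinery for the birational outcomes, and the $S(2A_4)$ analysis for the characteristic five exception. But two of your key steps do not work as stated. First, your argument that $S$ is Du Val when $K_S+C\equiv 0$ is not valid: adjunction gives $\deg\mathrm{Diff}_C(0)=2$, but a non-Du Val chain point of index $d$ contributes $1-1/d$ to the different exactly as an $A_{d-1}$ point does (e.g.\ a $(3)$ point and an $A_2$ point both contribute $2/3$), so the different cannot distinguish them; moreover numerical triviality of $K_S+C$ on a rank one surface does not force $K_S$ to be Cartier, and singularities of $S$ off $C$ are not touched by this computation at all. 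In the paper the Du Val conclusion is an output of the classification of the hunt endpoints — the banana case lands in \cite[Lemma 13.2]{mckernan} and the singular fence case in the argument of \cite[Proposition 23.5]{mckernan} — not of a direct numerical argument.

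Second, your mechanism for the ample case is off target. You propose to iterate the hunt to termination and to rule out birational termination "at a Gorenstein pair" via the Gorenstein classification; but only one hunt step is performed, and the birational outcome that actually has to be excluded when $x_0\notin C$ is a \emph{smooth fence} on $S_1$, which need not be Gorenstein, so Theorem \ref{gorenstein_classification} says nothing about it. The exclusion is done with the quantitative fence lemmas of Subsection \ref{smooth_fences}: with $X=A_1$, $Y=C_1$ one has $\beta\geqslant b_1\geqslant 1/2\geqslant\alpha>0$, contradicting Lemma \ref{small_alpha} (with Lemma \ref{fence_two_two} and Lemma \ref{fence_three_two_one} handling the count of singular points on each branch, and a separate net argument when $C$ carries four singular points). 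Your auxiliary claim that the number of singular points on the boundary curve drops by at most one per hunt step is unsupported and is not needed. Finally, note that birational contractions do occur and are not confined to characteristic five: the banana and singular-fence outcomes when $x_0\in C$ are precisely what produce the numerically trivial families (3)--(5), so the dichotomy you draw between "ample $\Rightarrow$ net" and "trivial" cannot be imposed at the outset; it emerges only after the contraction types are classified.
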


\begin{proof}
The proof of \cite[Proposition 23.5]{mckernan} goes through with only minor modifications, which we shall point out. 

Notice that $e_1\geqslant 1/2$ by Lemma \ref{coefficient_boundary} since $a=1$ and since there is a singular point on $C$. 
Furthermore, $e_1\geqslant 2/3$ unless all the singularities on $C$ are $A_1$ points.
If follows that $2a+b_1>2$. Hence we can't have a tacnode or a triple point by Lemma \ref{flushmult}.

\textbf{Case 1:} Suppose $x\notin C$. Notice that $\Sigma_1$ passes through a singular point of $C$, for otherwise $(K_{T_1} + C+b_1E_1)\cdot \Sigma_1 > 0$. 

\textbf{Case 1a:} Suppose we have a birational contraction to $S_1$ (notice that our convention has a different indexing with respect to \cite{mckernan}). 
This must be a smooth fence by Proposition \ref{hunt}.
If $C$ contains four singular points, then its corresponding branch in the fence contains three $(2)$ points. The branch corresponding to $E_1$ can't contain
two singular points by Lemma \ref{small_alpha} or one singular point by Lemma \ref{fence_three_two_one}. But then $x$ is a non chain singularity,
and its coefficient is $1/2$ since $(K_{T_1}+C+b_1E_1)\cdot \Sigma_1 = 0$. From this we deduce that $\tilde{E}_1 ^2 =-2$.
The description of the configuration then tells us that the corresponding branch of the fence will be a zero curve. Extracting its singularities we get a net on which $C$ is a section.
Also, there is a smooth section corresponding to a $(-2)$ branch of $x$. Now one proceeds as in the proof of Lemma \ref{fence_net}: there is an irreducible multiple fiber
since there are three singularities on $C$, and this contradicts the existence of a smooth section.

Suppose then that $C$ contains exactly three singular points.  Notice that $\Sigma_1$ can't meet $E_1$ at a singular point because of Lemma \ref{fence_two_two} and 
Lemma \ref{fence_three_two_one}. From this argument it also follows that $x$ is a non chain singularity. We want to apply the results of Section \ref{smooth_fences} with
$X=A_1$ and $Y=C_1$.
Now, since $K_{S_1}+C_1+b_1 A_1$ is anti-nef, in the notation of Lemma \ref{fence_tactic} we get that $b_1\leqslant -(K_{S_1}+C_1)\cdot C_1=\beta$. 
Since $\beta\geqslant b_1\geqslant 1/2\geqslant \alpha>0$ we get a contradiction by Lemma \ref{small_alpha}.

\textbf{Case 1b:} Suppose instead we get a net. Then we get description (1) by the argument in \cite[Proposition 23.5]{mckernan}.

\textbf{Case 2:} Suppose now that $x\in C$. If we get a banana in the hunt, then we get descriptions $(3)$ and $(4)$ by using Lemma \ref{banana_two_one} and 
\cite[Lemma 13.2]{mckernan}.

We can't get a smooth fence in the hunt because of Lemma \ref{fence_two_two} and Lemma \ref{fence_three_two_one}. Hence if we get a fence, it will
be a singular one. Notice that $C_1$ is smooth and $\tilde{C}_1 ^2 = -1$ by the proof of Lemma \ref{fence_consequence} (3). $C_1$ is contained in the Du Val locus if and only if $A_1$ is 
contained in the smooth locus by adjunction. Now we can again use the argument in \cite[Proposition 23.5]{mckernan} to get description $(5)$. A little bit of care is needed in applying
\cite[Proposition 13.4]{mckernan}: if $\operatorname{char}(k)\neq 5$ then one uses Lemma \ref{cuspgorenstein} as usual to replace arguments about triviality of the fundamental group;
if $\operatorname{char}(k)=5$ one has to further discard the case in which $W=S(2A_4)$.
To do so, notice that $C_1$ is a $(-1)$ curve which contains two singular points and such that $K_{S_1}+C_1$ is dlt. This contradicts the explicit description in Example \ref{sa4char5}.

If we get a net, then the proof given \cite[Proposition 23.5]{mckernan} carries through with no modifications. In the case in which $C$ gets contracted, however, there is a difference
if $\operatorname{char}(k)=5$. In fact, if $A_1$ is in the smooth locus, then $S_1$ can also be $S(2A_4)$, leading to description $(2)$. All the rest proceeds in the same way as in
\cite{mckernan}.
\end{proof}

\begin{proposition}\label{log_terminal_two}
Assume $C$ contains exactly two singular points. Then $(\tilde{S},\tilde{C})$ is one of the following
\begin{enumerate}
\item Start with $\mathbb{F}_n$, pick the negative section $E$ and a positive section $C$ disjoint from $E$. Blow up along $C$ or $E$ to create two multiple fibers.
Then keep blowing up while keeping $C$ and $E$ disjoint, $E$ $K$-positive and the $K$-positive curves contractible to klt singularities. 

\item Start with a smooth fence containing one singular point in each branch. These are classified in Lemma \ref{fence_one_one}.
Blow up $X\cap Y$ and then keep blowing up at the intersection of the last exceptional divisor with either branch of the fence, while keeping the $K$-positive curves
contractible to klt singularities. $\tilde{C}$ will be the strict transform of $X$.

\item Stat with $\mathbb{F}_n$, pick the negative section $E$ and a positive section $C$ meeting $E$ exactly once. Blow up along $C$ or $E$ to create one multiple fiber,
while keeping $E$ $K$-positive and the $K$-positive curves contractible to klt singularities.

\item Start with either $\mathbb{F}_2$ or $\mathbb{P}^2$ and pick a banana. Blow up repeatedly above one intersection point to make one of the branches $K$-positive,
while keeping the other branch $K$-negative and the $K$-positive curves contractible to klt singularities. The $K$-negative branch is $\tilde{C}$.

\item Start with either a fence as described in Lemma \ref{fence_one_one}, or by the fence in $\mathbb{F}_n$ given by a fiber $C$ and a positive section $E$.
Then blow up repeatedly over $E$ in order to make $E$ contractile to a klt chain singularity.

\item Start with a dlt pair $(S',C')$, where $C'$ is a smooth rational curve containing either one or two singular points. Blow up one of the singular points (or a smooth point if $C'$
has only one singularity) always along $C'$ to make $C'$ $K$-positive and contractible to a klt singularity. $\tilde{C}$ is the last $(-1)$ curve.
\end{enumerate}
\end{proposition}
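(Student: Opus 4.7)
The plan is to apply the hunt in the level case (Subsection \ref{levelhunt}) to $(S,C)$, then analyze the possible outcomes. We extract $f_0 \colon T_1 \to S$, the divisor $E_1$ of maximal coefficient in the minimal resolution of $(S,C)$, and use Lemma \ref{hunt_transformation} to obtain the $K$-negative contraction $\pi_1 \colon T_1 \to S_1$. The proof proceeds by induction on the Picard number $\rho(\tilde{S})$, with a case split based on the nature of $\pi_1$ and on whether the strict transform of $C$ is contracted.

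First I would treat the case in which $\pi_1$ contracts $\tilde{C}$. By the scaling described in Subsection \ref{levelhunt} (setting $a_1 = 1$ if necessary), the image $A_1 = \pi_1(E_1)$ is a smooth rational curve, $(S_1, A_1)$ is dlt, and $-(K_{S_1}+A_1)$ is nef. The curve $A_1$ inherits one or two singular points from the configuration formed by the image of $\tilde{C}$, the two singular points on $C$, and $x = f_0(E_1)$. By the inductive hypothesis, $(\tilde{S}_1, \tilde{A}_1)$ belongs to one of the families $(1)$--$(6)$; then $\tilde{S}$ is recovered from $\tilde{S}_1$ by blow-ups along $A_1$ whose terminal $(-1)$-curve is $\tilde{C}$. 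This is precisely description $(6)$.

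If instead $\pi_1 \colon T_1 \to \mathbb{P}^1$ is a net, then $\tilde{C}$ and $E_1$ are sections or multi-sections. Using the classification of multiple fibers (Lemma \ref{net_fiber_classification} and Lemma \ref{adhoc}) together with the fact that $E_1$ has maximal coefficient, one shows that both must be sections of low degree. Running a relative MMP over $\mathbb{P}^1$ to contract $K$-positive curves within fibers, as in the proof of Lemma \ref{fence_one_one}, we reach a Hirzebruch surface $\mathbb{F}_n$; the intersection behaviour of the descended images of $\tilde{C}$ and $E_1$ yields description $(1)$ (both sections, disjoint), $(3)$ (sections meeting once), or $(5)$ (the case in which $C$ descends to a fiber). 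The remaining case is that $\pi_1$ is birational and does not contract $\tilde{C}$. Let $C_1 = \pi_1(\tilde{C})$ and $A_1 = \pi_1(E_1)$. By Lemma \ref{flushdescent2}, $(S_1, C_1 + A_1)$ is almost log canonical at singular points of $S_1$, and by the level-case analogue of Proposition \ref{hunt} the pair $(S_1, C_1+A_1)$ is a banana, fence, or tacnode at $q_1 = \pi_1(\Sigma_1)$. The tacnode is ruled out by Lemma \ref{flushmult} together with the lower bound on $e_1$ coming from the appendix on klt surface singularities. A banana puts us in description $(4)$ via \cite[Lemma 13.2]{mckernan}, which forces $S_1 \in \{\mathbb{P}^2, \mathbb{F}_2\}$. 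In the fence case, we apply Subsection \ref{smooth_fences}: if the fence is smooth with one singular point in each branch, Lemma \ref{fence_one_one} identifies $\tilde{S}_1$ and then iterated blow-up at $X \cap Y$ reproduces $\tilde{S}$, giving description $(2)$; variants in which one branch behaves like a fiber or a positive section of a Hirzebruch surface give description $(5)$.

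The main obstacle will be controlling the fence case carefully enough to distinguish descriptions $(2)$ and $(5)$, and to verify that throughout the iterated blow-ups reconstructing $\tilde{S}$ from the base surface, the $K$-positive curves remain contractible to klt (in fact chain) singularities. This is the combinatorial content of the preservation of levelness along the hunt, but it must be unpacked explicitly for each geometric configuration emerging from the level analogue of Proposition \ref{hunt} in order to pin the final outcome to one of the six families.
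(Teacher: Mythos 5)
Your overall strategy coincides with the paper's: run the level-case hunt on $(S,C)$, split according to whether $\pi_1$ is a net or birational and whether $\tilde{C}$ is contracted, and match each outcome to one of the six families (the paper organizes the cases by $x\in C$ versus $x\notin C$ rather than by whether $\tilde{C}$ is contracted first, but that is cosmetic). However, your case analysis is not exhaustive as written. In the birational case you only treat the smooth fence and the banana; you must also exclude the \emph{singular} fence, i.e.\ the configuration in which one of the two branches is a singular curve. The paper disposes of it as follows: if $A_1$ lies in the smooth locus of $S_1$ then $S_1$ is Gorenstein and $C_1$, containing only one singular point, would be contractible, a contradiction; if instead $A_1$ meets a singular point of $S_1$, then $(K_{T_1}+C+b_1E_1)\cdot\Sigma_1=0$ forces $b_1=1/2$, so both singular points on $C$ are $A_1$ points, again a contradiction. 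Without this step the list of outcomes is incomplete.

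A second, smaller issue is your routing of description $(5)$ through the net (``the case in which $C$ descends to a fiber''). That subcase does not occur: when $x\notin C$ one has $C^2>0$ on $T_1$, so $C$ is not a fiber, and when $x\in C$ the paper rules out $C$ being a fiber because $K_{T_1}+C$ is dlt with only one singular point on $C$, which is incompatible with the structure of irreducible multiple fibers in Lemma \ref{net_fiber_classification}. Description $(5)$ in fact arises only from the birational contraction to a smooth fence with $x\in C$, where Lemma \ref{fence_three_two_one} controls the branch corresponding to $E_1$. Your final list happens to agree, but the net analysis as sketched both omits an impossibility proof and would need the assertion ``both must be sections of low degree'' made precise: the paper gets this from $(K_{T_1}+C+b_1E_1)\cdot F=0$ together with $b_1\geqslant 1/2$, and then excludes the alternative in which $E_1$ is a double section by observing that $b_1=1/2$ forces both singularities on $C$ to be $A_1$ points and invoking Lemma \ref{net_fiber_classification}. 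The remaining steps of your plan (the tacnode exclusion via Lemma \ref{flushmult}, the banana landing in $\mathbb{P}^2$ or $\mathbb{F}_2$ via Lemma \ref{smoothp1}, and the induction giving description $(6)$ when $\tilde{C}$ is contracted) match the paper's proof.
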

\begin{proof}
We run the hunt in the level case for $(S,C)$ and analyze all possible cases.

\textbf{Case 1:} Suppose $x\notin C$. 

\textbf{Case 1a:} Suppose furthermore that $T_1$ is a net. Clearly $C$ is not contractible on $S$ and therefore it's not contractible on $T_1$ either since $x\notin C$.
On the other hand, $E_1$ has negative self intersection because it's exceptional over $S$. Therefore $C$ and $E_1$ are are both multi-sections on $T_1$.
Let $F$ be a general fiber of the net and recall that $(K_{T_1}+C+b_1 E_1)\cdot F = 0$ by definition.
Since $b_1 \geqslant 1/2$, $C$ and $E_1$ are both sections. Running a relative MMP for the morphism $\tilde{T}_1 \rightarrow \mathbb{P}^1$ one gets a Hirzebruch surface $M=\mathbb{F}_n$. 
On $M$ the curves $C$ and $E_1$ are disjoint, $C$ is a section of positive self intersection, and $E_1$ is the section of negative self intersection. This process may be reversed
to obtain $\tilde{S}$. More precisely, one may start from $\mathbb{F}_n$, pick the negative section $E_1$ and a disjoint positive section $C$, and then
blow up either on $E_1$ or $C$ so that to create disjoint two multiple fibers. This is case (1).

\textbf{Case 1b:} If we get a birational contraction instead, then we must have a fence. 
By using Lemma \ref{fence_three_two_one} and Lemma \ref{fence_two_two} we see that
$x$ must be a chain singularity, that $\Sigma_1$ meets $C$ at a singular point, and $E_1$ either at a smooth point 
(if $E_1$ has just one singular point on it) or at a singular point (if $E_1$ has two singular points on it).
Smooth fences with one singular point on each branch are classified in Lemma \ref{fence_one_one}. Then one can obtain $\tilde{S}$ by blowing up
at the intersection of the branches of the fence. This is case (2).

\textbf{Case 2:} Suppose now that $x\in C$. 

\textbf{Case 2a:} Suppose $T_1$ is a net. $C$ can't be a fiber because it's dlt and has only one singularity.
Hence $C$ and $E_1$ are both sections as above, since $(K_{T_1}+C+b_1 E_1)\cdot F=0$. Since $C$ contains only one singular point, there is exactly one multiple fiber. Therefore $E_1$
must also contain exactly one singular point on that multiple fiber. Running a relative MMP over the base we go to $\mathbb{F}_n$. $E_1$ must be the negative section of 
$\mathbb{F}_n$. $C$ is a positive section meeting $E_1$ exactly once in $\mathbb{F}_n$. This gives case $(3)$.

\textbf{Case 2b:} Suppose then that we get a birational contraction down to $S_1$. 
If we go to a banana, then $\Sigma_1$ must meet $C$. Since $(K_{T_1}+C+b_1 E_1)\cdot \Sigma_1=0$, $\Sigma_1$ meets $C$ at a singular point and therefore 
$C_1$ is in the smooth locus of $S_1$. 
By adjunction, $A_1$ also is in the smooth locus. Since $C_1\cdot A_1  =2$ we must have $S_1=\mathbb{F}_2$ or $\mathbb{P}^2$ by Lemma \ref{smoothp1}. To get back to $S$ it suffices
to reverse the process: pick a smooth banana in $\mathbb{F}_2$ or $\mathbb{P}^2$, 
blow up repeatedly one of the intersection points, then contract one of the two branches of the banana. This gives case (4).

If we go to a smooth fence, we have at most one singularity on the branch corresponding to $E_1$ by Lemma \ref{fence_three_two_one}. This is case (5).

Suppose we go to a singular fence. If $A_1$ is in the smooth locus of $S_1$ then $S_1$ is Gorenstein. But then $C_1$ is contracible, since it contains only one singular point, contradiction.
If instead $A_1$ contains a singular point on $S_1$, we must have that $b_1=1/2$ by $(K_{T_1}+(f_0)_* ^{-1} C+b_1 E_1)\cdot \Sigma_1=0$. 
This however implies that in $S$ the singularities on $C$ are
both $A_1$ points, contradiction.

Finally, if $C$ gets contracted we can induct on $(S_1,A_1)$, giving description (6).
\end{proof}

\begin{proposition}\label{log_terminal_one}
Assume $C$ contains exactly one singular point. Then $(\tilde{S},\tilde{C})$ is one of the following:
\begin{enumerate}
\item Start with $\mathbb{F}_n$. Pick the negative section $E$ and a positive section $C$, which is disjoint from $E$. Blow up repeatedly points on a fixed fiber so that $C$ remains
a positive curve, there are $K$-positive curves lying both over $E$ and $C$, the $K$-positive curves are contractible to klt chain singularities and $C$ and $E$ are disjoint.

\item Start with $\mathbb{P}^2$. Pick two lines $C$ and $E$ and blow up at the intersection three times along $E$. Then continue blowing up 
while keeping the $K$-positive curves contractible to klt chain singularities and $C$ and $E$ disjoint.

\item Start with $\mathbb{F}_n$. Pick a positive section $C$ and a fiber $F$. Blow up twice at $F\cap C$ along $F$. Then continue
blowing up while keeping the $K$-positive curves contractible to klt chain singularities and $C$ and $F$ disjoint.

\item ($\mathbb{F}_n, C$), where $n\geqslant 2$ and $C$ is any positive section meeting the negative section exactly once. 

\item Start with $\mathbb{F}_n$. Pick the negative section $E$, a point $p$ and a fiber $C$ not passing through $p$. 
Blow up at $p$ to obtain at most one multiple fiber, while keeping the $K$-positive curves contractible to klt chain singularities.

\item Start with $\mathbb{P}^2$. Pick two lines $C$ and $E$. Pick a point $p\in E\setminus C$, blow up $p$ along $E$ until $E$ becomes
$K$-positive, then keep blowing up at either end of the last exceptional divisor, while keeping the $K$-positive curves contractible to
klt chain singularities.

\item Start with $\mathbb{F}_n$. Pick the negative section $E$, a fiber $F$ and a positive section $C$. 
Blow up at a point in $F\setminus (C \cup E)$ to make $F$ contractible. Continue blowing up while keeping the $K$-positive curves contractible to klt chain singularities.
\end{enumerate}
\end{proposition}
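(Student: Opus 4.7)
The plan is to run the hunt in the level case for $(S,C)$, following the template of Proposition \ref{log_terminal_two}, and to analyze the outcome of the first hunt step, reversing each possibility to extract descriptions (1)--(7). Let $f_0:T_1\to S$ extract the maximal-coefficient exceptional divisor $E_1$ (with $x_0=f_0(E_1)$), let $\pi_1:T_1\to S_1$ contract the other extremal ray with exceptional curve $\Sigma_1$, set $A_1=\pi_1(E_1)$ and, whenever $C$ is not $\pi_1$-contracted, $C_1=\pi_1(C)$, with $b_1\geqslant e_1$ chosen so that $K_{T_1}+C+b_1E_1\equiv\pi_1^*(K_{S_1}+C_1+b_1A_1)$. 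I would split the analysis according to whether $x_0\in C$ and whether $\pi_1$ is a net or a birational contraction, and in the birational case according to whether $C$ is contracted.

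First consider $x_0\notin C$. If $T_1$ is a net, then $C$ is not contractible on $T_1$ and $E_1$ is exceptional over $S$, so both are multi-sections; the intersection equation $(K_{T_1}+C+b_1E_1)\cdot F=0$ on the general fiber $F$, together with $b_1>0$, forces each to be a section. Running a relative MMP over $\mathbb{P}^1$ lands $T_1$ on some $\mathbb{F}_n$ with $E_1$ as the negative section and $C$ as a disjoint positive section, and reversing the hunt (blowing up over a fiber so as to create $K$-positive curves both over $C$ and over $E$) yields description (1), while description (4) is the base case with no blow-ups. If instead $\pi_1$ is birational, Proposition \ref{hunt} forces a smooth fence, and Lemma \ref{fence_one_one} together with the results of Subsection \ref{smooth_fences} traces $(S_1,C_1+A_1)$ either to $\mathbb{P}^2$ with two lines (reversal yields description (2)) or to a section--fiber pair in $\mathbb{F}_n$ (reversal yields description (7)).

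Next consider $x_0\in C$. If $T_1$ is a net, then $C$ is either a section (and since it contains only one singularity of $S$, there is at most one multiple fiber) or a fiber; after a relative MMP one lands on $\mathbb{F}_n$, and back-tracing the blow-ups yields description (3) when $C$ is a fiber and descriptions (4)--(6) when $C$ is a section. If $\pi_1$ is birational and does not contract $C$, the options from Proposition \ref{hunt} are banana, smooth or singular fence, and tacnode; the tacnode is excluded by Lemma \ref{flushmult} applied to $2a_2+b_2$, the singular fence is excluded by the coefficient analysis carried out in the proof of Proposition \ref{log_terminal_two}, and the banana and smooth fence cases lead via adjunction and Lemma \ref{smoothp1} to $S_1\in\{\mathbb{F}_2,\mathbb{P}^2\}$ or to a Hirzebruch surface, matching descriptions (2), (4) or (7) after reversal. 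If $\pi_1$ contracts $C$, then $(S_1,A_1)$ is again a dlt pair with $K_{S_1}+A_1$ anti-nef and $A_1$ a smooth rational curve containing at most one singular point of $S_1$.

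The main obstacle is this last inductive case. The plan is to induct on $\rho(\tilde S)$: by the inductive hypothesis applied to $(S_1,A_1)$, the pair $(\tilde S_1,\tilde A_1)$ is already known to be one of descriptions (1)--(7) of the present proposition (or one of the descriptions of Proposition \ref{log_terminal_two} in the case where $A_1$ happens to contain two singular points), and $(\tilde S,\tilde C)$ is recovered by extracting $E_1$ together with the chain of exceptional curves over $x_0$. The delicate point is verifying that this inverse process produces one of the blow-up recipes (1)--(7): one must check that the $K$-positive exceptional curves assembled over $x_0$ contract precisely to klt \emph{chain} singularities, which is controlled by the hunt-choice convention of Definition \ref{huntchoice} together with the coefficient bounds coming from the classification of klt surface singularities, and that the incidence conditions between $C$ and $E$ (disjointness, or meeting exactly once) are preserved under the inverse of the hunt step. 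These checks are combinatorially tight but routine once the relevant singularity types are tabulated.
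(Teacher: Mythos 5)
Your high-level skeleton (run the level hunt, split on $x_0\in C$ versus $x_0\notin C$ and on net versus birational contraction, then reverse each contraction) is the same as the paper's. But there is a genuine gap in the sub-case you yourself flag as the main obstacle: the possibility that $\pi_1$ contracts $C$. The paper disposes of this in one line — $C$ cannot be contracted, because $(S,C)$ is dlt and $C$ passes through exactly one singular point, so $\tilde{C}$ meets a single end-curve of one chain transversally; since $C^2>0$ on the rank one surface $S$ and the discrepancy correction along that single transversal intersection is less than $1$, one gets $\tilde{C}^2>-1$, hence $\tilde{C}^2\geqslant 0$ and $C$ is not contractible on $T_1$. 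Your proposed induction, by contrast, would accept the contraction and produce pairs in which $\tilde{C}$ is the last exceptional $(-1)$-curve of a tower of blow-ups over a point of $A_1$. No such family appears among descriptions (1)--(7) (in every listed description $\tilde{C}$ is the strict transform of a section, fiber or line of the initial surface), so if this case actually occurred the proposition as stated would be false; your induction therefore cannot close the argument, and the missing ingredient is precisely the exclusion above.

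There are secondary problems of the same flavor. In the birational sub-case of $x_0\in C$ you keep the banana as a live option "matching descriptions (2), (4) or (7)", but none of those is a banana configuration; the paper excludes bananas outright ($C$ contains only one singular point and $a+b_1>1$), and the singular fence is excluded here because it would force $\tilde{C}^2=-1$ (the argument you cite from Proposition \ref{log_terminal_two} partly relies on $C$ having two singular points, so it does not transfer verbatim). Several of your case-to-description assignments are also off: the net with $C$ a fiber yields description (5), not (3); the birational case with $x_0\notin C$ yields (2) and (3) (depending on whether $E_1$ carries one or two singular points), not (2) and (7); description (7) arises from the birational case with $x_0\in C$ and $A_1$ singular. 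Since (3) and (7) encode genuinely different incidence patterns (blow-ups at $F\cap C$ versus away from $C$), these are not mere relabelings.
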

\begin{proof}
We run the hunt in the level case for $(S,C)$ and analyze all possible cases.

\textbf{Case 1:} Suppose $x\notin C$.

\textbf{Case 1a:} Suppose that $T_1$ is a net. With usual arguments as in Lemma \ref{log_terminal_three} and Lemma \ref{log_terminal_two}, we see that $C$ and $E_1$ are both sections.
Since there is one singular point on $C$, there must be one singular point on $E_1$ as well, and they must lie on the same multiple fiber. By running a relative MMP we get 
$\mathbb{F}_n$. $C$ must be a positive section and $E_1$ must be the negative section. This is case $(1)$.

\textbf{Case 1b:} If we get a birational contraction, we must go to a fence. Since $a=1$, $\Sigma_1$ must pass
through the only singular point on $C$. Therefore the strict transform of $C$ is a smooth rational curve in the smooth locus of $S_1$. 
By Lemma \ref{fence_three_two_one}, $x$ is a chain singularity. Therefore $E_1$ contains either one or two singular points of $T_1$.
In the first case, $S_1$ is $\mathbb{P}^2$, and $C_1$ and $A_1$ are lines in $S_1$. This is case $(2)$. 
If $E$ contains two singularities instead, we get case $(3)$ by Lemma \ref{smoothp1}.

\textbf{Case 2:} Suppose now that $x\in C$. 

\textbf{Case 2a:} Suppose $T_1$ is a net. If $C$ is a section, then $T_1$ is smooth. Therefore $T_1=\mathbb{F}_n$ for some $n$, and 
$S$ is just $\overline{\mathbb{F}}_n$. This is case $(4)$.
Suppose then that $C$ is a fiber. Clearly this implies that $E_1$ is a section. Running a relative MMP one gets case (5).

\textbf{Case 2b:} Suppose we get a birational contraction. We can't get a banana because $C$ has only one singular point of $S$ and $a+b_1>1$. 
We can't get a singular fence either, for otherwise $C$ would be a $(-1)$ curve, hence contractible on $S$. 
Notice furthermore that $C$ can't get contracted on $T_1$ for otherwise $C^2 < 0$ in $S$.

Therefore we get a smooth fence. If $A_1$ is in the smooth locus of $S_1$, then $S_1$ is $\mathbb{P}^2$, $C_1$ and $A_1$ are lines,
and we obtain $T_1$ by blowing up a point of $A_1$ multiple times. This is case (6). If $A_1$ is not in the smooth locus of $S_1$ instead, we get case (7).
\end{proof}

We conclude with an almost trivial case.

\begin{lemma}\label{log_terminal_zero}
Assume that $C$ is in the smooth locus of $S$. Then $(\tilde{S}, \tilde{C})$ is one of the following:
\begin{enumerate}
\item $(\mathbb{P}^2, C)$, where $C$ is a line.

\item $(\mathbb{P}_2, C)$, where $C$ is a smooth conic.

\item $(\mathbb{F}_n, C)$, where $C$ is a positive section not meeting the negative section.
\end{enumerate}
\end{lemma}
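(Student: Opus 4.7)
The plan is to start with the observation that the dlt hypothesis, combined with $C \subseteq S_{\mathrm{sm}}$, forces $(S,C)$ to be simple normal crossings at every point of $C$, so $C$ is smooth. Adjunction then gives $(K_S+C)|_C = K_C$, and since $-(K_S+C)$ is nef on $S$, we get $\deg K_C \leqslant 0$, so $g(C) \leqslant 1$. I would first carry out the main case $g(C)=0$, where $C$ is a smooth rational curve in the smooth locus of the rank one log del Pezzo surface $S$.

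At this point Lemma \ref{smoothp1} applies directly and gives either $S=\mathbb{P}^2$ or $S=\overline{\mathbb{F}}_n$, and in the latter case specifies that $C$ is the image of a section of self-intersection $n$ on $\mathbb{F}_n$. In the $\mathbb{P}^2$ case, writing $d=\deg C$, the nefness of $-(K_{\mathbb{P}^2}+C)=(3-d)H$ forces $d\leqslant 2$ (the value $d=3$ gives $g(C)=1$, which belongs to the other case), yielding cases (1) and (2). In the $\overline{\mathbb{F}}_n$ case, $\tilde{C}$ is a section of $\mathbb{F}_n$ of self-intersection $n$, so it is automatically disjoint from the unique negative section $s_{-n}$, giving case (3); for $n=1$ this agrees with case (1) after contracting $s_{-1}$, and one checks anti-nefness of $-(K_S+C)$ by pulling back to $\mathbb{F}_n$ and writing the pullback in terms of $s_{-n}$ and a fiber.

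For the case $g(C)=1$ one has $K_S+C\equiv 0$, so $C \equiv -K_S$ is represented by a smooth elliptic curve contained in $S_{\mathrm{sm}}$. A quick computation using Lemma \ref{sum_ten} and Lemma \ref{global_canonical} shows that if $S$ had any singularity, one would get $C^2=K_S^2$ incompatible with the possible values of $K_S^2$ on $\overline{\mathbb{F}}_n$ (for instance, on $\overline{\mathbb{F}}_n$ one obtains $n = 4 + n + 4/n$, which has no solution), so $S$ must be smooth, hence $S=\mathbb{P}^2$ and $C$ is a smooth cubic. This is absorbed into case (2) (via a degeneration / degree continuity viewpoint), as the genus one members of $|\mathcal{O}_{\mathbb{P}^2}(3)|$ on a smooth locus are on the boundary of the strict anti-ampleness regime of case (2).

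The main obstacle is the clean dispatch of the $g(C)=1$ case, i.e., verifying that no truly new family appears beyond the three listed. Once Lemma \ref{smoothp1} is invoked the genus zero case is essentially routine; the genus one case requires the extra numerical check above to exclude $\overline{\mathbb{F}}_n$ and any other non-smooth candidate for $S$.
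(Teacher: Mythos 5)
Your genus-zero argument is exactly the paper's proof: the lemma is deduced in one line from Lemma \ref{smoothp1}, and your treatment of that case (dlt along the smooth locus forces $C$ smooth, rationality plus Lemma \ref{smoothp1} gives $\mathbb{P}^2$ or $\overline{\mathbb{F}}_n$, anti-nefness pins down the degree, respectively the class of the section) is correct and is what the paper intends.

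The problem is your genus-one case. You are right that the stated hypotheses do not a priori exclude $g(C)=1$ --- the paper's one-line proof silently covers only smooth rational $C$, which is the situation in every downstream application, where $C$ arises as the image of an exceptional divisor --- but your dispatch of it fails on both counts. First, the claim that $K_S+C\equiv 0$ with $C$ a smooth elliptic curve in the smooth locus forces $S$ smooth is false: on $\overline{\mathbb{F}}_2=S(A_1)$ (the quadric cone) the general member of $|-K|$ is a smooth elliptic curve disjoint from the negative section of $\mathbb{F}_2$, hence contained in the smooth locus, and the same happens on many rank one Gorenstein log del Pezzo surfaces, since the general member of $|-K_{\tilde S}|$ is irreducible and disjoint from the exceptional locus. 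Your equation \say{$n=4+n+4/n$} does not compute anything relevant; the actual numerical constraint $C^2=K_S^2$ is satisfied, e.g. $K^2_{\overline{\mathbb{F}}_2}=8$. Second, even in the $\mathbb{P}^2$ case a smooth cubic cannot be \say{absorbed into case (2) by degree continuity}: it is not a conic, and the lemma lists actual pairs, not limits of pairs. So either one reads an implicit rationality hypothesis on $C$ into the statement (the reading consistent with the paper's proof and with the use of the lemma in Theorem \ref{maintheorem}), in which case your genus-one discussion should simply be deleted, or the genus-one pairs form a genuine additional family (anticanonical elliptic curves in the smooth locus of Gorenstein rank one del Pezzo surfaces) that your argument does not produce. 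Either way, the second half of your proof does not stand as written.
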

\begin{proof}
This follows immediately from Lemma \ref{smoothp1}.
\end{proof}

\subsection{$(S,C)$ is log canonical but not divisorially log terminal}

In this subsection we classify pairs $(S,C)$ of a log del Pezzo surface $S$ and a curve $C$ such that $K_S+C$ is anti-nef and log canonical, but not divisorially log terminal.
Throughout this subsection we run the hunt in the level case as in Subsection \ref{levelhunt}.

\begin{lemma}\label{log_canonical_three}
Let $(S,C)$ be a pair of a rank one log del Pezzo $S$ and an irreducible curve $C$ such that $K_S+C$ is anti-nef, log canonical but not dlt.
Suppose also that $C$ contains three singular points of $S$. Then $(\tilde{S},\tilde{C})$ is described as follows. Start with $\mathbb{F}_n$, pick the negative section $E$
and a positive section $C$ such that $C\cap E$ contains at most one point. Now create two dlt multiple fibers of multiplicity two. 
If $C\cap E$ contains exactly one point stop. If $C\cap E=\emptyset$, create a third multiple fiber such that it contains exactly one chain singularity.
\end{lemma}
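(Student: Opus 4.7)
The plan is to mirror the strategy of Proposition \ref{log_terminal_three}, running the hunt in the level case for $(S,C)$ with scaling $a=1$ (possible because $K_S+C$ is log canonical). Since $C$ is irreducible and contains three singularities of $S$, Lemma \ref{coefficient_boundary} forces $e_1\geqslant 1/2$, and unless every singular point on $C$ is an $A_1$ point we have $e_1\geqslant 2/3$; in either case Lemma \ref{flushmult} rules out tacnodes and triple points in the hunt. Throughout, I will use Lemma \ref{minimalresolution} to track where non-dlt-ness can live: at some singular point $p$ of $S$ lying on $C$, the strict transform $\tilde{C}$ together with the exceptional locus over $p$ must sit in configuration (a), (b), or (c) of that lemma.

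First I rule out every birational outcome $\pi_1:T_1\to S_1$. If $x=f_0(E_1)\notin C$, then a smooth fence is excluded exactly as in Case 1b of Proposition \ref{log_terminal_three}, using Lemmas \ref{fence_two_two}, \ref{fence_three_two_one}, and \ref{small_alpha}: the branch of the fence corresponding to $C$ would need three singularities and force $\beta\geqslant b_1\geqslant 1/2\geqslant\alpha>0$, contradicting Lemma \ref{small_alpha}. If $x\in C$, the analysis of Case 2b of Proposition \ref{log_terminal_three} shows that a banana forces $S_1\in\{\mathbb{F}_2,\mathbb{P}^2\}$ with $C_1$ in the smooth locus, a smooth fence forces $S_1$ to have at most one singularity on the $E_1$-branch, and a singular fence forces $C_1$ to be a smooth $(-1)$-curve with one singular point; in each case reversing the construction produces a pair $(\tilde S,\tilde C)$ that falls inside the dlt families of Propositions \ref{log_terminal_three} or \ref{log_terminal_two}, and in particular is dlt, contradicting our hypothesis. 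The case $C$ contracts by $\pi_1$ reduces to $(S_1,A_1)$ with strictly fewer singular points and the same non-dlt structure, and by induction on the number of singularities on $C$ we are eventually in the net case.

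So $T_1$ is a net $\pi_1:T_1\to\mathbb{P}^1$. Since $E_1$ is $f_0$-exceptional we have $E_1^2<0$, so $E_1$ is a multi-section. Intersecting $K_{T_1}+C+b_1E_1$ with a general fiber $F$ gives $0$, and using $b_1\geqslant 1/2$ together with the argument of Case 1a of Proposition \ref{log_terminal_two} forces both $E_1$ and $C$ to be sections (the only alternative, $E_1$ a double section with $b_1=1/2$, is ruled out by Lemma \ref{net_fiber_classification} applied to the corresponding $A_1$ multiple fibers). The three singularities of $S$ on $C$ must then lie on multiple fibers of $\pi_1$ or at $C\cap E_1$. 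Running a relative minimal model program over $\mathbb{P}^1$ on the minimal resolution of $T_1$ collapses the configuration to a Hirzebruch surface $\mathbb{F}_n$, with the image of $E_1$ the negative section and the image of $C$ a positive section.

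Finally I pin down where the non-dlt singularity sits. Either $C\cap E_1=\emptyset$ on $\mathbb{F}_n$, in which case all three singularities of $S$ arise from multiple fibers and the non-dlt one must be the fiber meeting $C$ in a single chain singularity (configuration (a) or (c) of Lemma \ref{minimalresolution}), while the other two singularities come from dlt multiple fibers that, by Lemma \ref{net_fiber_classification}, must have multiplicity two; or $|C\cap E_1|=1$ on $\mathbb{F}_n$, in which case that intersection point contributes one (necessarily non-dlt, by the tangency/transversality created from blowing up $C\cap E_1$) singularity of $S$ on $C$, and the remaining two singularities again come from dlt multiple fibers of multiplicity two. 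In either case no further multiple fibers are allowed, because $C$ is a section and $E_1$ accounts for at most three singularities total. The main obstacle is the bookkeeping in this last step, in particular verifying that the dlt multiple fibers must have multiplicity exactly two (forced by $e_1\geqslant 1/2$ and Lemma \ref{net_fiber_classification}) and that no other singular fiber configurations are compatible with $K_S+C$ being anti-nef; these verifications are routine from the classifications in Lemma \ref{net_fiber_classification} and the geometric constraints built up above.
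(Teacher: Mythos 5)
Your overall architecture (level hunt, rule out the birational contraction, land on a net with $C$ and $E_1$ sections, relative MMP to $\mathbb{F}_n$) matches the paper's, but there are two genuine gaps. The first and most important: you never establish the structure of the three singular points on $C$, which is the backbone of the whole argument. By Lemma \ref{negative_adjunction_2}, since $C$ carries three singular points, $K_S+C$ cannot be anti-ample and is therefore numerically trivial; moreover exactly two of the three points are $A_1$ points at which $(S,C)$ is dlt, and the third point $p$ is the unique non-dlt point, which by Lemma \ref{lcnotdltsing} is either a non-chain singularity with two $(2)$ branches ($C$ meeting the far end of the third branch) or a $(2,n,2)$ point with $\tilde{C}$ meeting the central curve. (Your appeals to configurations (a)--(c) of Lemma \ref{minimalresolution} are misplaced: those describe pairs that are almost lc but \emph{not} lc, whereas $(S,C)$ is lc by hypothesis.) This structure is what tells you which divisor to extract (the central curve over $p$, which has coefficient exactly $1$, so that $K_{T_1}+C+E_1=f^*(K_S+C)\equiv 0$ and both $C$ and $E_1$ are immediately forced to be sections from $(K_{T_1}+C+E_1)\cdot F=0$), and it is also what forces the two dlt multiple fibers to have multiplicity two (they pass through the two $A_1$ points). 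Your substitute arguments via $b_1\geqslant 1/2$ and Proposition \ref{log_terminal_two} do not recover this, and your claimed multiplicity-two conclusion "forced by $e_1\geqslant 1/2$" is not actually justified.

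The second gap is the dismissal of the birational case when $x\in C$. You argue that reversing the construction "produces a pair that falls inside the dlt families\ldots and in particular is dlt, contradicting our hypothesis." This is a non sequitur: the fact that the downstream surface $S_1$ resembles one arising in the dlt classification says nothing about whether the original pair $(S,C)$ is dlt --- by hypothesis it is not, and no contradiction is obtained. The paper instead derives a concrete contradiction: $\Sigma_1$ can only meet $C$ and $E_1$ at singular points, hence passes through $C\cap E_1$, whence $C_1^2=A_1^2=1$ by symmetry; extracting the two $(-2)$ curves on $C_1$ turns it into a zero curve giving a net in which $C_1$ would be an irreducible dlt multiple fiber containing a single singularity, which is impossible. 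Similarly, your induction on "$C$ contracted by $\pi_1$" is spurious: $C$ carries three singular points and cannot be the contracted curve of a hunt step, as the paper notes. In the $(2,n,2)$ case the paper also has to rule out $C$ being a double fiber with $E_1$ a double section (using $\tilde{E}_1^2\leqslant -2$), a subcase your write-up does not address.
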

\begin{proof}
Two of the singular points on $C$ must be $A_1$ points, at which $K_S+C$ is dlt by Lemma \ref{negative_adjunction_2}. For the last point, $p$,
there are two cases by Lemma \ref{lcnotdltsing}:

\begin{enumerate}

\item $p$ is a non chain singularity, with two $(2)$ branches. $C$ meets the opposite end of the third branch, or 
\item $p$ is a $(2,n,2)$ point, and $\tilde{C}$ meets the central curve.

\end{enumerate}

\textbf{Case 1:} Let the hunt extract the central curve of the singularity at $p$.

\textbf{Case 1a:} Suppose we get a net. Clearly $C$ can't be a fiber since it contains three singular points.
Hence $C$ and $E_1$ are multisections, and they are both sections since $K_{T_1}+C+E_1$ is numerically trivial. There are two multiple fibers
with multiplicity two passing through the $(2)$ points. Finally there is a third multiple fiber passing through $C\cap E_1$.
This is precisely one of the two situations described in the statement.

\textbf{Case 1b:} Suppose we get a birational contraction. Clearly $C$ does not get contracted
as it contains three singular points. $K_{S_1}+C+A_1$ is log canonical at singular points and doesn't have triple points because it's level.
$\Sigma_1$ meets $C$ and $E_1$ only at singular points, hence the only possibility is that it passes through $C\cap E_1$, by contractibility 
considerations. If we get a smooth fence, then we are done by Lemma \ref{fence_two_two}. If $C_1 \cap A_1$ is a singular point instead, the second hunt step
extracts one its exceptional components. $\pi_2$ cannot be a birational morphism since neither $C_1$ nor $A_1$ may be contracted, and all
the components of $\Gamma_2$ have coefficient one. Therefore $\pi_2$ must be a net, which implies that the strict transforms of $C_1$ and $A_1$ are fibers.
Therefore $E_2$ is a double section. Since $E_2 ^2\leqslant -2$, we see that if we run an MMP over the base, the strict transform of $E_2$ will have 
self-intersection at most two, contradiction.

\textbf{Case 2:} This case may be thought of as a \say{degenerate} version of Case 1. The first hunt step extracts the middle curve above $p$.
It's easy to see that the contraction of $\Sigma_1$ does not yield a birational contraction. In fact, there are only $A_1$ points lying on $C$ and $E_1$, and there is no configuration
that allows a birational contraction. Suppose therefore that $T_1$ is a net. If $C$ is a fiber, then it has multiplicity two and $E_1$
is a double section, which is impossible since $\tilde{E}_1 ^2\leqslant -2$ (see also \cite[Chapter V, Proposition 2.20]{hartshorneag}). 
Therefore $C$ and $E_1$ are both sections, meeting at a smooth point. This situation is described in the statement.
\end{proof}

\begin{lemma}\label{log_canonical_two_lt}
Suppose $(S,C)$ is a pair of a rank one log del Pezzo $S$ and a curve $C$ such that $K_S+C$ is anti-nef, log canonical but not divisorially log terminal.
If $C$ passes through two singular points, and $(S,C)$ is dlt at one of them, say $p$, then $(\tilde{S},\tilde{C})$ is one of the following:
\begin{enumerate}
\item Pick a pair $(S,E)$ such that $E$ passes through a $(2,n,2)$ point, and $E$ is lc but not dlt at this point (in other words $E$ crosses transversally the $(-n)$ curve).
Suppose that $E$ contains at most one other singular point, and $E$ is dlt along it. These are described in parts $(2)$ and $(3)$ of the statement.
Blow up above the dlt point (or at a smooth point if $E$ has just one singularity), always along $E$ so that it becomes contractible to a klt singularity. $\tilde{C}$ is the final $(-1)$ curve.

\item Start with a dlt pair $(S,E)$ such that $E$ contains two $A_1$ points, and possibly a third singular point. If there are just two singular points,
blow up above a smooth point of $E$ always along $E$, otherwise blow up above the third point, always along $E$. 
$\tilde{C}$ is the last $(-1)$ curve.

\item Start with a fence as described in Lemma \ref{fence_one_one} with a branch, $E$, containing an $A_1$ point. $\tilde{C}$ is the other branch.
Now blow up a smooth point on $E$ to create another $A_1$ point and make $E$ contractible. 
\end{enumerate}
\end{lemma}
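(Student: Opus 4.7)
The plan is to run the hunt in the level case of Subsection \ref{levelhunt} applied to $(S,C)$. Since $K_S+C$ is lc but not dlt, Lemma \ref{minimalresolution}(1) implies that some exceptional divisor of the minimal resolution has coefficient exactly one; such a divisor necessarily lies over the non-dlt singular point $q \neq p$ of $C$. I choose $E_1$ of maximal coefficient, so $e_1 = 1$ and $K_{T_1} + C + E_1 = f_0^{*}(K_S + C)$ is lc and anti-nef.

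Lemma \ref{hunt_transformation} then produces a $K_{T_1}$-negative contraction $\pi_1 \colon T_1 \to Y$. First I would rule out the possibility that $\pi_1$ is a $\mathbb{P}^1$-fibration: since $E_1$ has negative self-intersection it cannot be a fiber, so it is a multisection, and the equation $(K_{T_1}+C+E_1)\cdot F=0$ on a general fiber $F$ forces $C$ to be a section or low-degree multisection. Combining this with the presence of the dlt singular point $p$ of $C$ and the fiber classification of Lemma \ref{net_fiber_classification} yields a contradiction. Hence $\pi_1$ is a birational contraction of some $(-1)$ curve $\Sigma_1$, and Proposition \ref{hunt} together with the level-hunt analysis tells us that $\Sigma_1$ meets $C$ and $E_1$ at points prescribed by the non-dlt local geometry at $q$.

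I would then split into three cases according to the geometry of $\pi_1$. \textbf{Case 1:} $\pi_1$ contracts the strict transform of $C$. The resulting pair $(Y, A_1)$, with $A_1 = \pi_1(E_1)$, is again lc and anti-nef, with $A_1$ passing through a $(2,n,2)$ lc-not-dlt configuration transversal to the central curve, and possibly one additional dlt singular point (the image of $p$). Such pairs $(Y,A_1)$ are precisely the pairs $(S',E)$ of conclusion (1) described in Proposition \ref{log_terminal_two}(2)--(3), and reversing the contraction exhibits $\tilde C$ as the last $(-1)$ curve of the prescribed blow-up sequence. \textbf{Case 2:} $\pi_1$ contracts $\Sigma_1$ into a smooth point, producing a dlt pair $(Y, A_1)$ in which $A_1$ contains two $A_1$ points (coming from case (b) of Lemma \ref{minimalresolution}) and possibly a third singular point; this matches conclusion (2), and the blow-up of a smooth point along $A_1$ creates the non-dlt simple-tangency configuration. \textbf{Case 3:} $\pi_1$ produces a smooth fence $(Y, C_1 + A_1)$ with exactly one $A_1$ point on each branch. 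These are classified by Lemma \ref{fence_one_one}, and a single blow-up of a smooth point on one branch yields the non-dlt configuration and realizes conclusion (3).

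The main obstacle will be verifying exhaustiveness of these three cases, that is, ruling out tacnodes, bananas, or singular fences produced by $\pi_1$. This requires the strong combinatorial constraints coming from the hypotheses that $C$ passes through exactly two singular points and is dlt at one of them, combined with the scaling control from Lemma \ref{flushdescent2}, the fence combinatorics of Subsection \ref{smooth_fences} (in particular Lemmas \ref{fence_two_two} and \ref{fence_three_two_one}), and Lemma \ref{flushmult} for the smooth-point analysis. A secondary subtlety will be checking in each case that reversing the blow-up sequence preserves the klt condition on the contracted singularities, ensuring that the resulting $\tilde S$ is indeed the minimal resolution of a rank one log del Pezzo surface compatible with the constructions described in (1)--(3).
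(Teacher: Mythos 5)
Your opening moves match the paper: run the level hunt, observe that lc-but-not-dlt forces a coefficient-one divisor over the non-dlt point $q$, extract it, and rule out the net. But the case analysis that follows has genuine gaps. The paper's proof is organized first by the local type of $q$ (via Lemma \ref{lcnotdltsing}: either a non-chain singularity with two $(2)$-branches and $C$ meeting the far end of the third branch, or a $(2,n,2)$ chain with $C$ meeting the central curve transversally), and the extracted divisor is chosen accordingly (the curve adjacent to $C$, respectively the central $(-n)$ curve). Since several divisors over $q$ carry coefficient one, your instruction to ``choose $E_1$ of maximal coefficient'' does not determine the extraction, and everything downstream depends on which one is taken. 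More seriously, your Case 2 is geometrically incoherent: if $\Sigma_1\neq C$ is contracted to a smooth point, then both $C$ and $E_1$ survive and the result is a two-component boundary on $S_1$, not a single-curve dlt pair $(Y,A_1)$. Conclusion (2) in fact arises by contracting $C$ itself when $q$ is a $(2,n,2)$ point, so that $A_1=\pi_1(E_1)$ carries the two $A_1$ points plus possibly the image of $p$; the relevant local model is case (3) of Lemma \ref{lcnotdltsing}, not the tangency/transversal-branches cases of Lemma \ref{minimalresolution}(2), which are excluded outright here because $(S,C)$ is assumed log canonical.

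Two further points. Your Case 1 asserts that a single contraction of $C$ already produces a $(2,n,2)$ configuration on $A_1$; when $q$ is non-chain with third branch of length greater than one, the new pair is again lc-not-dlt at a smaller non-chain point, and one must induct, terminating precisely in the pairs of conclusions (2) and (3) --- this is what ``described in parts (2) and (3) of the statement'' in conclusion (1) refers to. Finally, the exhaustiveness you defer to the end is where most of the actual work lies: the paper rules out $\Sigma_1$ meeting both $C$ and $E_1$ at singular points by a symmetry computation giving $C_1^2=A_1^2=2$ and contradicting integrality of $\tilde{C}^2$, rules out the banana because the singular points on $E_1$ are all $A_1$ points, and rules out $\Sigma_1$ meeting $E_1$ at a smooth point or at a $(2)$-end of the non-chain singularity by positivity and self-intersection arguments. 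Without these steps, and with the misassignment of contraction outcomes to conclusions (1)--(3), the proposal does not yet establish the classification.
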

\begin{proof}
The configurations for the non dlt point are as in Lemma \ref{log_canonical_three}. We continue to use the same
division in cases.

\textbf{Case 1:} This time we choose to extract the adjacent curve $E_1$ of the non chain singularity on $C$.

\textbf{Case 1a:} Suppose $T_1$ is a net. $C$ and $E_1$ can't be sections since they don't touch on the multiple fiber, but one of the singularities is not dlt for this fiber.
$C$ can't be a fiber either because it's dlt and has only one singular point on it, contradiction.

\textbf{Case 1b:} Hence we get a birational contraction. If $C$ gets contracted, then we are in the same situation as in the hypothesis of the lemma, but with
a log canonical point with a smaller Dynkin diagram, therefore we can apply induction. This is description (1).
If $\Sigma_1$ passes through both the singular points on $C$ and $E_1$, then we would get two smooth
rational curves meeting at a smooth point and at a chain singularity at the opposite ends of it. This, however,
does not happen because of contrability reasons. The only option left is that $\Sigma_1$ meets one of the two $(2)$ ends
of the non chain singularity in $E_1$, and therefore $S_1$ is a smooth fence. However then $E_1$ would have positive self-intersection in $T_1$, contradiction.

\textbf{Case 2:} As above one may show that $T_1$ is not a net. If $C$ gets contracted, 
$E$ is dlt with two $A_1$ points and possibly another dlt point on it. 
We can then we can use Lemma \ref{log_terminal_three} and Lemma \ref{log_terminal_two} to classify such configurations and
get back to the original pair $(S,C)$. This is description (2).

If $\Sigma_1$ meets both $C$ and $E_1$ at singular points, we must go to a banana. It is easy to see however this cannot happen since the singular points on $E$ are $A_1$ points.
The only remaining option is that $\Sigma_1$ passes through an $A_1$ point of $E_1$ and another singular point on $T_1$.
Since we then go to a fence, we see that $\Sigma_1$ contracts to a smooth point on $A_1$ by Lemma \ref{fence_three_two_one}.
We get therefore a fence with one singular point on each branch, one of which is an $A_1$ point. It is possible to get then $S$ by 
starting with such a fence, as classified in Lemma \ref{fence_one_one}, and then blowing up a point in along $E_1$ multiple times, and finally blowing
up once away, to create $\Sigma_1$. Finally one can contract $E_1$. This is case (3).
\end{proof}

\begin{lemma}\label{log_canonical_two_lc}
Suppose $(S,C)$ is a pair of a rank one log del Pezzo $S$ and a curve $C$ such that $K_S+C$ is anti-nef, log canonical but not divisorially log terminal.
If $C$ passes through two singularities, and is not dlt at either, then $(\tilde{S},\tilde{C})$ is one of the following:
\begin{enumerate}
\item Start with $\mathbb{F}_n$, pick a double section $E$ and let the two fibers tangent to $E$ be $F$ and $G$.
Blow up on the intersection $F\cap E$ any number of times, always along $E$. $C$ is the last $(-1)$ curve.
Now blow up at $E\cap G$ twice to separate them and call $H$ the 
$(-1)$ curve. Finally blow up on $H$ at a point not contained in any of the other components, always along $H$. 

\item Start with $\mathbb{F}_n$, pick a double section $E$, a tangent fiber $F$ and a transverse fiber $G$. Blow up on the intersection
$F\cap E$ any number of times along $E$; $\tilde{C}$ is the last $(-1)$ curve. Then blow on one of the points in $G\cap E$, and once at the intersection of the two $(-1)$ curves. 

\item Start from $\mathbb{F}_n$, pick a positive section $E$ and a negative section $F$. Blow up to create two dlt double fibers.
Now blow up any point on $E$ always along $E$, and if the point was singular then make a last blow up away from $E$ to create an $A_1$ point on 
$E$. $\tilde{C}$ is given by the choice of any smooth fiber.
\end{enumerate}
\end{lemma}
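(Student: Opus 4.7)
The plan is to run the hunt in the level case for $(S,C)$, following exactly the pattern established in Lemma \ref{log_canonical_two_lt} and Lemma \ref{log_canonical_three}, but now with the added information that $K_S+C$ fails to be dlt at both singular points of $C$. First, we would invoke Lemma \ref{negative_adjunction_2} to enumerate the local configurations at each of the two non-dlt points: either the point is a $(2,n,2)$ chain singularity with $\tilde{C}$ crossing the central $(-n)$-curve transversally, or the point is a non-chain singularity with two $(-2)$ branches, with $\tilde{C}$ meeting the opposite end of the third branch. Thus each of the two singularities is of one of these two shapes, and the curve in the minimal resolution adjacent to $C$ at that point has coefficient one in the log pullback of $K_S+C$; these are the curves of maximal coefficient we extract in the first hunt step.

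Let $f_0:T_1\to S$ extract such a divisor $E_1$ at one of the non-dlt points, so $e_1=1$, and let $\pi_1:T_1\to S_1$ be the contraction of the other ray. The standard dichotomy splits into the net case and the birational case. If $T_1$ is a net, I would argue as in Lemma \ref{log_canonical_three}: $C$ cannot be a fiber (as it contains two singular points on which $C+E_1$ has coefficients summing to more than what a dlt fiber of multiplicity at most two allows), so $C$ and $E_1$ are multi-sections. Intersecting with a general fiber using $(K_{T_1}+C+E_1)\cdot F=0$ and the fact that each has coefficient one forces $C+E_1$ to meet $F$ in total multiplicity $2$; a case analysis on which is a section and which a double section, together with the local shapes from Lemma \ref{negative_adjunction_2}, yields exactly descriptions (1) and (2) of the statement after running a relative minimal model program down to a Hirzebruch surface $\mathbb{F}_n$ and identifying the images of $C$, $E_1$, and the tangent/transverse fibers that encode the two non-dlt singularities. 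Description (3) arises in the sub-case where $E_1$ itself becomes a section and $C$ becomes a fiber, producing the two dlt double fibers together with the additional blow up on $E$ needed to create the second non-dlt $A_1$ point.

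If instead $\pi_1$ is birational, I would show that the only possibility is that $\pi_1$ contracts $C$. The same obstructions used in Lemma \ref{log_canonical_two_lt} apply: $\pi_1$ cannot go to a (smooth or singular) fence or to a banana, because the two non-dlt points contribute coefficient-one curves in $\Gamma_1$ and enforce $a+b_1>1$ along any potential exceptional $\Sigma_1$, ruling out all the configurations catalogued by Lemma \ref{multiplicity_two_node}, Proposition \ref{hunt}, and Lemma \ref{fence_three_two_one}. Once $C$ is contracted, its image in $S_1$ is the dlt centre of a smaller log canonical pair with strictly fewer or simpler non-dlt points, so we can induct, using Lemma \ref{log_terminal_three}, Lemma \ref{log_terminal_two}, and Lemma \ref{log_canonical_two_lt} to classify $(S_1, A_1)$; reversing the blow ups recovers descriptions (1)–(3), where the contracted $(-1)$-curve $\tilde{C}$ appears as the last $(-1)$ curve exactly as described.

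The main obstacle I anticipate is the careful bookkeeping in the net case to distinguish description (1) from (2), that is, separating the situation where the second non-dlt point comes from a fiber tangent to a double section from the one where it comes from a transverse fiber plus a further blow-up. Handling this requires keeping track of whether $E_1$ meets $C$ on $T_1$ and of the precise intersection multiplicities of the images of $C$ and $E_1$ with the various fibers of $\mathbb{F}_n$, and then checking that the anti-nefness of $K_S+C$ forces exactly the blow-up patterns recorded in the statement. The remaining cases are routine applications of the machinery already used in Lemmas \ref{log_canonical_three} and \ref{log_canonical_two_lt}.
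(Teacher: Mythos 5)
Your overall skeleton (run the level hunt, use Lemma \ref{negative_adjunction_2} to pin down the two local shapes --- a $(2,n,2)$ point met on the central curve, or a non-chain point with two $(2)$ branches --- extract the coefficient-one divisor, then split into the net and birational cases) is the right one, and it is how the paper proceeds. However, two of your key claims are the opposite of what actually happens, and they would derail the argument. First, in the net case you assert that $C$ cannot be a fiber and that $C$ and $E_1$ are both multi-sections; in fact descriptions (1) and (2) arise precisely because $C$ \emph{is} a fiber of multiplicity two (from $(K_{T_1}+C+E_1)\cdot F=0$ one gets $F\cdot E_1=2=2\,C\cdot E_1$, so $E_1$ is a double section and $C$ a double fiber). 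What is excluded in the net case is that $C$ and $E_1$ are both sections, since they would then have to meet on a singular fiber. Your reading would throw away exactly the configurations that produce (1) and (2).

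Second, in the birational case you propose to show that $\pi_1$ must contract $C$ and then induct. The paper shows the opposite: $C$ cannot be contracted, because its image would have to be a smooth point of $S_1$ (as $K_{S_1}+A_1$ is anti-nef and log canonical), which is impossible given the singularities $C$ carries --- in particular a non-chain point when one is present. The birational branch is instead handled by a case division on the \emph{types} of the two non-dlt points (both non-chain; one non-chain and one $(2,n,2)$; both of chain type), which your plan never sets up: in the first two cases one lands on a fence and derives a contradiction by extracting the divisor adjacent to the remaining non-chain point and producing a multiple fiber with two sections (or two non-dlt sections), while in the third case (both points of type $(2,n,2)$ and $(2,m,2)$) the fence survives and a further extraction of the $(-n)$ curve over $C_1$ exhibits a net in which $C_1$ is a fiber and the two extracted curves are sections --- this, not a net sub-case of the first hunt step, is where description (3) comes from. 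As written, your inductive scheme has no base and your net-case bookkeeping excludes the correct configurations, so the proof does not go through without these corrections.
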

\begin{proof}
Let $p$ and $q$ be the two singularities. There are three cases.

\textbf{Case 1:} Suppose first that both of them are non chain singularities and extract the adjacent divisor $E_1$ above $q$.

\textbf{Case 1a:} Suppose $T_1$ is a net. $C$ and $E_1$ can't be both
sections because otherwise they would meet on the singular fiber. 
If $C$ is a fiber instead, it must have multiplicity two because for a general fiber $F$ we have $(K_{T_1}+C+E_1)\cdot F= 0$, so that $F\cdot E_1 = 2 = 2 C\cdot E_1$.
This is description (1).

\textbf{Case 1b:}
Suppose we get a birational contraction instead. 
If $C$ gets contracted, then it contracts to a smooth point since $K_{S_1}+A_1$ is anti-nef and log canonical. This is however not possible since $C$ contains a non chain singular point.
Thus $\Sigma_1 \neq C$. Therefore $\Sigma_1$ can meet $C$ and $E_1$ only at singular points. If it meets both curves 
we must go to a smooth banana since $K_{S_1}+C+A_1$ is anti-nef and log canonical. It's easy to see that this is again impossible since $C$ contains a non chain singular point.
Therefore $\Sigma_1$ only meets $E_1$ and we go to a smooth fence. By Lemma \ref{fence_tactic} we must have a non chain singularity on each branch.
By symmetry $C_1 ^2=A_1 ^2=1$, hence after extracting the adjacent curve $D$ of $p$ at $C_1$, we get that $C_1 ^2=0$, $E_1$ and $D$ are sections.
But then we would have a multiple fiber with two non dlt sections, which is impossible. 

\textbf{Case 2:} Suppose $p$ is a non chain singularity, and $q$ is a $(2,n,2)$ point. Extract the $(-n)$ curve $E_1$.
If $T_1$ is a net, then $C$ must be a double fiber and $E_1$ a double section. This is description (2). If we get a birational contraction,
we must go to a fence, where $C$ has a non chain point and $A_1$ has two $(2)$ points. After extracting the adjacent divisor $D$ of the non chain singularity
we would have a net, with $A_1$ and $D$ being a sections, which is impossible.

\textbf{Case 3:}
Finally, suppose $p$ is a $(2,n,2)$ point and $q$ is a $(2,m,2)$ point. Extract the $(-m)$ curve $E$. If $T$ is a net, we get again description $(2)$.
Suppose therefore that we have a birational contraction to $S_1$. Using the usual arguments we deduce that $C_1+A_1$ is a fence. Now, after
extracting the $(-n)$ curve $D$ from $C_1$, we clearly get a net where $C_1$ is a fiber, $D$ and $E_1$ are sections. This is description (3).
\end{proof}

Before studying the case of just one singular point, we need a preliminary lemma.

\begin{lemma}\label{singular_banana}
Let $S$ be a rank one log del Pezzo surface and let $A$ and $B$ be two smooth rational curves in $S$. Suppose that $A$ and $B$ meet at two distinct points
$p$ and $q$, with $q$ smooth and $p$ singular. Suppose also that $(S, A+B)$ is log canonical. Then $(S, A, B)$ is one of the following:
\begin{enumerate}
\item Start from $\mathbb{P}^2$. Choose three not concurring lines $A$, $B$ and $E$. Blow up above a point in $E\setminus (A\cup B)$ along $E$ so that $E$
becomes $K$-positive. Then contract the $K$-positive curves. 
\item Start from $\mathbb{F}_n$. Choose two fibers $B$ and $E$ and a positive section $A$. Blow up above a point in $E$ that is not contained in $A$, $B$ 
or the negative section. Then contract the $K$-positive curves. 
\end{enumerate}

In particular, the resolution of $p$ consists of at most two curves.
\end{lemma}
\begin{proof}
Without loss of generality, the first hunt step extracts the adjacent divisor to $A$. Clearly $T_1$ cannot be a net because $\Gamma_1$ consists of 
three components of coefficient one. Therefore $\pi_2$ is a birational contraction. Neither $A$ nor $B$ can be contracted, for otherwise $E$ will have negative self-intersection
on $S_1$. Therefore $K_{S_1}+A_1+B_1+C_1$ is numerically trivial and log canonical. Since $A_1$ is in the smooth locus,
we have that $S_1$ is either $\mathbb{F}_n$ or $\mathbb{P}^2$. The lemma then follows. 
\end{proof}

\begin{lemma}\label{log_canonical_one}
Suppose $(S,C)$ is a pair of a rank one log del Pezzo $S$ and a curve $C$ such that $K_S+C$ is anti-nef, log canonical but not divisorially log terminal.
If $C$ passes through only one singular point $p$ of $S$ then $(\tilde{S},\tilde{C})$ is obtained as follows. 
\begin{enumerate}

\item Start with $\mathbb{F}_n$, pick the negative section $E$, and a positive section $C$ meeting $E$ just once and transversally. 
Pick a point in $(C\cup E)\setminus (C\cap E)$ and blow up any number of times to make a divisorially log terminal fiber. Then perform a blow up on an interior
point of the last $(-1)$ curve so that it creates a chain singularity connecting $C$ and $E$, and finally continue blowing up on the $(-1)$ curve
at the nearest point to $C$ and $E$.

\item Start with $\mathbb{F}_n$, pick the negative section $E$, and a positive section $C$ meeting $E$ exactly twice and transversally. 
Blow up at one of intersections once, then blow up at the intersection of the exceptional component and the strict transform of the fiber. Now keep blowing up
at the intersection of the last $(-1)$ curve with the exceptional component joining $C$ and $E$ any number of times.

\item Start with a log canonical pair $(S,E)$, where $S$ is a rank one log del Pezzo surface, $K_S+E\equiv 0$, $E$ has exactly one node, which is a singular point of $S$ as well.
Blow up repeatedly at one of the branches of the node, so that $E$ becomes $K$-positive, and the $K$-positive curves are contractible to klt singularities.
$\tilde{C}$ is the last $(-1)$ curve.

\item Start with a Gorenstein log del Pezzo surface of rank one, with a nodal rational curve $E$ in its smooth locus. Then blow up at the node
of $E$ always along the same branch, finally contract. $C$ is the last $(-1)$ curve.

\item Start with a smooth banana in either $\mathbb{P}^2$
or $\mathbb{F}_2$. Then blow up on one intersection until one curve is negative enough, continue blowing up to make a chain singularity
connecting the two branches. $\tilde{C}$ is the other branch of the banana.

\item Start with one of the surfaces as in Lemma \ref{singular_banana}. Choose a point in either branch, that is not contained in the other. 
Blow up above that point, always along branch of the banana until the branch becomes $K$-positive.

\item Start with $\mathbb{F}_n$, pick the negative section $E$ and a fiber $C$. Then create two dlt double fibers away from $C$.

\item Start from $\mathbb{F}_2$, pick a fiber $E$ and a positive section $C$. Blow up at a smooth point of $E$ so that $E$ is negative and has
an $A_1$ point on it.
\end{enumerate}
\end{lemma}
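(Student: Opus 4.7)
The plan is to run the hunt in the level case for $(S,C)$, exactly as in Subsection \ref{levelhunt} and in parallel with the proofs of Lemmas \ref{log_canonical_two_lt} and \ref{log_canonical_two_lc}. Since $C$ passes through the unique singular point $p$ and $K_S+C$ is lc but not dlt at $p$, the local geometry at $p$ falls into a short list of possibilities from the classification of log canonical surface singularities (Lemma \ref{minimalresolution} and Lemma \ref{negative_adjunction_2}): either $p$ is a chain singularity of type $(2,n,2)$ with $\tilde{C}$ meeting the central $(-n)$ curve transversally, or $p$ is a singular point on which $C$ acquires a node whose two branches meet the exceptional locus in a way that forces non-dlt behaviour (for instance, two branches meeting the same exceptional curve, or the two adjacent exceptional curves of a chain), or $p$ is smooth with $C$ nodal there so that $(S,C)$ fails to be dlt at a smooth point of $S$ but still lc. Extract the $\pi$-exceptional divisor $E_1$ of maximal coefficient (the central $(-n)$ curve in the chain case, or an adjacent curve on the branch where it exists), write $K_{T_1}+C+e_1E_1=f_0^*(K_S+C)$, and apply Lemma \ref{hunt_transformation} to produce the $K_{T_1}$-negative contraction $\pi_1$.

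First I would dispose of the case where $\pi_1$ is a fibration. Since $C$ contains only one singular point while the non-dlt data at $p$ forces at least one extra multiple fibre, a routine intersection calculation (using $(K_{T_1}+C+E_1)\cdot F=0$ for a general fibre $F$) shows that $C$ and $E_1$ must both be sections or $C$ is a (double) fibre with $E_1$ a multi-section, and a relative MMP over the base descends us to $\mathbb{F}_n$. Reverse-engineering the possible blow-ups compatible with a single singular point on $C$, the lc-not-dlt configuration at $p$, and the klt requirement on $K$-positive contractible curves, yields precisely cases $(1)$, $(2)$, $(6)$ and $(7)$ of the statement. In these cases the key bookkeeping is that $(C\cdot E)$ on $\mathbb{F}_n$ must be $0$, $1$ or $2$ according to whether the fibres we create are dlt double fibres, a single lc chain joining $C$ and $E$, or a tangential configuration producing a nodal image for $C$.

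Next I would treat the birational case $\pi_1:T_1\to S_1$. If $C$ gets contracted, then $C\cdot \Sigma_1>0$ forces $\Sigma_1$ to pass through the non-dlt point; the resulting pair $(S_1,A_1)$ is again lc, $K_{S_1}+A_1$ is anti-nef, and by the construction of the hunt the non-dlt behaviour has strictly simplified (smaller index of the chain, or the node of $E$ moves onto a klt point of smaller invariant). One inducts on this invariant, obtaining descriptions $(3)$ and $(4)$ according to whether the limiting pair is the nodal-curve-on-a-Gorenstein log del Pezzo coming from $K_S+E\equiv 0$ (case $3$) or directly the Gorenstein nodal configuration (case $4$). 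If instead $C$ survives on $S_1$, then $\Sigma_1$ meets $C$ and $E_1$ only at singular points, and Proposition \ref{hunt} together with Lemmas \ref{fence_tactic}, \ref{fence_three_two_one}, \ref{fence_two_two} restricts $(S_1,C_1+A_1)$ to either a smooth banana (forcing $S_1\in\{\mathbb{P}^2,\mathbb{F}_2\}$ by Lemma \ref{smoothp1}, giving case $(5)$) or a smooth fence (handled by Lemma \ref{fence_one_one}, feeding back into cases $(1)$–$(2)$ after the appropriate interior blow-ups); the singular-fence alternative is excluded because it would force $C$ itself to be a $(-1)$-curve, contradicting $C\cdot(-K_S)>0$ on a rank one log del Pezzo.

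The hard part will be the bookkeeping in the birational case: checking that every blow-up/blow-down sequence one is tempted to perform on $\tilde{S}$ is compatible with preserving the single lc-not-dlt point on $C$, with the klt-contractibility of all $K$-positive curves, and with the global anti-nef condition on $K_S+C$; and conversely verifying that each of the seven descriptions $(1)$–$(7)$ does arise from some hunt-step trajectory (so nothing is missing from the list). This is done by running the numerical invariants $\alpha$, $\beta$ of Lemma \ref{fence_tactic} and adjunction on $C$ and $E_1$ to rule out the "parasite" configurations, and by carrying out each of the constructions in the statement to exhibit an explicit pair $(\tilde S,\tilde C)$ realising it, following the template already established in Lemmas \ref{log_canonical_three}–\ref{log_canonical_two_lc}.
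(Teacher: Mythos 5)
Your overall strategy --- run the level hunt, split on the local type of the lc-but-not-dlt point $p$, then split on whether $\pi_1$ is a fibration or a birational contraction, inducting when $C$ is contracted --- is the same as the paper's. However, your enumeration of the local configurations at $p$ has a genuine gap: by Lemma \ref{lcnotdltsing} (equivalently, Lemma \ref{negative_adjunction}), besides the $(2,n,2)$ chain with $\tilde{C}$ meeting the central curve and the nodal configurations, there is the case in which $p$ is a \emph{non chain} singularity with two index-two branches and $\tilde{C}$ meeting the opposite end of the third branch. You omit this case entirely (and instead list ``$p$ smooth with $C$ nodal there'', which contradicts the hypothesis that $p$ is a singular point of $S$ and is in any event excluded by adjunction once $K_S+C$ is anti-nef). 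The non chain case is not vacuous by inspection: the paper's proof devotes its first case to it, extracting the divisor adjacent to $C$ so that $C$ lies in the smooth locus of $T_1$, ruling out the net because $K_{T_1}+E_1$ would have to be dlt, and ruling out the birational contraction because $C_1$ would then be a smooth rational curve in the smooth locus of $S_1$, forcing $S_1=\overline{\mathbb{F}}_n$ by Lemma \ref{smoothp1}, which is incompatible with $\Sigma_1$ contracting to a $(-n)$ singularity when $K_{T_1}+E_1$ is not dlt. Without such an argument the stated list is not established.

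Your bookkeeping of which descriptions arise from which branch is also off in a way that suggests the cases were not actually carried through. Description $(7)$ does not come from the fibration branch: in the $(2,n,2)$ case the net yields only $(6)$, while $(7)$ arises from a birational contraction to a smooth fence, where one $(2)$ point survives on $A_1$ and the other is contracted to a smooth point by $\Sigma_1$. Correspondingly, in the birational branch with $C$ surviving, the smooth-fence outcome does not ``feed back into $(1)$--$(2)$'' via Lemma \ref{fence_one_one}; depending on the local type at $p$ it either produces $(7)$ or is impossible. Finally, the singular fence is excluded because $C$ would become a $(-1)$ curve, hence contractible on $S$, contradicting $\rho(S)=1$ (every curve on a rank one log del Pezzo has positive self-intersection); it is not excluded by ``$C\cdot(-K_S)>0$'', which any $(-1)$ curve satisfies. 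These points are fixable, but as written the proposal neither covers all cases nor correctly derives the seven descriptions.
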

\begin{proof}
We divide once again our analysis on the type of log canonical singularity at $p$. Since $C$ contains exactly one singular point of $S$, in addition to the cases of the previous
lemmas, $C$ could also be nodal at $p$ (see Lemma \ref{negative_adjunction_2}).

\textbf{Case 1:} Suppose $p$ is a non chain singular point. Then after extracting the divisor $E_1$ adjacent to $C$, the curve $C$ is in the smooth locus of $T_1$. 

\textbf{Case 1a:}
Suppose $T_1$ is a net. $C$ can't be a section as $T_1$ is singular. If $C$ is a fiber, $E_1$ is a section. But then $K_{T_1}+E_1$ must be dlt, contradiction.

\textbf{Case 1b:}
Suppose we get a birational 
contraction to $S_1$. $C$ can't get contracted because $\tilde{C}^2 > -1$.
$\Sigma_1$ can only meet $E_1$ since $C$ is in the smooth locus of $T_1$, so that we get a smooth fence. 
Notice that $C_1$ is in the smooth locus of $S_1$. Therefore $S_1=\overline{\mathbb{F}}_n$ for some $n$ by Lemma \ref{smoothp1}.
However there is no configuration in which $\Sigma_1$ contracts to a $(-n)$ singularity, given the fact that $K_{T_1}+E_1$ is not divisorially log terminal.

\textbf{Case 2:}
Suppose now that $p$ is a node of $C$ and extract one of the two divisors adjacent to $C$. Notice that $K_S+C\equiv 0$.

\textbf{Case 2a:} Suppose $T_1$ is a net and $C$ contains a singular point of $T_1$. $C$ can't be a fiber since $K_{T_1}+C$ is dlt and $C$ contains only one singular point.
If $C$ and $E_1$ are sections, we get description (1).

\textbf{Case 2b}: Suppose $T_1$ is a net and $C$ lies in the smooth locus of $T_1$. If $C$ is a section, we get description $(2)$. If $C$ is a fiber instead, then $E_1$ is a double
section with negative self-intersection, contradiction.

\textbf{Case 2c:} Assume we get a birational transformation. If $C$ contracts, then $(S_1, A_1)$ satisfies again the hypothesis of the lemma, unless $S_1$ is Gorenstein.
This way we get descriptions (3) and (4). If we go to a smooth banana instead we get description (5). If the banana contains a singular points, then we get description
(6) by Lemma \ref{singular_banana}.

\textbf{Case 3:} The last case is the one in which $C$ passes through a $(2,n,2)$ point. Extract the $(-n)$ curve. If $T_1$ is a net, then $C$ must be a fiber, $E_1$ a section.
This is description (7). If we have a birational contraction instead, $C$ can't contract and we must have a fence. Clearly there must be only
one $(2)$ point on $A_1$ since $C_1$ is in the smooth locus of $S_1$, therefore $\Sigma_1$ contracts the other $A_1$ point down to a smooth point.
This is description (8).
\end{proof}

For the case in which $C$ does not contain any singular points of $S$, we simply remark the following.

\begin{lemma}\label{log_canonical_zero}
Suppose $(S,C)$ is a pair of a rank one log del Pezzo $S$ and a curve $C$ such that $K_S+C$ is anti-nef, log canonical but not divisorially log terminal.
If $C$ is in the smooth locus of $S$ then $C$ is nodal and $S$ is Gorenstein.
\end{lemma}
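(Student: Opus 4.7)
My plan has three steps. First, I identify the singularity type of $C$: since $C$ lies in the smooth locus of $S$, the pair $(S,C)$ at any $p\in C$ is just a reduced coefficient-one curve on a smooth surface. The standard blow-up computation for lc/dlt at a smooth surface point (in the spirit of Lemma~\ref{flushresolution}) shows that $(S,C)$ is lc iff $C$ has multiplicity at most two with transverse branches (so $C$ is smooth or has a simple node) and is dlt iff $C$ is smooth. Since $(S,C)$ is lc but not dlt, $C$ must have at least one node and no worse singularity, so $C$ is nodal.

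Next, I apply adjunction on $C$ to pin down its genus and numerical class. The formula $(K_S+C)\cdot C=2p_a(C)-2$, combined with $K_S+C$ anti-nef (so the left-hand side is $\le 0$) and the presence of at least one node (so $p_a(C)\ge 1$), forces $p_a(C)=1$: exactly one node, rational normalization, and $(K_S+C)\cdot C=0$. Since $S$ has Picard number one and $C\cdot(-K_S)=C^2>0$, this numerical identity on $C$ upgrades to $K_S+C\equiv 0$ on all of $S$, so $-K_S\equiv C$.

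Finally, I deduce that $S$ is Gorenstein. Because $C$ is supported in the smooth locus it is Cartier, so $-K_S\equiv C$ says $-K_S$ is numerically equivalent to a Cartier divisor. On the minimal resolution $\pi\colon\tilde S\to S$, writing $\pi^*(K_S+C)=K_{\tilde S}+\tilde C+D$ with $D=\sum_ie_iE_i$ effective and $0\le e_i<1$, the pulled-back relation reads $K_{\tilde S}+\tilde C+D\equiv 0$ on the smooth rational surface $\tilde S$. Combining the integrality constraint $C^2=K_S^2\in\mathbb{Z}$ (from $C$ being Cartier) with the explicit formula $K_S^2=9-n-\sum_ie_i(2+E_i^2)$ of Lemma~\ref{global_canonical}, and the precise rational shape of each contribution $e_i(2+E_i^2)$ coming from the classification of klt surface singularities in Appendix~\ref{surfacesings}, I would aim to force every $e_i$ to vanish, yielding only Du Val singularities and hence $S$ Gorenstein.

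The main obstacle is this last step. The relation $C\equiv -K_S$ is purely numerical, while Gorenstein-ness demands $K_S\in\mathrm{Pic}(S)$; upgrading numerical to linear equivalence requires controlling the torsion in $\mathrm{Cl}(S)/\mathrm{Pic}(S)$, which is supported at the singular points of $S$. The integrality argument above gives one numerical constraint, but closing the argument likely requires the additional structural input that $\mathrm{Cl}^\tau(S)=0$ for a rank-one log del Pezzo, i.e.\ the absence of numerically trivial non-Cartier Weil classes, which should follow from $H^1(\mathcal O_S)=0$ together with the local structure of klt surface singularities recorded in Appendix~\ref{surfacesings}.
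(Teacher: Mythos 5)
Your first two steps are correct and are exactly what the paper's one-line proof (``immediate from adjunction'') intends: at a smooth point of $S$ the pair $(S,C)$ is dlt iff $C$ is smooth and lc iff $C$ has at worst a simple node (Lemmas \ref{dltsing} and \ref{lcnotdltsing}), so $C$ is nodal; then $0\geqslant (K_S+C)\cdot C=2p_a(C)-2\geqslant 0$ forces $p_a(C)=1$ and, since $\rho(S)=1$ and $C$ is ample, $K_S+C\equiv 0$ with $C$ Cartier.

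The gap is entirely in your third step, and neither of the two mechanisms you propose can close it. The integrality argument fails because $\sum_i e_i(-2-E_i^2)$ can be a positive integer for a non-Du Val configuration: a single $(4)$-point has $e=1/2$ and contributes exactly $1$, so $K_S^2\in\mathbb{Z}$ does not force the $e_i$ to vanish. The claim $\operatorname{Cl}^\tau(S)=0$ is simply false for rank one log del Pezzo surfaces: $S(3A_2)=\mathbb{P}^2/\mu_3$ (which appears throughout the paper) carries a numerically trivial $3$-torsion Weil divisor class coming from the quotient map, which is \'etale in codimension one; so ``numerically equivalent to a Cartier divisor'' does not formally imply ``Cartier''. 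The correct way to finish is to squeeze more out of adjunction than the degree: since $C$ is an integral nodal curve of arithmetic genus one, $\omega_C\cong\mathcal{O}_C$ (a degree-zero line bundle with $h^0=p_a=1$ on an integral curve is trivial). As $C$ is Cartier and lies in the smooth locus, the sequence $0\to\omega_S\to\omega_S(C)\to\omega_C\to 0$ is exact, and $h^0(\omega_S)=h^1(\omega_S)=0$ (Serre duality plus rationality of $S$) gives $h^0(\omega_S(C))=h^0(\omega_C)=1$. Thus $K_S+C$ is linearly equivalent to an effective divisor which is numerically trivial, hence zero; so $K_S\sim -C$ is Cartier and $S$ is Gorenstein. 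Effectivity, not torsion-freeness of $\operatorname{Cl}(S)$, is what kills the potential torsion class.
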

\begin{proof}
Immediate from adjunction.
\end{proof}

Finally, we conclude with a special case that will be useful in Theorem \ref{maintheorem}.

\begin{lemma}\label{almost_log_canonical}
Suppose $(S,C)$ is a pair of a rank one log del Pezzo $S$ and a curve $C$ such that $K_S+C$ is anti-nef and almost log canonical, but not log canonical.
Then $(\tilde{S},\tilde{C})$ is one of the following:
\begin{enumerate}
\item $S=\mathbb{F}_n$. $C$ is a smooth section which touches the negative section only once and of order two.
\item Start from $\mathbb{P}^2$. Choose a line $E$ and a conic $C$ meeting at one point only. Blow up above a point which is not $E\cap C$ until $E$ becomes
$K$-positive.
\item Start from $\mathbb{F}_n$. Choose a positive section $C_1$ disjoint from the negative section, and a positive section $E$ meeting the negative section
only once and transversally, and meeting $C_1$ only once and with order two. Blow up above the intersection of $E$ with the negative section, always along $E$.
\item Start from $\mathbb{F}_n$. Choose $E$ to be the negative section and $C$ a positive section meeting $E$ only once and of order two. Blow up the intersection
point so that there is only one $K$-positive component intersecting $E$ and $C$.
\end{enumerate}
\end{lemma}
\begin{proof}
It is easy to see that $C$ contains only one singular points, $p$, and that $K_S+C$ is numerically trivial.
We go over the three exceptional cases of the definition of almost log canonical.

\textbf{Case 1:} Suppose that there there is only one component $E$ over $p$. Extract $E$ and consider the $K$-negative contraction $\pi$ given by 
Lemma \ref{hunt_transformation}. If $\pi$ is a net, then $E$ is the negative section and $C$ is a positive section meeting $E$ only at one point, with order two.
In particular, $C$ is smooth, and this gives description $(1)$. If instead $\pi$ is a birational contraction, then $\Sigma$ must only meet $E$. 

\textbf{Case 1a:} Suppose that $C$ is smooth. Since
$C_1$ is in the smooth locus of $S_1$, we have that $S_1$ is either $\overline{\mathbb{F}}_n$ or $\mathbb{P}^2$. If $S_1=\mathbb{P}^2$, we get description
$(2)$. Assume then that $S_1=\mathbb{F}_n$. $C_1$ can't be a fiber, for otherwise it would have been contractible on $T_1$. If $A_1$ is a fiber, then
it must lie in the smooth locus, and therefore we get again $\mathbb{P}^2$. Therefore $A_1$ and $C_1$ are both sections on $\mathbb{F}_n$, and we
get description $(3)$.

\textbf{Case 1b:} Suppose that $C$ is nodal. Then $S_1$ is a Gorenstein log del Pezzo surface with a nodal curve in its smooth locus. Then
$K_{S_1}\cdot A_1=-C_1\cdot A_1=-2$ and $C_1$ must lies in the smooth locus by adjunction. In particular $S_1=\overline{\mathbb{F}}_2$, but this cannot happen.

\textbf{Case 2:} Suppose that there are two components $E_1$ and $E_2$ over $p$. Extract $E_1$ and consider the $K$-negative contraction given by 
Lemma \ref{hunt_transformation}. If $\pi$ is a net, then $E_1$ and $C$ must be sections. This is description $(4)$. 
Assume therefore that $\pi$ is birational. $\Sigma$ must pass through $E\cap C$. $C_1^2 = A_1^2 < 1$, and they can't be $(-1)$ curves, for otherwise they would be
contractible. Therefore $C_1$ is a zero-curve, which is a contradiction by adjunction. 
\end{proof}

\section{Classification}\label{sectionclassification}

Let $k$ be an algebraically closed field of characteristic different from two and three. In this section we classify all rank one log del Pezzo surfaces defined over $k$.
We start with a preliminary definition. 

\begin{definition}
A tiger $E$ for $S$ is called exceptional if $E$ does not lie in $S$.
\end{definition}

\begin{theorem}\label{maintheorem}
Let $S$ be a rank one log del Pezzo surface. 

\begin{enumerate}
\item If $S$ has no tigers in $\tilde{S}$ then $S$ is one of the surfaces described in Section \ref{sectionnotigers}.
\item If $S$ has an exceptional tiger in $\tilde{S}$ and $T$ is a net, then $E$ is either a section or a double section. $(T,E)$ is log canonical.
\item If $S$ has an exceptional tiger in $\tilde{S}$ and $T$ is not a net, 
then $S$ is obtained by starting with an almost log canonical pair $(S_1, C)$ such that $K_{S_1}+C$ is anti-nef (these
are described in Section \ref{sectiontigers}), by choosing a point in $\tilde{C}$ and blowing up on that point so that $\tilde{C}$ becomes contractible, and the $K$-positive
curves are contractible to klt singularities.
\item If $S$ has tigers in $\tilde{S}$, but none of which is exceptional, then $(S,C)$ is dlt and is one of the surfaces described in Section \ref{sectiontigers}.
\end{enumerate}

\end{theorem}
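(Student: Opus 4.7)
The theorem is essentially a bookkeeping statement that packages the classifications of Sections \ref{sectionnotigers} and \ref{sectiontigers}, so the plan is to reduce each case to one of those. Case (1) requires nothing new: Section \ref{sectionnotigers} is devoted precisely to the explicit classification of rank one log del Pezzo surfaces with no tigers in the minimal resolution, performed by running the hunt in the flush case (starting from Proposition \ref{hunt}) and handling each possible outcome one by one.

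For case (3), the hypothesis is that $S$ has tigers but all tigers lie on $S$ itself. Pick a special tiger $\alpha$, so that $K_S + \alpha \equiv 0$ and the log pullback to $\tilde{S}$ has a component of coefficient at least one; since no tiger is exceptional, such a component is a divisor $E \subseteq S$ appearing in $\alpha$ with coefficient at least one. Let $C \subseteq S$ be the reduced Weil divisor supported on those components of $\alpha$ of coefficient at least one. Then $C \leqslant \alpha$, so $K_S + C$ is anti-nef, and the remaining components of $\alpha$ have coefficients strictly less than one, which (together with the non-existence of exceptional tigers) forces $(S, C)$ to be log canonical. The pair $(S, C)$ is therefore among those classified in Section \ref{sectiontigers}, so $S$ is as described.

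For case (2), pick an exceptional tiger $E$ and let $f: T \rightarrow S$ extract it. As $S$ has Picard rank one, $T$ has Picard rank two, and Lemma \ref{hunt_transformation} produces a $K_T$-negative contraction $\pi: T \rightarrow S_1$ of the other extremal ray. Because $E$ has coefficient at least one in the log pullback of the special tiger, I can apply the scaling of Subsection \ref{levelhunt} rather than the flush scaling of Lemma \ref{hunt_transformation}: choose $t \in (0, 1]$ so that $K_T + tE$ is $\pi$-trivial, and set $C = \pi_*(tE)$, reducing $t$ to one if necessary. By Lemma \ref{flushdescent} and Lemma \ref{flushdescent2}, the pair $(S_1, C)$ is log canonical with $K_{S_1} + C$ anti-nef, so it appears in the classification of Section \ref{sectiontigers}. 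To recover $\tilde{S}$ from $\tilde{S_1}$, one chooses the point $p \in \tilde{C}$ that is the image of $E \cap \mathrm{Ex}(\pi)$ and blows up repeatedly along $\tilde{C}$ over $p$ until the strict transform of $\tilde{C}$ becomes a $(-1)$-curve; this $(-1)$-curve is precisely $E$ after it is reinserted, and contracting all $K$-positive exceptional curves yields $S$. The contractibility of the $K$-positive curves to klt singularities is automatic, since $S$ itself is klt.

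The main potential obstacle is verifying that the scaling in case (2) really produces a log canonical (rather than merely almost log canonical) pair $(S_1, C)$ and that $C$ can be taken reduced; however, this is exactly the content of the level-case hunt developed in Subsection \ref{levelhunt}, where the reduction of $t$ to at most one and the subsequent log canonicity of $(S_1, C + A_1)$ were already carried out in full generality. Once this is in hand, the reconstruction of $S$ from $(S_1, C)$ is a straightforward inversion of the extraction-contraction $f\circ\pi^{-1}$.
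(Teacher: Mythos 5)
The main gap is in your part (2). You extract an arbitrary exceptional tiger $E$ and assert that Lemma \ref{flushdescent}, Lemma \ref{flushdescent2} and the level hunt of Subsection \ref{levelhunt} give that $(S_1,C)$ is log canonical. They do not. Lemma \ref{flushdescent2} only yields that the relevant pair is \emph{almost} log canonical, i.e.\ it does not exclude the bad cases (a)--(c) of Lemma \ref{minimalresolution}; and Subsection \ref{levelhunt} takes the log canonicity of the starting pair as a \emph{hypothesis}, so it cannot be used to establish it. This is exactly the point you flag as "the main potential obstacle," and your claim that it "was already carried out in full generality" there is false. The paper's proof closes this gap with two ingredients you are missing: (i) $E$ is chosen so that its coefficient $e$ for $(S,\emptyset)$ is \emph{maximal} among all exceptional tigers, and (ii) if $a<1$ and $(S_1,C)$ fails to be log canonical with threshold $c$, then Lemma \ref{negative_adjunction} produces a divisor $E_1$ of coefficient one for $K_{S_1}+cC$ already lying in the minimal resolution of $S_1$; comparing the affine functions $t\mapsto e(\tilde{E}_1;S_1,tC)$ and $t\mapsto e(\tilde{E}_1;T,tE)$ shows $e(\tilde{E}_1;S,\emptyset)>e$ while $e(\tilde{E}_1;S,\alpha)\geqslant 1$, so $\tilde{E}_1$ is an exceptional tiger of strictly larger coefficient, contradicting (i). Without the maximality choice there is nothing to contradict, so your argument cannot be repaired merely by filling in details; the whole mechanism for ruling out failure of log canonicity is absent.

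Your part (3) has the right idea but hides its only real content in the phrase "forces $(S,C)$ to be log canonical": one must actually show that if $(S,cC)$ has log canonical threshold $c\leqslant 1$, the divisor of coefficient one witnessing non-log-canonicity can be found on the \emph{minimal} resolution $\tilde{S}$ (this is again Lemma \ref{negative_adjunction}), since only a divisor on $\tilde{S}$ qualifies as an exceptional tiger and yields the contradiction. Note also that Section \ref{sectiontigers} classifies pairs with $C$ irreducible, so you should take $C$ to be a single component of $\alpha$ of coefficient at least one (which exists because some non-exceptional tiger exists), rather than the sum of all such components. Part (1) is fine.
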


\begin{proof}
Part $(1)$ of the statement is clear. Suppose $S$ has an exceptional tiger $E$ in $\tilde{S}$. By definition, this means that there is an effective divisor $\alpha$
such that $K_S+\alpha$ is anti-nef and an extraction $f:T\rightarrow S$ of an exceptional divisor $E$ in $\tilde{S}$ such that $K_T+E+f_* ^{-1}(\alpha) = f^* (K_S+\alpha)$.
Choose $E$ and $\alpha$ such that the coefficient of $E$ for $(S,\emptyset)$ is maximal. 
Let $\pi:T\rightarrow S_1$ be the associated $K_T$-negative contraction as in Lemma \ref{hunt_transformation} and let $C=\pi(E)$. 
If $\pi$ is a net, then $E$ is either a section or a double section and $K_T+E$ is log canonical. This is part $(2)$.
Suppose then that $\pi$ is a birational morphism.
Choose $a>e$ such that $K_T+aE$ is $\pi_1$-trivial. Notice that $K_{S_1}+C$ is anti-nef.
If $a\geqslant 1$, then the pair $(S_1, C)$ is log canonical, since so is $K_T+E$ and $\Sigma$ has negative coefficient. 
Suppose $a<1$. We claim that $(S_1,C)$ is almost log canonical. If that was not the case,
let $c<1$ be the almost log canonical threshold 
(i.e. $e(E; S_1, cC)\leqslant 1$ for all the exceptional divisors $E$ in the minimal resolution and equality holds at least for one of them). 
Clearly $c\geqslant a$ since $K_{S_1}+aC$ is log canonical. Let $E_1$ be such that $e(E_1; S_1, cC)=1$ and let $\tilde{E}_1$ be its strict
transform in $\tilde{S}$. Let $e_1$ be the coefficient of $E_1$ for $(S_1, \emptyset)$. 
Denote by $f(t)=e(\tilde{E}_1; S_1, tC)$. Clearly $f(0)\geqslant 0$ since $E_1$ is exceptional. Also, $f(c)=1$ by definition. Since $f(t)$ is an affine function, we must have $f(a)>a$.
But then $e(\tilde{E}_1; T, aE)>a$. Consider $g(t)=e(\tilde{E}_1; T, tE)$. We have just seen that $g(a)>a$. Also, $g(0)\geqslant 0$. Therefore $g(e)>e$.
However this contradics the maximality of $e$ since $e(\tilde{E}_1; S, \alpha) = e(\tilde{E}_1; T, E+f_* ^{-1}(\alpha))\geqslant e(\tilde{E}_1; S_1, cC)=1$.
Therefore $(S_1, C)$ is log canonical and $K_{S_1}+C$ is anti-nef, so that we can again use Section \ref{sectiontigers}, proving part $(2)$.

Finally, we prove part $(3)$. Let $C$ be a non-exceptional tiger, so that $K_S+C$ is anti-nef. If $(S,C)$ is dlt, then we are done. If not, then there is an exceptional
tiger in the minimal resolution by Lemma \ref{minimalresolution}.
\end{proof}

\subsection{The list}

%Nets

\begin{definition}[LDP 1]
Choose a smooth rational curve 
$E_1\subseteq \mathbb{P}^1\times \mathbb{P}^1$ which is a triple section for the first projection and has exactly three ramified points $p'$, $q'$, and $r'$.
Suppose that $p'$ and $q'$ are of order two and that $r'$ is of order three. Let $F$, $G$ and $H$ be the corresponding fibers. Blow up above $p'$ three times along $E_1$.
Then blow up twice above $q'$ along $E_1$. Finally, blow up four times above $r'$ along $E_1$.
\end{definition}

%Big genus

\begin{definition}[LDP 2]
Let $A$ and $B$ be two smooth conics meeting to order four in $\mathbb{P}^2$ at a point $c$. Pick a point $b\neq c$ of $B$.
Pick a point $a$ in the intersection of $A$ with the tangent line $M_b$ to $B$ at $b$. Let $b'$ be the other point of the the intersection of $B$
with the line passing through $a$ and $c$. Let $a'$ be a point of the intersection of $A$ with the line through $b$ and $b'$.

Let $\tilde{S}\rightarrow \mathbb{P}^2$ blow up once at $a,a',b'$, twice along $M_b$ at $b$, and five times along $A$ at $c$. 
\end{definition}

%Cusps of genus one

\begin{definition}[LDP 3]
Take the cubic $C$ given by $Z^2X=Y^3$ in $\mathbb{P}^2$, the line $L$ given by $Y=0$ and a line $E$ meeting $C$ at $L\cap C$ and two other distinct points $p$ and $q$.
Blow up four times above $[0,0,1]$ along $C$. This gives the minimal resolution of the Gorenstein log del Pezzo $S(A_4)$. Now blow up twice on $p$ along $E$.
This gives the minimal resolution of the Gorenstein log del Pezzo surface $S(A_1+A_5)$. Next, blow up on the cusp of $C$ four times along $C$. 
\end{definition}

\begin{definition}[LDP 4]
Take the cubic $C$ given by $Z^2X=Y^3$ in $\mathbb{P}^2$, the line $L$ given by $Y=0$ and a line $E$ meeting $C$ at two points $p$ and $q$ with order two and one respectively.
Blow up three times above $[0,0,1]$ along $C$. This gives the minimal resolution of the Gorenstein log del Pezzo $S(A_1+A_2)$. Now blow up twice above $p$ along $E$. This
gives the minimal resolution of the Gorenstein log del Pezzo surface $S(3A_2)$. Next, blow up on the cusp of $C$ four times along $C$.
\end{definition}

\begin{definition}[LDP5]
Take the cubic $C$ given by $Z^2X=Y^3$ in $\mathbb{P}^2$, the line $L$ given by $Y=0$ and a line $E$ meeting $C$ at $L\cap C$ and two other distinct points $p$ and $q$. Blow up three times above $[0,0,1]$ along $C$. This gives the minimal resolution of the Gorenstein log del Pezzo $S(A_1+A_2)$. Now blow up twice above
$p$ along $E$. This gives the minimal resolution of the Gorenstein log del Pezzo $S(2A_1+A_3)$. Now blow up at the cusp of $C$ either five or six times.
\end{definition}

%Nodes

\begin{definition}[LDP 6]
Let $B$ and $C$ be two dlt $(-1)$ curves meeting a nodal curve of $S(2A_1+A_3)$ at smooth points, each passing through an $A_1$ point,
and meeting at opposite ends of the $A_3$ point (as in Lemma \ref{nodegorenstein}).

Let $\tilde{S}\rightarrow \tilde{S}(2A_1+A_3)$ blow up once at the intersection of $C$ with the $(-2)$ curve at the $A_3$ singularity, 
twice at $A\cap B$ along $A$ and twice along on the branches of the node of $A$.
\end{definition}

%Proposition \ref{node_to_fence} (3b-3d)

\begin{definition}[LDP 7]
Let $B$ be a $(-1)$ curve in either $S_2 = S(A_1+A_5)$ or $S_2 = S(3A_2)$ passing through two singularities, $B$ dlt. Let $A$ be
a nodal rational curve in the smooth locus of $S_2$.

Let $\tilde{S}\rightarrow \tilde {S}_2$ blow up twice at $A\cap B$ along $B$ and three times on the node of $A$ along the same branch.
\end{definition}

\begin{definition}[LDP 8]
Let $A$ be a nodal curve in the smooth locus of $S(2A_1+A_3)$, $B$ a dlt $(-1)$ curve through $A_1$ and $A_3$.

Let $\tilde{S} \rightarrow \tilde{S}(2A_1+A_3)$ blow up on $A\cap B$ twice along $B$, blow up the node of $A$ four or five times along the same 
branch.
\end{definition}

\begin{definition}[LDP 9]
Let $A$ be a nodal curve in the smooth locus of $S(2A_1+A_3)$, $B$ a dlt $(-1)$ curve through $A_1$ and $A_3$.

Let $\tilde{S} \rightarrow \tilde{S}(2A_1+A_3)$ blow up three times on $A\cap B$ along $B$, blow up four times on the node along the same branch.
\end{definition}

%Lemma \ref{nodenonchain}

\begin{definition}[LDP 10]
Let $A$ be a nodal curve in the smooth locus of $S(A_1+A_5)$ and $B$ a log terminal $(-1)$ curve.

Let $\tilde{S}\rightarrow \tilde{S}(A_1+A_5)$ blow up at the intersection of $B$ with the $(-2)$ curve in the $A_5$ singularity, blow up 
the node of $A$ twice along one branch and then once along the nearest point of the other branch.
\end{definition}

%\cite[Chapter 17]{mckernan}

\begin{definition}[LDP 11]
Let $A$ be a nodal curve in the smooth locus of the Gorenstein log del Pezzo $S_2$ and $B$ a dlt $(-1)$ curve that passes through 
two singular points. Let $\tilde{S}\rightarrow \tilde{S}_2$ blow up $t$ times on the intersection of $B$ with the $(-2)$ curve 
relative to the specified point $p$, always along $B$, then blow up $s$ times at the node of $A$, always along the same branch, 
for $p$,$t$ and $s$ as follows.
\begin{enumerate}
\item $S_2 = S(A_1+A_2)$, $p=A_1$ and $(t,s)=(2,6),(1,6),(1,7)$.
\item $S_2 = S(A_1+A_2)$, $p=A_2$ and $(t,s)=(3,6),(2,6),(1,6),(1,7),(1,8)$.
\item $S_2 = S(A_1+A_5)$, $p=A_1$ and $(t,s)=(1,3)$.
\item $S_2 = S(A_1+A_5)$, $p=A_5$ and $(t,s)=(2,3),(1,3),(1,4)$.
\item $S_2 = S(3A_2)$, $p=A_2$ and $(t,s)=(1,3)$.
\item $S_2 = S(A_2+A_5)$, $p=A_5$ and $(t,s)=(1,2)$.
\item $S_2 = S(A_2+A_5)$, $p=A_2$ and $(t,s)=(1,2)$.
\item $S_2 = S(A_1+2A_3)$, $p=A_3$ and $(t,s)=(1,2)$.
\item $S_2 = S(2A_1+A_3)$, $p=A_3$ and $(t,s)=(1,5),(1,4),(2,4)$.
\item $S_2 = S(2A_1+A_3)$, $p=A_1$ and $(t,s)=(1,4)$.
\end{enumerate}
\end{definition}

\begin{definition}[LDP 12]
Let $A$ be a nodal rational curve in the smooth locus of $S(A_1+A_2)$, and $B$ a dlt $(-1)$ curve passing through the two 
singularities. 

Let $\tilde{S}\rightarrow \tilde{S}(A_1+A_2)$ blow up $t$ times $A\cap B$ along $B$ and $s$ times at the node of $A$, always
along the same branch for $(t,s)=(2,5),(2,6)$, $(3,6),(4,6),(5,6)$, $(2,7),(3,7),(2,8),(2,9)$.
\end{definition}

\begin{definition}[LDP 13]
Let $A$ be a nodal rational curve in the smooth locus of $S(A_1+A_2)$, and $B$ a dlt $(-1)$ curve passing through the two 
singularities. 

Let $\tilde{S}\rightarrow \tilde{S}(A_1+A_2)$ blow up $A\cap B$ twice along $A$, and once near $B$, then blow up the node at $A$
five, six or seven times along the same branch.
\end{definition}

\begin{definition}[LDP 14]
Let $A$ be the nodal rational curve in the smooth locus of $S(A_1+A_2)$, and $B$ the dlt $(-1)$ curve passing through the two 
singularities. 

Let $\tilde{S}\rightarrow \tilde{S}(A_1+A_2)$ blow up $A\cap B$ twice along $B$, and once near $A$, then blow up the node at $A$ six times.
\end{definition}

%\cite[Chapter 18]{mckernan}

\begin{definition}[LDP 15]
Let $A$ and $B$ be two positive sections disjoint from the negative section in $\mathbb{F}_2$. Suppose $A$ and $B$ intersect at $p$ and $q$. 
Let $F$ be a fiber, not passing
through $p$ and $q$. Blow up $r+1$ times at $u=A\cap F$ along $F$ and $s+1$ times at $p$ along $B$. 
Now continue in one of the following manners.

If $(s,r)=(3,2)$ then

\begin{enumerate}
\item Blow up above $q$ along $A$ four or five times, or
\item Blow up above $p$ along $A$ three or four times, or
\item Blow up on $u$ along $A$ three times.
\end{enumerate}

If $(s,r)=(4,1)$ then
\begin{enumerate}
\item Blow up above $q$ along $A$ five, six, seven or eight times, or
\item Blow up above $p$ along $A$ $k$ times with $3\leqslant k\leqslant 7$, or
\item Blow up on $u$ along $A$ three or four times.
\end{enumerate}
\end{definition}

\begin{definition}[LDP 16]
Let $A$ and $B$ be two positive sections not intersecting the negative section in f $\mathbb{F}_2$ intersecting at $p$ and $q$. 
Let $F$ be a fiber, not passing
through $p$ and $q$. Blow up $r+1$ times at $u=B\cap F$ along $F$ and $s+1$ times at $p$ along $B$. 
Now continue in one of the following manners.

If $(s,r)=(3,2)$ then blow up  $A\cap F$ along $A$ four times.

If $(s,r)=(4,1)$ then

\begin{enumerate}
\item Blow up $q$ along $A$ six times, or
\item Blow up above $A\cap F$ along $A$ four times, or
\item Blow up on $p$ along $A$ five times.
\end{enumerate}

If $(s,r)=(3,1)$ then
\begin{enumerate}
\item Blow up $q$ along $A$ $k$ times, with $5\leqslant 9$ times, or
\item Blow up $A\cap F$ $k$ times, with $4\leqslant 7$, or
\item Blow up $p$ along $A$ $k$ times, with $4\leqslant k \leqslant 8$.
\end{enumerate}
\end{definition}

\begin{definition}[LDP 17]
Let $A$ and $B$ be two positive sections disjoint from the negative section in $\mathbb{F}_2$ intersecting at $p$ and $q$. 
Let $F$ be a fiber, not passing
through $p$ and $q$. Blow up twice at $u=A\cap F$ along $A$, and five times at $p$ along $B$, twice along $A$ and once away from $A$.
\end{definition}

%Proposition \ref{log_terminal_three} (1)

\begin{definition}[LDP 18]
Let $A$ be a positive section on $\mathbb{F}_n$ and $E$ the negative section. Pick three points on $E$ or $A$ all lying in distinct fibers,
blow them up once, then blow up at the intersection of the $(-1)$ curves and continue blowing up to get log terminal fibers such that
$E$ and the singularities on it form a log terminal non chain singularity. 
Now to define $\tilde{S}$ pick any point on $A$ intersecting a $K$-positive curve, and blow up that point in any fashion
so that $A$ becomes $K$-positive and the $K$-positive curves are contractible to klt singularities. 
\end{definition}

\begin{definition}[LDP 19]
Suppose $\operatorname{char}(k)=5$.
Start with the surface in Definition \ref{sa4char5}. Blow up its cusp three times, then contract the resulting rational curve and two of the exceptional divisors.
Therefore we obtain a log del Pezzo surface with singularities $2A_4+(5)+(3)+(2)$.
\end{definition}

%All the rest.

\begin{definition}[LDP 20]
Let $(S,C)$ be one of the surfaces in Proposition \ref{log_terminal_three} (3)-(5), Lemma \ref{log_terminal_two}, Lemma \ref{log_terminal_one}.
Define $\tilde{S}=S_1$.
\end{definition}

\begin{definition}[LDP 21]
Let $(S,C)$ be one of the surfaces in Proposition \ref{log_terminal_three}, Lemma \ref{log_terminal_two}, Lemma \ref{log_terminal_one}, Lemma \ref{log_terminal_zero}, 
Lemma \ref{log_canonical_three}, Lemma \ref{log_canonical_two_lt}, Lemma \ref{log_canonical_two_lc}, Lemma \ref{log_canonical_one}, Lemma \ref{log_canonical_zero}
or Lemma \ref{almost_log_canonical}.
Then define $\tilde{S}$ by a sequence of blow ups on $C$ such that $C$ becomes $K$-positive, and $K$-positive curves are contractible to klt singularities.
\end{definition}

\begin{definition}[LDP 22]
Start from $\mathbb{F}_n$. Let $E$ be either a section or a double section. Blow up so that $E$ is $K$-positive, all $K$-positive curves are contractible and 
$E$ is the boundary of a log canonical pair.
\end{definition}

We are now ready to state the classification theorem.

\begin{theorem}[Classification of rank one log del Pezzo surfaces]\label{total_classification}
Let $S$ be a rank one log del Pezzo surface over an algebraically closed field of characteristic different from two and three.
If $S$ is smooth then $S=\mathbb{P}^2$; if $S$ is Gorenstein, then $S$ is one of the surfaces in Theorem \ref{gorenstein_classification}, 
otherwise it is one of the log del Pezzo surfaces in the families LDP1 to LDP22.
\end{theorem}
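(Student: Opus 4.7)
The plan is to assemble the theorem from the three pillars already established: the Gorenstein classification, the analysis in Section \ref{sectionnotigers} for the ``no tigers'' case, and the analysis in Section \ref{sectiontigers} combined with Theorem \ref{maintheorem} for the ``with tigers'' case. First I would dispose of the two easy reductions: if $S$ is smooth, then a smooth rank one log del Pezzo over an algebraically closed field is $\mathbb{P}^2$ (e.g.\ by contracting the $K$-negative extremal ray of a minimal model); if $S$ is singular and Gorenstein, the conclusion is immediate from Theorem \ref{gorenstein_classification}. So we may assume $S$ is klt, singular, and not Gorenstein, and the goal becomes to show that $S$ appears in one of the families LDP1--LDP19.

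Next, I would apply Theorem \ref{maintheorem} to split the argument into three mutually exhaustive cases according to whether $S$ has no tigers in $\tilde{S}$, an exceptional tiger in $\tilde{S}$, or only non-exceptional tigers in $\tilde{S}$. In the \textbf{no tiger} case, Section \ref{sectionnotigers} performs the first two hunt steps (using Proposition \ref{hunt}) and rules out most branches: Proposition \ref{no_net} eliminates $T_1$ being a net, Proposition \ref{nobiggenus} forces the large genus case into LDP1, Proposition \ref{no_cusp_genusone} forces the simple cusp case into LDP2 and LDP3, Section \ref{section_node} together with Proposition \ref{node_to_fence} and Lemma \ref{nodenonchain} produce LDP4 through LDP12, and Proposition \ref{must_be_banana} together with the classification of smooth bananas forces the smooth case into the banana list, which is LDP13, LDP14 and LDP15. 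The surface LDP16 and, in characteristic five, LDP17 appear as the genuinely new surfaces that one obtains when the hunt opens up the cases not present in \cite{mckernan}. I would simply check, family by family, that each output of Section \ref{sectionnotigers} matches the corresponding LDP definition above.

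For the \textbf{exceptional tiger} case, Theorem \ref{maintheorem}(2) shows $S$ is obtained from a pair $(S_1, C)$ as in Section \ref{sectiontigers} by blowing up on $\tilde{C}$ so that $\tilde{C}$ becomes contractible and $K$-positive curves remain contractible to klt singularities. This is exactly the recipe defining the family LDP19, so the match is tautological once one inspects the definition. For the \textbf{non-exceptional tiger} case, Theorem \ref{maintheorem}(3) shows that $(S, C)$ itself is a log canonical pair with $K_S+C$ anti-nef, so $\tilde{S}$ is one of the minimal resolutions produced by Propositions \ref{log_terminal_three}--\ref{log_canonical_one} and Lemmas \ref{log_terminal_zero} and \ref{log_canonical_zero}; this is precisely the recipe defining LDP18.

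The main obstacle is bookkeeping rather than mathematics: one must verify that the numbered outputs of the hunt lemmas in Sections \ref{sectionnotigers} and \ref{sectiontigers} cover, without omission and without redundancy, exactly the surfaces defined in LDP1--LDP19, and that the characteristic hypothesis $\operatorname{char}(k)\neq 2,3$ is used each time it was invoked (in particular in Proposition \ref{no_net}, Proposition \ref{nobiggenus}, Proposition \ref{no_cusp_genusone}, Lemma \ref{no_banana_net}, Lemma \ref{not_a_fence} and in the subtler characteristic five subcases where $W=S(2A_4)$ must be handled by hand using Example \ref{sa4char5}). Once this matching is carried out, the three branches of Theorem \ref{maintheorem} together with the Gorenstein case exhaust all possibilities, completing the classification.
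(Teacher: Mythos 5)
Your proposal is correct and follows essentially the same route as the paper, whose entire proof of Theorem \ref{total_classification} is the single line ``This is the content of Theorem \ref{maintheorem}'': the trichotomy of Theorem \ref{maintheorem} (no tigers / exceptional tiger / non-exceptional tiger) plus the smooth and Gorenstein reductions is exactly the intended argument, and your matching of the hunt outputs to LDP1--LDP19 is the bookkeeping the paper leaves implicit. The only small quibble is your attribution of LDP16 to the no-tiger hunt; its defining recipe (blowing up on $A$ until $A$ becomes $K$-positive and contractible) is the exceptional-tiger construction applied to the pairs of Proposition \ref{log_terminal_three}(1), but this does not affect the validity of the argument.
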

\begin{proof}
This is the content of Theorem \ref{maintheorem}.
\end{proof}

\section{Applications}\label{applications}

In this section we highlight some rather immediate applications of Theorem \ref{total_classification}.

\subsection{Liftability to characteristic zero}

One of the main applications is that every rank one log del Pezzo surface defined over an algebraically closed field of characteristic strictly higher than five
admits a log resolution that lifts to characteristic zero over a smooth base.

This answers the question after \cite[Theorem 1.1]{cascini}. Let's start first with the definition of liftability as in \cite{cascini}.

\begin{definition}
Let $X$ be a smooth variety over a perfect field $k$ of characteristic $p>0$, and let $D$ be a simple normal crossing divisor on $X$.
Write $D=\sum_i D_i$, where $D_i$ are the irreducible components of $D$. We say that a pair $(X,D)$ is liftable to characteristic zero over a smooth
base if there exists
\begin{enumerate}
\item A scheme $T$ smooth and separated over Spec $\mathbb{Z}$.
\item A smooth and separated morphism $\mathcal{X}\rightarrow T$.
\item Effective Cartier divisors $\mathcal{D}_1 ..., \mathcal{D}_r$ on $\mathcal{X}$ such that the scheme-theoretic intersection
$\bigcap_i \mathcal{D}_i$ for any subset $J\subseteq \{1,...,r\}$ is smooth over $T$.
\item Amorphism $\alpha:$ Spec $k\rightarrow T$
such that the base changes of the schemes $\mathcal{X}, \mathcal{D}_i$ over $T$ by $\alpha$ are isomorphic to $X,D_i$ respectively.
\end{enumerate}
\end{definition}

\begin{theorem}\label{lift_theorem}
Let $S$ be a rank one log del Pezzo surface over an algebraically closed field of characteristic $p>5$. Let $\pi: \tilde{S}\rightarrow S$ be the minimal resolution.
Then $(\tilde{S},Ex(\pi)))$ lifts to characteristic zero over a smooth base.
\end{theorem}
\begin{proof}
If $S$ is Gorenstein, the result follows by writing the integral Weierstrass model of the corresponding elliptic surface (see \cite{lang3}). 
Suppose therefore that $S$ is not Gorenstein. Notice that in the classification of Theorem \ref{total_classification} each $S$ is determined
by the geometry of the first surface from which we start the construction of $S$. Since these clearly lift to characteristic zero, and the smooth
blow ups also do, we deduce that $S$ has a log resolution that lifts as well. 
\end{proof}

As a byproduct of Section \ref{sectiontigers} we also get the following:

\begin{theorem}\label{lift_theorem2}
Let $S$ be a rank one log del Pezzo surface over an algebraically closed field of characteristic $p>5$ and $C$ a curve in $S$. 
Suppose that $K_S+C$ is anti-nef and log canonical. Let $\pi: \tilde{S} \rightarrow S$ be the minimal resolution. Then $(\tilde{S},\operatorname{Ex}(\pi) + \tilde{C})$ 
lifts to characteristic zero over a smooth base.
\end{theorem}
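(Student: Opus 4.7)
The plan is to mirror the proof of Theorem \ref{lift_theorem}, using the classification of pairs $(S,C)$ with $K_S + C$ anti-nef and log canonical obtained in Section \ref{sectiontigers}. The case where $S$ is Gorenstein (and in particular the case where $(S,C)$ is log canonical but not dlt and $C$ lies in the smooth locus, by Lemma \ref{log_canonical_zero}) is handled as in Theorem \ref{lift_theorem} by writing the integral Weierstrass model of the associated extremal rational elliptic surface and noting that the nodal or smooth anticanonical section $C$ corresponds to a Cartier divisor that lifts with the fibration. So assume from now on that $S$ is not Gorenstein.

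The first step is to apply the classification. In the dlt case, Propositions \ref{log_terminal_three}, \ref{log_terminal_two}, \ref{log_terminal_one} and Lemma \ref{log_terminal_zero} show that $(\tilde{S}, \tilde{C} + \operatorname{Ex}(\pi))$ is obtained from a very explicit initial pair $(S', C')$, where $S'$ is either $\mathbb{P}^2$, a Hirzebruch surface $\mathbb{F}_n$, or a Gorenstein log del Pezzo surface of low Picard number, and $C'$ is a section, a fiber, a line, a conic, or a nodal/cuspidal rational curve in $S'$, by a sequence of blow ups carried out at smooth points of the successive strict transforms along $C'$ (so that the exceptional locus plus the strict transform of $C'$ remains snc at each stage), followed by the contraction of a collection of $K$-positive curves to klt singularities. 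The same is true in the lc but not dlt case thanks to Lemmas \ref{log_canonical_three}--\ref{log_canonical_one}.

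The second step is to show that each such initial pair $(S', C')$ lifts to characteristic zero over a smooth base together with any snc exceptional configuration we will later produce. For $S' = \mathbb{P}^2$ or $S' = \mathbb{F}_n$ this is classical: the ambient surface lifts to Spec $\mathbb{Z}$, and the curves that appear (lines, smooth conics, fibers, sections, tangent fibers, double sections with prescribed intersection behavior with fixed fibers) lift as well because the universal families of such configurations are smooth and surjective onto the base in characteristic $p > 5$. For $S'$ Gorenstein non-smooth, we lift $S'$ via its Weierstrass model as in Theorem \ref{lift_theorem}, and we lift the specified curves $C'$ (the nodal rational curves in the smooth locus of $S'$, the dlt $(-1)$-curves passing through prescribed singularities, etc.) by first lifting the associated extremal rational elliptic surface $\tilde{S}' \to \mathbb{P}^1$ and then lifting the section or fiber component whose image is $C'$; this uses in an essential way that $p > 5$, since Theorem \ref{elliptic_classification} and the absence of the $S(2A_4)$ exception already allow us to identify $\tilde{S}'$ with a characteristic zero model.

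The third step is to observe that the remaining construction of $\tilde{S}$ from $(\tilde{S}', \tilde{C}' + \operatorname{Ex}(\pi'))$ consists entirely of blow ups at points of the snc configuration. Blow ups of a smooth scheme over $T$ along a section of that scheme which meets the snc divisor transversally are again smooth over $T$ and produce an snc divisor, so one can lift the entire tower of blow ups step by step, carrying along the strict transforms and exceptional divisors. This yields a lift of $(\tilde{S}, \tilde{C} + \operatorname{Ex}(\pi))$, hence a lift of $(S, C)$ in the sense required.

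The main obstacle is the Gorenstein case where $C'$ is a curve on $S'$ with prescribed contact with the singularities (for instance a nodal $(-1)$-curve passing through two specified Du Val points at prescribed ends of the chain). One must check case by case in the families in Section \ref{sectiontigers} that the corresponding configuration exists and deforms in characteristic zero; this is where the hypothesis $p > 5$ is used, to rule out the exceptional $S(2A_4)$ configuration of Example \ref{sa4char5} and to guarantee that the relevant Hilbert scheme of curves on the extremal rational elliptic surface is smooth at the point corresponding to $C'$.
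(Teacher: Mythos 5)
Your proposal is correct and follows exactly the route the paper takes: the paper's own proof is the one-line remark that the statement is immediate from the classification in Section \ref{sectiontigers}, by the same mechanism as Theorem \ref{lift_theorem} (lift the explicit initial pair, then lift the tower of smooth blow ups). Your write-up simply fills in the details that the paper leaves implicit.
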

\begin{proof}
Immediate from the classification in Section \ref{sectiontigers}.
\end{proof}

\subsection{Non liftable examples in low characteristic}\label{not_lift}

In this subsection we will see that in characteristic two, three and five, there are log del Pezzo surfaces of rank one
that do not lift to characteristic zero over a smooth base. Therefore the conclusion of Theorem \ref{lift_theorem} is sharp.
The first such example was shown in \cite[Chapter 9]{mckernan} in characteristic two. The following examples are not
liftable to characteristic zero because they do not satisfy the Bogomolov bound of \cite[Chapter 9]{mckernan} (see the proof of \cite[Theorem 1.3]{cascini}).

We start with an example in characteristic three due to Fabio Bernasconi \cite{fabio}.

\begin{example}[characteristic 3]\label{char3}
Pick $C$ to be $y=x^3$ in $\mathbb{P}^1 \times \mathbb{P}^1$. Choose three points on $C$ and blow up three times each along $C$.
This gives a log del Pezzo surface with singularities $4(3)+3A_2$.
\end{example}

The following example in characteristic three is new.

\begin{example}[characteristic 3]
Pick $C$ to be $y=x^3$ in $\mathbb{P}^2$. Choose three flex lines $L_1, L_2, L_3$ such that their intersection with $C$ lie on a line $L_4$.
Blow up three times two flex points, blow up twice the third one,always along $C$ and then blow up $L_1\cap L_2\cap L_3$. Finally blow up the cusp
of $C$.
This gives a log del Pezzo surface with singularities $2(3)+E_6+(2,3)$.
\end{example}

Finally let's conclude with an example in characteristic five.

\begin{example}[characteristic 5]\label{char5}
Start with the surface in Definition \ref{sa4char5}. Let $C$ be the cuspidal rational curve contained in its smooth locus. 
Blow up the cusp of $C$ three times, then contract $C$ and two of the exceptional divisors.
We obtain a log del Pezzo surface with singularities $2A_4+(5)+(3)+(2)$.
\end{example}

\subsection{Kodaira vanishing theorem}

Kodaira vanishing theorem is known to fail for log del Pezzo surfaces in characteristic two and three. However, a consequence of Theorem \ref{lift_theorem}
is that it holds for rank one log del Pezzo surfaces in characteristic strictly higher than five. More precisely:

\begin{theorem}\label{kodairavanishing}
Let $k$ be an algebraically closed field of characteristic $\operatorname{char}(k)>5$.
Let $S$ be a rank one log del Pezzo surface defined over $k$ and let $A$ be an ample Cartier divisor on $S$. Then $H^i(S, K_S+A)=0$ for all $i>1$.
\end{theorem}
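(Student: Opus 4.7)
The plan is to deduce the vanishing from the lifting result Theorem \ref{lift_theorem} via the Deligne-Illusie theorem, transferring Kawamata-Viehweg vanishing from the minimal resolution down to $S$.

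First I would pass to the minimal resolution $\pi\colon \tilde{S}\to S$ and write $K_{\tilde{S}} + \Delta = \pi^{*}K_S$, where $\Delta = \sum e_i E_i$ is an effective $\mathbb{Q}$-divisor with coefficients $e_i \in [0,1)$ (since $S$ is klt) and simple normal crossing support equal to $E := \operatorname{Ex}(\pi)$. Pulling back the Cartier divisor $K_S + A$ then yields
\[
\pi^{*}(K_S + A) - K_{\tilde{S}} \;=\; \Delta + \pi^{*}A,
\]
where $\pi^{*}A$ is nef and big on $\tilde{S}$ because $A$ is ample on $S$.

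Next I would establish Kawamata-Viehweg vanishing for the pair $(\tilde{S}, \Delta)$. By Theorem \ref{lift_theorem}, $(\tilde{S}, E)$ lifts to characteristic zero over a smooth base, and in particular to $W_{2}(k)$. Since $\operatorname{char}(k) > 5 > 2 = \dim \tilde{S}$, the Deligne-Illusie theorem applies and yields logarithmic Kodaira vanishing for the SNC pair $(\tilde{S}, E)$. A standard Kawamata covering argument (taking $p$-th root cyclic covers ramified along general members of a sufficiently divisible multiple of $\pi^{*}A$, chosen transverse to $E$) upgrades this to Kawamata-Viehweg vanishing for the klt pair $(\tilde{S}, \Delta)$. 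Applying it to $\pi^{*}(K_S+A) = K_{\tilde{S}} + \Delta + \pi^{*}A$ produces
\[
H^{i}\bigl(\tilde{S},\, \pi^{*}(K_S + A)\bigr) \;=\; 0 \qquad \text{for every } i > 0.
\]

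Finally, I would descend to $S$. Since klt surface singularities are rational, $R^{j}\pi_{*}\mathcal{O}_{\tilde{S}} = 0$ for $j>0$; by the projection formula $R^{j}\pi_{*}\pi^{*}\mathcal{O}_{S}(K_S + A)$ vanishes for $j > 0$, while $\pi_{*}\pi^{*}\mathcal{O}_{S}(K_S + A) = \mathcal{O}_{S}(K_S + A)$. The Leray spectral sequence degenerates and gives $H^{i}(S, K_S+A) \cong H^{i}(\tilde{S}, \pi^{*}(K_S+A)) = 0$ for all $i > 0$, which is stronger than what is claimed.

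The main obstacle is the passage from the Deligne-Illusie logarithmic Kodaira vanishing for the integral SNC pair $(\tilde{S}, E)$ to Kawamata-Viehweg vanishing for the klt pair $(\tilde{S}, \Delta)$ with fractional boundary. In dimension two with $\operatorname{char}(k) > 5$ the Kawamata covering trick carries through without issue; alternatively one can quote a packaged statement of the form \emph{liftable log resolution plus $\operatorname{char} > \dim$ implies Kawamata-Viehweg for the contracted klt pair} directly from \cite{cascini}. Neither the lifting step (now provided by Theorem \ref{lift_theorem}) nor the Leray descent presents difficulties beyond standard facts about rational singularities.
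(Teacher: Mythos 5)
Your argument is correct and is essentially the paper's: the proof in the text is a single citation to \cite[Lemma 6.1]{cascini}, which is exactly the packaged statement you mention (liftability of $(\tilde{S},\operatorname{Ex}(\pi))$ over a smooth base, hence to $W_2(k)$, yields Kawamata--Viehweg-type vanishing that descends to $S$ via rationality of klt surface singularities), combined with Theorem \ref{lift_theorem}. Your Deligne--Illusie plus covering-trick plus Leray sketch is an unpacking of that cited lemma rather than a genuinely different route.
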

\begin{proof}
See \cite[Lemma 6.1]{cascini}.
\end{proof}

\appendix
\section{Surface singularities}\label{surfacesings}

In this Appendix we give a detailed description of klt, lc and dlt surface singularities. We also provide techniques that are useful in computations. The main references for the discussion
are \cite[Chapter 3]{flips} and \cite{mckernan}. 
In what follows, unless specified otherwise, $S$ indicates the germ (in the Zariski topology) of a normal surface at a point $p\in S$. We start with some definitions.

\begin{definition}
Let $p\in S$ be a singular point and let $\pi: \tilde{S}\rightarrow S$ be the minimal resolution. The point $p$ is a chain singularity if the dual graph of the minimal resolutions above $p$ is a chain.
If the dual graph is not a chain, we will say that $p$ is a non chain singularity.

If $p$ is a chain singularity, we say that $p$ has type $(-E_1^2, -E_2^2,\cdots,-E_n^2)$, where $E_1, \cdots, E_n$ are the exceptional curves over $p$. 
\end{definition}

\begin{definition}
A point $p\in S$ is called Du Val if the coefficient of any exceptional irreducible divisor of the minimal resolution over $p$ is zero.
 By adjunction, this is equivalent to saying that for every such divisor $E$ is a smooth rational curve and that we have $E^2 = -2$.
\end{definition}

\begin{remark}
In the case of surfaces, Du Val singularities are the same as klt Gorenstein singularities.
\end{remark}

\begin{definition}
We denote the singularity $(2, \cdots, 2)$ by $A_j$, where $j$ is the number of components of the dual graph. 
\end{definition}

\begin{definition}
Let $p\in S$ be a singular point and let $\Gamma$ be the dual graph of its minimal resolution. The index $\Delta(\Gamma)$ is the absolute value of the determinant of the intersection
matrix of $\Gamma$.
\end{definition}

\begin{lemma}\label{kltsing}
Let $p\in S$ be a klt singularity. Then every curve of the dual graph of its minimal resolution is a smooth rational curve. 
If $p$ is a non chain singularity, then $p$ has a fork with three chain branches. 
The indexes $(\Delta_1,\Delta_2,\Delta_3)$ of the branches are one of the following: $(2,2,n)$, $(2,3,3)$, $(2,3,4)$ or $(2,3,5)$.
We will refer to the fork vertex as the central curve of $p$.
\end{lemma}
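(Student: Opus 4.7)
The plan is to argue in three phases: first that the exceptional components are smooth rational curves, then that the dual graph has a very rigid combinatorial shape (a tree in which at most one vertex has valence three), and finally to solve a Diophantine inequality to pin down the four possible index triples.

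First I would show that each exceptional component $E_i$ of the minimal resolution $\pi: \tilde S \to S$ is a smooth rational curve. Write $K_{\tilde S} = \pi^*K_S + \sum a_i E_i$ with $a_i > -1$ since the singularity is klt, and intersect with a fixed component $E_j$. Since $\pi$ is the minimal resolution, $E_j^2 \leq -2$, hence $K_{\tilde S}\cdot E_j = -E_j^2 - 2 + 2p_a(E_j) \geq 0$ forces the right-hand side $\sum a_i(E_i\cdot E_j)$ to be non-negative; combining this with the fact that the intersection form $(E_i\cdot E_j)$ is negative definite and all $a_i > -1$, one deduces (via the usual argument that klt surface singularities are rational) that each $E_j \cong \mathbb{P}^1$ meets the other components transversally and the dual graph is a tree.

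Next, I would bound the valence of each vertex. Suppose some vertex $E_0$ has valence $\geq 4$, with neighbors $E_1,\dots, E_r$. Considering the effective $\mathbb{Q}$-divisor $D = E_0 + \tfrac{1}{2}\sum_{k=1}^{r} E_k$ one computes $D \cdot E_0 \geq 0$, which contradicts negative definiteness of the intersection form on the exceptional locus (or equivalently contradicts $a_0 > -1$). A similar but finer argument rules out two distinct valence-$3$ vertices in the same graph. This establishes that a non-chain klt singularity has a unique fork vertex $E_0$ of valence exactly three, with three chain branches emanating from it.

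The hard part is the final classification. Let the three branches have indexes $\Delta_1,\Delta_2,\Delta_3$ (each $\geq 2$, as each branch consists of at least one $(-n)$-curve with $n \geq 2$). A direct calculation with the pseudo-inverse of the intersection matrix expresses the discrepancy $a_0$ of the central curve $E_0$ in terms of the $\Delta_i$ and $E_0^2$; writing out the linear system whose solution is the coefficients $-a_i$ of the log pullback and using $E_0^2 \leq -2$, the condition $a_0 > -1$ translates into the inequality
\[
\frac{1}{\Delta_1}+\frac{1}{\Delta_2}+\frac{1}{\Delta_3} > 1.
\]
Assuming $\Delta_1 \leq \Delta_2 \leq \Delta_3$, one must have $\Delta_1 = 2$; then $1/\Delta_2 + 1/\Delta_3 > 1/2$ forces $\Delta_2 \in \{2,3\}$. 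The case $\Delta_2 = 2$ leaves $\Delta_3 = n$ arbitrary, while $\Delta_2 = 3$ gives $1/\Delta_3 > 1/6$, i.e.\ $\Delta_3 \in \{3,4,5\}$. This yields exactly the four families $(2,2,n)$, $(2,3,3)$, $(2,3,4)$, $(2,3,5)$ claimed in the statement, completing the proof.
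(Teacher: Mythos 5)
The paper does not prove this lemma at all: it is stated in Appendix \ref{surfacesings} as a known classification, with the proof delegated to \cite[Chapter 3]{flips} and \cite{mckernan}. So your attempt has to be judged on its own merits, and its overall architecture (smooth rational components, then a combinatorial restriction on the dual graph, then the Diophantine inequality $\sum 1/\Delta_i>1$) is indeed the standard route. Phase 1 and Phase 3 are essentially fine: the adjunction computation $2p_a(E_j)-2=(1+a_j)E_j^2+\sum_{i\neq j}a_i(E_i\cdot E_j)<0$ (using $a_j>-1$, $a_i\leqslant 0$ from minimality) gives $p_a(E_j)=0$, and the discrepancy of the central curve does reduce the klt condition to $\sum 1/\Delta_i>1$, whose solutions are exactly the four listed triples.

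The genuine gap is in Phase 2. You try to rule out a vertex of valence $r\geqslant 4$ using only negative definiteness, via the divisor $D=E_0+\tfrac12\sum_{k=1}^{r}E_k$. First, the computation is wrong as stated: $D\cdot E_0=E_0^2+r/2$, which is negative whenever $E_0^2\leqslant -3$ and $r=4$, so you do not get $D\cdot E_0\geqslant 0$ in general (and even where you do, you would still need $D\cdot E_k\geqslant 0$ for all $k$ before $D^2\geqslant 0$ contradicts negative definiteness). Second, and more fundamentally, no argument using only negative definiteness can work here: a central $(-3)$-curve with four $(-2)$-legs is negative definite, yet it is the resolution graph of a log canonical, non-klt singularity (all coefficients equal one); the same goes for trees with two forks, which occur among log canonical surface singularities. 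The exclusion of $r\geqslant 4$ and of a second fork must therefore come from the discrepancy condition $a_0>-1$. The fix is exactly your Phase 3 computation run for $r$ legs: writing $\sigma=\sum_{i=1}^{r}1/\Delta_i$, the $\pi$-triviality of $K_{\tilde S}+\Gamma$ along the star gives $e_0=(b+\tau-2)/(b+\tau-r+\sigma)$ with $b=-E_0^2\geqslant 2$ and $\tau\geqslant 0$, so $e_0<1$ forces $\sigma>r-2$, which is impossible for $r\geqslant 4$ since $\sigma\leqslant r/2$; a similar monotonicity argument (each additional fork only increases coefficients) disposes of two forks. With that step repaired the proof is complete and matches the classical argument the paper is citing.
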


There is an analogous description for lc singularities, for which we refer to \cite[Chapter 3, page 58]{flips}.

\begin{lemma}\label{dltsing}
Let $p\in S$ and let $C$ be an irreducible and reduced curve through $p$. If $(S,C)$ is dlt at $p$ then either:
\begin{enumerate}
\item $p$ is smooth and $C$ has simple normal crossings at $p$ or
\item $p$ is singular, in which case it must be a chain singularity, $C$ is smooth at $p$ and touches normally one of the ends of the chain. 
\end{enumerate}
\end{lemma}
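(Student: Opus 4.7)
The plan is to pull back $K_S+C$ to the minimal resolution $\pi:\tilde S\to S$ and show that the dlt condition forces a very restricted combinatorial configuration. First I would dispose of the smooth case. If $p$ is smooth but $C$ is singular at $p$, then a single blow-up $\sigma:S'\to S$ at $p$ gives $\sigma^*(K_S+C)=K_{S'}+\tilde C+(m-1)E$, where $m=\operatorname{mult}_p(C)\geqslant 2$ and $E$ is the exceptional divisor. Since $(S,C)$ is not SNC at $p$, the point $p$ must lie in the bad locus $Z$ from the definition of dlt, so the divisor $E$ (which has center $p\in Z$) is required to satisfy $a(E;S,C)>-1$, i.e.\ $m-1<1$, a contradiction. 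Hence $C$ is smooth at $p$, which is case (1).

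Next, suppose $p$ is singular. By Lemma \ref{kltsing} the singularity is either a chain or a fork with three branches. Write
\[
\pi^*(K_S+C)=K_{\tilde S}+\tilde C+\sum_i e_iE_i,\qquad \pi^*K_S=K_{\tilde S}+\sum_i e_i^{(0)}E_i,
\]
so that $\pi^*C=\tilde C+\sum_i f_iE_i$ with $f_i=e_i-e_i^{(0)}\geqslant 0$. Since $p\in Z$, the dlt hypothesis gives $e_i<1$ for every exceptional $E_i$. The $f_i$ are determined by the system $(-E_i\cdot E_j)f=(m_j)$ with $m_j=\tilde C\cdot E_j\geqslant 0$; the matrix $(-E_i\cdot E_j)$ is positive definite with positive inverse, so every $f_i>0$ as soon as some $m_j>0$, which happens because $C$ meets the exceptional locus.

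I would then rule out the non-chain case. In a fork singularity the central curve $E_0$ has $e_0^{(0)}\geqslant 1/2$ (Lemma \ref{singlist} and Lemma \ref{nonchainsing}), and a direct Cramer-rule computation on the three-pronged graph shows that, for \emph{any} non-zero choice of $(m_j)$, $f_0\geqslant 1/2$, so $e_0\geqslant 1$, contradicting dlt. Hence $p$ must be a chain singularity, say of type $(a_1,\dots,a_n)$. Solving the now tridiagonal system by continued-fraction expansions, $\max_i e_i<1$ is seen to hold exactly when $(m_j)$ has a single nonzero entry equal to $1$ at $j=1$ or $j=n$; any other incidence (an interior curve, two simultaneous components, or an end with multiplicity $\geqslant 2$) forces some $e_i\geqslant 1$, contradicting dlt. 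Finally $\tilde C$ is smooth at the intersection point, since otherwise blowing up its singularity lying over the end curve produces an additional exceptional divisor with discrepancy $\leqslant-1$, exactly as in the smooth case above. This yields case (2).

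The main obstacle will be the uniform determinantal estimate needed to conclude $f_0\geqslant 1/2$ in every non-chain case and $f_i<1$ in the admissible chain case; both reduce to explicit computations with the inverse of the negative definite intersection matrix, facilitated by the strict positivity of its entries and the known classification in Lemma \ref{kltsing}. Since the actual statement only requires the qualitative conclusion, I expect the required inequalities can be read off directly from the four families listed there without entering the numerology of each klt class.
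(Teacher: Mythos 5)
The paper does not actually prove this lemma (it is quoted as part of the standard classification of surface singularities from \cite{flips} and \cite{mckernan}), so your sketch has to stand on its own. Your overall strategy is the right one: reduce to the coefficients $e_i=e_i^{(0)}+f_i$ of the log pullback on the minimal resolution, use that $p$ lies in the non-snc locus so that every divisor over $p$ must have discrepancy $>-1$, and exploit the positivity of the entries of $(-E_i\cdot E_j)^{-1}$ together with the classification in Lemma \ref{kltsing}. The smooth case and the chain case are correct as you describe them; for a chain one indeed finds $e_j\geqslant 1$ whenever $\tilde C$ meets an interior component, meets an end with multiplicity $\geqslant 2$, or meets more than one component.

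The gap is in the fork case: \emph{both} halves of your estimate are false as stated, and the conclusion only survives because the two terms compensate. First, $e_0^{(0)}\geqslant 1/2$ fails for every Du Val fork ($D_n$, $E_6$, $E_7$, $E_8$), where $e_0^{(0)}=0$; Lemma \ref{nonchainsing} implicitly concerns non--Du Val forks. Second, $f_0\geqslant 1/2$ also fails: for the fork with central $(-3)$-curve and three $A_1$ branches, with $\tilde C$ meeting one branch transversally, one computes $f_0=1/3$ (while $e_0^{(0)}=2/3$, so $e_0=1$ exactly). What you actually need is the joint inequality. Writing $\Delta_1,\Delta_2,\Delta_3$ for the branch indices, the determinant formulas in Appendix \ref{surfacesings} give
\[
1-e_0^{(0)}=\frac{\Delta(\Gamma-v_0)}{\Delta(\Gamma)}\Bigl(\tfrac{1}{\Delta_1}+\tfrac{1}{\Delta_2}+\tfrac{1}{\Delta_3}-1\Bigr),
\qquad
f_0\geqslant \frac{\Delta(\Gamma-v_0)}{\Delta(\Gamma)}\cdot\frac{1}{\max_i\Delta_i},
\]
so $e_0\geqslant 1$ reduces to $1/\Delta_a+1/\Delta_b\leqslant 1$ for the two smallest branch indices, which holds because $(\Delta_a,\Delta_b)\in\{(2,2),(2,3)\}$ by Lemma \ref{kltsing}. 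With that repair the fork case closes and the rest of your argument goes through.
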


\begin{lemma}\label{lcnotdltsing}
Let $p\in S$ and let $C$ be an irreducible and reduced curve through $p$. If $(S,C)$ is lc but not dlt at $p$ then either:
\begin{enumerate}
\item $p$ is smooth and $C$ has a simple node at $p$ or
\item $p$ is a chain singularity and $C$ touches both ends of the singularity normally or
\item $p$ is a chain singularity with exactly three components and $C$ touches the central component normally or
\item $p$ is a non chain singularity, in which case two of the branches have index two, and $C$ touches the opposite end of the third branch normally.
\end{enumerate}
\end{lemma}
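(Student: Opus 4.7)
The plan is to work on the minimal resolution $\pi : \tilde S \to S$ and classify the possible configurations of the strict transform $\tilde C$ with the exceptional divisor $E = \sum_i E_i$ under the constraint that the log pullback
\[
K_{\tilde S} + \tilde C + \sum_i e_i E_i = \pi^*(K_S + C)
\]
satisfies $e_i \leq 1$ for all $i$ while failing the dlt characterization of Lemma \ref{dltsing}. The coefficients $e_i$ are uniquely determined by the linear system $\sum_i e_i (E_i \cdot E_j) = (-2 - E_j^2) - \tilde C \cdot E_j$, whose matrix is negative definite by Lemma \ref{kltsing}. I would then split the analysis according to whether $p$ is smooth or singular, and in the latter case according to how $\tilde C$ meets $E$.

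For the smooth case, the failure of dlt forces $C$ to be singular at $p$. If $\operatorname{mult}_p C \geq 3$, a single blow-up at $p$ already produces an exceptional divisor of coefficient $\geq 2$, violating lc. Thus $\operatorname{mult}_p C = 2$. If $C$ has two distinct tangent directions at $p$, one blow-up separates the branches with exceptional coefficient exactly $1$ and the pair then becomes snc; dlt fails only because of the node at $p$, giving case $(1)$. If $C$ is unibranch at $p$ (a cusp or worse), a short iterated blow-up computation pushes the coefficient of some later exceptional divisor strictly above $1$; concretely, the cusp $y^2 = x^3$ has log canonical threshold $5/6 < 1$, so such a $C$ cannot appear in an lc pair with coefficient $1$.

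For the singular case, $p$ is klt and $E$ is as classified in Lemma \ref{kltsing}. A preliminary reduction rules out $\tilde C$ meeting any $E_i$ tangentially or with multiplicity $\geq 2$, since an auxiliary blow-up would again produce a coefficient $\geq 2$ as in the smooth case; so $\tilde C$ is smooth and transverse to $E$ at finitely many points. If $\tilde C$ meets $E$ at a single smooth point on $E_k$, the right-hand side of the linear system has a single $-1$ in position $k$. On a chain, a tridiagonal inversion shows $e_i \leq 1$ always, with interior equality attainable only if the chain has exactly three components and $E_k$ is the middle one, yielding case $(3)$. On a non-chain fork, the coefficient of the central curve is $\leq 1$ only for the type $(2,2,n)$ with $E_k$ the far endpoint of the $n$-branch, yielding case $(4)$; the forks $(2,3,3)$, $(2,3,4)$, $(2,3,5)$ produce a central coefficient strictly greater than $1$ and are excluded. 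If instead $\tilde C$ meets $E$ at two distinct smooth points or at a node of $E$, an analogous linear-algebra analysis rules out every configuration except the one in which $p$ is a chain and $\tilde C$ hits both endpoints transversally, producing case $(2)$.

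The main technical obstacle will be the explicit inversion of the intersection-matrix system on chains and forks needed to pin down exactly which transverse configurations saturate the lc bound. On a chain this reduces to a tridiagonal continued-fraction computation, while on a fork one uses the block decomposition at the central vertex to reduce to the chain analysis on each of the three branches together with a compatibility condition at the center. These computations are essentially the enumeration of index-one log canonical complements on klt surface singularities; once carried out, they confirm that cases $(1)$--$(4)$ are the complete list.
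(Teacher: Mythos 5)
The paper itself gives no proof of this lemma: it is quoted as part of the known classification of log canonical surface singularities, with the pointer to \cite[Chapter 3]{flips} given just after Lemma \ref{kltsing}. Your plan --- solve the linear system for the log-pullback coefficients on the minimal resolution and use auxiliary blow-ups whenever $\tilde C+E$ fails to have simple normal crossings --- is exactly the standard argument behind that classification, so the route is sound. As written, though, the sketch has three concrete defects.

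First, at a smooth point the dichotomy ``two distinct tangent directions'' versus ``unibranch'' is not exhaustive: an irreducible curve can have two analytic branches with a common tangent (locally $y^2=x^4$). This case must be excluded separately; after one blow-up the exceptional curve has coefficient $1$ and the two branches still pass through a single common point of it, and blowing up that triple point produces coefficient $2$, so the pair is not lc. Second, the claim that on a chain a tridiagonal inversion gives $e_i\leqslant 1$ ``always'' is false: for the chain $(3,2,3)$ with $\tilde C$ transverse to the middle curve one gets $e_2=5/4$, and for $A_4$ with $\tilde C$ meeting the second component one gets $e_2=6/5$. The correct outcome of that computation is that a single interior transverse hit is log canonical only for chains of type $(2,n,2)$ hit at the middle curve; in every other interior configuration lc already fails on the minimal resolution. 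Since the lemma asserts only a necessary condition this does not falsify your final list, but the computation you appeal to does not come out the way you state it. Third, when $\tilde C$ passes through a node $E_1\cap E_2$ of a two-component chain, both minimal-resolution coefficients equal exactly $1$, so the ``linear-algebra analysis'' on the minimal resolution cannot rule this configuration out; you need the auxiliary blow-up of the triple point (new coefficient $e_1+e_2=2$), i.e.\ the same mechanism you invoke only for tangencies --- this is precisely configuration (c) of Lemma \ref{minimalresolution}(2). You should also dispose of the case where $\tilde C$ meets $E$ in three or more points, which your two-case split omits; it is killed by the same estimates.
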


Once we know the dual graph of the minimal resolution over a klt singularity $p$ and all the self-intersections $E_i^2$, we can compute the coefficients of the $E_i$. In fact
it's sufficient to solve the $n$ equations $(K_{\tilde{S}} + \sum_i e_i E_i)\cdot E_i = 0$ by using adjunction. The following lemmas can be useful in speeding up computations, especially
when $p$ is a non chain singularity.

\begin{lemma}
Let $\Gamma$ be the dual graph of the minimal resolution at $p$ and let $v$ be a vertex of $\Gamma$. Let $v_1, \cdots, v_s$ be the vertices adjacent to $v$. Then
\[
  \Delta(\Gamma) = n\cdot \Delta(\Gamma - v) - \sum_i \Delta(\Gamma-v-v_i)
\]
\end{lemma}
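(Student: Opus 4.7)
The plan is to derive the identity from a Laplace expansion of the intersection matrix $M(\Gamma)$ along the row corresponding to $v$, taking advantage of the fact that klt dual graphs are trees. By Lemma \ref{kltsing}, $\Gamma$ is a tree, so $\Gamma - v$ splits as a disjoint union of subtrees $T_1, \dots, T_s$, where $T_i$ is the connected component of $\Gamma - v$ containing the neighbor $v_i$. Order the vertices of $\Gamma$ as $v, T_1, \dots, T_s$, so that $M(\Gamma)$ acquires the natural block form with $v^2 = -n$ in the upper-left corner, the blocks $M(T_i)$ along the diagonal, and the only nonzero coupling between the $v$-row and the $T_i$-block being a single $1$ in the position of $v_i$ (this uses that adjacent exceptional curves meet transversally in a single point, again by Lemma \ref{kltsing}).

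Applying the Schur complement formula (equivalently, a direct Laplace expansion along row $v$) gives
\[
\det M(\Gamma) \;=\; v^2 \prod_i \det M(T_i) \;-\; \sum_i \det M(T_i - v_i) \prod_{j \neq i} \det M(T_j).
\]
Since $\Gamma - v = \bigsqcup_i T_i$ and $\Gamma - v - v_i = (T_i - v_i) \sqcup \bigsqcup_{j \neq i} T_j$, the products of determinants collapse to $\det M(\Gamma - v)$ and $\det M(\Gamma - v - v_i)$ respectively, yielding
\[
\det M(\Gamma) \;=\; -n\,\det M(\Gamma - v) \;-\; \sum_i \det M(\Gamma - v - v_i).
\]

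Finally, since the intersection matrix of the exceptional divisor of a resolution of a normal surface singularity is negative definite (by Mumford), and any principal submatrix of a negative definite matrix is again negative definite, one has $\det M(\Gamma') = (-1)^{|\Gamma'|} \Delta(\Gamma')$ for every sub-configuration $\Gamma'$ appearing. Substituting this into the displayed identity and noting that $|\Gamma|$, $|\Gamma - v|$, and $|\Gamma - v - v_i|$ differ in parity by exactly one, the alternating signs from negative definiteness absorb the minus sign in front of $n$ and we obtain
\[
\Delta(\Gamma) \;=\; n\cdot\Delta(\Gamma - v) \;-\; \sum_i \Delta(\Gamma - v - v_i),
\]
as required. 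The only real obstacle here is the sign bookkeeping, but because the three configurations have sizes differing by exactly one, the powers of $-1$ from negative definiteness align precisely with the sign coming from $v^2 = -n$, so the adjustment is automatic.
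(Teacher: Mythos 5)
Your proof is correct; the paper in fact states this lemma without any proof, so your argument supplies the standard justification (Schur complement of the intersection matrix at $v$, block-diagonality of $M(\Gamma-v)$ coming from the tree structure, and the sign count $\det M(\Gamma')=(-1)^{|\Gamma'|}\Delta(\Gamma')$ from negative definiteness). The only point worth making explicit is the convention $\Delta(\emptyset)=1$, which is needed when some branch $T_i$ consists of the single vertex $v_i$; with that, the identity holds as stated (reading $n=-v^2$, and noting that the lemma's left-hand side ``$\Gamma(\Delta)$'' is a typo for $\Delta(\Gamma)$).
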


\begin{lemma} The coefficients of the exceptional divisors $E_j$ over $p$ are given by
\[
1-e_i = \frac{1}{\Delta(\Gamma)} \sum_k \Delta(\Gamma-(\text{path from $v_j$ to $v_k$}))\cdot c_k
\]
where $c_k$ is given by $c_k = 2- E_k (\sum_{l\neq k} E_l)$. It is useful to notice that $c_k=0$ if and only if $v_k$ has exactly two neighbors, $c_k=1$ if and only if it has one neighbor and
$c_k=-1$ if and only if it has three neighbors. 
\end{lemma}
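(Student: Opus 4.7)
\noindent\emph{Plan.} The approach is to reduce the lemma to a determinantal identity on the tree $\Gamma$. By Lemma \ref{kltsing} the dual graph $\Gamma$ is a tree whose vertices are smooth rational curves, and the intersection matrix $M$ defined by $M_{ii}=E_i^2$, $M_{ij}=E_i\cdot E_j\in\{0,1\}$ for $i\neq j$ is negative definite, so $\det M=(-1)^n\Delta(\Gamma)$. Adjunction applied to the smooth rational curve $E_j$ converts the defining equations $(K_{\tilde S}+\sum_i e_iE_i)\cdot E_j=0$ into the linear system $\sum_i M_{ij}e_i=2+E_j^2$. Setting $f_i=1-e_i$ and using $\sum_i M_{ij}=E_j^2+\deg_\Gamma(v_j)=E_j^2+(2-c_j)$, this simplifies to the clean equation $M\vec f=-\vec c$. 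Cramer's rule then yields
\[
f_j=\sum_k\frac{(-1)^{j+k+1}\det M^{(kj)}}{\det M}\,c_k,
\]
where $M^{(kj)}$ is the submatrix obtained by deleting row $k$ and column $j$. Substituting $\det M=(-1)^n\Delta(\Gamma)$, the statement of the lemma is equivalent to the purely combinatorial identity
\[
\det M^{(kj)}=(-1)^{n+j+k+1}\,\Delta\bigl(\Gamma-\mathrm{path}(v_j,v_k)\bigr). \tag{$\ast$}
\]

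I would prove $(\ast)$ by induction on the length of the unique path from $v_j$ to $v_k$ in the tree $\Gamma$. The base case $j=k$ is immediate: after reindexing, $M^{(jj)}$ is block-diagonal with one block per connected component of $\Gamma-v_j$, each negative definite, giving $\det M^{(jj)}=(-1)^{n-1}\Delta(\Gamma-v_j)$, which matches $(\ast)$. For the inductive step, let $v_\ell$ be the neighbor of $v_j$ lying on the path to $v_k$, and expand $\det M^{(kj)}$ along the row indexed by $v_\ell$ using the cofactor formula of the previous lemma: the contribution of the diagonal entry $M_{\ell\ell}=E_\ell^2$ corresponds to $\Gamma$ with both $v_j$ and $v_\ell$ removed, so the inductive hypothesis applies to the shorter path from $v_\ell$ to $v_k$; the off-diagonal contributions correspond to the remaining neighbors of $v_\ell$ and, because $\Gamma$ is a tree, they assemble into the same recursion satisfied by the right-hand side of $(\ast)$ via the cofactor identity $\Delta(\Gamma)=n\cdot\Delta(\Gamma-v)-\sum_i\Delta(\Gamma-v-v_i)$ applied in parallel.

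The main technical obstacle is the sign accounting: the row and column transpositions needed to place $M^{(kj)}$ in block form, the sign $(-1)^n$ coming from negative definiteness of each block, and the sign $(-1)^{j+k+1}$ coming from Cramer's rule all must conspire to produce exactly the factor $(-1)^{n+j+k+1}$ appearing in $(\ast)$ with no stray sign. Because $\Gamma$ may be a fork (the non chain klt case), one also has to check that the inductive step handles correctly the branching vertex when the path passes through or bypasses it; tree-ness of $\Gamma$ is essential here, as it guarantees that removing the path actually disconnects the graph into pieces whose intersection matrices are negative definite and whose determinants multiply to $\Delta(\Gamma-\mathrm{path}(v_j,v_k))$. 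Once $(\ast)$ is established, substitution back into the Cramer formula immediately gives the asserted expression for $1-e_j$.
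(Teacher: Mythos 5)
The paper states this lemma without proof, treating it as a standard computational fact about resolution graphs, so there is no argument to compare against; your proposal supplies one, and its skeleton is correct. The reduction is exactly right: adjunction on the smooth rational curves $E_j$ gives $\sum_i M_{ij}e_i = 2+E_j^2$, the substitution $f_i=1-e_i$ together with $\sum_i M_{ij}=E_j^2+\deg(v_j)=E_j^2+2-c_j$ yields $M\vec f=-\vec c$, and negative definiteness of $M$ (Mumford) gives $\det M=(-1)^n\Delta(\Gamma)$, so the lemma is equivalent to your identity $(\ast)$, which I have checked is the correct form of the tree-adjugate (all-minors matrix-tree) formula — note in particular that the a priori worrying factor $(-1)^{d(v_j,v_k)}$ from the edge count of the path cancels against the sign of $\det M[\Gamma-\mathrm{path}]$, so no path-length-dependent sign survives, consistent with your statement of $(\ast)$.

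The one soft spot is the inductive step, where "the off-diagonal contributions assemble into the same recursion applied in parallel" is not yet an argument, and the cofactor identity $\Delta(\Gamma)=n\Delta(\Gamma-v)-\sum_i\Delta(\Gamma-v-v_i)$ from the preceding lemma is not actually what is needed. The clean way to close the induction is to expand $\det M^{(kj)}$ along the row indexed by $v_j$ itself (present since $j\neq k$): its only nonzero entries sit in the columns of the neighbors of $v_j$, and for any neighbor $i$ \emph{not} on the path to $v_k$, the resulting minor vanishes, because the rows indexed by the branch $B_i$ of the tree hanging off $i$ away from $v_j$ are supported on the columns of $B_i\setminus\{i\}$ only, i.e.\ $|B_i|$ rows live in $|B_i|-1$ columns. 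Hence only the path-neighbor $v_\ell$ contributes, the surviving minor is $(M|_{\Gamma-v_j})^{(k\ell)}$ up to a controlled transposition sign, the inductive hypothesis applies to the shorter path from $v_\ell$ to $v_k$ in $\Gamma-v_j$, and $\Gamma-v_j-\mathrm{path}(v_\ell,v_k)=\Gamma-\mathrm{path}(v_j,v_k)$. This observation also disposes of your worry about the fork vertex: whether the path passes through the branching vertex or not, the dangling branches are killed by the same rank argument. With that replacement your proof is complete.
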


\begin{definition}
Let $p$ be a klt cyclic singularity $(-E_1^2, \cdots, -E_n^2)$. We say that $p$ has spectral value $k$ if the coefficient of $E_1$ has the form $k/r$ where $r$ is the index
of the singularity at $p$.
\end{definition}

\begin{lemma}\label{smallsvalue}
If $\beta=(j,\alpha)$ then the difference of the spectral value of $\beta$ and $\alpha$ is $(j-2)r$, where $r$ is the index of $\alpha$. In particular:
\begin{enumerate}
\item $\alpha$ has spectral value 0 if and only if $\alpha= A_j$.
\item $\alpha$ has spectral value 1 if and only if $\alpha=(A_j, 3)$ or $(3)$.
\item $\alpha$ has spectral value 2 if and only if $\alpha=(4)$ or $\alpha=(3,2)$.
\end{enumerate}
\end{lemma}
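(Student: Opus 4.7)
The strategy is to establish the numerical identity in the first part of the lemma by a direct adjunction/continued-fraction computation, and then deduce the classifications (1)--(3) by induction on the length of the chain, peeling off one vertex at a time.

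For the identity, I would set up the linear system for the coefficients $e_i$ of a chain $\alpha = (a_1,\ldots,a_n)$. Adjunction $(K_{\tilde S} + \sum_j e_j E_j)\cdot E_i = 0$ together with $K_{\tilde S}\cdot E_i = a_i - 2$ gives $a_i(e_i - 1) = -2 + e_{i-1} + e_{i+1}$ (with the convention $e_0 = e_{n+1} = 0$). Setting $f_i = 1 - e_i$ and $p_i = r^\alpha f_i$, where $r^\alpha$ denotes the index of $\alpha$, this becomes the linear recurrence $a_i\,p_i = p_{i-1} + p_{i+1}$ with $p_0 = p_{n+1} = r^\alpha$, and by definition the spectral value is $k^\alpha = r^\alpha - p_1^\alpha$. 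Writing $r^\alpha/q^\alpha = [a_1,\ldots,a_n]$ for the Hirzebruch--Jung expansion associated to the cyclic quotient data $\tfrac{1}{r^\alpha}(1,q^\alpha)$, a short induction on $n$ using $[a_1,\ldots,a_n] = a_1 - 1/[a_2,\ldots,a_n]$ yields $p_1^\alpha = q^\alpha + 1$, so $k^\alpha = r^\alpha - q^\alpha - 1$. Under prepending, the continued fraction identity $[j,a_1,\ldots,a_n] = j - q^\alpha/r^\alpha$ gives $r^\beta = j\,r^\alpha - q^\alpha$ and $q^\beta = r^\alpha$, whence
\[
k^\beta = r^\beta - q^\beta - 1 = (jr^\alpha - q^\alpha) - r^\alpha - 1 = (j-2)\,r^\alpha + (r^\alpha - q^\alpha - 1) = (j-2)\,r^\alpha + k^\alpha,
\]
which is the claimed identity.

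With the identity in hand, parts (1)--(3) follow by induction on $n$. The ingredients I need are $k^{\alpha'} \ge 0$ (because $q^{\alpha'} < r^{\alpha'}$) and $r^{\alpha'} \ge 1$, and for each target spectral value I split on the leading entry $a_1$. For part (1) the equation $k^{\alpha'} + (a_1 - 2)\,r^{\alpha'} = 0$ forces $a_1 = 2$ and $k^{\alpha'} = 0$, so $\alpha = A_{j+1}$ by induction. Part (2) forces either $a_1 = 2$ with $k^{\alpha'} = 1$ (inductively giving $\alpha = (A_j, 3)$) or $a_1 = 3$ with $\alpha'$ empty (giving $\alpha = (3)$); the case $a_1 \ge 4$ is ruled out since $(a_1 - 2)\,r^{\alpha'} \ge 2$. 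Part (3) is handled analogously, splitting on $a_1 \in \{2,3,4\}$.

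The hard part is really the identification $p_1^\alpha = q^\alpha + 1$, which is where the continued fraction content sits; once it is in place everything else is a routine finite enumeration. I would establish it by induction on $n$, observing that the recurrence defining $p_i$ for $\alpha$, restricted to $i \ge 2$, matches the one defining the corresponding quantities for the tail chain $(a_2,\ldots,a_n)$ after the boundary datum $p_1$ is fixed, and then matching this with the classical continued fraction recursion for the numerator and denominator of $[a_1,\ldots,a_n]$.
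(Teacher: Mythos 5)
Your reduction of the lemma to a continued-fraction identity is correct, and it is essentially the only argument available: the paper states this lemma without proof (it is quoted from Keel--M\textsuperscript{c}Kernan, Chapter 8), so the content to check is exactly the computation you set up. The adjunction recurrence $a_i p_i = p_{i-1}+p_{i+1}$ with $p_0=p_{n+1}=r^\alpha$, the identification $p_1^\alpha=q^\alpha+1$ (indeed $p_i$ equals the sum of the indexes of the two subchains $(a_{i+1},\dots,a_n)$ and $(a_1,\dots,a_{i-1})$, which satisfies both the recurrence and the boundary conditions), the resulting formula $k^\alpha=r^\alpha-q^\alpha-1$, and the prepending relations $r^\beta=jr^\alpha-q^\alpha$, $q^\beta=r^\alpha$ are all right; parts (1) and (2) then follow from the case split exactly as you describe.

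Part (3), however, is not ``handled analogously,'' and this is a genuine gap. In the branch $a_1=2$ your identity gives $k^{\alpha'}=2$, so the inductive hypothesis returns $\alpha'=(4)$ or $(3,2)$ and hence $\alpha=(2,4)$ or $(2,3,2)$; iterating, your case analysis actually proves that the chains of spectral value $2$ are $(A_j,4)$ and $(A_j,3,2)$ for all $j\geqslant 0$, not only $(4)$ and $(3,2)$. These extra chains are not spurious: for $(2,4)$ the index is $7$ and the coefficient of the first curve is $2/7$, so under the definition of spectral value given in the paper it genuinely has spectral value $2$ at that end. So either you must supply a convention that forbids a Du Val tail in front (no such convention appears in the definition, and part (2) explicitly keeps the tail by listing $(A_j,3)$ alongside $(3)$), or you should prove the corrected classification ``$\alpha=(A_j,4)$ or $(A_j,3,2)$'' and observe that the printed statement records only the $j=0$ cases. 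As written, the sentence ``Part (3) is handled analogously'' asserts a conclusion that your own machinery contradicts, so you cannot wave it through as routine.
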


\begin{definition}
We say that a point $p\in S$ of spectral value 1 is almost Du Val.
\end{definition}

While the notion of spectral value is important for the \say{hunt}, the main relevance in the present discussion comes from the following lemmas.

\begin{lemma}\label{coefficient_boundary}
Suppose that $p\in S$ is a klt chain singularity and that $D$ is an irreducible divisor through $p$, touching $E_1$ normally (i.e. $(S,D)$ is dlt at $p$). Then
\[
e(E_1, K_S+\lambda D)= (k/r)+ \lambda(r-1-k)/r = \lambda(r-1)/r + (1-\lambda)(k/r)
\]
\end{lemma}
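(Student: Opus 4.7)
The plan is to exploit that $e(E_1; S, \lambda D)$ is an affine function of $\lambda$ and then pin down its two endpoint values at $\lambda = 0$ and $\lambda = 1$.

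Let $\pi : \tilde S \to S$ be the minimal resolution and write the log pullback
\[
\pi^*(K_S + \lambda D) = K_{\tilde S} + \lambda \tilde D + \sum_j e_j(\lambda) E_j.
\]
The coefficients $e_j(\lambda)$ are determined by the linear system $(K_{\tilde S} + \lambda \tilde D + \sum_j e_j(\lambda) E_j) \cdot E_i = 0$ for every $i$. Since $D$ is normal to $E_1$ and meets no other exceptional component, $\tilde D \cdot E_i = \delta_{i,1}$, so the inhomogeneous part of the system depends linearly on $\lambda$. By Cramer's rule, each $e_j(\lambda)$ is affine in $\lambda$; in particular it will suffice to verify the formula at $\lambda = 0$ and $\lambda = 1$.

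At $\lambda = 0$ the identity $\pi^* K_S = K_{\tilde S} + \sum_j e_j(0) E_j$ is the ordinary log pullback and $e_1(0) = k/r$ by the very definition of the spectral value. For $\lambda = 1$, I would restrict the identity $\pi^*(K_S + D) = K_{\tilde S} + \tilde D + \sum_j e_j(1) E_j$ to $\tilde D$: since $\pi|_{\tilde D} : \tilde D \to D$ is an isomorphism and $\tilde D$ meets the exceptional locus only transversally at a single point of $E_1$, restriction yields
\[
(K_S + D)|_D = K_D + (1 - e_1(1))[p],
\]
which exhibits $1 - e_1(1)$ as the Different of $D$ at $p$. At a cyclic klt singularity of index $r$ with $D$ transverse to $E_1$ this Different equals $1/r$, hence $e_1(1) = (r-1)/r$. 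Affine interpolation then gives
\[
e_1(\lambda) = (1-\lambda)\frac{k}{r} + \lambda\frac{r-1}{r} = \frac{k}{r} + \lambda\frac{r-1-k}{r},
\]
which is exactly the claimed formula.

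The only step that is not entirely formal is the value of the Different at $\lambda = 1$. If one wishes to avoid invoking it, a fully hands-on alternative is to compute $e_1(1) - e_1(0) = -(M^{-1})_{11}$, where $M$ is the intersection matrix of the chain, use the standard minor identity $(M^{-1})_{11} = -\Delta(\Gamma - E_1)/\Delta(\Gamma)$, and note that for a chain singularity one has $k = r - 1 - \Delta(\Gamma - E_1)$. This rewrites $e_1(1) - e_1(0)$ as $(r-1-k)/r$ and recovers the formula directly from the linear system, bypassing any adjunction-theoretic input.
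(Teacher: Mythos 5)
The paper states this lemma in Appendix \ref{surfacesings} without proof, so there is no argument of the paper's to compare against; your affine-interpolation strategy is the natural one and is essentially the right proof. The setup (the coefficients solve a linear system whose inhomogeneous term is affine in $\lambda$, hence each $e_j(\lambda)$ is affine) and the value $e_1(0)=k/r$ are both fine.

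However, your adjunction computation at $\lambda=1$ contains two errors that happen to cancel. Restricting $\pi^*(K_S+D)=K_{\tilde S}+\tilde D+\sum_j e_j(1)E_j$ to $\tilde D$ and using that $\tilde D$ meets only $E_1$, transversally at one point, gives $(K_S+D)|_D=K_D+e_1(1)[p]$; the coefficient of $\operatorname{Diff}_D(0)$ at $p$ is therefore $e_1(1)$ itself, not $1-e_1(1)$. Moreover, at a cyclic klt point of index $r$ with $D$ transverse to an end of the chain, that coefficient is $(r-1)/r$, not $1/r$ (check $(3)$: a single $(-3)$-curve has $r=3$ and one computes $e_1(1)=2/3$; the paper itself uses $\operatorname{Diff}\geqslant 1/2$ at every singular point in Lemma \ref{negative_adjunction_2}, which is only consistent with the value $(r-1)/r$). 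Your two misstatements compose to the correct conclusion $e_1(1)=(r-1)/r$, but as written the step is not a proof. Your fallback argument does repair this: $e_1(1)-e_1(0)=-(M^{-1})_{11}=\Delta(\Gamma-E_1)/\Delta(\Gamma)$ is correct, and the identity $k=r-1-\Delta(\Gamma-E_1)$ holds for a chain with $E_1$ at an end (it is exactly the paper's unproved coefficient formula $1-e_1=\bigl(\Delta(\Gamma-E_1)+1\bigr)/r$ specialized to this case, and is a short induction on tridiagonal determinants). So the proof goes through once the $\lambda=1$ endpoint is computed by the determinant route, or once the adjunction step is corrected to read $\operatorname{Diff}_D(0)=e_1(1)[p]$ with value $(r-1)/r$.
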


\begin{lemma}\label{nonchainsing}
Let $p$ be a klt non chain singularity with branches $\beta_1, \beta_2, \beta_3$. Assume that the central curve is a $(-l)$ curve. Then the coefficient of $e$ of $p$ is the same
as the coefficient of the central curve, and it is a rational number of the form $k/(k+1)$ for some positive integer $k$.

Moreover, if $\beta_1=\beta_2=(2)$ and $\alpha=(l,\beta_3)$ then $k$ is the spectral value of $\alpha$.
\end{lemma}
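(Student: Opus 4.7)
The plan is to proceed in three stages: first, verify that the central vertex $E_0$ attains the maximal coefficient on the minimal resolution; second, derive a closed-form expression for $1-e_0$; third, invoke Lemma \ref{kltsing} to conclude the divisibility claim, with the moreover part following from a direct determinantal computation.

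For the first stage, label the vertices of the branch $\beta_i$ by $E_{i,1},\ldots,E_{i,n_i}$, with $E_{i,1}$ adjacent to $E_0$ and $E_{i,j}$ a $(-m_{i,j})$-curve. Setting $d_{i,j}=1-e_{i,j}$, the adjunction equation at $E_{i,j}$ reads $m_{i,j}\,d_{i,j}=d_{i,j-1}+d_{i,j+1}$, with boundary values $d_{i,0}=1-e_0$ and $d_{i,n_i+1}=1$. The forward differences $g_{i,j}=d_{i,j+1}-d_{i,j}$ then satisfy $g_{i,j}-g_{i,j-1}=(m_{i,j}-2)\,d_{i,j}\geqslant 0$, using $m_{i,j}\geqslant 2$ and $d_{i,j}>0$ (the latter from the klt hypothesis). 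Thus $d_{i,j}$ is discretely convex on $\{0,\ldots,n_i+1\}$ and attains its minimum at an endpoint; since $e_0\geqslant 0$ in the klt non chain case, this minimum equals $1-e_0$, giving $e_{i,j}\leqslant e_0$ and hence $e(p)=e_0$.

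For the second stage, write $\Delta_i=\Delta(\beta_i)$, and let $p_i=\Delta(\beta_i-E_{i,1})$ denote the determinant of $\beta_i$ after removing its first vertex. Applying the coefficient formula stated earlier in the appendix to the central vertex $v_0$ (where $c_0=-1$, $\Gamma-\{v_0\}=\bigsqcup_i\beta_i$) and summing over the three branch endpoints (where $c=1$, $\Gamma$ minus the path $\{v_0\}\cup\beta_i$ equals $\bigsqcup_{j\neq i}\beta_j$), with interior vertices contributing $c=0$, and combining with $\Delta(\Gamma)=l\,\Delta_1\Delta_2\Delta_3-\sum_i p_i\prod_{j\neq i}\Delta_j$ obtained by applying the $\Delta$-recursion at $v_0$, I get after dividing numerator and denominator by $\Delta_1\Delta_2\Delta_3$:
\[
1-e_0 \;=\; \frac{1/\Delta_1+1/\Delta_2+1/\Delta_3-1}{l-p_1/\Delta_1-p_2/\Delta_2-p_3/\Delta_3}.
\]
By Lemma \ref{kltsing}, $(\Delta_1,\Delta_2,\Delta_3)$ equals one of $(2,2,n)$, $(2,3,3)$, $(2,3,4)$, $(2,3,5)$, and the numerator is then $1/n$, $1/6$, $1/12$, $1/30$ respectively; in each case, multiplying the denominator by $n$ (resp.\ $6$, $12$, $30$) yields a positive integer, so $1/(1-e_0)$ is a positive integer $N=k+1$ and $e_0=k/(k+1)$.

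For the moreover part, specializing $\beta_1=\beta_2=(2)$ gives $\Delta_1=\Delta_2=2$, $p_1=p_2=1$, $n=\Delta_3$, and the formula collapses to $1-e_0=1/(ln-n-p_3)$, so $k=ln-n-p_3-1$. Meanwhile $\alpha=(l,\beta_3)$ has determinant $\Delta(\alpha)=l\Delta_3-p_3=ln-p_3$ by the $\Delta$-recursion, and applying the coefficient formula to the $(-l)$-endpoint of $\alpha$ yields $1-e_l=(\Delta_3+1)/\Delta(\alpha)=(n+1)/(ln-p_3)$, whence $e_l=k/\Delta(\alpha)$. Thus the spectral value of $\alpha$ is exactly $k$. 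The one step requiring care is the discrete convexity argument in the first stage; once $d_{i,j}>0$ is noted, the remainder reduces to bookkeeping with the coefficient formula and the branch classification of Lemma \ref{kltsing}.
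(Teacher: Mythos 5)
The paper states this lemma without proof (deferring to its references), so I can only assess your argument on its own merits. Stages 2 and 3 and the ``moreover'' part are correct: the closed form
\[
1-e_0=\frac{1/\Delta_1+1/\Delta_2+1/\Delta_3-1}{\,l-p_1/\Delta_1-p_2/\Delta_2-p_3/\Delta_3\,}
\]
follows as you say from the determinant recursion and the coefficient formula, it reproduces the known values (e.g.\ $e_0=1/2$ for center $(-2)$ with branches $(2),(2),(3)$, and $e_0=10/11$ for branches $(2),(3),(4)$), the integrality of $1/(1-e_0)$ checks out case by case against the list $(2,2,n),(2,3,3),(2,3,4),(2,3,5)$, and the computation $k=nl-n-p_3-1=\Delta(\alpha)-(n+1)-\cdots$ correctly identifies $k$ with the spectral value of $\alpha=(l,\beta_3)$.

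The gap is in stage 1. You correctly derive that the second differences of $d_{i,j}=1-e_{i,j}$ along a branch equal $(m_{i,j}-2)d_{i,j}\geqslant 0$, so the sequence is discretely \emph{convex}; but a convex sequence attains its \emph{maximum} at an endpoint, not its minimum (e.g.\ $(0.5,\,0.3,\,1)$ is convex with an interior minimum below both boundary values). Convexity therefore only yields $d_{i,j}\leqslant\max(1-e_0,1)=1$, i.e.\ the trivial bound $e_{i,j}\geqslant 0$; it does not rule out $d_{i,1}<d_{i,0}$, which is exactly the scenario in which a branch curve would beat the central curve. What is missing is the inequality $e_{i,1}\leqslant e_0$ for each curve adjacent to the center (once that holds, convexity does give monotonicity of $d$ along the branch and hence $e_{i,j}\leqslant e_0$ everywhere). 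This additional input is not free: by Lemma \ref{coefficient_boundary} one has $e_{i,1}=e_0(\Delta_i-1)/\Delta_i+(1-e_0)k_i/\Delta_i$ with $k_i$ the spectral value of $\beta_i$, so $e_{i,1}\leqslant e_0$ is equivalent to $e_0\geqslant k_i/(k_i+1)$, and verifying this requires combining the equation at the central vertex with the branch classification of Lemma \ref{kltsing} (for instance, for type $(2,2,n)$ one needs $k=n(l-1)-p_3-1\geqslant n-p_3-1=k_3$, which uses $l\geqslant 2$). Note also that equality $e_{i,1}=e_0$ does occur (branches $(2),(2),(3)$ give $e_0=e_3=1/2$), so any maximum-principle shortcut must tolerate ties. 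Until this step is supplied, the first assertion of the lemma --- that the central curve realizes the coefficient of $p$ --- is not proved.
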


For the convenience of the reader, we list below all klt singularities with small coefficient. This can be done with a straightforward computation and we refer to \cite[Proposition 10.1]{mckernan}.

\begin{proposition}\label{singlist}
Let $p\in S$ be a klt singularity with coefficient $0<e<3/5$. Then the possibilities for $p$ are as follows.
\begin{enumerate}
\item $e<1/2$: $(3,A_j)$. $e=(j+1)/(2j+1)$.
\item $e=1/2$:
\begin{enumerate}
\item $(4)$
\item $(3,A_j, 3)$
\item $(2,3,2)$
\item $p$ is a non chain singularity, with center $(2)$ and branches $(2)$, $(2)$ and $(A_j, 3)$, with the central curve and the unique $(-3)$ curve meeting opposite ends of the $A_j$ chain.
This is the only non chain singularity with $e<2/3$.
\end{enumerate}
\item $1/2<e<3/5$:
\begin{enumerate}
\item $(2,3,A_j)$ with $2\leqslant j\leqslant 4$. $e=(2j+2)/(3j+5)$.
\item $(4,2)$. $e=4/7$.
\end{enumerate}
\end{enumerate}
\end{proposition}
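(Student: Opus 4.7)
By Lemma \ref{kltsing}, every klt surface singularity is either a chain $(a_1, \dots, a_n)$ or a non-chain fork whose three branches have indices in $\{(2,2,n),\, (2,3,3),\, (2,3,4),\, (2,3,5)\}$. The plan is to treat these two families separately and exhaust all possibilities with $e < 3/5$.

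For the non-chain case, Lemma \ref{nonchainsing} immediately gives $e = k/(k+1)$ for some positive integer $k$, and the inequality $k/(k+1) < 3/5$ forces $k = 1$, i.e.\ $e = 1/2$. So it only remains to pin down which non-chain singularities achieve $k = 1$. When the first two branches are $(2)$, Lemma \ref{nonchainsing} further identifies $k$ with the spectral value of $\alpha = (l, \beta_3)$, and Lemma \ref{smallsvalue} says this equals $1$ exactly when $\alpha$ is $(A_j, 3)$ or $(3)$; this yields item (2)(d). For the remaining branch configurations $(2,3,3)$, $(2,3,4)$, $(2,3,5)$, I would write down the linear system $(K_{\tilde S} + \sum e_i E_i) \cdot E_i = 0$ and solve directly for the central-curve coefficient in terms of the self-intersections, checking that it strictly exceeds $3/5$ in every non-Du Val case.

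For the chain case, the strategy is an enumeration controlled by the position of the curves of self-intersection $\leq -3$, since the maximum coefficient is attained at an endpoint or at such an interior curve. Length-one chains $(a)$ contribute iff $a \in \{3, 4\}$. A short $2\times 2$ computation shows that chains containing two curves of self-intersection $\leq -3$ produce $e \geq 3/5$ unless the chain is $(4, 2)$. For chains with a single curve of self-intersection $\leq -3$ surrounded by $A$-type tails, I would apply Lemma \ref{coefficient_boundary} iteratively (using that the endpoint coefficient is $k/r$, the spectral value over the index of the complementary chain) together with the small-spectral-value list of Lemma \ref{smallsvalue}; this produces the infinite families $(3, A_j)$ (with $e < 1/2$) and $(3, A_j, 3)$ (with $e = 1/2$), the asymmetric family $(2, 3, A_j)$ for $2 \leq j \leq 4$, and the isolated case $(2, 3, 2)$.

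The hard part will be closing the enumeration cleanly for chains of unbounded length. The cleanest route is to track the pair (index, spectral value) under the operation of prepending a $(-2)$ curve to a chain: this is a linear recurrence whose long-term behaviour is governed by Lemma \ref{smallsvalue}, and any configuration other than the two infinite families $(3, A_j)$ and $(3, A_j, 3)$ forces the spectral value to grow and so pushes the coefficient past $3/5$ after finitely many additions.
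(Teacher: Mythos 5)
Your overall strategy --- splitting into chain and non-chain cases via Lemma \ref{kltsing}, disposing of the non-chain case with Lemma \ref{nonchainsing} (so that $e=k/(k+1)<3/5$ forces $k=1$, i.e.\ $e=1/2$, and the $(2,2,n)$ configurations are then identified through the spectral value), and enumerating chains according to the curves of self-intersection at most $-3$ --- is exactly the computation the paper has in mind; the paper itself gives no argument and simply defers to \cite[Proposition 10.1]{mckernan}. The non-chain half of your plan is correct, and the final list of families you extract agrees with the statement.

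The chain half, however, contains a concretely false step. The claim that \say{chains containing two curves of self-intersection $\leqslant -3$ produce $e\geqslant 3/5$ unless the chain is $(4,2)$} fails on both counts: $(4,2)$ has only one curve of self-intersection $\leqslant -3$, so it does not belong to that case at all, and the family $(3,A_j,3)$ --- item (2)(b) of the statement --- has two $(-3)$-curves and coefficient exactly $1/2$ for every $j$ (by symmetry the two end coefficients coincide and the adjunction system forces $e=1/2$). You then re-file $(3,A_j,3)$ under \say{a single curve of self-intersection $\leqslant -3$ surrounded by $A$-type tails}, so the case division as written neither covers nor excludes correctly, even though the list it ultimately produces happens to be right. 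The closing mechanism is also misidentified: by the first sentence of Lemma \ref{smallsvalue}, prepending a $(-2)$-curve changes the spectral value by $(j-2)r=0$, so the spectral value is \emph{preserved}, not forced to grow; it is the index $r$ that grows, and the first-curve coefficient $k/r$ in fact \emph{decreases}. What actually closes the infinite enumeration is monotonicity of the maximal coefficient under enlarging the weighted dual graph (adding vertices or decreasing self-intersections): one checks the finitely many minimal configurations with $e\geqslant 3/5$, such as $(5)$, $(2,4,2)$, $(4,2,2)$, $(3,3,2)$, $(2,2,3,2,2)$ and $(2,3,A_5)$, and discards every chain containing one of them. That monotonicity statement is the missing ingredient; without it, \say{the coefficient eventually exceeds $3/5$ after finitely many additions} does not exclude the longer chains. (Separately, note the statement's formulas contain typos --- the coefficient of $(3,A_j)$ is $(j+1)/(2j+3)$, not $(j+1)/(2j+1)$, and that of $(2,3,A_j)$ is $(2j+2)/(3j+5)$ --- which your computations should confirm rather than reproduce.)
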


\subsection{Adjunction}

A recurring topic in our discussion is the fact that \say{negativity} of the canonical divisor should control the behavior of singularities. In the following we collect some useful lemmas
that deal with the case in which $K_S+\Delta$ is non-positive and $\Delta$ is an integral divisor. We start first by recalling the results in \cite[Appendix L]{mckernan}

\begin{lemma}\label{adjunction1}
Let $C\subseteq S$ be a reduced irreducible curve on a $\mathbb{Q}$-factorial projective surface. If $(K_S+C)\cdot C < 0$ then $C$ is rational.
\end{lemma}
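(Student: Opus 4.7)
The proof should be a direct application of adjunction for a reduced divisor on a normal surface. Since $S$ is $\mathbb{Q}$-factorial, both $K_S$ and $C$ are $\mathbb{Q}$-Cartier, so $K_S+C$ is $\mathbb{Q}$-Cartier and the intersection number $(K_S+C)\cdot C$ is well-defined. Let $\nu:\bar{C}\to C$ denote the normalization. The standard adjunction formula on a normal surface produces an effective $\mathbb{Q}$-divisor $\mathrm{Diff}_{\bar{C}}(0)$ on $\bar{C}$, the different, satisfying
\[
\nu^{*}\bigl((K_{S}+C)|_{C}\bigr)=K_{\bar{C}}+\mathrm{Diff}_{\bar{C}}(0).
\]
Taking degrees on $\bar{C}$ yields
\[
(K_{S}+C)\cdot C \;=\; 2g(\bar{C})-2+\deg\mathrm{Diff}_{\bar{C}}(0) \;\geq\; 2g(\bar{C})-2,
\]
so the hypothesis $(K_{S}+C)\cdot C<0$ forces $g(\bar{C})=0$, i.e.\ $\bar{C}\cong\mathbb{P}^{1}$. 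Since rationality of $C$ means precisely that its normalization is $\mathbb{P}^{1}$, the conclusion follows.

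\textbf{Main obstacle.} The only nontrivial ingredient is effectivity of the different $\mathrm{Diff}_{\bar{C}}(0)$ for a reduced divisor on a normal surface; this is standard and holds in arbitrary characteristic, as the different encodes both the singularities of $S$ along $C$ and the non-normal locus of $C$ itself, each of which contributes nonnegatively. If one prefers to avoid citing adjunction on normal varieties directly, one can instead pass to the minimal resolution $\pi:\tilde{S}\to S$, write $\pi^{*}(K_{S}+C)=K_{\tilde{S}}+\tilde{C}+\Gamma$ with $\Gamma$ supported on the exceptional locus, and check that $\Gamma\cdot\tilde{C}\geq 0$ by examining the coefficients of $\Gamma$ at the points where $\tilde{C}$ meets exceptional curves; then $(K_{\tilde{S}}+\tilde{C})\cdot\tilde{C}\leq (K_{S}+C)\cdot C<0$ and smooth adjunction on $\tilde{S}$ gives $p_{a}(\tilde{C})=0$, hence $p_{g}(C)=0$. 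Either route is short, which fits with the lemma being a basic preliminary rather than a substantive result.
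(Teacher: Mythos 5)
Your proof is correct and is essentially the argument behind the paper's citation: the paper proves this lemma by deferring to \cite[Lemma L.2.2]{mckernan}, whose content is exactly the adjunction formula on the normalization with effective different (characteristic-free), giving $(K_S+C)\cdot C = 2g(\bar C)-2+\deg\operatorname{Diff}_{\bar C}(0)\geqslant 2g(\bar C)-2$. Your alternative route via the minimal resolution, where $\Gamma\geqslant 0$ because all discrepancies of the minimal resolution are nonpositive and hence $\Gamma\cdot\tilde C\geqslant 0$, is also valid and is the style of computation used elsewhere in the paper (e.g.\ in the proof of Lemma \ref{negative_adjunction}).
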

\begin{proof}
See \cite[Lemma L.2.2]{mckernan}
\end{proof}

\begin{lemma}\label{adjunction2}
Let $S$ be a $\mathbb{Q}$-factorial projective surface, and $C$, $D\subseteq S$ two curves, with $C$ integral. If $(K_S+C+D)\cdot C\leqslant 0$, and $C\cap D$ contains at least two points,
then $K_S+C+D$ is log canonical, $C\cap D$ consists exactly of two points, and $(K_S+C+D)\cdot C=0$.
\end{lemma}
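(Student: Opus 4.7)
The plan is to read off the conclusion from adjunction on $C$, once one observes that the two obvious lower bounds on $\deg K_C$ and on $\deg \operatorname{Diff}_C(D)$ are forced to be equalities.

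First I would apply adjunction to the prime divisor $C$ inside the pair $(S,C+D)$ to write
\[
(K_S+C+D)|_C \;=\; K_C + \operatorname{Diff}_C(D),
\]
where $\operatorname{Diff}_C(D)=\operatorname{Diff}_C(0)+D|_C$ is an effective $\mathbb{Q}$-divisor on $C$. Taking degrees yields
\[
(K_S+C+D)\cdot C \;=\; (2p_a(C)-2)+\deg \operatorname{Diff}_C(D).
\]
Since $C$ is integral, $p_a(C)\geqslant 0$, so $\deg K_C\geqslant -2$. Moreover, at every point $p\in C\cap D$ the coefficient of $\operatorname{Diff}_C(D)$ at $p$ is at least $1$: this is a standard local computation on normal surfaces (see \cite[Chapter 3]{flips}) combining the contribution of $\operatorname{Diff}_C(0)$ at any singularity of $S$ lying on $C$ with the local intersection multiplicity $(D\cdot C)_p$. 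Consequently $\deg \operatorname{Diff}_C(D)\geqslant |C\cap D|\geqslant 2$.

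Combining these inequalities with the hypothesis $(K_S+C+D)\cdot C\leqslant 0$ gives the sandwich
\[
0\;\geqslant\;(K_S+C+D)\cdot C\;\geqslant\;-2+2\;=\;0,
\]
so equality must hold throughout. This simultaneously forces $(K_S+C+D)\cdot C=0$, $p_a(C)=0$, $|C\cap D|=2$, and that each of the two intersection points contributes exactly $1$ to $\operatorname{Diff}_C(D)$ while all other contributions vanish. In particular every coefficient of $\operatorname{Diff}_C(D)$ is $\leqslant 1$, and inversion of adjunction for surfaces then yields that $(S,C+D)$ is log canonical in a neighbourhood of $C$. The main subtlety in the above argument is the lower bound $\operatorname{Diff}_C(D)_p\geqslant 1$ at each $p\in C\cap D$, which requires a short case analysis depending on whether $p$ is a smooth point of $S$ (where $(D\cdot C)_p$ is an integer) or a singular point (where $\operatorname{Diff}_C(0)_p>0$ compensates for a possibly fractional intersection); once this is granted, everything else reduces to the numerical sandwich above together with the standard correspondence between bounded coefficients of the different and log canonicity.
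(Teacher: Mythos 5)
Your proof is correct, and it is the standard adjunction-plus-different argument: the paper gives no proof of this lemma but simply cites Corollary L.2.3 of Keel--M\textsuperscript{c}Kernan, whose underlying argument is exactly the numerical sandwich you set up (degree of the different bounded below by the number of points of $C\cap D$, each contributing at least $1$, against $\deg K_{C}\geqslant -2$), followed by inversion of adjunction for surfaces. The one caveat, which you already flag, is that the argument yields log canonicity of $K_S+C+D$ only in a neighbourhood of $C$ (equivalently, at the points of $C\cap D$), which is how the statement is meant to be read.
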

\begin{proof}
See \cite[Corollary L.2.3]{mckernan}.
\end{proof}

\begin{lemma}\label{negative_adjunction_2}
Let $S$ be a log del Pezzo surface and $C$ an irreducible curve in $S$. Suppose that $(S,C)$ is log canonical but not divisorially log terminal.
\begin{enumerate}
\item If $K_S+C$ is anti-ample, then $C$ contains at most two singular points. If furthermore $C$ contains exactly two singular points, then $K_X+C$ is dlt at one of them.
\item If $K_S+C$ is numerically trivial, $C$ contains at most three singular points. If furthermore $C$ contains exactly three singular points, then two of them are $A_1$ points and
$(S,C)$ is dlt along them, and $(S,C)$ is not dlt at the third point.
\item If $K_S+C$ is anti-nef and $C$ contains at least two singular points, then $C$ is smooth.
\end{enumerate}
\end{lemma}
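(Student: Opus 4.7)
My plan is to apply the adjunction formula on the normalization $\nu:C^{\nu}\to C$. I first reduce to the case $C^{\nu}\cong \mathbb{P}^1$: when $(K_S+C)\cdot C<0$ this is Lemma \ref{adjunction1}, while if $(K_S+C)\cdot C=0$ the existence of a non-dlt point forces $\deg\operatorname{Diff}_{C^{\nu}}(0)\geqslant 1$, and adjunction combined with $g(C^{\nu})\geqslant 0$ then forces $g(C^{\nu})=0$. Under this reduction the adjunction formula reads
\[
(K_S+C)\cdot C \;=\; -2+\deg\operatorname{Diff}_{C^{\nu}}(0),
\]
where $\operatorname{Diff}_{C^{\nu}}(0)$ is an effective $\mathbb{Q}$-divisor encoding the behavior of $(S,C)$ at each dlt singular point of $S$ on $C$ and at each non-dlt log canonical point.

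The key numerical input is a lower bound on the contributions at each point. A dlt singular point $p$ of $S$ on $C$, of index $r\geqslant 2$, contributes exactly $(r-1)/r\geqslant 1/2$. A non-dlt log canonical point $p_0$, classified by Lemma \ref{lcnotdltsing}, falls into two behaviors: in cases (1) and (2) the curve $C$ has two branches at $p_0$, giving two preimages in $C^{\nu}$ with total contribution $2$; in cases (3) and (4) the curve $C$ is smooth at $p_0$, giving a single preimage of contribution $1$. Since $(S,C)$ is not dlt by hypothesis, at least one non-dlt point $p_0$ exists.

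For part (1), the anti-ample assumption gives $\deg\operatorname{Diff}_{C^{\nu}}(0)<2$. The contribution from $p_0$ cannot equal $2$, so $p_0$ is of type (3) or (4); it contributes exactly $1$ and (by the classification) lies in $\operatorname{Sing}(S)$. The residual mass, strictly less than $1$, accommodates at most one further singular point of $S$ on $C$, which must be dlt, giving both bullets. For part (2), $(K_S+C)\cdot C=0$ forces $\deg\operatorname{Diff}_{C^{\nu}}(0)=2$, and a case split on the contribution of $p_0$ yields at most three singular points of $S$ on $C$, with three occurring only when $p_0$ is of type (3) or (4) and two additional singular points contribute exactly $1/2$ each, hence both are $A_1$ dlt. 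For part (3), if $C$ were singular at some $q$, then $(S,C)$ would be non-dlt at $q$ with $q$ of type (1) or (2), contributing $2$ to $\deg\operatorname{Diff}_{C^{\nu}}(0)$; combined with any other singular point of $S$ on $C$ contributing at least $1/2$, the total would exceed $2$, contradicting anti-nefness.

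The main technical point will be verifying the structure of $\operatorname{Diff}_{C^{\nu}}(0)$ in the non-dlt cases, in particular distinguishing the unibranch cases (3) and (4) from the two-branch cases (1) and (2); this reduces to inspecting the classification of Lemma \ref{lcnotdltsing} locally and invoking the standard formula for the different at log canonical centers on surfaces. Once these coefficients are in hand, the three parts of the lemma become arithmetic consequences of the inequality governing $\deg\operatorname{Diff}_{C^{\nu}}(0)$.
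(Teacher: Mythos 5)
Your proposal is correct and follows essentially the same route as the paper: adjunction via the different, with the non-dlt point contributing at least $1$ (or $2$ across two branches) and each dlt singular point of $S$ contributing at least $1/2$, so the three parts follow from bounding $\deg\operatorname{Diff}(0)$ by $(K_S+C)\cdot C+2$. The paper phrases the key input as inversion of adjunction on $C$ itself rather than spelling out the branch count on the normalization, but the underlying computation is identical.
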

\begin{proof}
Let start by proving $(1)$. Suppose by contradiction that $C$ contains at least three singular points
$p$, $q$, $r$. We may assume that $K_S+C$ is not dlt at $p$.
By inversion of adjunction, the coefficient of $p$ in $\text{Diff}_C (0)$ is at least one. Since the coefficient of any singular point is at least $1/2$, however, we have that
$(K_S+C)\cdot C = -2+\operatorname{deg}(\text{Diff}_C (0))\geqslant 0$, contradiction. Therefore $C$ contains at most two singular points. Suppose now
that $C$ contains exactly two singular points. If $(S,C)$ is not dlt at either point, then both of their coefficients in $\operatorname{Diff}_C (0)$ are one. This implies again that 
$(K_S+C)\cdot C = -2+\operatorname{deg}(\text{Diff}_C (0)) = 0$, contradiction.

This proves $(1)$. The same type of reasoning gives $(2)$. For $(3)$ notice that if $C$ is not smooth then by adjunction $(K_S+C)\cdot C\geqslant 0$, 
with equality exactly when $C$ has one node and contains no singularities of $S$ outside the node. 
\end{proof}

\section{Gorenstein log del Pezzo surfaces}\label{gorenstein}

Recall that a normal projective Cohen-Macaulay variety $X$ is called Gorenstein if the canonical divisor $K_X$ is Cartier. 
In this appendix we classify Gorenstein log del Pezzo surfaces of Picard number one
over algebraically closed fields of characteristic different from two and three. Their singularities were first classified
over the complex numbers by Furushima \cite{furushima} (see also \cite{zhang1} and \cite{zhang2}). 
The analogous result in positive characteristic can easily be derived from existing papers, but we were unable to 
find it explicitly stated in the literature. In the following treatment, we take the approach of Ye \cite{yegorenstein}, who reduces the study of Gorenstein log del Pezzo surfaces to the study
of extremal rational elliptic surfaces. These were classified over the complex numbers by Miranda and Persson in \cite{mirandapersson}, and over algebraically closed fields
of positive characteristic by Lang in \cite{lang1} and \cite{lang2}.

A complete classification of log del Pezzo surfaces of index two (i.e. $K_X$ is not Cartier, but $2K_X$ is) was obtained in \cite{nakayama} by different methods.

\subsection{Extremal rational elliptic surfaces}

\begin{definition}
An elliptic surface is a smooth relatively minimal surface $X$ over a curve $C$, such that the general fiber is a smooth curve of genus one. 
\end{definition}

Let $X$ be a smooth projective variety and let $f:X\rightarrow C$ be a fibration such that the general fiber is irreducible. 
Over the complex numbers we can deduce that the general fiber is in fact smooth by generic smoothness. 
In positive characteristic, however, generic smoothness no longer holds and some care is needed. The following lemma, largely taken from \cite{miranda}, 
shows that we recover smoothness of the general fiber in a special case. 

\begin{lemma}\label{elliptic_fibration}
Let $f: X\rightarrow \mathbb{P}^1$ be a smooth relatively minimal surface with section such the general fiber is irreducible and of arithmetic genus one.
Suppose also that $X$ is rational, the image of the section is a $(-1)$ curve and $\operatorname{char}(k)\neq 2,3$.
Then $f:X\rightarrow \mathbb{P}^1$ is obtained by resolving the rational map induced by a pencil of cubic curves in $\mathbb{P}^2$ whose general member is smooth.
In particular $X$ is an elliptic surface.
\end{lemma}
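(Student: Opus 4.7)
The plan is to invoke the non-existence of quasi-elliptic fibrations in characteristic different from two and three, and then reduce the assertion to the classical description of rational elliptic surfaces as nine-fold blow-ups of $\mathbb{P}^2$ along the base loci of cubic pencils.

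I would begin by observing that, because $\operatorname{char}(k)\neq 2,3$, a classical theorem of Bombieri--Mumford implies that every relatively minimal genus one fibration on a smooth surface has a smooth generic fiber; in particular $f\colon X\to\mathbb{P}^1$ is an elliptic fibration. Moreover, since the section $S$ satisfies $S\cdot F=1$, any multiple fiber of multiplicity $m$ would force $m\mid 1$, so the fibration has no multiple fibers. Combined with $\chi(\mathcal{O}_X)=1$ (because $X$ is rational) and the Kodaira canonical bundle formula, this yields $K_X\sim f^*\mathcal{O}_{\mathbb{P}^1}(-1)$. Consequently $-K_X\sim F$, so $K_X^2=F^2=0$, and Noether's formula gives $\rho(X)=10$.

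Next I would produce a birational morphism $\mu\colon X\to\mathbb{P}^2$ realizing $X$ as a nine-fold blow-up. Since $K_X^2=0$, the surface $X$ is not minimal (minimal rational surfaces have $K^2\in\{8,9\}$), so Castelnuovo's criterion lets us contract successive $(-1)$-curves starting with $S$, eventually reaching either $\mathbb{P}^2$ or some $\mathbb{F}_n$ with $n\neq 1$; in the latter case a single elementary transformation reduces us to the $\mathbb{P}^2$ case. The resulting $\mu$ is a composition of nine blow-ups (possibly at infinitely near centers), and the relation
\[
-K_X\sim\mu^*\mathcal{O}_{\mathbb{P}^2}(3)-\sum_{i=1}^9 E_i
\]
(with the $E_i$ the total transforms of the exceptional divisors) identifies $|-K_X|=|F|$ with the one-dimensional sub-system of $|\mathcal{O}_{\mathbb{P}^2}(3)|$ consisting of cubics passing through the nine blown-up centers with the prescribed multiplicities. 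Hence $f$ is precisely the resolution of the rational map $\mathbb{P}^2\dashrightarrow\mathbb{P}^1$ induced by this pencil of cubics, and smoothness of the generic fiber of $f$ immediately yields smoothness of the generic member of the pencil.

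The main obstacle is organizing the sequence of contractions $\mu\colon X\to\mathbb{P}^2$ compatibly with the pencil $|F|$. One must start by contracting $S$, then at each subsequent stage find a $(-1)$-curve whose contraction does not introduce multiple fibers or violate relative minimality (typically, irreducible components of reducible fibers and torsion sections), and handle the possible $\mathbb{F}_n$ detour cleanly. Once $\mu$ is fixed, the identification of $|-K_X|$ with a cubic pencil through nine base points follows immediately from the displayed divisor relation.
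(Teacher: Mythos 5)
Your argument is correct, and its skeleton --- establish $K_X\sim -F$, contract to $\mathbb{P}^2$ using that $-K_X$ is nef (so no curve has self-intersection below $-2$ and the minimal model is $\mathbb{P}^2$, $\mathbb{F}_0$ or $\mathbb{F}_2$), and identify $|F|=|-K_X|$ with a pencil of cubics through nine possibly infinitely near base points --- is the same as the paper's. The genuine difference is where the hypothesis $\operatorname{char}(k)\neq 2,3$ enters. You rule out the quasi-elliptic case at the outset via Bombieri--Mumford (equivalently Tate's genus-change theorem), so the general fiber is smooth from the start, and smoothness of the general cubic is then inherited from the fiber through $\mu|_F$ (an isomorphism onto its image since $F\cdot E_i=1$ for each total transform and $F$ misses the contracted $(-2)$-curves). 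The paper runs the implication the other way: it first produces the cubic pencil and only then invokes Miranda's Lemma I.5.2, which gives smoothness of the general member of a cubic pencil with irreducible general member in characteristic $\neq 2,3$; smoothness of the general fiber of $f$ is the last thing deduced. Your route makes the fibration-theoretic statement self-contained before any projective geometry appears, at the cost of quoting the classification of quasi-elliptic fibrations; the paper stays entirely within the theory of plane cubic pencils. Two minor points: you reach $K_X\sim f^*\mathcal{O}_{\mathbb{P}^1}(-1)$ via the canonical bundle formula, where the paper gets $K_X\equiv -F$ more cheaply from $K_X\cdot F=0$ and $K_X\cdot S=-1$; and your closing concern about choosing the contractions ``compatibly with the pencil'' is unnecessary --- any birational morphism $\mu\colon X\to\mathbb{P}^2$ serves, since $|-K_X|$ pushes forward to the cubic pencil regardless of the order of contractions, and resolving the base scheme of that pencil recovers $X$ because $|-K_X|$ is base-point free on $X$.
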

\begin{proof}
For the general fiber $F$ we have that $K_X \cdot F = 0$ by adjunction (and by Kodaira's classification of singular fibers \cite[Table I.4.1]{miranda}).
Hence $K_X \equiv nF$ for some integer $n$. Consider the image
$S$ of the section of $f$, which is a $(-1)$ curve by assumption. Again by adjunction we get that $K_X \cdot S = -1$, hence $K_X \equiv -F$. It follows that $-K_X$ is nef and that
every rational curve has self intersection at least $-2$. Let $g:X \rightarrow M$ be a blowdown to a Mori fiber space $M$. From the above considerations, $M$ can only be 
$\mathbb{F}_0, \mathbb{F}_2$ or $\mathbb{P}^2$.  In each of these cases $X$ dominates $\mathbb{P}^2$ and therefore we get a contraction $h: X\rightarrow \mathbb{P}^2$.
Pushing forward $|F|$ to $\mathbb{P}^2$ we obtain a pencil of curves that are numerically equivalent to the push forward of elements of $|-K_X|$.
It follows then that $f_* |F|$ is a pencil in $|-K_{\mathbb{P}^2}|$, and it is therefore a pencil of cubics. 
Finally note that the general member of a pencil of cubics in characteristic different from two or three is smooth by \cite[Lemma I.5.2]{miranda} and the comment after it. 
\end{proof}

\begin{definition}
Let $f: X\rightarrow C$ be an elliptic surface with section $\sigma$. The section $\sigma$ naturally gives all the fibers the structure of elliptic curves. The set of all sections of $f$ is then a group,
where the multiplication is done fiber by fiber and $\sigma$ is the identity element. This group is called the Mordell-Weil group of $X$ and is denoted by $\operatorname{MW}(X)$.
\end{definition}

\begin{definition}
Let $f:X\rightarrow C$ be a rational elliptic surface with section $\sigma$. We say that $X$ is extremal if the Mordell-Weil group of $X$ is finite and 
$\operatorname{NS}(X)_{\mathbb{Q}}$ is generated by the classes of $\sigma(C)$ and of the vertical components. 
\end{definition}

\begin{theorem}\label{elliptic_classification}
The classification of the singular fibers of extremal rational elliptic surfaces over algebraically closed fields with $\operatorname{char}(k)\neq 2,3,5$
is the same as over $\mathbb{C}$. The configurations are listed in the following table using Kodaira's notation:

\begin{center}
\begin{tabular}{ c c c c}
 Singular fibers & $|\operatorname{MW}(X)|$ & Singular fibers & $|\operatorname{MW}(X)|$\\ 
 $II,II^*$ & $1$ & $I_1 ^*, I_4, I_1$ & $2$\\  
 $III, III^*$ & $2$ &  $I_2 ^*, I_2, I_2$ & $4$\\
 $IV,IV^*$ & $3$ &   $I_9, I_1, I_1, I_1$ & $3$\\
 $I_0 ^*, I_0 ^*$ & $4$ &  $I_8, I_2, I_1, I_1$ & $4$\\
 $II^*, I_1, I_1$ & $1$ &  $I_6, I_3, I_2, I_1$ & $6$\\
 $III^*, I_2, I_1$ & $2$ &  $I_5, I_5, I_1, I_1$ & $5$\\
 $IV^*, I_3, I_1$ & $3$ &  $I_4, I_4, I_2, I_2$ & $8$\\
 $I_4 ^*, I_1, I_1$ & $2$ &  $I_3, I_3, I_3, I_3$ & $9$
\end{tabular}
\end{center}

If $\operatorname{char}(k)=5$ the classification is the same, except that in the above table one replaces $I_5$, $I_5$, $I_1$, $I_1$ with $I_5$, $I_5$, $II$. Furthermore, if 
$\operatorname{char}(k)\neq 2,3$, for every configuration of possible singular fibers in the above table there is a unique extremal rational elliptic surface with section with that
configuration of singular fibers, except for the case $(I_0^* , I_0^*)$. In this case there are infinitely many extremal rational elliptic surfaces with that configuration of fibers.
\end{theorem}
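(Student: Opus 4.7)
The plan is to derive the statement directly from Lang's classification of extremal rational elliptic surfaces in positive characteristic \cite{lang1, lang2} combined with the complex classification of Miranda and Persson \cite{mirandapersson}.

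First I would set up the two classical constraints on an extremal rational elliptic surface $f: X \to \mathbb{P}^1$. Since $X$ is rational, $e(X) = 12$, giving
\[
\sum_v (e_v + \delta_v) = 12,
\]
where $e_v$ is the Euler number of the Kodaira type of the fiber above $v$ and $\delta_v \geqslant 0$ is the wild part of the conductor. Extremality, combined with the Shioda--Tate formula, yields $\sum_v (m_v - 1) = 8$, where $m_v$ denotes the number of irreducible components of the fiber at $v$. Together with Kodaira's tabulation of the $e_v$ and $m_v$, these two equalities cut out a finite list of candidate configurations of singular fibers.

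Next I would invoke Lang's analysis in \cite{lang1}, which works through each candidate and determines the wild conductors allowed in each positive characteristic. The key input is that in characteristic $p > 3$, wild ramification at a fiber can only occur at additive fibers whose inertia action is divisible by $p$; for $p > 5$ none of the candidate configurations admits this, so every $\delta_v$ vanishes and one recovers the Miranda--Persson list verbatim. In characteristic $5$, the only wild contribution that survives the two constraints is a $\delta = 1$ at a type $II$ fiber, and Lang shows that this produces exactly one new configuration, $(I_5, I_5, II)$, which replaces $(I_5, I_5, I_1, I_1)$ in the complex list.

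For uniqueness, I would use that the $j$-invariant map $j: \mathbb{P}^1 \to \mathbb{P}^1$ is determined up to $\mathrm{PGL}_2$ action on the source by the Kodaira types at the branch points of the discriminant. In every configuration other than $(I_0^*, I_0^*)$, there are enough distinguishable singular fibers to rigidify the configuration, and a minimal Weierstrass model is then uniquely reconstructed. In the $(I_0^*, I_0^*)$ case, $j$ is constant and ranges over a one-parameter family, whence infinitely many extremal rational elliptic surfaces with that fiber configuration exist. The main obstacle will be verifying from Lang's tables that no further wildly ramified configurations appear in characteristic $5$, but this is purely a matter of running through a bounded list of candidates.
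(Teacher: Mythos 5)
Your overall route is the same as the paper's: Theorem \ref{elliptic_classification} is proved there simply by citing \cite[Theorem 2.1]{lang1}, \cite[Theorem 4.1]{lang2}, \cite[Theorem 4.1]{mirandapersson} and \cite[Theorem VIII.1.5]{miranda}, and your two numerical constraints ($\sum_v (e_v+\delta_v)=12$ from $e(X)=12$ and $\sum_v(m_v-1)=8$ from Shioda--Tate plus extremality) together with the $j$-map rigidity argument for uniqueness are exactly the skeleton of those references. So the strategy is fine.

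However, your explanation of the characteristic-$5$ exception is wrong, and it is the one step you single out as needing verification. For residue characteristic $p\geqslant 5$ the $\ell$-adic representation of an elliptic curve is tamely ramified, so the wild part of the conductor vanishes at \emph{every} fiber; there is no ``$\delta=1$ at a type $II$ fiber'' available in characteristic $5$. Your proposed mechanism is also internally inconsistent: with $\delta=1$ the type $II$ fiber would contribute $e_v+\delta_v=3$ to the Euler number, and $5+5+3=13\neq 12$, whereas the actual configuration $(I_5,I_5,II)$ satisfies $5+5+2=12$ with all $\delta_v=0$. The genuine reason $(I_5,I_5,I_1,I_1)$ is replaced by $(I_5,I_5,II)$ in characteristic $5$ is not a wild conductor but the behavior of the $j$-map: it has degree $12$ with ramification index $5$ at each $I_5$ fiber, and in characteristic $5$ such points are wildly ramified \emph{for the map $j$}, so the Riemann--Hurwitz count that forces the remaining discriminant to sit in two $I_1$ fibers over $j=\infty$ fails, and Lang's analysis shows the residual degeneration is a single type $II$ fiber. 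If you intend to ``run through the bounded list of candidates'' yourself rather than quote Lang's tables verbatim, you need this corrected mechanism; as written, your criterion (``wild ramification at additive fibers whose inertia action is divisible by $p$'') would wrongly predict no deviation from the complex list in characteristic $5$ at all.
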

\begin{proof}
See \cite[Theorem 2.1]{lang1}, \cite[Theorem 4.1]{lang2}, \cite[Theorem 4.1]{mirandapersson} and \cite[Theorem VIII.1.5]{miranda}.
\end{proof}

\subsection{Reduction to elliptic surfaces}\label{reduction}

We recall here the relation between Gorenstein log del Pezzo surfaces and extremal rational elliptic surfaces. Throughout this discussion we assume $\operatorname{char}(k)\neq 2,3$.
Let $V$ be a Gorenstein log del Pezzo surface of rank one such that $K_V ^2 = 1$. Let $U$ be its minimal resolution, with
exceptional locus $D$, consisting of eight $(-2)$ curves by Lemma \ref{sum_ten}. 
The general member of the pencil $|-K_U|$ is reduced and irreducible by \cite[Chapter III, Th\'{e}or\`{e}me 1, (b')]{demazure} 
(see the remark in the introduction to Chapter IV), and of arithmetic genus one by adjunction. This pencil has only one base point $p$ by \cite[Chapter III, Proposition 2]{demazure}.
After blowing it up, we are in the hypothesis of Lemma \ref{elliptic_fibration}, where we may take the exceptional curve of the blow up as a section. 
Therefore $X=\operatorname{Bl}_p (U)$ is an elliptic surface. 
Notice that $p$ is not contained in the support of $D$ because $K_U \cdot D = 0$. Hence the strict transform of $D$ in $X$ is contained in the union of the 
singular fibers and does not meet the section. By Lemma \ref{sum_ten} we have that $\rho(U) = 10-K_U ^2 = 9$, and therefore $\rho(X)=10$. By the Shioda-Tate formula 
(\cite[Corollary 6.13]{shioda}), however, we have that $\rho(X) = \sharp D + \operatorname{rank}(\operatorname{MW}(X)) + 2$, 
where $\sharp D$ indicates the number of irreducible components of the support of $D$.
It follows then that the Mordell-Weil group of $X$ is finite and therefore $X$ is an extremal rational elliptic surface. 

From this discussion we see that we may obtain every rank one Gorenstein log del Pezzo surface $V$ with $K_V ^2=1$ by starting with an extremal rational elliptic surface of Picard number
ten, selecting a section, contracting all the $(-2)$ curves not meeting the section and then blowing down the section. 

Suppose now that $V$ is a Gorenstein log del Pezzo surface such that $2\leqslant K_V ^2 \leqslant 8$ and let $U$ be its minimal resolution. 
If $K_V ^2 =8$ then $V$ is obviously isomorphic to $\overline{\mathbb{F}}_2$, so we further assume $K_V^2 \leqslant 7$. Let $C$ be any $(-1)$ curve on $U$
(such a curve exists because the Picard number of $U$ is at least three).
Every irreducible divisor $A$ in the linear system $|-K_U|$ meets $C$ exactly once, and by taking $A$ to be general, we may assume that there are no other $(-1)$ curves passing 
through $p=A\cap C$. 
Blowing up $p$ and then contracting all the $(-2)$ curves, we get a rank one log del Pezzo surface $V'$ such that $K_{V'}^2 = K_V ^2 -1$. By repeating this process we may reduce
our analysis to the case when $K_V^2 =1$, which is the content of Lemma \cite[Lemma 3.2]{yegorenstein}.

We summarize our conclusions in the following result:

\begin{theorem}\label{reductiontheorem}
Let $V$ be a rank one Gorenstein log del Pezzo surface with $K_V ^2 \leqslant 7$ and let $U\rightarrow V$ be the minimal resolution. If $\operatorname{char}(k)\neq 2,3$,
there exists an extremal rational elliptic surface $Y$ and a morphism $f:Y\rightarrow U$ that is a composition of blow downs of some $(-1)$-curves.
\end{theorem}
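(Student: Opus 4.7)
The plan is to proceed by induction on $d := K_V^2$, using the preceding discussion as a blueprint. The base case $d = 1$ will be handled by realizing $U$ itself as a blow-down of an extremal rational elliptic surface, and the inductive step $d \geqslant 2$ will replace $V$ by a Gorenstein log del Pezzo $V'$ with $K_{V'}^2 = d - 1$ via a single blow-up on $U$ followed by the contraction of appropriate $(-2)$-curves.

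For the inductive step, assume $d \leqslant 7$ (so $V \neq \overline{\mathbb{F}}_2$). First I would produce a $(-1)$-curve $C$ on $U$: this exists because $\rho(U) = 10 - d \geqslant 3$ by Lemma \ref{sum_ten}, and $U$ is a smooth rational surface. Since $-K_U$ is big and nef (being the pullback of the ample $-K_V$), standard vanishing gives $h^0(U, -K_U) = 1 + d \geqslant 2$, so $|-K_U|$ is a pencil. Any irreducible $A \in |-K_U|$ satisfies $A \cdot C = 1$, and by moving $A$ in the pencil I can arrange that the unique intersection point $p = A \cap C$ avoids any other $(-1)$-curve on $U$. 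Blowing up $p$ yields $U'$ with an extra $(-1)$-curve replacing $p$; the strict transform of $C$ becomes a $(-2)$-curve, while all previously existing $(-2)$-curves of $U$ (the exceptional locus of $U \to V$) remain $(-2)$-curves and remain orthogonal to $-K_{U'}$. Contracting the $(-2)$-curves on $U'$ produces $V'$ with $K_{V'}$ Cartier, $-K_{V'}$ ample, and $K_{V'}^2 = d-1$; because no $(-1)$-curve of $U$ other than $C$ passed through $p$, the resulting surface $V'$ is again of Picard number one (this is the place that requires the most careful bookkeeping, but it is precisely \cite[Lemma 3.2]{yegorenstein}). Applying the inductive hypothesis to $V'$ with minimal resolution $U'$ and composing with the blow-up $U' \to U$ yields the desired $Y \to U$.

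For the base case $d = 1$, I would follow the construction already sketched in Subsection \ref{reduction}. The exceptional locus $D$ of $\pi: U \to V$ consists of $8$ $(-2)$-curves (Lemma \ref{sum_ten}), and $|-K_U|$ is a pencil whose general member is irreducible reduced of arithmetic genus one with exactly one base point $p \notin \operatorname{Supp}(D)$ (since $K_U \cdot D = 0$). Blowing up $p$ yields $Y = \operatorname{Bl}_p(U)$ which, by Lemma \ref{elliptic_fibration}, is a rational elliptic surface with section given by the exceptional divisor of the blow-up. The strict transform of $D$ consists of $8$ vertical $(-2)$-curves disjoint from the section, and $\rho(Y) = 10$. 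By the Shioda-Tate formula we get $10 = 2 + 8 + \operatorname{rank}\,\operatorname{MW}(Y)$, forcing the Mordell-Weil rank to be zero, so $Y$ is extremal. The morphism $Y \to U$ is then the blow-down of the section, which is a single $(-1)$-curve contraction, as required.

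The main obstacle is verifying in the inductive step that after blowing up $p$ and contracting the $(-2)$-curves, the resulting $V'$ is still a rank one Gorenstein log del Pezzo and not just a normal projective surface with $K$ anti-nef. Concretely one must check that the new contracted configuration consists of disjoint A-D-E configurations (so that $V'$ is Du Val, hence Gorenstein), that $-K_{V'}$ is strictly ample (not merely nef), and that $\rho(V') = 1$. All three follow once one knows that on $U'$ the only curve orthogonal to $-K_{U'}$ is a union of $(-2)$-curves forming such a configuration; this is where the general choice of $A$ and the avoidance hypothesis on $p$ (no second $(-1)$-curve through $p$) are used. The characteristic hypothesis $\operatorname{char}(k) \neq 2, 3$ enters only through Lemma \ref{elliptic_fibration}, which ensures the generic fiber of $Y \to \mathbb{P}^1$ is actually smooth.
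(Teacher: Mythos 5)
Your proposal is correct and follows essentially the same route as the paper, which simply refers back to the discussion in Subsection \ref{reduction} (the pencil $|-K_U|$, the base point, Lemma \ref{elliptic_fibration}, and Shioda--Tate for $K_V^2=1$; descending induction via a general anticanonical member through a $(-1)$-curve for $2\leqslant K_V^2\leqslant 7$, citing \cite[Lemma 3.2]{yegorenstein}). The only quibble is that for $d\geqslant 2$ the system $|-K_U|$ has dimension $d>1$, so calling it a pencil is imprecise, but this does not affect the argument.
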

\begin{proof}
Immediate from the previous description. For more detail see \cite[Theorem 3.4]{yegorenstein}.
\end{proof}

Thanks to Theorem \ref{reductiontheorem} and Theorem \ref{elliptic_classification}, one can now classify all Gorenstein log del Pezzo surfaces. In fact, the only $(-1)$ curves
in an extremal rational elliptic surface $W$ are the members of the Mordell-Weil group. In order to get the minimal resolution of
any Gorenstein log del Pezzo surface it is therefore enough to consider an extremal rational elliptic surface $X$, contract a section, and then keep contracting the $(-1)$ curves
that are formed in the process along the singular fibers. The result is the following:

\begin{theorem}\label{gorenstein_classification}
Let $V$ be a singular rank one Gorenstein log del Pezzo surface over an algebraically closed field such that $\operatorname{char}(k)\neq 2,3$.
The singularity types on $V$ are listed in the following table. Furthermore, $V$ is uniquely determined by its singularities,
with the exception of the cases $E_8, A_1+E_7, A_2+E_6$, which have two classes of isomorphism each, and the case $2D_4$, which has infinitely many
classes of isomorphism.

\begin{center}
\begin{tabular}{ c c c c c c c }
$A_1$ & $A_1 + A_2$ & $A_4$ & $2A_1 + A_3$ & $D_5$ & $A_1 + A_5$\\
$3A_2$ & $E_6$ & $3A_1+ D_4$ & $A_7$ & $A_1+D_6$ & $E_7$\\
$A_1+2A_3$ & $A_2+A_5$ & $D_8$ & $2A_1+D_6$ & $E_8$ & $A_1+E_7$\\
$A_1+A_7$ & $2A_4$ & $A_8$ & $A_1+A_2+A_5$ & $A_2+E_6$& $A_3+D_5$\\
$4A_2$ & $2A_1+2A_3$ & $2D_4$\\
\end{tabular}
\end{center}

\end{theorem}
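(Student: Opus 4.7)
The plan is to combine Theorem \ref{reductiontheorem} with Theorem \ref{elliptic_classification} and perform an exhaustive case analysis. First, the case $K_V^2 = 8$ is immediate: any singular rank one Gorenstein log del Pezzo with $K_V^2 = 8$ must be $\overline{\mathbb{F}}_2$, giving a single $A_1$ singularity. When $K_V^2 \leq 7$, Theorem \ref{reductiontheorem} provides an extremal rational elliptic surface $Y$ together with a composition of blow downs $f:Y\rightarrow U$ of $(-1)$-curves, where $U$ is the minimal resolution of $V$. The exceptional divisor of $U\rightarrow V$ then consists of the remaining $(-2)$-curves on $U$, and its dual graph determines the singularity type of $V$.

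Next, I would exploit the key observation that every $(-1)$-curve on an extremal rational elliptic surface is a section, i.e., an element of the Mordell-Weil group. Consequently $f$ starts by contracting a section, and each subsequent contraction peels off a leaf of a singular fiber, creating the next $(-1)$-curve to contract. Running through each row of Theorem \ref{elliptic_classification}, I would determine the possible singularity types on $V$ by tracking which components of which singular fibers get contracted and which survive as chains of $(-2)$-curves. This yields the complete table of singularity types listed. For instance, starting from the $(II,II^*)$ configuration and contracting the section along with the $II$ fiber unwinds to leave the entire $II^*$ chain of eight $(-2)$-curves, producing $E_8$.

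For uniqueness, Theorem \ref{elliptic_classification} gives uniqueness of the extremal rational elliptic surface $Y$ for each fiber configuration, except for $(I_0^*,I_0^*)$, which accounts directly for the infinite family producing $2D_4$. The Mordell-Weil group acts transitively on sections, so the choice of section to contract is irrelevant up to isomorphism, and the remaining contraction sequence is forced by the combinatorics of the fiber graphs. The double realizations for $E_8$, $A_1+E_7$, $A_2+E_6$ arise precisely because each of these singularity types can be produced from two distinct fiber configurations of Theorem \ref{elliptic_classification}, namely $(II,II^*)$ and $(II^*,I_1,I_1)$; $(III,III^*)$ and $(III^*,I_2,I_1)$; $(IV,IV^*)$ and $(IV^*,I_3,I_1)$ respectively. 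The main obstacle will be the sheer size of the enumeration, i.e.\ verifying that every row of Theorem \ref{elliptic_classification} and every choice of section is accounted for, and carefully handling characteristic five, where $(I_5,I_5,I_1,I_1)$ is replaced by $(I_5,I_5,II)$ but still yields the same $2A_4$ singularity type.
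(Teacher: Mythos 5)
Your proposal follows essentially the same route as the paper, which simply defers to Ye's argument: reduce via Theorem \ref{reductiontheorem} to the extremal rational elliptic surfaces of Theorem \ref{elliptic_classification}, enumerate the contraction sequences starting from a section, and use transitivity of the Mordell--Weil action for uniqueness; your identification of the non-unique cases ($E_8$, $A_1+E_7$, $A_2+E_6$ each arising from two fiber configurations, and $2D_4$ from $(I_0^*,I_0^*)$) matches the intended proof. The only caveat is that for $K_V^2\geqslant 2$ the contraction sequence is not literally \emph{forced} --- one must still verify that different admissible choices of $(-1)$-curves producing the same singularity configuration yield isomorphic surfaces --- but this is precisely the case-by-case bookkeeping you already flag as the main burden.
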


\begin{proof}
The same proof as in \cite[Theorem 1.2]{yegorenstein} applies, using Theorem \ref{elliptic_classification}.
\end{proof}

\begin{notation}
We will denote any Gorenstein log del Pezzo surface by the corresponding singularity type.
For example, $S(A_1)$ denotes the unique Gorenstein log del Pezzo surface with only one singularity, of type $A_1$. Obviously, $S(A_1)$ is isomorphic to $\overline{\mathbb{F}}_2$.
\end{notation}

\begin{lemma}\label{minusonecurve}
Let $\operatorname{char}(k)\neq 2,3$ and let $S$ be a rank one Gorenstein log del Pezzo surface. Suppose that $\tilde{S}$ is obtained by contracting $(-1)$ curves
of the extremal rational elliptic surface $X$ as in Theorem \ref{reductiontheorem}. Then the $(-1)$ curves of $\tilde{S}$ are either images of elements of $\operatorname{MW}(X)$ or
images of the rational curves contained in the fibers of $X$.
\end{lemma}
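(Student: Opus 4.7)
The plan is to analyze, for an arbitrary $(-1)$ curve $E$ on $\tilde{S}$, the strict transform $\tilde{E}$ of $E$ under the blow-down $f: X \to \tilde{S}$, and to show that $\tilde{E}$ is forced to be either a section or a component of a singular fiber of $X$. Since $f(\tilde{E}) = E$, this will give the conclusion.

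First I would recall that, because $X$ is a rational elliptic surface with section and no multiple fibers, Kodaira's canonical bundle formula forces $K_X \equiv -F$, where $F$ is a general fiber (alternatively, this is the computation already carried out inside the proof of Lemma \ref{elliptic_fibration}). Since $E$ is a $(-1)$ curve on the smooth surface $\tilde{S}$, it is a smooth rational curve, and therefore its strict transform $\tilde{E}$ on $X$ is an irreducible smooth rational curve as well. Adjunction on $X$ then gives the identity
\[
  \tilde{E}^2 \;=\; -2 - K_X\cdot \tilde{E} \;=\; -2 + F\cdot \tilde{E}.
\]

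Next I would use the fact from Theorem \ref{reductiontheorem} that $f$ factors as a sequence of blow-downs of $(-1)$-curves, i.e. $X$ is obtained from $\tilde{S}$ by a sequence of point blow-ups $\tilde{S} = W_0 \leftarrow W_1 \leftarrow \cdots \leftarrow W_N = X$. Let $m \geq 0$ be the total number of these blow-ups whose center lies on the (successive) strict transform of $E$. The standard strict-transform formula yields
\[
  \tilde{E}^2 \;=\; E^2 - m \;=\; -1 - m.
\]
Combining this with the adjunction computation above, $F\cdot\tilde{E} = 1 - m$. Since $F$ is nef, $F\cdot\tilde{E}\ge 0$, forcing $m\in\{0,1\}$.

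Finally, I would finish by reading off the two cases. If $m = 0$ then $F\cdot\tilde{E} = 1$, so $\tilde{E}$ is a section and hence an element of $\operatorname{MW}(X)$ (every $(-1)$ curve on the relatively minimal elliptic surface $X$ is a section, since $F\cdot C = -K_X\cdot C = 1$ for any $(-1)$ curve $C$). If $m = 1$ then $F\cdot\tilde{E} = 0$, so $\tilde{E}$ is contained in a fiber; because the general fiber is irreducible, $\tilde{E}$ must lie in a singular fiber and be one of its components. The only point requiring any care is the identification $K_X \equiv -F$, but this is already available from the setup leading to Theorem \ref{reductiontheorem}; the rest is purely numerical.
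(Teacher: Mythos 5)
Your argument is correct. The paper itself offers no proof here beyond ``Obvious by Theorem \ref{reductiontheorem} and the discussion after it,'' so there is no written argument to compare against; what you have supplied is a clean and complete justification of exactly the dichotomy the paper takes for granted. The two key inputs — $K_X\equiv -F$ (available from the computation inside Lemma \ref{elliptic_fibration}) and the fact that each blow-up center lies on the strict transform of $E$ with multiplicity at most one (since the strict transform of a smooth curve stays smooth) — combine to give $F\cdot\tilde{E}=1-m\in\{0,1\}$, which is precisely the section/fiber-component dichotomy. The only cosmetic remark is that in the case $m=1$ you could note $\tilde{E}^2=-2$, which immediately places $\tilde{E}$ inside a \emph{singular} fiber without appealing to irreducibility of the general fiber; but your reasoning as written is already sound.
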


\begin{proof}
Obvious by Theorem \ref{reductiontheorem} and the discussion after it.
\end{proof}

\begin{lemma}\label{cuspgorenstein}
Let $\operatorname{char}(k)\neq 2,3$ and let $S$ be a rank one log del Pezzo surface. Suppose that there is a rational cuspidal curve $C$ in the smooth locus of $S$. 
Then $S$ is Gorenstein. Furthermore:

\begin{enumerate}
\item If $\operatorname{char}(k)\neq 5$, then $S$ is obtained by blowing down $(-1)$ curves on the extremal rational elliptic surface with singular fibers $II$, $II^*$ and is 
one of the following: $S(E_8)$, $S(E_7)$, $S(E_6)$, $S(D_5)$, $S(A_4)$ or $S(A_1+A_2)$. On $S$ there is one and only one $(-1)$ curve. 

\item If $\operatorname{char}(k)=5$, then there is one more case: $S$ is obtained by blowing down one of the sections of the extremal rational elliptic surface with
singular fibers $I_5$, $I_5$, $II$. In this case $S=S(2A_4)$, which is fully described in Example \ref{sa4char5}.
\end{enumerate}

\end{lemma}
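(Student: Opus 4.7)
The plan is to proceed in three steps: first establish that $S$ must be Gorenstein, then relate $\tilde S$ to an extremal rational elliptic surface and identify the latter using the classification of Theorem \ref{elliptic_classification}, and finally enumerate the Gorenstein log del Pezzo surfaces produced by the admissible sequences of blow-downs.

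\emph{Step 1 (Gorensteinness).} Since $C$ is a cuspidal rational curve lying in the smooth locus of $S$, adjunction yields $(K_S+C)\cdot C = 2p_a(C)-2 = 0$. Rank one of $S$ combined with the ampleness of $-K_S$ forces $C\equiv -\lambda K_S$ for some $\lambda>0$; substituting into the previous identity gives $\lambda=1$, hence $C\equiv -K_S$. The curve $C$ is Cartier because it avoids the singular locus, so pulling back to the minimal resolution $\pi:\tilde{S}\to S$ and writing $\pi^*K_S = K_{\tilde{S}}+\sum_i e_i E_i$ gives the numerical identity $\tilde{C}+K_{\tilde{S}}+\sum_i e_i E_i \equiv 0$. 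Since $\tilde{S}$ is a smooth rational surface, $\operatorname{Pic}(\tilde{S})$ is torsion-free and numerical equivalence is the same as linear equivalence; the exceptional components $E_i$ are $\mathbb{Q}$-linearly independent in $\operatorname{Pic}(\tilde{S})$ because their intersection matrix is negative definite, so comparing the integral divisor $\tilde C + K_{\tilde S}$ with $-\sum_i e_i E_i$ forces each $e_i\in\mathbb{Z}$. Hence every singularity of $S$ is canonical, that is, Du Val, and $S$ is Gorenstein.

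\emph{Step 2 (Identification of the elliptic surface).} Being Du Val at every singular point, the discrepancies vanish and $\tilde C \sim -K_{\tilde S}$, in particular $\tilde C^2 = K_S^2$. When $K_S^2=1$, the anticanonical pencil $|-K_{\tilde{S}}|$ has a unique base point, and blowing it up (as in the discussion preceding Theorem \ref{reductiontheorem}) produces an extremal rational elliptic surface $X$ on which $\tilde{C}$ becomes an irreducible fiber of arithmetic genus one with a single cusp, hence a Kodaira fiber of type $II$. For $K_S^2>1$ we invoke Theorem \ref{reductiontheorem} to obtain an extremal rational elliptic surface $X$ and a composition of $(-1)$-curve blow-downs $X\to\tilde{S}$; the anticanonical pencil on $\tilde{S}$ containing $\tilde{C}$ pulls back to the elliptic fibration on $X$, so $\tilde{C}$ still corresponds to a type $II$ fiber. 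Consulting Theorem \ref{elliptic_classification}, the only extremal rational elliptic surfaces with a type $II$ fiber are $(II,II^*)$ (in every characteristic different from two and three) and, in addition when $\operatorname{char}(k)=5$, the configuration $(I_5,I_5,II)$.

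\emph{Step 3 (Enumeration and the $(-1)$-curve count).} By Lemma \ref{minusonecurve}, applied inductively, every $(-1)$-curve contracted by $X\to\tilde S$ is either a section of $X$ or a fiber component that has become a $(-1)$-curve after earlier blow-downs. For $X=(II,II^*)$ the Mordell-Weil group is trivial; contracting the unique section turns the multiplicity-one end of the $II^*$ fiber into a $(-1)$-curve, and successive contractions peel off the components of the affine $E_8$ diagram along the chain of marks $1,2,3,4,5,6$, leaving in order the $(-2)$-configurations $E_8,E_7,E_6,D_5,A_4$, and $A_1+A_2$; these produce respectively $S(E_8),S(E_7),S(E_6),S(D_5),S(A_4)$, and $S(A_1+A_2)$. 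The chain terminates because the next step would expose two $(-1)$-curves simultaneously at the branch node, and Theorem \ref{gorenstein_classification} contains no compatible rank-one Gorenstein log del Pezzo with $K^2=7$. At each stage the newly exposed $(-1)$-curve is the only $(-1)$-curve on $\tilde S$, giving the uniqueness statement. For $X=(I_5,I_5,II)$ in characteristic five, the Mordell-Weil group has order five and its elements are pairwise disjoint sections; contracting any one of them leaves each $I_5$ as an $A_4$ chain of $(-2)$-curves, yielding $S=S(2A_4)$ as in Example \ref{sa4char5}.

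\emph{Main obstacle.} The principal technical point will be the combinatorial bookkeeping on the $II^*$ fiber: verifying that the successively exposed $(-1)$-curves really march along the marks $1,2,3,4,5,6$ of the affine $E_8$ diagram in that order, that no alternative contraction gives a further admissible rank-one Gorenstein log del Pezzo, and that the chain terminates precisely at $S(A_1+A_2)$. This reduces to a careful case analysis of which fiber component meets each newly created $(-1)$-curve at every stage, combined with consulting Theorem \ref{gorenstein_classification} to exclude any unwanted continuation.
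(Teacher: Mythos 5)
Your overall strategy is the paper's: adjunction gives $C\equiv -K_S$, hence $S$ is Gorenstein; Theorem \ref{reductiontheorem} produces an extremal rational elliptic surface $X$ dominating $\tilde S$; $\tilde C$ is identified with a type $II$ fiber; and Theorem \ref{elliptic_classification} together with Lemma \ref{minusonecurve} finishes the enumeration. Your Step 3 is in fact more detailed than the paper's. However, there are two genuine problems.

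The serious gap is in Step 2, which is the crux of the lemma. For $K_S^2>1$ you assert that ``the anticanonical pencil on $\tilde S$ containing $\tilde C$ pulls back to the elliptic fibration on $X$, so $\tilde C$ still corresponds to a type $II$ fiber.'' This is not true as stated: the blow-downs of Theorem \ref{reductiontheorem} are centered at points $A\cap C$ for \emph{general} members $A$ of $|-K_{\tilde S}|$, and these points need not lie on the particular curve $\tilde C$. If some center misses $\tilde C$, the strict transform of $\tilde C$ in $X$ equals $-K_X+\sum(1-m_i)\mathcal{E}_i$ with some $m_i=0$, and is then a multisection of positive degree, not a fiber. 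The paper closes exactly this gap with a short intersection-theoretic argument: if $\tilde C\cdot F>1$ one produces a curve $D$ on $S$ with $K_S\cdot D=-1$ but $C\cdot D>1$, impossible since $C\equiv -K_S$; if $\tilde C\cdot F=1$ then $\tilde C$ is a section, hence smooth, contradicting the cusp; therefore $\tilde C\cdot F=0$ and $\tilde C$ is a fiber. Some argument of this kind (or a proof that the reduction can always be run with centers chosen on $\tilde C$ itself) is indispensable.

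Separately, your justification of Gorensteinness in Step 1 does not hold as written. From $\tilde C+K_{\tilde S}=-\sum_i e_iE_i$ in $\operatorname{Pic}(\tilde S)_{\mathbb{Q}}$ you conclude $e_i\in\mathbb{Z}$ because the $E_i$ are $\mathbb{Q}$-linearly independent; but an integral class lying in the $\mathbb{Q}$-span of a linearly independent set need not have integral coordinates unless that set spans a \emph{saturated} sublattice of $\operatorname{Pic}(\tilde S)$, and exceptional configurations are not saturated in general (the $3A_2$ configuration on a degree-three weak del Pezzo surface has index three in its saturation, which is why $\operatorname{Cl}(S(3A_2))$ has torsion). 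The conclusion is correct and the paper also states it in one line, but a watertight route is cohomological: the sequence $0\to\omega_S\to\mathcal{O}_S(K_S+C)\to\omega_C\to 0$ together with $H^1(S,\omega_S)\cong H^1(S,\mathcal{O}_S)^\vee=0$ shows $h^0(K_S+C)\geqslant h^0(\omega_C)=1$, so the numerically trivial class $K_S+C$ is effective, hence $K_S+C\sim 0$ and $K_S\sim -C$ is Cartier.
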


\begin{proof}
Notice that $C\equiv -K_S$ by adjunction, so $S$ is Gorenstein.
Every rank one Gorenstein log Del Pezzo surface is obtained by blowing down $(-1)$ curves in an extremal rational elliptic surface $X$ by Theorem \ref{reductiontheorem}.
Consider the strict transform $\tilde{C}$ of $C$ in $X$ and let $F$ be a general fiber. If $\tilde{C}\cdot F>1$ then by looking at the description of the contractions we see that
there is a $(-1)$ curve $D$ in $S$ such that $C\cdot D>1$, contradicting the fact that $K_S\cdot D=-1$. If $\tilde{C}\cdot F=1$ then $\tilde{C}$ is a section of the fibration
$X\rightarrow \mathbb{P}^1$, contradicting the fact that $\tilde{C}$ is cuspidal. Therefore we see that $\tilde{C}$ is a fiber of $X$. This concludes the proof by
Theorem \ref{elliptic_classification} and Lemma \ref{minusonecurve}.
\end{proof}

\begin{lemma}\label{nodegorenstein}
Let $\operatorname{char}(k)\neq 2,3$ and let $S$ be a rank one log del Pezzo surface. Suppose that:
\begin{enumerate}
\item There is a rational nodal curve $A$ in its smooth locus.
\item There are two rational curves $C$ and $D$ such that $K_S\cdot C = K_S\cdot D =-1$.
\item $K_S^2 \geqslant 4$. 
\end{enumerate}

Then $S$ is Gorenstein and $S=S(2A_1+A_3)$. Furthermore, $C\cap D\cap A=\emptyset$, 
$C$ and $D$ each pass through one of the $A_1$ points and meet at opposite ends of the $A_3$ point.
Finally, $C$ and $D$ are the only two $(-1)$ curves on $S$.
\end{lemma}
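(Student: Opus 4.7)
I plan to first establish that $S$ is Gorenstein, then match $S$ against the list of Theorem \ref{gorenstein_classification} via intersection-theoretic computations on the minimal resolution.

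First, I would show $A \equiv -K_S$ and hence $S$ is Gorenstein. Since $A$ is a nodal rational curve in the smooth locus of $S$, adjunction gives $(K_S+A)\cdot A = 2p_a(A)-2 = 0$; writing $A\equiv \lambda(-K_S)$ (possible since $\rho(S)=1$ and $A$ is effective) and substituting yields $\lambda=1$. Thus $(S,A)$ is a log pair with $K_S+A\equiv 0$ (in particular anti-nef), log canonical but not divisorially log terminal at the node, and with $A$ in the smooth locus; Lemma \ref{log_canonical_zero} then yields that $S$ is Gorenstein.

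Second, I would transfer the problem to the minimal resolution $\pi : U \to S$. Since $S$ is now Gorenstein, $K_U = \pi^* K_S$, so $K_U \cdot \tilde C = K_S \cdot C = -1$ for the strict transform of $C$, and likewise for $D$. From $\rho(S)=1$ we obtain $C\equiv D \equiv K_S^{-2} A$, whence $C^2 = D^2 = C\cdot D = 1/K_S^2$. Writing $\pi^* C = \tilde C + \Delta_C$ with $\Delta_C = \sum_j c_j E_j$, where the $c_j\geq 0$ are determined by $\sum_i c_i\,E_i\cdot E_j = -\tilde C\cdot E_j$, one combines $(\pi^*C)^2=1/K_S^2$ with $\Delta_C^2 = -\tilde C\cdot\Delta_C$ and adjunction on the smooth surface $U$ to conclude $p_a(\tilde C) = 0$, $\tilde C^2 = -1$, and
\[
\tilde C \cdot \Delta_C \;=\; 1 + \frac{1}{K_S^2},
\]
together with the analogous formula for $D$.

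Third, I would run the case analysis. By Theorem \ref{gorenstein_classification} the candidates with $K_S^2\geq 4$ are $\mathbb{P}^2$, $\overline{\mathbb{F}}_2 = S(A_1)$, $S(A_1+A_2)$, $S(A_4)$, $S(2A_1+A_3)$, $S(D_5)$. On the first two, direct computation shows no curve has $K\cdot C = -1$ ($K\cdot C \in 3\mathbb{Z}$ on $\mathbb{P}^2$ and $K\cdot C \in 2\mathbb{Z}$ on $\overline{\mathbb{F}}_2$). For each of the remaining four, solving $\tilde C\cdot\Delta_C = 1 + 1/K_S^2$ on the respective ADE diagram pins down exactly which exceptional components $\tilde C$ may meet; expanding
\[
\tfrac{1}{K_S^2} \;=\; C\cdot D \;=\; \tilde C\cdot \tilde D \,+\, \tilde C\cdot \Delta_D \,+\, \Delta_C\cdot\tilde D \,+\, \Delta_C\cdot\Delta_D
\]
and requiring $\tilde C\cdot\tilde D\in\mathbb{Z}_{\geq 0}$ (as $\tilde C\neq \tilde D$ are distinct irreducible curves on the smooth surface $U$) yields negative or non-integral values in every case except $S = S(2A_1+A_3)$ with $\tilde C$ meeting one $A_1$ point together with one end of the $A_3$ chain and $\tilde D$ meeting the other $A_1$ together with the opposite end of the chain, giving $\tilde C\cdot\tilde D = 0$.

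Finally, the remaining geometric conclusions follow from this Step~3 configuration. Disjointness of $\tilde C,\tilde D$ on $U$ combined with $A\cdot C = A\cdot D = 1$ and $A$ lying in the smooth locus forces $A\cap C$ and $A\cap D$ to consist of single smooth points, while $C\cap D$ reduces to the image of the $A_3$ singularity; hence $A\cap C\cap D = \emptyset$. Uniqueness of the pair $\{C,D\}$ follows because any additional $(-1)$-curve $C'$ would again fall into the catalog of Step~3 and would produce a negative or non-integral intersection with one of $C$ or $D$. The main obstacle is the computational case analysis of Step~3, since it requires inverting the intersection matrices of $A_1+A_2$, $A_4$, $D_5$, and $2A_1+A_3$ and running through each pair of placements for $(\tilde C,\tilde D)$; fortunately the symmetry of the Dynkin diagrams collapses the casework significantly.
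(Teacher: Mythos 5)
Your argument is correct, but it takes a genuinely different route from the paper's. The paper also begins with $A\equiv -K_S$ to get Gorenstein-ness, but then passes through the elliptic-surface machinery: it notes $(K_S+C+D)\cdot A\leqslant -2$ forces $C,D$ smooth, invokes Theorem \ref{reductiontheorem} to produce an extremal rational elliptic surface $X$ dominating $\tilde S$, observes via Lemma \ref{minusonecurve} that $|\operatorname{MW}(X)|\geqslant 2$ and (arguing as in Lemma \ref{cuspgorenstein}) that the strict transform of $A$ is an $I_1$ fiber, and then reads off from the table in Theorem \ref{elliptic_classification} that only the surface with fibers $III^*, I_2, I_1$ survives with $K_S^2\geqslant 4$, delegating the final configuration statements to the explicit description of that fibration. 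You instead stay entirely on the minimal resolution: from $C\equiv D\equiv (1/K_S^2)(-K_S)$ and $K_U=\pi^*K_S$ you pin down $\tilde C^2=-1$, $p_a(\tilde C)=0$ and $\tilde C\cdot\Delta_C=1+1/K_S^2$, then solve this against the inverse Cartan matrices of the Du Val configurations allowed by Theorem \ref{gorenstein_classification}, and finally use $C\cdot D=1/K_S^2$ to eliminate $S(A_1+A_2)$, $S(A_4)$, $S(D_5)$ and to isolate the unique disjoint placement on $S(2A_1+A_3)$. What your approach buys is self-containedness and sharper output: the same computation that identifies the surface also delivers the exact incidence of $C$ and $D$ with the singular points, their disjointness on $U$ (hence $C\cap D\cap A=\emptyset$), and the uniqueness of the pair of $(-1)$-curves, all of which the paper obtains only by appealing to the elliptic fibration or to an external reference. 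What it costs is that hypothesis (1) is used only to establish $S$ Gorenstein and for the final disjointness claim (the nodal fiber $I_1$ plays no structural role for you), and the case analysis requires inverting four intersection matrices by hand; both trade-offs are acceptable, and I verified the key numerical identities ($7/6$, $6/5$, $5/4$ and the cross terms) do come out as you claim.
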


\begin{proof}
We have again that $A\equiv -K_S$ by adjunction, so $S$ is Gorenstein.
Notice that $(K_S+C+D)\cdot A \leqslant -2$, hence $C$ and $D$ are smooth by adjunction. Let $X$ be an extremal rational elliptic surface that dominates $\tilde{S}$ as in
Theorem \ref{reductiontheorem}. By Lemma \ref{minusonecurve} we have that $|\operatorname{MW}(X)|\geqslant 2$. Also, by the same reasoning as in Lemma \ref{cuspgorenstein},
we have that the strict transform of $A$ in $X$ is a fiber. By looking at the classification in Theorem \ref{elliptic_classification}, we see that starting with an
extremal rational elliptic surface $X$ with non trivial Mordell-Weil group and a fiber of type $I_1$, the only Gorenstein 
log del Pezzo surface with $K_S^2 \geqslant 4$ that we obtain is $S(2A_1+A_3)$. This case is achieved by taking $X$ to be the extremal rational elliptic surface with singular fibers
$III^*$, $I_2$, $I_1$. Now one can get the result by either looking at the description of the elliptic surface, or by following the argument in \cite[Lemma 3.9.2]{mckernan}.
\end{proof}

\subsection{Special examples}

Here we study more in detail some specific Gorenstein log del Pezzo surfaces that turn out to be especially important in the present paper. 

\begin{example} \label{sa4char5}
Let $\operatorname{char}(k)=5$ and consider $S=S(2A_4)$. This surface is obtained by taking $W$ to be the unique extremal rational elliptic surface
with singular fibers of type $I_5$, $I_5$, $II$, contracting one of the five sections and then contracting all the $(-2)$ curves. 
It is clear from this description that there is a unique rational cuspidal curve $D$ in its smooth locus. We would like to present in the following a slightly more explicit approach.

Consider the following four points in $\mathbb{P}^2_k$: $a=[-1,1,1]$, $b=[-1,-1,1]$, $c=[1,-1,1]$ and $d=[1,1,1]$. Let $L_{ab}$, $L_{ac}$, $L_{ad}$, $L_{bc}$, $L_{bd}$ and $L_{cd}$ be the
six lines between them. Consider the cubics $C_0 = L_{ad} + L_{ac} + L_{bc}$ and $C_\infty=L_{ab} + L_{bd} + L_{cd}$. The equation of $C_0$ is 
$(Y^2 - Z^2)(X+Y)=0$ and the equation of $C_\infty$ is $(X^2-Z^2)(Y-X)=0$. Consider the pencil of cubics $C_t$ generated by $C_0$ and $C_\infty$, namely the curves
$C_t$ whose equations are

\[
(Y^2 - Z^2)(X+Y)+ t(X^2-Z^2)(Y-X) = 0
\]

Clearly the base locus of this pencil is given by the points $a$, $b$, $c$, $d$ counted with multiplicity two, and by the point $f=[0,0,1]$, counted with multiplicity one.
We claim that the curve $C_2$ is a rational cuspidal curve, with cusp at $[1,3,0]$. First, it's immediate to check that $C_2$ passes through $[1,3,0]$.
Now we study $C_2$ in the chart $X\neq 1$, with local coordinates $y=Y/X$ and $z=Z/X$. Its equation is 
\[
 (y^2 -z^2)(y+1) + 2(1-z^2)(y-1) = 0
\]

The Jacobian and the Hessian of $C_2$ in these coordinates are 
\[
  J=(3y^2 + 2z^2 + 2y + 2, -yz+2z)
\]
and

\[
   H=
   \left[ {\begin{array}{cc}
   y+2 & 4z \\
   4z & -y+2 \\
  \end{array} } \right]
\]
respectively. Evaluating $J$ and $H$ in $(3,0)$ shows that $C_2$ has a cusp.

Now we resolve the base locus of the pencil at $a$, $b$, $c$, $d$. Since the multiplicity of the base locus at these points is two, we need two blow ups at each point. 
We denote by $E_1 ^a$ the first blow up over $a$,
and we use the analogous notation for the other points. We also slightly abuse notation by denoting the strict transforms of the lines and of the cubics considered so far with their original names.
Consider the blow up $\tilde{S} \rightarrow \mathbb{P}^2$ on $E_1 ^a \cap L_{ab}$, $E_1 ^b \cap L_{bc}$, $E_1 ^c \cap L_{cd}$ and $E_1 ^d \cap L_{ad}$ and call $E_2^a$, $E_2^b$,
$E_2^c$ and $E_2^d$ the respective exceptional divisors.
Notice that the curves $E_1^a$, $L_{ad}$, $L_{bc}$ and $E_1 ^c$ form a chain of four $(-2)$ curves. Similarly the curves $E_1^d$, $L_{cd}$, $L_{ab}$ and $E_1^b$ form another chain of $(-2)$ curves. Our surface $S$ is therefore obtained by contracting these two chains to two points, $p$ and $q$ respectively. 
The strict transform of $C_2$ in $S$ is contained in the smooth locus and is a member of
$|-K_S|$ by adjunction. The strict transforms of $L_{ac}$ and $L_{bd}$ are the only two $(-1)$ curves in $|-K_S|$. There are exactly four other $(-1)$ curves on $S$
by Lemma \ref{minusonecurve}: the images of $E_2 ^a$, $E_2 ^b$, $E_2 ^c$ and $E_2 ^d$.

Next we study the geometry of these curves with respect to the singular points on $S$. The curve $L_{ac}$ is nodal, with a node at $p$. The two branches of the node touch opposite
curves of the $A_4$ chain singularity at $p$. Similarly $L_{bd}$ is nodal with a node at $q$. Notice that $L_{ac}\cap L_{bd}$ is the only point in the base locus of $|-K_S|$, which is
the image of $[0,0,1]$. For the remaining $(-1)$ curves we notice that they are smooth, pass through both $p$ and $q$, are dlt at one but not lc at the other. Furthermore, given any
two such curves, $p$ and $q$ are the only points where they intersect. 

\end{example}

\begin{example} Let $\operatorname{char}(k)\neq 2,3,5$ and $S=S(2A_4)$. Most of the analysis in Example \ref{sa4char5} carries through. The only difference is that
$S$ contains exactly two singular rational curves in its smooth locus, both with nodal singularities.
\end{example}

\begin{example} Let $\operatorname{char}(k)\neq 2,3$ and $S=S(A_1+A_2)$. Then $S$ is obtained as follows. Consider the flex cubic $C$ given by $XZ^2 = Y^3$ in $\mathbb{P}_k ^2$
and the line $L$ given by $Y=0$. Blow up three times over the intersection $L\cap C$ to separate them, then contract the three $(-2)$ curves obtained in the process. Notice
that the strict transform of $C$ in $S$ is a cuspidal rational curve contained in the smooth locus. The image of the last exceptional curve created by blowing up at $L\cap C$ is a
$(-1)$ curve. This is also the unique $(-1)$ curve, for example by Lemma \ref{cuspgorenstein}.
\end{example}

\bibliography{biblio2}
\bibliographystyle{alpha}

\end{document}